\numberwithin{equation}{section}
\newtheorem{theorem}{Theorem}[section]
\newtheorem{lemma}[theorem]{Lemma}
\newtheorem{cor}[theorem]{Corollary}
\newtheorem{sublem}[theorem]{Sublemma}
\newtheorem{proposition}[theorem]{Proposition}
\newtheorem{remark}[theorem]{Remark}
\def\beq{\begin{equation}}
\def\eeq{\end{equation}}
\def\B{\mathcal{B}}
\def\cC{\mathcal{C}}
\newcommand{\C}{\mathcal{C}}
\def\cF{\mathcal{F}}
\newcommand{\F}{\mathcal{F}}
\def\G{\mathcal{G}}
\def\I{\mathcal{I}}
\def\eps{\varepsilon}
\def\cK{\mathcal{K}}
\def\K{\mathcal{K}}
\newcommand{\Lp}{\mathcal{L}}
\def\cM{\mathcal{M}}
\def\cO{\mathcal{O}}
\def\cR{\mathcal{R}}
\def\cS{\mathcal{S}}
\newcommand{\Si}{\mathcal{S}}
\def\cT{\mathcal{T}}
\def\cV{\mathcal{V}}
\def\V{\mathcal{V}}
\def\cW{\mathcal{W}}
\newcommand{\W}{\mathcal{W}}
\def\Z{\mathcal{Z}}
\newcommand{\Q}{\mathcal{Q}}
\newcommand{\E}{\mathcal{E}}
\newcommand{\N}{\mathbb{N}}
\newcommand{\Ho}{\mathbb{H}}\newcommand{\bH}{\mathbb{H}}
\newcommand{\tQ}{\tilde{Q}}
\newcommand{\tGamma}{\tilde{\Gamma}}
\newcommand{\tu}{\tilde{u}}
\newcommand{\p}{\mathbf{p}}
\newcommand{\bq}{\mathbf{q}}
\newcommand{\bv}{\mathbf{v}}
\newcommand{\bx}{\mathbf{x}}
\newcommand{\bF}{\mathbf{F}}
\newcommand{\bG}{\mathbf{G}}
\newcommand{\vf}{\varphi}
\newcommand{\po}{\psi_1}
\newcommand{\pt}{\psi_2}
\newcommand{\bp}{\overline{\psi}}
\newcommand{\R}{\mathbb{R}}
\newcommand{\ve}{\varepsilon}
\newcommand{\ob}{\overline{\omega}}
\newcommand{\ds}{\displaystyle}
\newcommand{\vu}{\vec{u}}
\newcommand{\bmu}{\overline{\mu}}
\begin{document}

\title{A functional analytic approach to perturbations of \\the Lorentz gas}
\author{Mark F. Demers \and Hong-Kun Zhang}
\address{Mark F. Demers, Department of Mathematics and Computer Science, Fairfield University, Fairfield CT 06824, USA.}
\email{mdemers@fairfield.edu}
\address{Hong-Kun Zhang, Department of Mathematics and Statistics, UMass Amherst, MA 01003, USA.}
\email{hongkun@math.umass.edu}

\thanks{M.\ D.\ is partially supported by NSF Grant DMS-1101572. H.-K.\ Z.\ is partially supported by NSF CAREER Grant DMS-1151762.}

\date{\today}

\begin{abstract}
We present a functional analytic framework based on the spectrum of the
transfer operator to study billiard maps associated
with perturbations of the periodic Lorentz gas.  We show that recently constructed Banach spaces
for the billiard map of the classical Lorentz gas are flexible enough to admit a wide variety of perturbations,
including: movements and deformations of scatterers; billiards subject to external forces;
nonelastic reflections with kicks and slips at the boundaries of the scatterers; 
and random perturbations comprised
of these and possibly other classes of maps.  The spectra and spectral projections
of the transfer operators are shown
to vary continuously with such perturbations so that the spectral gap enjoyed by the
classical billiard persists and important limit theorems follow.
\end{abstract}

\maketitle

\section{Introduction}
\label{intro}

The Lorentz gas is known to enjoy strong ergodic
properties:  both the continuous time dynamics and the billiard maps
are completely hyperbolic, ergodic, K-mixing and Bernoulli (see
\cite{Sin70, GO, SC87, CH96} and the references therein).
Young \cite{Y98} proved exponential
decay of correlations for billiard maps corresponding to the finite horizon periodic Lorentz gas
using Markov extensions; this technique was subsequently extended to other dispersing
billiards \cite{C99} and used to obtain important limit theorems such as local large deviation
estimates and almost-sure invariance principles \cite{melbourne nicol, melbourne nicol 2, rey}.

In this setting, it is natural to ask how the statistical properties of dispersing billiard maps vary
with the shape and position of the scatterers.  Alternatively, one may change the billiard
dynamics by introducing an external force between collisions or by considering nonelastic
reflections at the boundaries.  Such perturbed dynamics lead to nonequilibrium billiards
whose invariant measures are singular with respect to Lebesgue measure.

One of the first nonequilibrium physical models that was studied rigorously is the
periodic Lorentz gas with a small constant electrical field \cite{CELSa, CELSb} and the
well-known Ohm's law was proved for that case.   More general external forces
were handled in \cite{Ch01, Ch08, CD09} and billiards with kicks at reflections have been studied
in \cite{mark, Z09}.
Recently,  Chernov and Dolgopyat \cite{CD} used coupling methods to study the motion
of a point particle colliding with a moving scatterer.  Locally perturbed periodic
rearrangements of scatterers have also been the subject of recent studies \cite{DSV}.
Despite such successes, the study of perturbations of billiards has thus far been handled
on a case by case basis, with methods adapted and developed for each specific type of
perturbation considered.

In this paper, we propose a unified framework in which
to study a large class of perturbations of dispersing billiards.
This framework is based on the spectral
analysis of the transfer operator associated with the billiard map
and uses the recent work \cite{demers zhang}
which successfully constructed Banach spaces on which the transfer operator for the
classical periodic Lorentz gas has a spectral gap.

We first present abstract conditions under which we have uniform control of
spectral data for a given class of perturbed maps.  We then prove that four
broad classes of perturbations of billiards fit within this framework, namely:
\begin{itemize}
  \item[(i)]  Tables with shifted, rotated or deformed scatterers;
  \item[(ii)]  Billiards under small external forces which bend trajectories during flight;
  \item[(iii)]  Billiards with kicks or twists at reflections, including slips along the disk;
  \item[(iv)]  Random perturbations comprised of maps with uniform properties (including
  any of the above classes, or a combination of them).
\end{itemize}
In particular, the results on random perturbations are a version of time-dependent billiards,
in which scatterers are allowed to change positions between collisions.
The fact that our main theorems, \ref{thm:uniform} and \ref{thm:close}, are proved in
an abstract setting will facilitate the application of this framework to other classes
of perturbations as they arise in future works.

The present functional analytic approach uses the Banach spaces 
constructed in \cite{demers zhang}
as well as the perturbative framework of Keller and Liverani \cite{keller liverani}
to prove that the spectral data and spectral projectors, including invariant measures, rates
of decay of correlations, variance in the central limit theorem, etc, vary H\"older
continuously for the classes of perturbations mentioned above
(see \cite{baladi book, liv} for expositions of this approach).
In addition, this
approach yields new results for the perturbed billiard maps in terms of local
limit theorems, in particular giving new information about the evolution of noninvariant
measures in the context of these limit theorems.  For example, applying
Corollary~\ref{cor:limit theorems} to billiards under external forces and kicks, we obtain
a local large deviation estimate with a rate function that is the same for all probability measures
in our Banach space.  This implies in particular that Lebesgue measure and the
singular SRB measure for the perturbed billiard have the same large deviation
rate function.

The paper is organized as follows.  In Section 2, we describe our abstract framework,
state precisely the applications which serve as our model perturbations and
formulate our main results.
In Section 3, we lay out our common approach under the general conditions
{\bf (H1)}-{\bf (H5)} which guarantee the required uniform Lasota-Yorke inequalities
for Theorem~\ref{thm:uniform}, proved
in Section~\ref{uniform};
we also formulate conditions  {\bf (C1)}-{\bf (C4)} to verify that a perturbation is small in the sense of our Banach spaces for Theorem~\ref{thm:close}, proved in Section~\ref{close}.
The investigations of the concrete models are provided in Sections~\ref{perts} and \ref{kick}.


\section{Setting and Results}
\label{results}

In this section, we describe the abstract framework into which we will place our
perturbations and formulate precisely the classes of concrete deterministic perturbations to which
our results apply.  We also formulate a class of random perturbations with maps drawn from
any mixture of the deterministic perturbations described below.
We postpone until Section~\ref{common} a precise description of
the Banach spaces and the formal requirements on the abstract class of maps $\mathcal{F}$.


\subsection{Perturbative framework}
\label{framework}

We recall here the perturbative framework of Keller and Liverani \cite{keller liverani}.
Suppose there exist two Banach spaces $(\B, \| \cdot \|_\B)$ and $(\B_w, | \cdot |_w )$
with the unit ball of $\B$ compactly embedded in $\B_w$, $| \cdot |_w \leq \| \cdot \|_\B$,
and a family of bounded linear operators $\{ \Lp_\ve \}_{\ve \ge 0}$ defined on both
$\B_w$ and $\B$ such that
the following holds.\footnote{The results of \cite{keller liverani} hold in a more
general setting, but we only state the version we need for our purposes.}  There exist constants
$C, \eta >0$ and $\sigma < 1$ such that for
all $\ve \ge 0$ and $n \geq 0$,
\begin{equation}
\label{eq:pert LY}
\begin{split}
| \Lp_\ve^n h|_w & \leq C \eta^n |h|_w \qquad \mbox{for all $h \in \B_w$}, \\
\| \Lp_\ve^n h \|_\B & \le C \sigma^n \| h \|_\B + C \eta^n |h|_w \qquad \mbox{for all $h \in \B$}.
\end{split}
\end{equation}
If $\sigma < \eta$, the operators $\Lp_\ve$ are quasi-compact with essential
spectral radius bounded by $\sigma$ and spectral radius at most $\eta$ (see for example
\cite{baladi book}).
Suppose further that
\begin{equation}
\label{eq:pert small}
||| \Lp_\ve - \Lp_0 ||| := \sup \{ |\Lp_\ve h - \Lp_0 h |_w : h \in \B, \| h \|_\B \le 1 \} \le \rho(\ve),
\end{equation}
where $\rho(\ve)$ is a non-increasing upper semicontinuous function satisfying
$\lim_{\ve \to 0} \rho(\ve) = 0$.

The main result of \cite{keller liverani} is the following.
Let sp$(\Lp_0)$ denote the spectrum of $\Lp_0$.  For any $\sigma_1 > \sigma$, by quasi-compactness,
sp$(\Lp_0) \cap \{ z \in \mathbb{C} : |z| \geq \sigma_1 \}$ consists of finitely
many eigenvalues $\varrho_1, \ldots, \varrho_k$ of finite multiplicity.
Thus there exists $t_* >0$ and we may choose $\sigma_1$ such that
$| \varrho_i - \varrho_j | > t_*$ for $i \ne j$ and dist$(\mbox{sp}(\Lp_0), \{ |z| = \sigma_1 \}) > t_*$.
For $t < t_*$ and $\ve \geq 0$, define the spectral projections,
\[
\begin{split}
\Pi_\ve^{(j)} & := \frac{1}{2\pi i} \int_{|z - \varrho_j|=t} (z- \Lp_\ve)^{-1} \, dz \qquad \mbox{and} \\
\Pi_\ve^{(\sigma_1)} & := \frac{1}{2\pi i} \int_{|z|=\sigma_1} (z- \Lp_\ve)^{-1} \, dz .
\end{split}
\]

\begin{theorem} (\cite{keller liverani})
\label{thm:kl}
Assume that \eqref{eq:pert LY} and \eqref{eq:pert small} hold.
Then for each $t \le t_*$ and $s < 1 - \frac{\log \sigma_1}{\log \sigma}$, there
exist $\ve_1, C >0$ such that for any $0 \le \ve < \ve_1$, the spectral projections
$\Pi_\ve^{(j)}$ and $\Pi_\ve^{(\sigma_1)}$ are well defined
and satisfy, for each $j = 1, \ldots k$,
\begin{itemize}
 \item[(1)] $||| \Pi_\ve^{(j)} - \Pi_0^{(j)} ||| \le C \rho(\ve)^s$ and
 $||| \Pi_\ve^{(\sigma_1)} - \Pi_0^{(\sigma_1)} ||| \le C \rho(\ve)^s$ ;
 \item[(2)] rank$(\Pi_\ve^{(j)}) = \mbox{rank}(\Pi_0^{(j)})$;
 \item[(3)] $\| \Lp_\ve^n \Pi_\ve^{(\sigma_1)} \| \le C \sigma_1^n$, for all $n\ge 0$.
\end{itemize}
\end{theorem}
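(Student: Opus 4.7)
The plan is to follow the Keller--Liverani resolvent method, whose defining feature is that the perturbation is controlled only in the mixed operator norm $|||\cdot|||$ from $\B$ to $\B_w$, not in the strong operator norm on $\B$. First, applying Hennion's theorem to \eqref{eq:pert LY} together with the compact embedding $\B \hookrightarrow \B_w$, each $\Lp_\ve$ is quasi-compact with essential spectral radius $\leq \sigma$ and spectral radius $\leq \eta$. In particular, $\mathrm{sp}(\Lp_0) \cap \{|z| \geq \sigma_1\}$ consists of the finitely many eigenvalues $\varrho_1,\ldots,\varrho_k$, and by the choice of $t_*$ the contours $\gamma_j = \{|z-\varrho_j|=t\}$ and $\gamma_\infty = \{|z|=\sigma_1\}$ lie in the resolvent set of $\Lp_0$; the spectral projectors $\Pi_0^{(j)}$ and $\Pi_0^{(\sigma_1)}$ are then defined by Dunford calculus as usual, with $\Pi_0^{(j)}$ of finite rank.

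The key estimate is a uniform resolvent bound on these contours. Using the identity
\[
(z-\Lp_\ve)^{-1} = \sum_{k=0}^{n-1} z^{-(k+1)}\Lp_\ve^k \;+\; z^{-n}\Lp_\ve^n(z-\Lp_\ve)^{-1},
\]
iterating the strong Lasota--Yorke inequality on $\Lp_\ve^n$ and the weak one on $(z-\Lp_\ve)^{-1}$ acting on $\B_w$, one obtains for $|z| \geq \sigma_1$ and all sufficiently small $\ve$ a bound of the form $\|(z-\Lp_\ve)^{-1}h\|_\B \leq C_1\|h\|_\B + C_2|(z-\Lp_\ve)^{-1}h|_w$, together with the $\B_w$-bound $|(z-\Lp_\ve)^{-1}h|_w \leq C|h|_w$. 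I would then combine these with the second resolvent identity
\[
(z-\Lp_\ve)^{-1} - (z-\Lp_0)^{-1} = (z-\Lp_\ve)^{-1}(\Lp_\ve - \Lp_0)(z-\Lp_0)^{-1},
\]
and use that $\Lp_\ve - \Lp_0$ factors through $\B_w$ with norm $\leq \rho(\ve)$ by \eqref{eq:pert small} to deduce
\[
|((z-\Lp_\ve)^{-1} - (z-\Lp_0)^{-1})h|_w \leq C\,\rho(\ve)\,\|h\|_\B
\]
uniformly on $\gamma_j$ and $\gamma_\infty$.

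Integrating this estimate around the two families of contours via the Cauchy formula yields assertion (1); the fractional exponent $s < 1 - \log\sigma_1/\log\sigma$ emerges from balancing the two contributions of the iterated Lasota--Yorke against $\rho(\ve)$, optimizing $n \sim |\log\rho(\ve)|$ to trade a loss of $\sigma_1^{-n}$ against a gain of $\sigma^n + \rho(\ve)\eta^n$. Assertion (2) then follows because $\Pi_0^{(j)}$ restricted to $\mathrm{Range}(\Pi_\ve^{(j)})$ is close to the identity on a finite-dimensional space and hence an isomorphism once $\ve$ is small, forcing equality of ranks; assertion (3) is immediate from $\Lp_\ve^n \Pi_\ve^{(\sigma_1)} = (2\pi i)^{-1}\int_{\gamma_\infty} z^n (z-\Lp_\ve)^{-1}\,dz$ combined with the uniform resolvent bound on $\gamma_\infty$. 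The main obstacle is precisely the uniform resolvent control as $t$ shrinks and $\ve \to 0$: the naive Neumann series for $(z-\Lp_\ve)^{-1} = (z-\Lp_0)^{-1}(I - (\Lp_\ve-\Lp_0)(z-\Lp_0)^{-1})^{-1}$ cannot be used because $\Lp_\ve - \Lp_0$ is small only in $|||\cdot|||$, so the Lasota--Yorke iteration must substitute for the missing strong-norm smallness, and it is this substitution that accounts for the weakened Hölder exponent $s$ rather than the naive exponent $1$.
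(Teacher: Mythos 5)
This theorem is quoted from Keller--Liverani (\cite{keller liverani}) and the paper contains no proof of it, so there is no in-paper argument to compare against; what follows compares your proposal against the argument of the cited reference.

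Your high-level strategy is the right one: truncated Neumann sums, iterated Lasota--Yorke, the second resolvent identity, and optimization of the truncation length $n$ against $\rho(\ve)$. However, the displayed estimates contain a genuine error, and one of your intermediate claims is incompatible with the conclusion you are trying to prove. Specifically, you assert a uniform bound $|(z-\Lp_\ve)^{-1}h|_w \le C|h|_w$ for $|z|\ge \sigma_1$. This fails: the operator $(z-\Lp_\ve)^{-1}$ is only known to exist as an operator on $\B$ for $z$ in the $\B$-resolvent set; on $\B_w$ the Lasota--Yorke inequality \eqref{eq:pert LY} gives spectral radius at most $\eta$ with $\eta \ge 1 > \sigma_1$, there is no quasi-compactness on $\B_w$, and hence no guarantee that the annulus $\sigma_1 \le |z| < \eta$ lies in the $\B_w$-resolvent set at all, let alone with a uniform resolvent bound. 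Consequently the estimate you ``deduce,'' $|( (z-\Lp_\ve)^{-1}-(z-\Lp_0)^{-1})h|_w \le C\rho(\ve)\|h\|_\B$, is unfounded; and if it were true, the Cauchy integral would deliver assertion (1) with exponent $1$, contradicting the restriction $s < 1 - \tfrac{\log\sigma_1}{\log\sigma} < 1$ that the theorem itself imposes. Your final paragraph correctly identifies that the exponent loss must come from trading $\sigma_1^{-n}$ against $\sigma^n + \rho(\ve)\eta^n$, but this observation sits in tension with the linear estimate you write down two paragraphs earlier.

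The fix, and what the Keller--Liverani proof actually does, is to never treat $(z-\Lp_\ve)^{-1}$ as a bounded operator on $\B_w$. One works entirely with $R_\ve(z) = (z-\Lp_\ve)^{-1}$ on $\B$, using the decomposition $(z-\Lp_\ve) Q_\ve^{(n)}(z) = I - z^{-n}\Lp_\ve^n$ with $Q_\ve^{(n)}(z)=\sum_{k=0}^{n-1}z^{-(k+1)}\Lp_\ve^k$ to obtain, via \eqref{eq:pert LY}, an estimate of the schematic form
\[
|R_\ve(z)h - R_0(z)h|_w \;\le\; C\bigl(\sigma_1^{-n}\sigma^n + \sigma_1^{-n}\eta^n\rho(\ve)\bigr)\|h\|_\B ,
\]
valid uniformly on the contours, in which the first term is what you pay for never inverting on $\B_w$ and the second comes from factoring $\Lp_\ve-\Lp_0$ through $|||\cdot|||$. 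Optimizing over $n$ (one cannot send $n\to\infty$, because the first term is only geometric in $n$ with ratio $\sigma/\sigma_1$) gives the exponent $s$; this is exactly why $s$ is bounded away from $1$ in the way the theorem states. You should rework your middle paragraph to produce this two-term bound directly, rather than passing through the false $\B_w$-resolvent bound.
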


We say an operator $\Lp$ has a spectral gap if $\Lp$ has a simple eigenvalue of maximum
modulus and
all other eigenvalues have strictly smaller modulus.
The above theorem implies in particular that if
$\Lp_0$ has a spectral gap, then so does $\Lp_\ve$ for $\ve$ sufficiently small.
In addition, the related statistical properties (for instance, invariant measures, rates of decay of correlations,
variance of the Central Limit Theorem) are stable and vary H\"older continuously
as a function of $\rho(\ve)$.
This is the framework into which we will place our perturbations of the Lorentz gas.


\subsection{An abstract result for a class of maps with uniform properties}
\label{abstract}

We begin by fixing the phase space $M$ of a billiard map
associated with a periodic Lorentz gas.  That is, we place finitely many (disjoint)
scatterers $\Gamma_i$, $i=1, \ldots d$,
on $\mathbb{T}^2$ which have $\C^3$ boundaries with strictly positive curvature.
The classical billiard flow on the table
$\mathbb{T}^2 \setminus \cup_i \{ \mbox{interior } \Gamma_i \}$
is induced by a particle traveling at unit speed and undergoing elastic collisions at the
boundaries.  In what follows, we also consider particles whose motion between collisions
follows slightly curved trajectories (due to external forces)
as well as certain types of collisions which do not obey the
usual law of reflection.

In all cases, the billiard map associated with the flow is the Poincar\'e map
corresponding to collisions with the scatterers.
Its phase space is $M = \cup_{i=1}^d I_i \times [-\pi/2, \pi/2]$,
where $\ell(I_i) = |\partial \Gamma_i|$, i.e.\ the length of $I_i$ equals
the arclength of $\partial \Gamma_i$, $i=1, \ldots d$.
$M$ is parametrized by the canonical coordinates $(r, \vf)$ where $r$ represents the
arclength parameter on the boundaries of the scatterers (oriented clockwise) and
$\vf$ represents the angle an outgoing (postcollisional) trajectory makes with the
unit normal to the boundary at the point of collision.

The phase space $M$ and
coordinates so defined are fixed for
all classes of perturbations we consider; however, the configuration space (the billiard
table on which the particles flow) and the laws which govern the motion of the particles
may vary as long as all variations give rise to the
same phase space $M$, i.e.\ the number of $\Gamma_i$ and the arclengths
of their boundaries do
not change.
See Remark~\ref{rem:arclength} for a way to relax this requirement on the arclength.
For any $x = (r,\vf) \in M$, we define $\tau(x)$ to be the first collision of the
trajectory starting at $x$ under the billiard flow.  The billiard map is defined wherever
$\tau(x) < \infty$.  We say that the billiard has finite horizon if there is an upper bound
on the function $\tau$.  Otherwise, we say the billiard has infinite horizon.
Notice that the function $\tau$ depends on the placement of the scatterers in $\mathbb{T}^2$,
while $M$ is independent of their placement.

\bigskip
We assume there exists a class of maps $\F$ on $M$ satisfying properties
{\bf (H1)}-{\bf (H5)} of Section~\ref{class of maps} with uniform constants.
For each $T \in \F$, in Section~\ref{transfer} we
define the transfer operator $\Lp_T$ associated with
$T$ on an appropriate class of distributions $h$ by
\[
\Lp_Th(\psi) = h(\psi \circ T), \;\; \; \mbox{for suitable test functions } \psi.
\]
In Section~\ref{norms}, we define Banach spaces of distributions
$( \B, \| \cdot \|_\B )$ and $(\B_w, | \cdot |_w )$, preserved under the action
of $\Lp_T$, $T \in \F$, such that the unit ball of $\B$ is compactly embedded
in $\B_w$.

\begin{theorem}
\label{thm:uniform}
Fix $M$ as above and suppose there exists a class of maps $\F$ satisfying
{\bf (H1)}-{\bf (H5)} of Section~\ref{class of maps}.  Then $\Lp_T$ is well defined
as a bounded linear operator on $\B$ for each $T \in \F$.  In addition,
there exist $C>0$, $\sigma <1$
such that for any $T \in \F$ and $n \geq 0$,
\begin{equation}
\begin{split}
\label{eq:uniform LY}
| \Lp_T^n h|_w & \leq C \eta^n |h|_w \qquad \mbox{for all $h \in \B_w$},  \\
\| \Lp_T^n h \|_\B & \le C \sigma^n \| h \|_\B + C \eta^n |h|_w \qquad \mbox{for all $h \in \B$},
\end{split}
\end{equation}
where $\eta \ge 1$ is from {\bf (H5)}.
This, plus the compactness of $\B$ in $\B_w$, implies that all the operators
$\Lp_T$, $T \in \F$, are quasi-compact with essential
spectral radius bounded by $\sigma$:  i.e., outside of any disk of radius greater than $\sigma$,
their spectra contain finitely many eigenvalues of finite multiplicity.  Moreover, for each $T \in \F$,
\begin{itemize}
  \item[(i)] the spectral radius of $\Lp_T$ is  1 and the elements of the peripheral spectrum
are measures absolutely continuous with respect to $\bmu := \lim_{n \to \infty} \frac 1n \sum_{i=0}^{n-1} \Lp_T^i 1$;
  \item[(ii)] an ergodic, invariant probability measure $\nu$ for $T$ is in $\B$ if and only if
  $\nu$ is a physical measure\footnote{Recall that a physical measure for $T$ is
  an ergodic, invariant probability measure $\nu$ such that
  $\lim_{n \to \infty} \frac 1n \sum_{i=1}^{n-1} f(T^i x) = \int f \, d\nu$ for a positive Lebesgue
  measure set of $x \in M$.}
  for $T$;
  \item[(iii)]   there exist a finite number of $q_\ell \in \mathbb{N}$ such that the spectrum of
  $\Lp$ on the unit circle is
  $\cup_\ell \{ e^{2\pi i \frac{k}{q_\ell}} : 0 \le k < q_\ell, \, k \in \mathbb{N} \}$.
  The peripheral spectrum contains no Jordan blocks.
  \item[(iv)]   Let $\Si_{\pm n, \ve}^{\bH}$ denote the $\ve$-neighborhood of $\Si_{\pm n}^\bH$,
  the singularity set for $T^{\pm n}$ (with homogeneity strips).
    Then
  for each $\nu$ in the peripheral spectrum and $n \in \mathbb{N}$,
  we have $\nu(\Si_{\pm n, \ve}^\bH) \le C_n \ve^\alpha$,
  for some constants $C_n >0$.
  \item[(v)]  If $(T \bmu)$ is ergodic, then 1 is a simple eigenvalue. If $(T^n, \bmu)$ is ergodic
  for all $n \in \N$, then 1 is the only eigenvalue of modulus 1, $(T, \bmu)$ is mixing and
  enjoys exponential decay of correlations for H\"older observables.
\end{itemize}
\end{theorem}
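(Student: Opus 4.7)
The plan is to first prove the uniform Lasota--Yorke pair \eqref{eq:uniform LY}, then deduce quasi-compactness via the standard Hennion argument using the compact embedding $\B \hookrightarrow \B_w$, and finally derive the spectral statements (i)--(v) from general Perron--Frobenius-type results for positive quasi-compact operators together with regularity features built into $\|\cdot\|_\B$. Iteration will reduce \eqref{eq:uniform LY} to the one-step inequalities
\[
|\Lp_T h|_w \le \eta\, |h|_w \qquad \text{and} \qquad \|\Lp_T h\|_\B \le \sigma\, \|h\|_\B + C\, |h|_w,
\]
with $C, \sigma, \eta$ depending only on the uniform data in \textbf{(H1)}--\textbf{(H5)}.

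For the one-step estimates I would write $\Lp_T h(\psi) = h(\psi\circ T)$ and test against admissible functions supported on stable curves. Pulling back such a curve decomposes it into a controlled number of homogeneous connected components; the weak-norm bound then follows from the uniform distortion and Jacobian control provided by \textbf{(H1)}, \textbf{(H3)} and \textbf{(H5)}. For the strong norm, the leading contribution contracts by a factor $\sigma<1$ supplied by the uniform hyperbolicity in \textbf{(H2)}, while the pieces created when stable curves are cut by singularities are controlled by a one-step complexity bound combining \textbf{(H3)} with the one-step expansion in \textbf{(H4)}; the resulting fragmentation term is absorbed into $C|h|_w$. Because every constant is drawn from the uniform data, the resulting inequalities hold uniformly over $T \in \F$. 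I expect this to be the main technical obstacle: one must arrange the cone-field, distortion, complexity, and transversality estimates so that no hidden dependence on the individual $T$ sneaks into any constant, which is precisely what the uniform hypotheses \textbf{(H1)}--\textbf{(H5)} are designed to enforce.

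Given \eqref{eq:uniform LY}, items (i)--(v) follow from standard arguments. For (i), the Ces\`aro averages $\frac{1}{n}\sum_{i<n}\Lp_T^i 1$ are bounded in $\B$ by \eqref{eq:uniform LY}; any $\B_w$-accumulation point is an invariant probability $\bmu \in \B$, so the spectral radius equals $1$, and each peripheral eigenmeasure is absolutely continuous with respect to $\bmu$ by a standard positivity argument. For (ii), an ergodic invariant probability $\nu \in \B$ has a density with respect to $\bmu$ that is bounded on its ergodic support and so governs Birkhoff averages of a positive Lebesgue-measure set of points, while conversely any physical measure must lie in the peripheral spectral projection and hence in $\B$. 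Item (iii) is the Perron--Frobenius dichotomy for positive quasi-compact operators, with Jordan blocks on the unit circle excluded by the power-boundedness in \eqref{eq:uniform LY}. Item (iv) is a direct application of the definition of $\|\cdot\|_\B$ to indicator approximations of the $\ve$-neighborhoods of $\Si_{\pm n}^{\bH}$, using the H\"older exponent $\alpha$ built into the norm. Finally, (v) follows by the usual recipe: ergodicity of $(T,\bmu)$ pins down the $1$-eigenspace as one-dimensional, ergodicity of every iterate eliminates any other unimodular eigenvalues in view of (iii), and the resulting spectral gap together with duality yields exponential decay of correlations for H\"older observables.
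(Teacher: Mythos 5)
Your high-level plan matches the paper's structure in outline, but there are two substantive gaps, both traceable to the same oversight: the constant $\eta$ in \textbf{(H5)} and \eqref{eq:uniform LY} is allowed to be strictly larger than $1$, and you treat the situation as if $\eta=1$.

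First, the reduction to one-step inequalities of the form $\|\Lp_T h\|_\B \le \sigma\|h\|_\B + C|h|_w$ with $\sigma<1$ does not go through. The paper's Proposition~\ref{prop:ly} proves genuine $n$-step inequalities, and the strong unstable estimate \eqref{eq:unstable norm} has the form $\|\Lp_T^n h\|_u \le C\eta^n\Lambda^{-\beta n}\|h\|_u + C\eta^n C_3^n\|h\|_s$ where the coefficient $C_3^n$ on $\|h\|_s$ grows exponentially in $n$. Consequently there is no one-step contraction for the full norm $\|\cdot\|_\B=\|\cdot\|_s + b\|\cdot\|_u$; one must instead fix a large iterate $N$ with $\eta^N\max\{\theta_*^{(1-\alpha)N},\Lambda^{-qN},\Lambda^{-\beta N}\}<1$ and then choose the weight $b$ small \emph{relative to $N$} so that the $bC\eta^N C_3^N\|h\|_s$ term is absorbed. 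This $N$-step-plus-weight-tuning structure is not a cosmetic detail; without it the contraction fails. (Minor: you also swap the roles of \textbf{(H2)}/\textbf{(H4)} versus \textbf{(H1)}/\textbf{(H3)} — hyperbolicity is \textbf{(H1)}, one-step expansion is \textbf{(H3)}, distortion is \textbf{(H4)}.)

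Second, and more seriously, your derivations of (i) and (iii) lean on ``power-boundedness from \eqref{eq:uniform LY},'' but \eqref{eq:uniform LY} with $\eta>1$ gives only $\|\Lp_T^n\|_\B\le C\eta^n$, which is \emph{not} power bounded and only bounds the spectral radius by $\eta$. In particular it is not automatic that the Ces\`aro averages $\frac1n\sum_{i<n}\Lp_T^i 1$ are bounded in $\B$; their norms could a priori grow like $\eta^n/n$. The paper closes this gap with a separate distributional estimate (Lemma~\ref{lem:distr}), which shows $|h(\psi)|\le C|h|_w\bigl(|\psi|_\infty + H^p_n(\psi)\bigr)$ uniformly in $n$; applying this to a putative eigenvector with $|z|>1$, using $H^p_n(\psi\circ T^n)\lesssim\Lambda^{-pn}$, forces $h=0$, so the spectral radius is exactly $1$. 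Only after this extra argument do the Ces\`aro averages stabilize, the peripheral spectrum consist of eigenvalues (not generalized eigenvectors), and Jordan blocks get excluded; the remaining Perron--Frobenius structure in (i)--(v) then goes as you describe, largely by delegating to the corresponding lemmas in \cite{demers zhang}. Without Lemma~\ref{lem:distr} or a substitute, your arguments for (i) and (iii) do not close.
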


Theorem~\ref{thm:uniform} is proved in Section~\ref{uniform}.
In Section~\ref{distance}, we define a distance $d_\F(\cdot , \cdot)$ between maps in $\F$.
Our next result shows that this distance controls the size of perturbations in the
spectra of the associated transfer operators.

\begin{theorem}
\label{thm:close}
Let $\beta >0$ be from the definition of $(\B, \| \cdot \|_\B)$ in Section~\ref{norms}.
There exists $C >0$ such that if $T_1, T_2 \in \F$ with $d_\F(T_1, T_2) \le \ve$, then
\[
||| \Lp_{T_1} - \Lp_{T_2} ||| \le C \ve^{\beta/2},
\; \; \; \mbox{where $||| \cdot |||$ is from \eqref{eq:pert small}.}
\]
\end{theorem}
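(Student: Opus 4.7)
The plan is to unpack $|||\Lp_{T_1}-\Lp_{T_2}|||$ using the dual characterization of the weak norm. By the structure of $(\B_w,|\cdot|_w)$ announced in Section~\ref{norms}, bounding $|\Lp_{T_1}h-\Lp_{T_2}h|_w$ amounts to controlling the integral $\int_W(\Lp_{T_1}h-\Lp_{T_2}h)\,\psi\,dm_W$ uniformly over admissible stable curves $W$ and test functions $\psi$ with an appropriate Hölder bound in which the exponent $\beta$ from $\|\cdot\|_\B$ plays the central role. Via the duality $(\Lp_T h)(\psi)=h(\psi\circ T)$, this reduces to estimating
\begin{equation*}
\int_{T_1^{-1}W}h\,(\psi\circ T_1)\,dm\;-\;\int_{T_2^{-1}W}h\,(\psi\circ T_2)\,dm
\end{equation*}
by $C\ve^{\beta/2}\|h\|_\B$, uniformly in $W$ and $\psi$.

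The heart of the argument is a matching construction between the smooth components of $T_1^{-1}W$ and $T_2^{-1}W$, cut out by the respective singularity sets $\cS^{T_1}_1$ and $\cS^{T_2}_1$. Since $d_\F(T_1,T_2)\le\ve$, these singularity sets are $O(\ve)$-close in Hausdorff distance and the maps themselves are $O(\ve)$-close in $C^0$ off an $\ve$-neighborhood of their singularities. I would pair each component of $T_1^{-1}W$ with a component of $T_2^{-1}W$ lying in a common smoothness domain, so that matched pairs are $C^0$-close stable curves at distance $O(\ve)$. The unmatched residue lies in a tube of width $\ve$ around the symmetric difference $\cS^{T_1}_1\triangle\cS^{T_2}_1$; by the one-sided transversality of the stable cone to the singularity curves and by Theorem~\ref{thm:uniform}(iv), the total weight of these unmatched pieces is bounded by a positive power of $\ve$.

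The two types of contributions are estimated separately. On the unmatched pieces, one applies the definition of the strong norm (testing against $C^\beta$ functions weighted by a power of the curve length) together with the growth lemma for stable curves; this yields a term of order $\ve^{\beta/2}\|h\|_\B$. On a matched pair of components, I would parametrize both curves by a common interval and decompose $(\psi\circ T_1)-(\psi\circ T_2)$ using the Hölder regularity of $\psi$ and the $C^0$-closeness of the two maps, producing a factor of $\ve^\beta$, and then transport the integral on $T_2^{-1}W$ onto $T_1^{-1}W$ via a flow-box/holonomy change-of-variable. The exponent $\beta/2$ arises from balancing this $\ve^\beta$ Hölder cost against an $\ve^{1/2}$ cost for the reparametrization of nearby stable curves separated by a transversal distance of order $\ve$.

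The main obstacle is precisely this reparametrization step. The weak norm is defined as a supremum over admissible stable curves rather than through a single reference measure, so one must carefully route the integral along $T_2^{-1}W$ through the Banach space framework anchored at $T_1^{-1}W$ while preserving admissibility, controlling the induced Jacobian, and avoiding further loss of regularity. Once this bookkeeping is executed — essentially a two-map version of the change-of-variable machinery developed in \cite{demers zhang} for a single billiard — the matched and unmatched estimates combine to give $|||\Lp_{T_1}-\Lp_{T_2}|||\le C\ve^{\beta/2}$, as claimed.
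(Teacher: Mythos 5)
Your architecture --- decompose each $T_\ell^{-1}W$ into matched and unmatched pieces, then estimate the two contributions separately --- is the architecture of the paper's proof, but two of your steps are genuine gaps rather than mere lack of detail.

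First, the origin of the exponent $\beta/2$ is not a ``balance'' between an $\ve^\beta$ H\"older cost and an $\ve^{1/2}$ reparametrization cost: no combination of two such losses yields a product of exponents. The actual mechanism is that condition \textbf{(C4)} in the definition of $d_\F$ controls the derivative difference $\|DT_1^{-1}v-DT_2^{-1}v\|$ only by $\sqrt{\ve}$, not by $\ve$. Consequently, a matched pair $U^1_j\subset T_1^{-1}W$ and $U^2_j\subset T_2^{-1}W$ is $O(\ve)$-close in $\C^0$ but only $O(\ve^{1/2})$-close in the $\C^1$-metric $d_{\W^s}$; this is precisely the content of Lemma~\ref{lem:close}(a). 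The strong unstable norm weights the difference of integrals over two stable curves at $d_{\W^s}$-distance $\ve'$ by $(\ve')^{-\beta}$, so applying it with $\ve'=O(\ve^{1/2})$ yields the factor $(\ve^{1/2})^\beta=\ve^{\beta/2}$ directly (cf.\ \eqref{eq:second close}). This step cannot be carried out without invoking \textbf{(C4)} specifically, and your proposal never engages with the precise content of \textbf{(C1)}--\textbf{(C4)}, which are the hypotheses encoded in $d_\F(T_1,T_2)\le\ve$; you would not be able to reproduce the claimed exponent from ``$C^0$-closeness off an $\ve$-neighborhood of the singularities'' alone.

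Second, you propose to bound the unmatched residue via Theorem~\ref{thm:uniform}(iv), i.e.\ the estimate $\nu(\Si^{\bH}_{\pm n,\ve})\le C_n\ve^\alpha$. That bound holds only for $\nu$ in the peripheral spectrum of $\Lp_T$, whereas here $h$ is an arbitrary element of $\B$ with $\|h\|_\B\le 1$; the estimate does not apply, and there is no reason for a general $h\in\B$ to assign small mass to small sets. The paper instead exploits that the unmatched pieces $V^\ell_k$ have images of length $O(\ve)$ and uses the strong \emph{stable} norm, which tests against $|W|^\alpha$-weighted observables and so gains $|V^\ell_k|^\alpha=O(\ve^\alpha)$ per piece; the residual sum $\sum_k |J_{V^\ell_k}T_\ell|^{1-\alpha}_{\C^0}$ is then controlled by the weakened one-step expansion \eqref{eq:weakened step1} from \textbf{(H3)} (with $\varsigma=1-\alpha>\varsigma_0$), producing an $\ve^\alpha\|h\|_s$ term that is absorbed since $\alpha\ge\beta>\beta/2$.
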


We prove Theorem~\ref{thm:close} in Section~\ref{close}.
According to Theorem~\ref{thm:kl}, an immediate consequence of Theorems~\ref{thm:uniform}
and \ref{thm:close} is the following.

\begin{cor}
\label{cor:limit theorems}
If $T_0 \in \F$ has a spectral gap, then all $T \in X_\ve(T_0) = \{ T \in \F : d_\F(T,T_0) < \ve$
have a spectral gap for $\ve$ sufficiently small.  In particular, the maps in $X_\ve$ enjoy
the following limit theorems (among others), which follow from the existence of a spectral
gap.

Fix $T \in F$ with a spectral gap. Let
$\gamma = \max \{p , 2\beta + \delta \}$ for some $\delta>0$, where $p$ and $\beta$ are from Sect.~\ref{norms},
and let $g \in \C^\gamma(M)$.  Define $S_ng = \sum_{k=0}^{n-1} g \circ T^k$.
\begin{itemize}
  \item[(a)] (Local large deviation estimate)  For any (not necessarily invariant)
  probability measure $\nu \in \B$,
  \[
  \lim_{\ve \to 0} \lim_{n \to \infty} \frac 1n \log \nu \Big( x \in M : \frac 1n S_ng(x) \in [t-\ve, t+\ve] \Big)
  = - I(t)
  \]
  where the rate function $I(t)$ is independent of $\nu \in \B$ (but may depend on $T$), and
  $t$ is in a neighborhood of the mean $\bmu(g)$.
 \item[(b)]  (Almost-sure invariance principle).  Suppose $\bmu(g)=0$ and distribute
 $(g \circ T^j)_{j \in \N}$ according to a probability measure $\nu \in \B$.  Then there
 exist $\lambda >0$, a probability space $\Omega$ with random variables $\{ X_n \}$
 satisfying $S_ng \stackrel{\mbox{dist.}}{=} X_n$, and a Brownian motion $W$ with
 variance $\varsigma^2\geq 0$ such that
 \[
 X_n = W(n) + \mathcal{O}(n^{1/2-\lambda}) \qquad \mbox{as $n\to \infty$ almost-surely
 in $\Omega$}.
 \]
\end{itemize}
\end{cor}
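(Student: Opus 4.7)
The plan is to derive the corollary as a direct application of Theorem~\ref{thm:kl} using Theorems~\ref{thm:uniform} and \ref{thm:close}, and then to invoke the standard spectral-gap consequences for the two limit theorems.

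First, persistence of the spectral gap. Theorem~\ref{thm:uniform} supplies the uniform Lasota-Yorke inequalities \eqref{eq:pert LY} for every $T \in \F$, and Theorem~\ref{thm:close} supplies the perturbative smallness bound \eqref{eq:pert small} with $\rho(\ve) = C \ve^{\beta/2}$ whenever $d_\F(T, T_0) \le \ve$. Thus the hypotheses of Theorem~\ref{thm:kl} hold for the family $\{\Lp_T : T \in X_\ve(T_0)\}$ in place of $\{\Lp_\ve\}$. Since $T_0$ has a spectral gap, one chooses $\sigma_1 < 1$ separating the simple eigenvalue $1$ of $\Lp_{T_0}$ from the rest of $\mathrm{sp}(\Lp_{T_0})$, and applies parts (1)--(3) of Theorem~\ref{thm:kl}: the spectral projection $\Pi_T^{(1)}$ is well defined, has rank one by (2), and by (3) the complementary part $\Lp_T \Pi_T^{(\sigma_1)}$ has norm decaying geometrically at rate $\sigma_1$, uniformly in $T \in X_\ve(T_0)$ for $\ve$ small enough. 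This is a spectral gap for every such $T$.

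For part (a), I would use the twisted-transfer-operator machinery. For $z \in \mathbb{C}$ near $0$, define $\Lp_{T,z} h = \Lp_T(e^{zg} h)$. The hypothesis $\gamma \ge 2\beta + \delta$ is precisely what is needed to guarantee that multiplication by the H\"older function $e^{zg}$ is a bounded operator on $\B$ with analytic dependence on $z$ (this is the standard regularity threshold for multipliers on the anisotropic norms of \cite{demers zhang}). Since $\Lp_{T,0} = \Lp_T$ has a spectral gap, Kato's analytic perturbation theory produces a simple leading eigenvalue $\lambda(z)$ with spectral projector $\Pi_z$, both analytic in $z$, and $\lambda(0)=1$. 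Assuming $g - \bmu(g)$ is not a coboundary (the degenerate case gives $I \equiv 0$), the function $\log \lambda(z)$ is real-analytic and strictly convex near $0$. For any probability measure $\nu \in \B$, one writes $\int e^{z S_n g} \, d\nu = \Lp_{T,z}^n \nu(\mathbf{1}) = \lambda(z)^n (\Pi_z \nu)(\mathbf{1}) + O((\sigma_1 + o(1))^n \| \nu \|_\B)$, so the logarithmic moment generating function per step converges to $\log \lambda(z)$ independently of $\nu$. The local large deviation estimate with rate function $I(t) = \sup_z \{zt - \log \lambda(z)\}$ then follows from the G\"artner--Ellis theorem in its local form (e.g.\ the argument used in \cite{rey}), and $I$ is independent of $\nu \in \B$ since $\nu$ affects only the subexponential prefactor.

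For part (b), the almost-sure invariance principle follows from the spectral gap by the now-standard martingale-coboundary approach. With the gap in hand, the cohomological equation $g - \bmu(g) = \psi + \chi - \chi \circ T$ is solvable by setting $\chi = \sum_{n \ge 0} (\Lp_T^n g - \bmu(g))$, convergent in $\B$ by the geometric decay on the complement of the leading eigenspace, so that $\psi$ is a reverse-martingale difference with respect to the natural filtration. One then applies the almost-sure invariance principle for martingale differences (e.g.\ the Skorokhod-type embedding used by Melbourne-Nicol in \cite{melbourne nicol, melbourne nicol 2}), which yields the approximation $X_n = W(n) + \mathcal{O}(n^{1/2 - \lambda})$ on a suitable probability space, with $\varsigma^2 = \lim_{n \to \infty} n^{-1} \bmu((S_n g)^2)$. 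The main obstacle is the first of these steps: one must check that multiplication by $e^{zg}$ really does preserve $\B$ with analytic dependence on $z$, which is where the regularity requirement $\gamma \ge \max\{p, 2\beta+\delta\}$ becomes essential; the rest is bookkeeping built on top of the gap already delivered by Theorem~\ref{thm:kl}.
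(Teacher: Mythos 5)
The paper gives no proof of this corollary at all; it simply asserts that the argument is identical to \cite[Theorem 2.6]{demers zhang} and moves on. Your outline is the standard one and is almost certainly the route taken there: persistence of the gap from Theorem~\ref{thm:kl} via Theorems~\ref{thm:uniform} and \ref{thm:close}, then Nagaev--Guivarc'h style analytic perturbation of the twisted operator $\Lp_{T,z}$ for the local large deviation estimate, with the initial measure $\nu \in \B$ affecting only the prefactor $(\Pi_z\nu)(\mathbf{1})$ and hence not the rate function. That part is correct and in the right spirit, and you are right that the regularity threshold $\gamma > 2\beta$ (together with $\gamma \ge p$ for the test-function side) is exactly what is needed for $h \mapsto e^{zg}h$ to be a bounded analytic family on $\B$: the strong unstable norm compares H\"older moduli on nearby stable curves, so the multiplier must be H\"older of exponent strictly larger than $2\beta$ to stay in the space, mirroring the embedding $\C^\gamma(M) \hookrightarrow \B$ for $\gamma > 2\beta$ recorded in Section~\ref{recall property}.

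One point in your sketch of (b) needs more care than you give it. You define $\chi = \sum_{n \ge 0}(\Lp_T^n g - \bmu(g))$ and treat it as a function solving a cohomological equation, but the series converges only in $\B$, which is a space of distributions built as a completion of $\C^1(M)$ under the weak and strong norms; an element of $\B$ is not a priori an $L^2(\bmu)$ or even measurable function, so one cannot directly form $\chi \circ T$ or speak of a martingale difference without an intermediate step (disintegration along unstable curves, or passing to a quotient on which the distributions become genuine densities). This is why the clean spectral-gap proofs of the ASIP avoid the naive coboundary and instead work directly with the characteristic-function expansion furnished by the perturbed eigenvalue $\lambda(it)$ (the approach of Gou\"ezel's spectral ASIP), or verify the conditions of a strong-approximation theorem in terms of the spectral data alone. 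There is also a minor sign slip: with $\chi = \sum_{n\ge 1}\Lp_T^n\tilde g$ one gets $\tilde g = \psi + \chi\circ T - \chi$, not $\psi + \chi - \chi\circ T$. Neither issue is fatal to the strategy, and both are implicitly handled by the citation in the paper, but if you were writing this out you would need to address the distributional nature of $\chi$ explicitly.
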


\noindent
The proof of the corollary is the same as that of \cite[Theorem 2.6]{demers zhang} and will
not be repeated here.


\subsection{Applications to concrete classes of deterministic perturbations}
\label{concrete}

In this section we describe precisely several types of perturbations of the Lorentz gas
which fall under the abstract framework we have outlined above.
In light of Theorems~\ref{thm:uniform} and \ref{thm:close}, it suffices to check two things
for each class of perturbations we will introduce:
(1) {\bf (H1)}-{\bf (H5)} hold uniformly in each class; (2) the perturbations are small
in the sense of the distance $d_\F( \cdot , \cdot)$.

\bigskip
\noindent
{\bf A.  Movements and Deformations of Scatterers.} \\
We fix the phase space $M = \cup_{i=1}^d I_i \times [-\frac{\pi}{2}, \frac{\pi}{2}]$
associated with a billiard map corresponding to a periodic Lorentz gas with $d$ scatterers
as described above.  We assume that the billiard particle
moves along straight lines and undergoes elastic reflections at the boundaries.

For given $I_1, \ldots, I_d$, we use the notation $Q = Q(\{ \Gamma_i \}_{i=1}^d ; \{ I_i \}_{i=1}^d)$ to denote the configuration of
scatterers $\Gamma_1, \ldots, \Gamma_d$ placed on the billiard table
such that $|\partial \Gamma_i| = \ell(I_i)$, $i = 1, \ldots, d$.
Since we have fixed $I_1, \ldots, I_d$, $M$ remains the same
for all configurations $Q$ that we consider.
For each such configuration, we define
\[
\tau_{\min}(Q) =
\inf \{ \tau(x) : \tau(x) \mbox{ is defined for the configuration } Q \} .
\]
Similarly, $\K_{\min}(Q)$ and $\K_{\max}(Q)$ denote the minimum and maximum curvatures
respectively of the $\Gamma_i$ in the configuration $Q$.  The constant
$E_{\max}(Q)$ denotes the maximum $C^3$ norm of the $\partial \Gamma_i$ in $Q$.

For each fixed $\tau_*, \K_*, E_* >0$, define $\Q_1(\tau_*, \K_*, E_*)$ to be the collection
of all configurations $Q$ such that $\tau_{\min}(Q) \ge \tau_*$,
$\K_* \le \K_{\min}(Q) \le K_{\max}(Q) \le \K_*^{-1}$, and $E_{\max}(Q) \le E_*$.
The horizon for $Q \in \Q_1(\tau_*, \K_*, E_*)$ is allowed to be finite or infinite.
Let $\F_1(\tau_*, \K_*, E_*)$ be the corresponding set of billiard maps induced by the
configurations in $\Q_1$.  It follows from \cite{demers zhang} that
for any $T \in \F_1(\tau_*, \K_*, E_*)$, $\Lp_T$ has a spectral gap in $\B$. 
We prove the following theorems in Section~\ref{perts}.

\begin{theorem}
\label{thm:F1}
Fix $I_1, \ldots, I_d$ and let $\tau_*, \K_*, E_* >0$.  The family
$\F_1(\tau_*, \K_*, E_*)$ satisfies {\bf (H1)}-{\bf (H5)} with uniform constants
depending only on $\tau_*$, $\K_*$ and $E_*$.  As a consequence
of Theorem~\ref{thm:uniform}, $\Lp_T$ is quasi-compact as an operator on $\B$
for each $T \in \F_1(\tau_*, \K_*, E_*)$ with uniform bounds on its essential
spectral radius.
\end{theorem}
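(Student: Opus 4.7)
The plan is to verify that every $T$ coming from a configuration $Q \in \Q_1(\tau_*, \K_*, E_*)$ satisfies hypotheses {\bf (H1)}--{\bf (H5)} of Section~\ref{class of maps} with constants depending only on $\tau_*$, $\K_*$, and $E_*$, and then to invoke Theorem~\ref{thm:uniform} directly. Since $M$ is fixed once the arclengths $\ell(I_i)$ are fixed, the only data that vary between elements of $\F_1$ are intrinsic geometric invariants of the scatterers, and the three parameters $\tau_*, \K_*, E_*$ were introduced precisely to control these invariants. We would organize the verification by recalling, for each hypothesis, the corresponding classical dispersing billiard estimate and identifying where each constant enters; the task is then essentially bookkeeping, except for {\bf (H5)}.

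For the hyperbolicity hypothesis we use the standard invariant unstable and stable cones of dispersing billiards; the expansion factor along a postcollisional unstable tangent vector is bounded below in terms of $1 + 2\K_* \tau_*/\cos\vf$, which together with the homogeneity strip decomposition gives a uniform hyperbolicity constant $\Lambda > 1$ depending only on $\tau_*$ and $\K_*$. The distortion and curvature-control bounds for iterates of unstable curves use only two-jet information on $\partial \Gamma_i$ and the fact that $\tau$ stays bounded away from zero; these are controlled by $E_*$, $\K_*^{\pm 1}$, and $\tau_*$. The geometric hypotheses on singularity curves reduce to transversality and cone-alignment statements for $\Si_{\pm n}^{\bH}$, which follow once hyperbolicity is uniform and the curves are $C^2$ with bounds depending only on $E_*$ and $\K_*$. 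H\"older continuity of the unstable holonomy and of the unstable Jacobians then depends only on $\Lambda$ and the distortion constants, and hence is uniform across $\F_1$.

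The main obstacle is the one-step expansion estimate underlying {\bf (H5)}, which is the delicate uniform version of Chernov's growth lemma, coupling hyperbolicity to the combinatorial complexity of the singularity set. What must be shown is that there exist $n_0 \in \mathbb{N}$ and $\theta < 1$, depending only on $(\tau_*, \K_*, E_*)$, such that for every sufficiently short unstable curve $W$ and every $T \in \F_1$ the sum over connected components $W_i$ of $T^{-n_0}W$ of reciprocal expansion factors satisfies $\sum_i 1/\Lambda_i \le \theta$. For this we would use that the number of singularity curves meeting a short unstable curve in one step is bounded uniformly in terms of $d$, $\K_*$, and $\tau_*$ (there are only $d$ scatterers, and the collision number per unit flight time is bounded by $1/\tau_*$), while the additional pieces introduced by the homogeneity strip partition contribute the standard logarithmic correction that is dominated by any definite power of $\Lambda^{-n_0}$. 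Choosing $n_0$ in terms of the ratio of the uniform expansion rate to this uniform complexity bound then produces the required uniform $\theta$. Once {\bf (H5)} is established, Theorem~\ref{thm:uniform} applies with constants depending only on $(\tau_*, \K_*, E_*)$, yielding the uniform Lasota--Yorke inequality and, through the compact embedding $\B \hookrightarrow \B_w$, uniform quasi-compactness with essential spectral radius at most $\sigma < 1$.
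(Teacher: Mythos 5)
Your overall strategy matches the paper's: verify {\bf (H1)}--{\bf (H5)} with constants controlled by $\tau_*, \K_*, E_*$, then cite Theorem~\ref{thm:uniform}. The verification of {\bf (H1)}, {\bf (H2)}, {\bf (H4)} via classical dispersing-billiard estimates (cone invariance, bounded curvature of iterated stable curves, distortion bounds depending only on $k_0$, $C_e$, $\Lambda$) is in the right spirit and is what the paper does, citing \cite[Sections 4.4, 4.5, 5.10, 5.27]{chernov book}.

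However, there is a concrete mislabeling that affects the structure of your argument. You identify {\bf (H5)} as the ``one-step expansion'' / growth-lemma estimate and call it the main obstacle, but in this paper {\bf (H5)} is the Jacobian control condition $(J_\mu T(x))^{-1} \le \eta$ for $\eta < \min\{\Lambda^\beta, \Lambda^q, \theta_*^{\alpha-1}\}$. For maps in $\F_1$ this is trivially satisfied with $\eta = 1$, because every $T \in \F_1$ is a genuine billiard map preserving $\mu = c\cos\vf\,dr\,d\vf$, so $J_\mu T \equiv 1$; the paper disposes of {\bf (H5)} in one line. The one-step expansion you describe is actually {\bf (H3)}.

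On {\bf (H3)} itself, your sketch would also need revision. You argue via a complexity bound on the number of singularity pieces over $n_0$ iterates, which is essentially the finite-horizon version of the growth lemma. But $\Q_1(\tau_*, \K_*, E_*)$ explicitly permits infinite horizon configurations, where a short stable curve can be cut into countably many pieces by $T^{-1}$ (indexed by free flight time $n$ and homogeneity strip $k$), and the complexity count is unbounded. The correct argument is the one-step estimate \eqref{eq:step1} in the adapted norm, in which the infinite sum over pieces converges because the contraction on a piece with index $(n,k)$ is $O((nk^2)^{-1})$; this is \cite[Lemma 5.56]{chernov book} and the corresponding \eqref{eq:weakened step1} is \cite[Sublemma 3.5]{demers zhang} with $\varsigma_0 = 1/6$. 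Your argument as written does not cover the infinite horizon case, so it would not suffice for the full class $\F_1$.
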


We fix an initial configuration of scatterers $Q_0 \in \Q_1(\tau_*, \K_*, E_*)$ and consider
configurations $Q$ which alter each $\partial \Gamma_i$ in $Q_0$ to a curve
$\partial \tilde{\Gamma}_i$ having the same arclength as $\partial \Gamma_i$.
We consider each $\partial \Gamma_i$ as a parametrized curve
$u_i:I_i \to M$ and each $\partial \tilde{\Gamma}_i$ as parametrized by $\tilde{u}_i$.
Define $\Delta(Q, Q_0) = \sum_{i=1}^d |u_i - \tilde{u}_i|_{C^2(I_i,M)}$.

\begin{theorem}
\label{thm:deform}
Choose $\gamma \le  \min \{ \tau_*/2, \K_*/2 \}$ and let $\F_A(Q_0, E_*; \gamma)$
be the set of all billiard maps corresponding to
configurations $Q$ such that $\Delta(Q,Q_0) \le \gamma$ and $E_{\max}(Q) \le E_*$.

Then $\F_A(Q_0, E_*; \gamma) \subset \F_1(\tau_*/2, \K_*/2, E_*)$
and $d_\F(T_1, T_2) \le C|\gamma|^{2/15}$ for any $T_1, T_2 \in \F_A(Q_0, E_*; \gamma)$.
If all $T_i \in \F_A(Q_0, E_*; \gamma)$ have uniformly bounded finite horizon,
then $d_\F(T_1, T_2) \le C|\gamma|^{1/3}$.

As a consequence, the eigenvalues outside a disk of radius $\sigma <1$ and the corresponding
spectral projectors of $\Lp_T$
vary H\"older continuously for all $T \in \F_A(Q_0, E_*; \gamma)$
and all $\gamma$ sufficiently small.
\end{theorem}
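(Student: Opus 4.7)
The plan is to verify, in order, the inclusion $\F_A(Q_0,E_*;\gamma)\subset \F_1(\tau_*/2,\K_*/2,E_*)$; then the geometric distance bound $d_\F(T_1,T_2)\le C\gamma^{2/15}$ (resp.\ $C\gamma^{1/3}$ in the finite-horizon case); and finally to feed these into Theorem~\ref{thm:close} and invoke Theorem~\ref{thm:kl}.

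\textbf{Inclusion.} The constraint $\Delta(Q,Q_0)\le\gamma$ bounds $|\tilde u_i - u_i|_{C^2(I_i,M)}$ by $\gamma$. In particular $|\K_{\min}(Q)-\K_{\min}(Q_0)|\le\gamma$, and likewise for $\K_{\max}$, so $\gamma\le \K_*/2$ yields $\K_*/2 \le \K_{\min}(Q)\le \K_{\max}(Q)\le 2\K_*^{-1}$. A $C^0$ displacement of order $\gamma$ of the scatterer boundaries alters the free-flight function by at most $O(\gamma)$, so $\tau_{\min}(Q)\ge \tau_*-O(\gamma)\ge \tau_*/2$ once $\gamma\le \tau_*/2$ (up to a harmless adjustment of absolute constants). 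Since $E_{\max}(Q)\le E_*$ by assumption, $Q\in\Q_1(\tau_*/2,\K_*/2,E_*)$, and Theorem~\ref{thm:F1} then supplies {\bf (H1)}--{\bf (H5)} with uniform constants on $\F_A$.

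\textbf{Distance estimate.} I expect this to be the main obstacle. For $T_1,T_2\in\F_A$ corresponding to configurations $Q_1,Q_2$, I would compare the two maps pointwise away from their singularity sets. Given $x=(r,\vf)\in M$ whose forward trajectories under both tables strike corresponding scatterers and stay a distance at least $\eps$ from $\Si^\bH_{\pm 1}$, writing the collision condition in each table and plugging $|u_i-\tilde u_i|_{C^1}\le\gamma$ into the implicit function theorem yields $|T_1 x - T_2 x|\le C\gamma/\sin\vf_+$, with analogous bounds on derivatives. When integrated against the weighted H\"older test functions on stable curves appearing in the definition of $d_\F$ from Section~\ref{distance}, this contributes an amount of order $\gamma\eps^{-\kappa}$ for an exponent $\kappa$ depending on the constants $p,\beta$ of Section~\ref{norms}, while the $\eps$-tube around $\Si^\bH_{\pm 1}$ has weak-norm mass $O(\eps^\alpha)$ by the growth lemma built into the verification of {\bf (H1)}--{\bf (H5)}. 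Optimizing in $\eps$, and accounting for the logarithmic loss due to the unboundedly many homogeneity strips crossed in the infinite-horizon case, produces the exponent $2/15$; in the finite-horizon case the free flight and its derivatives are uniformly bounded, the logarithmic loss disappears, and the same optimization yields $1/3$.

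\textbf{Spectral conclusion.} By the inclusion step, Theorem~\ref{thm:uniform} gives the uniform Lasota-Yorke bound \eqref{eq:uniform LY} for every $T\in\F_A$, hence quasi-compactness with essential spectral radius at most $\sigma<1$. Feeding the distance estimate into Theorem~\ref{thm:close} yields $|||\Lp_{T_1}-\Lp_{T_2}|||\le C\gamma^{\beta/15}$ (resp.\ $C\gamma^{\beta/6}$), so \eqref{eq:pert small} holds with $\rho(\gamma)=C\gamma^{\beta/15}$ at any base point $T_0\in\F_A$. Theorem~\ref{thm:kl} then gives the H\"older continuity in $\gamma$ of isolated eigenvalues outside the disk of radius $\sigma$ and of their spectral projectors, completing the proof.
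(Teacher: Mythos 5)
Your inclusion step is essentially correct and matches the paper's approach (compare curvatures via $\|\tilde u_i'' - u_i''\| \le \gamma$ and the free flight via $C^0$ displacement).

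The distance estimate, however, is built on a misreading of the definition of $d_\F(\cdot,\cdot)$. In Section~\ref{distance}, $d_\F(T_1,T_2)\le\ve$ is a \emph{pointwise} statement: it holds iff conditions {\bf (C1)}--{\bf (C4)} are satisfied at every $x\notin N_\ve(\Si^1_{-1}\cup\Si^2_{-1})$, with no test functions or integration involved. The H\"older test functions on stable curves enter only in the proof of Theorem~\ref{thm:close}, which \emph{converts} a $d_\F$ bound into an $|||\Lp_{T_1}-\Lp_{T_2}|||$ bound. So ``integrating against the weighted H\"older test functions\ldots appearing in the definition of $d_\F$'' and then ``optimizing in $\eps$'' is not the mechanism, and the purported ``logarithmic loss due to the unboundedly many homogeneity strips'' is not what produces the exponent $2/15$. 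Consequently your argument neither verifies {\bf (C1)}--{\bf (C4)} nor explains where $2/15$ actually comes from, and the pointwise bound $|T_1 x - T_2 x|\le C\gamma/\sin\vf_+$ you posit is not what the geometry gives.

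The paper's argument is geometric and pointwise. For {\bf (C1)} it decomposes the two backward trajectories into a parallel displacement of size $\gamma$ (giving $O(\sqrt{\gamma/\K_*})$ spread on a convex scatterer) and an angular displacement of size $\gamma$ (giving $O(\sqrt{\gamma\tau})$); in the infinite-horizon case one thresholds at $\hat\tau=\gamma^{-1/3}$ so that either the spread is $O(\gamma^{1/3})$ or $x$ lies within $C\gamma^{1/3}$ of $\Si_{-1}^{T_0}\cup\Si_{-1}^{T_1}$, giving $d(T_0^{-1}x,T_1^{-1}x)\le C\gamma^{1/3}$ off the $\ve$-tube. The exponent $2/15$ then comes from {\bf (C4)}: outside $N_\ve$ with $\ve=\gamma^a$ one has $\cos\vf(T_i^{-1}x)\gtrsim\ve^{1/2}$ and $\tau\lesssim\ve^{-1}$, and the explicit formula for $DT^{-1}$ yields $\|DT_0^{-1}(x)-DT_1^{-1}(x)\|\lesssim\gamma^{1/3-2a}$, so matching this to the required $\sqrt{\ve}=\gamma^{a/2}$ forces $a\le 2/15$. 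In the finite-horizon case $\tau$ is bounded so a factor of $\ve^{-1}$ is saved and $d(T_0^{-1}x,T_1^{-1}x)\lesssim\gamma^{1/2}$, which gives $a=1/3$. You would need to rebuild your middle step along these lines; the spectral conclusion you give at the end is then fine.
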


\begin{remark}
\label{rem:arclength}
 (a) A remarkable aspect of this result is that it allows us to move configurations from
finite to infinite horizon without interrupting H\"older continuity of the statistical properties such as
the rate of decay of correlations and the variance in the CLT, among others.

\smallskip
\noindent
(b)  The requirement that all deformations of the initial configuration $Q_0$
maintain the same arclength
can be relaxed.  The purpose of this requirement is to define
the corresponding transfer operators on fixed spaces $\B$ and $\B_w$.  If a scatterer $\Gamma_i$
is deformed into $\Gamma_i'$ with a slight change in arclength, we can reparametrize
$\Gamma_i'$ (no longer according to arclength) using the same interval $I_i$ as for
$\Gamma_i$.  This will change the derivatives of maps in the class $\F_B(Q_0, E_*; \gamma)$
slightly, but since properties {\bf (H1)}-{\bf (H5)} have some leeway built into the uniform constants,
for small enough reparametrizations the same properties will hold with slightly weakened constants.
\end{remark}

\smallskip
\noindent
{\bf B.  Billiards Under Small External Forces with Kicks and Slips.} \\
As in part A, we fix $\tau_*, \K_*$ and $E_*$ and choose a fixed
$Q_0 \in \Q_1(\tau_*, \K_*, E_*)$.  In this section, we consider the dynamics of the
billiard map on the table $Q_0$, but subject to external forces both during flight
and at collisions.

Let $\bq=(x,y)$ be the position of a particle in a billiard table $Q_0$ and
$\p$ be the velocity vector. For a $C^2$ stationary external force,
$\mathbf{F}: \mathbb{T}^2 \times \mathbb{R}^2 \to \mathbb R^2$, the perturbed billiard flow	
$\Phi^t$ satisfies the following differential equation between collisions:
\begin{equation}\label{flowf}
    \frac{d \bq}{dt} =\p(t) , \qquad
    \frac{d \p}{dt} = \mathbf{F}(\bq, \p) .
\end{equation}
At collision, the trajectory experiences possibly nonelastic reflections with slipping along
the boundary:
\begin{equation}
\label{reflectiong}
(\bq^+(t_i), \p^+(t_i)) = (\bq^-(t_i), \cR  \p^-(t_i)) +\mathbf G(\bq^-(t_i), \p^-(t_i))
\end{equation}
 where $\cR \p^-(t_i)= \p^-(t_i)+2(n(\bq^-)\cdot \p^{-})n(\bq^-))$ is the usual reflection operator,
 $n(\bq)$ is the unit normal vector to the billiard wall $\partial Q_0$ at
 $\bq$ pointing inside the table $Q_0$, and $\bq^-(t_i), \p^-(t_i)$, $\bq^+(t_i)$ and
 $\p^+(t_i)$ refer to the incoming and
 outgoing position and velocity vectors, respectively.
 $\mathbf G$ is an external force acting on the
 incoming trajectories.
 Note that we allow $\bf G$ to change both the position and velocity of the particle at the moment
 of collision.  The change in velocity can be thought of as a kick or twist while a change in
 position can model a slip along the boundary at collision.

In \cite{Ch01, Ch08}, Chernov considered billiards under small external forces $\mathbf{F}$
with $\mathbf G=0$, and $\mathbf{F}$ to be stationary. In \cite{Z09} a twist force was considered assuming $\mathbf{F}=0$ and $\bG$
depending on and affecting only the velocity, not the position.
Here we consider a combination of these two cases for systems under more general forces $\mathbf{F}$ and $\mathbf{G}$.
We make four assumptions, combining those in \cite{Ch01, Z09}. \\

 \noindent\textbf{(A1)} (\textbf{Invariant space}) \emph{Assume the dynamics preserve a smooth function $\E(\bq,\p)$. Its level surface $\Omega_c:=\E^{-1}(c)$, for any $c>0$, is a compact 3-d manifold such that $\|\p\|>0$ on $\Omega_c$ and for each $\bq\in Q$ and $\p\in S^1$ the ray $\{(\bq, t\p), t>0\}$ intersects the manifold $\Omega_c$ in exactly one point.} \\

Assumption ({\bf{A1}}) specifies an additional integral of motion, so that we only consider restricted systems on a compact phase space.  In particular, ({\bf{A1}}) implies that the speed $p=\|\p\|$ of the billiard along any typical trajectory at time $t$ satisfies
$$0<p_{\min}\leq p(t) \leq p_{\max}<\infty$$
for some constants $p_{\min}\leq p_{\max}$. Under this assumption the particle will not become
overheated, and its speed will remain bounded. For any phase point $\bx=(\bq, \p) \in \Omega$
for the flow, let $\tau(\bx)$ be the length of the  trajectory between
$\bx$ and its next non-tangential collision. \\

\noindent(\textbf{A2}) (\textbf{Finite horizon}) \emph{There exist $\tau_{\max}>\tau_{\min}>0$ such that free paths between successive reflections are uniformly bounded,
$\tau_*/2 \le \tau_{\min} \le \tau( \bx ) \le \tau_{\max} \le \tau_*^{-1}$, $\forall  \bx \in \Omega$.
Since $Q_0 \in \Q_1(\tau_*, \K_*, E_*)$, the curvature $\cK(r)$ of the boundary
is also uniformly bounded for all $r \in \partial Q_0$.} \\

\noindent(\textbf{A3}) (\textbf{Smallness of the perturbation}). \emph{We assume there exists
$\eps_1>0$ small enough, such that
$$\|\mathbf{F}\|_{C^1}<\eps_1, \|\mathbf G\|_{C^1}<\eps_1 .$$ }

Let $\bv=(\cos\theta, \sin\theta)$ denote the unit velocity vector with $\theta\in [0,2\pi]$,
and  $\cM$ be a level surface $\Omega_c$ with coordinates
$(\bq,\theta)$, for some fixed $c>0$.
Denote $T_{\bF,\bG}: M\to M$ as the  billiard map associated to the flow on
$\cM$, where $M$ is the collision space containing all post-collision vectors
based at the boundary of the billiard table $Q_0$.\\

\noindent(\textbf{A4})  \emph{We assume both forces $\bF$ and $\bG$ are stationary
and that $\bG$ preserves tangential collisions.
In addition, we assume that the singularity set of $T^{-1}_{\bF, \bG}$ is the same as that
of $T^{-1}_{\bF, \mathbf{0}}$.\footnote{The assumption on the singularity set
of $T^{-1}_{\bF, \bG}$ is not essential to our approach, but is made to simplify the proofs
in Section~\ref{kick},  since the paper is already quite long and we include a number of
distinct applications.}} \\

The case $\mathbf{F} = \mathbf{G}=0$ corresponds to the classical billiard dynamics. It
preserves the kinetic energy $\E =\frac{1}{2} \|\p\|^2$.
We denote by $\F_B(Q_0, \tau_*, \eps_1)$ the class of all perturbed billiard maps
defined by the dynamics \eqref{flowf} and \eqref{reflectiong} under
forces $\mathbf{F}$ and $\mathbf{G}$, satisfying assumptions {\bf (A1)}-{\bf (A4)}.

\begin{theorem}
\label{thm:C1}
For any $T\in \cF(Q_0, \tau_*, \eps_1)$, the perturbed system
$T$ satisfies {\bf (H1)}-{\bf (H5)} with uniform constants
depending only on $\eps_1$, $\tau_*$, $\K_*$ and $E_*$.
\end{theorem}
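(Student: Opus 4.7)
The plan is to verify hypotheses {\bf (H1)}-{\bf (H5)} of Section~\ref{class of maps} for each $T \in \cF(Q_0,\tau_*,\eps_1)$ by showing that the standard structural properties of dispersing billiards (uniform hyperbolicity, bounded distortion, control of the singularity set, and the one-step expansion / complexity bound) persist under the small $C^2$ perturbations $\bF$ and $\bG$, with constants depending only on $\eps_1$, $\tau_*$, $\K_*$ and $E_*$. The key inputs are the unperturbed estimates for $Q_0 \in \Q_1(\tau_*,\K_*,E_*)$ already established in Theorem~\ref{thm:F1}, together with the two separate perturbative analyses of \cite{Ch01,Ch08} (the case $\bG \equiv 0$) and \cite{Z09} (the case $\bF \equiv 0$); the work consists in merging them and extracting constants that are uniform across the whole class.

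First I would fix the energy surface $\Omega_c$ furnished by assumption ({\bf A1}) and work in the adapted coordinates $(\bq,\theta)$. The hyperbolicity part of {\bf (H1)} comes from the existence of an invariant unstable cone field: between collisions the linearized flow of \eqref{flowf} differs from the free flow by an $\mathcal{O}(\eps_1)$ term uniformly on $\Omega_c$ over flight times bounded by $\tau_{\max}\le \tau_*^{-1}$ via ({\bf A2}); at each reflection, the positive curvature lower bound $\K_*$ yields strict expansion of unstable vectors by a factor at least $1+c\tau_*\K_*$, and the $\eps_1$-small twist/slip induced by $\bG$ can be absorbed into this margin once $\eps_1$ is chosen small enough. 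The derivative $DT_{\bF,\bG}$ factors as the collision derivative composed with the flight-time derivative; since both are $C^2$ with norms controlled by $\tau_*^{-1}$, $\K_*^{-1}$, $E_*$ and $\|\bF\|_{C^2}+\|\bG\|_{C^2}<\eps_1$ (assumption ({\bf A3})), the H\"older distortion bounds along homogeneous unstable curves required for {\bf (H2)} follow from the standard second-derivative computation in stable–unstable coordinates (cf.\ \cite{Ch01,Z09}).

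The main obstacle is to verify the singularity structure and the one-step expansion / growth bounds {\bf (H3)}-{\bf (H5)} \emph{uniformly} in $T\in\cF(Q_0,\tau_*,\eps_1)$. Here assumption ({\bf A4}) is decisive: the singularity set of $T^{-1}_{\bF,\bG}$ coincides with that of $T^{-1}_{\bF,\mathbf{0}}$, so no new singular curves are created by the collision perturbation $\bG$, and the partition of $M$ into smooth components of $T^{-1}$ together with the homogeneity strips is combinatorially identical to the purely force-perturbed case of \cite{Ch01}. Consequently the one-step expansion sum $\sum_j \Lambda_j^{-1}$ depends on $(\bF,\bG)$ only through the expansion factors $\Lambda_j$, which differ from their classical values by $\mathcal{O}(\eps_1)$; choosing $\eps_1$ sufficiently small (depending only on $\tau_*,\K_*,E_*$) keeps the sum strictly below one, uniformly across the class. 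Iterating with Chernov's growth lemma, as adapted in \cite{Ch01,Z09}, then gives {\bf (H5)} with a uniform complexity exponent, completing the verification of all five hypotheses and delivering Theorem~\ref{thm:C1} through Theorem~\ref{thm:uniform}.
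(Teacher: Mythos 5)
Your high-level strategy—verify \textbf{(H1)}--\textbf{(H5)} by treating $T_{\bF,\bG}$ as a small perturbation, absorbing $\mathcal{O}(\eps_1)$ corrections into the hyperbolicity and growth margins, and using \textbf{(A4)} to keep the singularity combinatorics identical to the $\bG\equiv\mathbf{0}$ case—is the same as the paper's. But the proposal has two substantive gaps and one genuine misidentification.

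First, you assume that the perturbative analyses of \cite{Ch01,Ch08} (for $\bF$) and \cite{Z09} (for $\bG$) supply everything needed, so the job is to ``merge'' them. That is not quite right. The paper's proof must compute the explicit differential $DT_{\bF}$ (the $2\times 2$ matrix in \eqref{DTf}), and it states expressly that this computation is \emph{not} contained in \cite{Ch01}. Deriving it requires tracking the evolution of $(w,\dot w)$ under \eqref{kappadot}--\eqref{dphids} and proving the a priori bound of Lemma~\ref{wtaubound} to control the linear map $D_x$ of \eqref{Dx}. Only then can one write $DT_{\bF,\bG}$ as the factorization \eqref{DTepsg}—a near-identity matrix built from the $g^i_j$'s composed with $DT_{\bF}$—which is what drives every subsequent estimate (cone invariance in Proposition~\ref{cones}, the determinant bound \eqref{differential}, the curvature bound \eqref{evodphi}, the Jacobian relation \eqref{eq:jacs}). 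Your sketch would cite its way past precisely this missing ingredient.

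Second, you misread what \textbf{(H5)} asks for. In this paper \textbf{(H5)} is the Jacobian control $(J_\mu T)^{-1}\le \eta$, which the paper obtains in one line from \eqref{differential}; it is not a ``complexity exponent,'' and the growth lemma you invoke for it is actually what \textbf{(H3)} (one-step expansion) needs. Relatedly, your account of \textbf{(H2)} is off: \textbf{(H2)} is the \emph{invariance of a family of stable/unstable curves with uniformly bounded curvature} (the paper proves this via the curvature-recovery estimate of Proposition~\ref{curvbd}), whereas the ``H\"older distortion bounds along homogeneous curves'' you assign to \textbf{(H2)} are in fact the content of \textbf{(H4)}, which the paper verifies separately (Lemma~\ref{distorbd} and the $J_\mu T$ estimate following \eqref{eq:det formula}). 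A correct verification must keep these hypotheses straight. Finally, a small slip: \textbf{(A3)} bounds only the $C^1$-norms of $\bF$ and $\bG$ by $\eps_1$; you write $C^2$-smallness, which is not assumed and is not needed.
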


\begin{theorem}
\label{thm:C2}
Within the class $\cF_B(Q_0, \tau_*, \eps_1)$,
the change of either the force $\bF$ or $\bG$ by  a small amount
$\delta$ yields a perturbation of size $\mathcal{O}(|\delta|^{1/2})$
in the distance $d_\F(\cdot, \cdot)$. 

As a consequence, the spectral gap enjoyed by the classical billiard $T_{\mathbf{0}, \mathbf{0}}$
persists for all $T_{\bF, \bG} \in \cF(Q_0, \tau_*, \eps_1)$ 
for $\ve_1$ sufficiently small so that we may apply the limit theorems of
Corollary~\ref{cor:limit theorems} to any such $T_{\bF, \bG}$.  
\end{theorem}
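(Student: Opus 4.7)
The plan is to verify the smallness conditions \textbf{(C1)}--\textbf{(C4)} of Section~3 for any pair of maps $T_1 = T_{\bF_1, \bG_1}$ and $T_2 = T_{\bF_2, \bG_2}$ in $\cF_B(Q_0, \tau_*, \eps_1)$ whose defining forces satisfy $\|\bF_1 - \bF_2\|_{C^1} + \|\bG_1 - \bG_2\|_{C^1} \le \delta$, read off the bound $d_\F(T_1,T_2) \le C \delta^{1/2}$, and then invoke Theorem~\ref{thm:close} to conclude $||| \Lp_{T_1} - \Lp_{T_2} ||| \le C \delta^{\beta/4}$. Since the classical billiard $T_{\mathbf{0},\mathbf{0}}$ has a spectral gap in $\B$ by \cite{demers zhang}, the Keller--Liverani Theorem~\ref{thm:kl} will then transfer this spectral gap to every $T_{\bF,\bG} \in \cF_B(Q_0, \tau_*, \eps_1)$ provided $\eps_1$ is small enough, and Corollary~\ref{cor:limit theorems} will supply the limit theorems claimed in the statement.

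The first concrete step is standard ODE comparison. By (A1)--(A3) the flow $\Phi^t$ lives on a compact three-dimensional invariant manifold $\Omega_c$ on which the vector field generated by $\bF_j$ is uniformly $C^2$ in $j$, and by (A2) the free flight time is bounded by $\tau_{\max}$. A Gronwall argument applied to \eqref{flowf} and to its variational equation on the interval $[0, \tau_{\max}]$ then yields
\[
|\Phi^t_1(\bx) - \Phi^t_2(\bx)| + \|D\Phi^t_1(\bx) - D\Phi^t_2(\bx)\| \le C \delta.
\]
The reflection law \eqref{reflectiong} is an explicit $C^1$ function of the incoming data and of $\bG$; since $\|\bG_1 - \bG_2\|_{C^1} \le \delta$, it contributes an additional $O(\delta)$ discrepancy to both the post-collision point and its derivative. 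Assumption (A4) is crucial at this stage: it forces the singularity sets of $T_1^{-1}$ and $T_2^{-1}$ to coincide with that of $T_{\bF_1,\mathbf{0}}^{-1}$, so that $T_1$ and $T_2$ are defined on a common domain, and they are $O(\delta)$-close in $C^1$ on any region bounded away from their shared singularity set.

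The main obstacle is to transfer this $C^1$ closeness of the maps into the distance $d_\F$, whose definition degenerates as one approaches grazing collisions and the boundaries of the homogeneity strips $\bH$. Following the blueprint of Theorem~\ref{thm:close}, I would fix an admissible stable curve $W$ for $T_2$ together with a weak-norm test function $\psi$, and split $W$ into a ``good'' piece $W_\eps$ at distance at least $\eps$ from the shared singularity set and a ``bad'' remainder whose $\bmu$-measure is controlled by part~(iv) of Theorem~\ref{thm:uniform}. On $W_\eps$ the $C^1$ closeness of the maps lets us realize $T_1(W_\eps)$ as a $C^1$-graph of slope $O(\delta/\eps)$ over $T_2(W_\eps)$ and compare the push-forward densities with discrepancy $O(\delta/\eps)$; on the bad piece one uses only $L^\infty$ boundedness. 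Optimizing by choosing $\eps = \delta^{1/2}$ balances the two contributions and produces $d_\F(T_1,T_2) \le C \delta^{1/2}$. The improvement over the exponents $2/15$ and $1/3$ appearing in Theorem~\ref{thm:deform} is a direct dividend of (A4), which prevents the singularity set from moving at all, together with (A2), which keeps the hyperbolicity constants uniformly bounded.

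Once $d_\F(T_1,T_2) \le C \delta^{1/2}$ is in hand, Theorem~\ref{thm:close} gives $||| \Lp_{T_1} - \Lp_{T_2} ||| \le C\delta^{\beta/4}$, a non-increasing function of $\delta$ tending to zero. Applying Theorem~\ref{thm:kl} with $\Lp_0 = \Lp_{T_{\mathbf{0},\mathbf{0}}}$ and $\rho(\delta) = C\delta^{\beta/4}$ transfers the spectral gap of the classical Lorentz gas to every $T_{\bF,\bG} \in \cF_B(Q_0, \tau_*, \eps_1)$ for $\eps_1$ small, and Corollary~\ref{cor:limit theorems} completes the proof.
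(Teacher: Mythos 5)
The high-level architecture you propose is correct and matches the paper: verify conditions \textbf{(C1)}--\textbf{(C4)} of Section~\ref{distance}, compare each $T_{\bF,\bG}$ to the unperturbed billiard $T_{\mathbf{0},\mathbf{0}}$ (or compare two maps directly, by triangle inequality), then feed the resulting bound on $d_\F$ into Theorem~\ref{thm:close}, and finally apply Theorem~\ref{thm:kl} and Corollary~\ref{cor:limit theorems}. The flow comparison at the beginning is also in the right spirit, and your observation that \textbf{(A4)} fixes the singularity set is exactly the structural reason the argument is cleaner than in Theorem~\ref{thm:deform}.

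However, the step you identify as the ``main obstacle'' is the one where your argument goes wrong, and the error is a conflation of two different objects. The distance $d_\F$ is defined in Section~\ref{distance} purely by the four pointwise and derivative conditions \textbf{(C1)}--\textbf{(C4)} on the set $M \setminus N_\ve(\Si^1_{-1}\cup\Si^2_{-1})$; it involves no test function $\psi$, no integral over a stable curve $W$, no push-forward densities, and no $\bmu$-measure of a ``bad'' set. The good/bad decomposition of an admissible curve against a weak-norm test function, with the bad piece controlled by an $L^\infty$ bound and a measure estimate, is precisely the mechanism used in the paper to prove Theorem~\ref{thm:close}, i.e.\ to show that $d_\F$ being small implies $|||\Lp_{T_1}-\Lp_{T_2}|||$ is small. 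Using it to bound $d_\F$ itself is circular: Theorem~\ref{thm:close} is what you are supposed to \emph{invoke} afterwards, not reprove. What actually has to be done is to take the explicit differential formula $DT^{-1}(x) = -\frac{1}{A(T^{-1}x)\cos\vf(T^{-1}x)}(F+R)$ coming from \eqref{DTf} and \eqref{DTepsg}, and estimate $\|DT^{-1}(x)-DT_0^{-1}(x)\|$ term by term; the heart of the matter is that on $M\setminus N_\ve(\Si^T_{-1}\cup\Si^{T_0}_{-1})$ one has $\cos\vf(T^{-1}x) \gtrsim \sqrt{\ve}$, so that the second-order term in the difference of the two matrices behaves like $\ve^{-1}\cdot (\text{closeness of the maps})^{1/2}$. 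This is the quantity that must be balanced against the $\sqrt{\ve}$ on the right-hand side of \textbf{(C4)}, and that balance is entirely absent from your sketch. In particular, the exponent $\ve=\delta^{1/2}$ you extract from your ``optimization'' is not supported by the estimates you write down: the paper's own analysis yields $\ve = C\delta^{1/3}$, and the mechanism producing that exponent (the interplay between $\cos\vf(T^{-1}x)\gtrsim\sqrt\ve$ and the $\sqrt\delta$ closeness of $T^{-1}x$ and $T_0^{-1}x$, as in \eqref{eq:cos diff}) is the missing idea. Also note that $T$ does not preserve $\mu$ or $\bmu$, so \textbf{(C2)} is a genuine condition to verify via \eqref{eq:det formula}, not something that follows from closeness of the flows alone.

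A smaller point: your Gronwall bound on the variational equation gives $C^1$-closeness of the \emph{flows}, but the passage to the collision \emph{maps} is exactly where the $1/\cos\vf$ degeneration enters, and this step needs the explicit matrix computations of Section~\ref{flow review}; stating the flow estimate does not by itself yield the $O(\sqrt\delta)$ positional closeness of $T^{-1}x$ and $T_0^{-1}x$ away from grazing, which comes from the curvature of the scatterers (cf.\ the convexity estimate used in the deformation case).
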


The limit theorems implied by Theorem~\ref{thm:C2} are new even for the
simplified maps $T_{\bF,\mathbf{0}}$ and $T_{\mathbf{0}, \bG}$.
We provide the proofs of Theorems~\ref{thm:C1} and \ref{thm:C2} in Section~\ref{kick}.


\subsection{Smooth random perturbations}
\label{random}

We follow the expositions in \cite{demers liverani, demers zhang}.
Suppose $\F$ is a class of maps satisfying {\bf (H1)}-{\bf (H5)} and let
$d_\F(\cdot, \cdot)$ be the distance in $\F$ defined in Section~\ref{distance}.  For
$T_0 \in \F$, $\ve > 0$, define
\[
X_\ve(T_0) = \{ T \in \F : d_\F(T, T_0) < \ve \},
\]
to be the $\ve$-neighborhood of $T_0$ in $\F$.

Let $(\Omega, \nu)$ be a probability space and let $g: \Omega \times M \to \mathbb{R}^+$
be a measurable function satisfying:  There exist constants $a, A >0$ such that
\begin{enumerate}
  \item[(i)] $g(\omega, \cdot) \in \C^1(M, \mathbb{R}^+)$
  and $|g(\omega, \cdot)|_{\C^1(M)} \le A$ for each $\omega \in \Omega$;
  \item[(ii)] $\int_\Omega g(\omega, x) d\nu(\omega) = 1$ for each $x \in M$;
  \item[(iii)] $g(\omega, x) \geq a $  for all
  $\omega \in \Omega$, $x \in M$.
\end{enumerate}

We define a random walk on $M$ by assigning to each $\omega \in \Omega$, a map
$T \in X_\ve(T_0)$.  Starting at $x \in M$, we choose $T_\omega \in X_\ve(T_0)$
according to the
distribution $g(\omega, x)d\nu$.  We apply $T_\omega$ to $x$ and repeat this
process starting at $T_\omega x$.  We say the process defined in this way has size
$\Delta(\nu,g) \le \ve$.

Notice that if $\nu$ is the Dirac measure centered at $\omega_0$, then this
process corresponds to the deterministic perturbation $T_{\omega_0}$ of $T_0$.
If $g \equiv 1$, then the choice of $T_\omega$ is independent of the position $x$, while
in general this formulation allows the choice of the next map to depend on the previous
step taken.

The transfer operator $\Lp_{(\nu,g)}$ associated with the random process is defined by
\[
\Lp_{(\nu,g)}h(x) = \int_\Omega \Lp_{T_\omega} h(x) \, g(\omega, T_\omega^{-1}x) \, d\nu(\omega)
\]
for all $h \in L^1(M,m)$, where $m$ is Lebesgue measure on $M$.

\begin{theorem}
\label{thm:random}
The transfer operator $\Lp_{(\nu,g)}$ satisfies
the uniform Lasota-Yorke inequalities given by Theorem~\ref{thm:uniform}.
Let $\ve_0$ be given by \eqref{eq:s-unstable} and let $\ve \le \ve_0$.  If
$\Delta(\nu,g) \le \ve$, then there exists a constant $C>0$ depending only on {\bf (H1)}-{\bf (H5)},
such that $||| \Lp_{(\nu,g)} - \Lp_{T_0} ||| \le C A \ve^{\beta/2}$.

It follows that all the operators $\Lp_{(\nu,g)}$ enjoy a spectral gap for
$\ve$ sufficiently small if $\Lp_{T_0}$ has a spectral gap and
the limit theorems of Corollary~\ref{cor:limit theorems} apply to
$\Lp_{(\nu,g)}$.
\end{theorem}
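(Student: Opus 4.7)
The strategy is to verify the two hypotheses of the Keller--Liverani perturbation theorem (Theorem~\ref{thm:kl}) for the pair $(\Lp_{(\nu,g)}, \Lp_{T_0})$, using Theorems~\ref{thm:uniform} and \ref{thm:close} as the main engines and following the template for random perturbations set out in \cite{demers liverani}.

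For the uniform Lasota--Yorke inequalities, I would start from
\[
\Lp_{(\nu,g)} h = \int_\Omega \Lp_{T_\omega}\!\bigl(g(\omega,\cdot)\,h\bigr)\, d\nu(\omega),
\]
and iterate. Using the commutation $\Lp_T(gh) = (g\circ T^{-1})\cdot \Lp_T h$ one may view $\Lp_{(\nu,g)}^n h$ as an integral over $(\omega_1,\dots,\omega_n)\in\Omega^n$ of expressions of the form $G_{\omega_1,\dots,\omega_n}\cdot \Lp_{T_{\omega_n}\circ\cdots\circ T_{\omega_1}} h$, where $G_{\omega_1,\dots,\omega_n}$ is a product of $n$ factors of $g$ pulled back along the random orbit. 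The structural fact that makes this work is that the hypotheses {\bf (H1)}--{\bf (H5)} depend only on class-level constants -- invariant cone fields, distortion on homogeneous stable/unstable curves, transversality of singularity curves, and sub-exponential growth of complexity -- so that any finite composition of maps drawn from $\F$ admits the same estimates as a single map in $\F$. Consequently the proof of Theorem~\ref{thm:uniform} in Section~\ref{uniform} carries over verbatim to $\Lp_{T_{\omega_n}\circ\cdots\circ T_{\omega_1}}$ with unchanged $\sigma<1$ and $\eta$. The uniform $\C^1$ bound on the weights, together with the normalization $\int g(\omega,\cdot)\,d\nu\equiv 1$, prevents the multiplicative factors $G_{\omega_1,\dots,\omega_n}$ from blowing up in the $\B$- and $\B_w$-norms, and Minkowski's inequality under the integral then delivers the LY bounds for $\Lp_{(\nu,g)}^n$.

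The closeness estimate is short and clean. Property (ii) gives $\int_\Omega g(\omega,x)\,d\nu(\omega)\equiv 1$, hence $\Lp_{T_0}h = \int_\Omega \Lp_{T_0}(g(\omega,\cdot)\,h)\,d\nu(\omega)$, and subtraction yields
\[
\Lp_{(\nu,g)}h - \Lp_{T_0}h = \int_\Omega \bigl(\Lp_{T_\omega} - \Lp_{T_0}\bigr)\!\bigl(g(\omega,\cdot)\,h\bigr)\,d\nu(\omega).
\]
Since $T_\omega\in X_\ve(T_0)$ for $\nu$-a.e.\ $\omega$ whenever $\Delta(\nu,g)\le\ve$, Theorem~\ref{thm:close} gives $|||\Lp_{T_\omega}-\Lp_{T_0}|||\le C\ve^{\beta/2}$, while $\|g(\omega,\cdot)\,h\|_\B \le CA\|h\|_\B$ because multiplication by a $\C^1$ function is bounded on $\B$ by construction. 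Taking a supremum over $h$ in the unit ball of $\B$ and integrating over $\Omega$ produces $|||\Lp_{(\nu,g)} - \Lp_{T_0}|||\le CA\ve^{\beta/2}$. Theorem~\ref{thm:kl} then yields persistence of the spectral gap for $\Lp_{(\nu,g)}$ and H\"older continuity of the spectral data, and Corollary~\ref{cor:limit theorems} supplies the announced limit theorems.

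The main obstacle, as usual in random perturbation theory, lies in the LY step: certifying that each geometric constant entering the proof of Theorem~\ref{thm:uniform} -- expansion rates in the unstable cone, distortion on homogeneity strips, transversality angles between stable and unstable directions, and the complexity growth of singularity sets -- was extracted from the class-level data {\bf (H1)}--{\bf (H5)} and therefore survives along arbitrary random compositions of maps in $\F$. The normalization $\int g\,d\nu\equiv 1$ is what prevents the weighted averages from accumulating exponentially large prefactors. Once these two points are secured, the rest is Fubini, the triangle inequality, and an appeal to the abstract tools already in place.
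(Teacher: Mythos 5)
Your overall strategy matches the paper's: iterate $\Lp_{(\nu,g)}$ to expose random compositions $T_{\ob_n}$ of maps in $\F$, establish uniform Lasota--Yorke bounds for those, prove $|||\Lp_{(\nu,g)}-\Lp_{T_0}|||\le CA\ve^{\beta/2}$ via Theorem~\ref{thm:close}, and invoke Theorem~\ref{thm:kl}. Your closeness argument is sound and in fact formulated a bit more cleanly than the paper's Lemma~\ref{lem:random close}: writing $\Lp_{(\nu,g)}h-\Lp_{T_0}h=\int_\Omega(\Lp_{T_\omega}-\Lp_{T_0})(g(\omega,\cdot)h)\,d\nu(\omega)$ avoids the slight imprecision in the paper's interchange (which tacitly uses $\int_\Omega g(\omega,T_\omega^{-1}x)\,d\nu=1$ rather than the actual hypothesis $\int_\Omega g(\omega,y)\,d\nu=1$). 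You do implicitly invoke boundedness of multiplication by a $\C^1$ function on $\B$ with operator norm $\lesssim|g(\omega,\cdot)|_{\C^1}$; this is true for the norms in Section~\ref{norms} but deserves at least a sentence, since $\B$ is built by completion.

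The genuine gap is in the Lasota--Yorke step. ``Minkowski's inequality under the integral'' will not by itself deliver the claimed bound. If you push the $\B$-norm inside the $\Omega^n$-integral you are forced to estimate $\|G_{\ob_n}\cdot\Lp_{T_{\ob_n}}h\|_\B$ for each fixed $\ob_n$, and any such bound involves a H\"older norm of the weight $G_{\ob_n}=\prod_{j=1}^n g(\omega_j,T_{\ob_{j-1}}\cdot)$, which is of order $A^n$; the normalization $\int g\,d\nu\equiv 1$ cannot cancel an exponential factor once norms have already been taken. The paper's actual cure is twofold. First, Lemma~\ref{lem:random growth} re-establishes the growth lemma for random compositions $T_{\ob_n}$ (the original Lemma~\ref{lem:growth} is stated for powers $T^n$ of a single $T\in\F$, so ``verbatim'' overstates what is available). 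Second, and this is the step you are missing, Sublemma~\ref{lem:G} proves a distortion bound for $G_{\ob_n}$ along stable curves: $|G_{\ob_n}|_{\C^1(W^n_i)}\le C\,G_{\ob_n}(x)$ for any $x\in W^n_i$, with $C$ uniform in $n$ and $\ob_n$, because the hyperbolic contraction makes $\sum_j d(T_{\ob_{j-1}}x,T_{\ob_{j-1}}y)$ summable. This allows one to replace $G_{\ob_n}$ by the pointwise value $G_{\ob_n}(x_0)$ (times a uniform constant) \emph{inside} the integrals $\int_{W^n_i}(\cdots)\,dm_W$ that define the norms, before taking the $\Omega^n$-integral; only then does one integrate the $\omega_j$ out one at a time from $j=n$ down to $j=1$, using property (ii) of $g$ to make each factor contribute $1$. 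Without this reordering and without the distortion estimate on $G_{\ob_n}$, the proof cannot produce a constant independent of $n$. You identify the normalization as the mechanism that tames the prefactors, which is the right intuition, but you do not supply the lemma that makes the mechanism operative.
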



\subsection{Large perturbations:  Large translations, rotations and deformations of scatterers}
\label{large pert}

If we fix $\tau_*, \K_* >0$ and $E_* < \infty$, then Theorems~\ref{thm:uniform} and
\ref{thm:F1} imply that the transfer operator $\Lp_T$ corresponding to any
$T \in \F_1(\tau_*, \K_*, E_*)$ is quasi-compact with essential spectral radius bounded by
$\sigma<1$.  In fact, \cite[Theorem 2.5]{demers zhang} implies that $\Lp_T$
has a spectral gap.

Now choose a compact interval $J \subset \R$ and parametrize a continuous path
in $\F_1(\tau_*, \K_*, E_*)$ according to the distance $d_\F(\cdot, \cdot)$.  To
each point $s \in J$ is assigned a map $T_s \in \F_1(\tau_*, \K_*, E_*)$
and a corresponding transfer operator $\Lp_s$.  Fix $\sigma_1 > \sigma$.
Due to Theorem~\ref{thm:close}, there exists $\ve_s >0$ such that
the spectra and spectral projectors of $\Lp_{s'}$ outside the disk of radius
$\sigma_1$ vary H\"older continuously for $s' \in (s-\ve_s, s+\ve_s) =: B(s, \ve_s)$.

The balls $B(s, \ve_s)$, $s \in J$ form an open cover of $J$ and since $J$ is compact,
there is a finite subcover $\{ B(s_i, \ve_{s_i}) \}_{i=1}^n$.  Because these intervals overlap,
as we move along the entire path from one end of $J$ to the other, the spectra and
spectral projectors of $\Lp_s$ vary H\"older continuously in $s$.  We have
proved the following.

\begin{theorem}
\label{thm:large pert}
Let $J \subset \R$ be a compact interval and let $\{ T_s \}_{s \in J} \subset \F_1(\tau_*, \K_*, E_*)$
be a continuously parametrized path according to the distance $d_\F(\cdot , \cdot)$.
Then the spectra and spectral projectors of the associated transfer operators
$\Lp_s$ vary H\"older continuously in the distance $d_\F(\cdot, \cdot)$.

As a consequence,  the related dynamical properties of $T_s$, such as the rate of decay of
correlations and variance in the Central Limit Theorem, vary H\"older continuously even
across large movements and deformations of scatterers as long as the resulting
maps remain in $\F_1(\tau_*, \K_*, E_*)$.   Indeed, since $J$ is compact, the continuity of
the spectral data implies that the spectral gap is uniform along such paths even when the
resulting configurations are no longer close to the original.
This regularity holds as we move scatterers
in such a way that the table changes from finite to infinite horizon.
\end{theorem}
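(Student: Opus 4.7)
The plan is to combine the uniform quasi-compactness supplied by Theorem~\ref{thm:uniform} and Theorem~\ref{thm:F1} with a local application of the Keller--Liverani perturbation result Theorem~\ref{thm:kl}, and then patch the local statements together using compactness of $J$. First, since $\{T_s\}_{s\in J} \subset \F_1(\tau_*, \K_*, E_*)$, the Lasota--Yorke bounds of Theorem~\ref{thm:uniform} hold with constants $C, \sigma, \eta$ that do \emph{not} depend on $s$. Fix any $\sigma_1 \in (\sigma, 1)$. Outside the disk of radius $\sigma_1$ each $\Lp_s$ has only finitely many eigenvalues of finite multiplicity, and by (the cited) Theorem 2.5 of \cite{demers zhang} each $\Lp_s$ has a spectral gap at $1$.

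Second, for every fixed $s \in J$ I would apply Theorem~\ref{thm:kl} to the pair $(\Lp_s, \Lp_{s'})$ for $s'$ near $s$. The Lasota--Yorke inequalities \eqref{eq:uniform LY} play the role of \eqref{eq:pert LY} with constants independent of the base point, while Theorem~\ref{thm:close} provides
\[
||| \Lp_{s'} - \Lp_s ||| \le C\, d_\F(T_{s'}, T_s)^{\beta/2},
\]
which plays the role of \eqref{eq:pert small}. Continuity of $s \mapsto T_s$ in $d_\F$ produces $\ve_s > 0$ such that $d_\F(T_{s'}, T_s)$ is smaller than the threshold $\ve_1$ from Theorem~\ref{thm:kl} whenever $|s'-s| < \ve_s$. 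On the interval $B(s, \ve_s)$, Theorem~\ref{thm:kl} yields H\"older continuity of each eigenvalue $\varrho_j$ outside $|z|=\sigma_1$ and of the corresponding spectral projector $\Pi^{(j)}_{s'}$, with exponent any $\kappa < 1 - \log\sigma_1/\log\sigma$; rank conservation (item (2) of Theorem~\ref{thm:kl}) ensures the eigenvalues are tracked consistently on each such ball.

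Third, compactness of $J$ yields a finite subcover $\{B(s_i, \ve_{s_i})\}_{i=1}^n$. On each overlap $B(s_i, \ve_{s_i}) \cap B(s_{i+1}, \ve_{s_{i+1}})$ the two local descriptions of the spectrum outside $|z|=\sigma_1$ must coincide, because both are given by the contour integrals $\tfrac{1}{2\pi i}\oint (z-\Lp_{s'})^{-1}\,dz$ around the same curve. Taking the minimum of finitely many H\"older constants and the minimum of finitely many exponents produces a single modulus of continuity valid along all of $J$, and the infimum over $i$ of the local spectral gaps gives a uniform spectral gap along the path. The finite/infinite horizon issue is automatic: the class $\F_1(\tau_*, \K_*, E_*)$ is defined without reference to the horizon, so a path crossing the horizon boundary presents no special difficulty for the argument. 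The main obstacle is simply the patching in the previous sentence --- keeping the spectral bookkeeping consistent across overlapping balls --- which is resolved by choosing $\sigma_1$ close enough to $\sigma$ so that the isolating contours used in each $B(s_i,\ve_{s_i})$ can be taken to agree on the overlaps.
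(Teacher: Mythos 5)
Your argument reproduces the paper's proof: uniform Lasota--Yorke constants from Theorems~\ref{thm:uniform} and \ref{thm:F1}, the spectral gap from \cite[Theorem 2.5]{demers zhang}, local H\"older continuity via Theorem~\ref{thm:close} and Theorem~\ref{thm:kl}, and a finite subcover of the compact interval $J$ to patch local statements into a global one. The extra remarks about matching the contour integrals on overlaps and about rank conservation are correct elaborations of the same mechanism, not a different route.
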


\begin{remark}
One could just as well apply the above large movements of scatterers to billiards
under external forces in the uniform families $\F_B(Q_s, \tau_*, \ve_1)$ and allow
the configurations $Q_s$, $s \in J$, to change over a continuously parametrized path in
$\F_1(\tau_*, \K_*, E_*)$ as long as the horizon along the path
remains bounded uniformly above by $\tau_*^{-1}$.  Theorem~\ref{thm:large pert}
applies to such families of maps as well since they all possess spectral gaps by
Theorem~\ref{thm:C2}.
\end{remark}


\section{Common Approach}
\label{common}

In this section, we describe the common approach we will take for each class of perturbations that we consider.  We begin by formulating general conditions {\bf (H1)}-{\bf (H5)} under which the perturbations of
a billiard map will satisfy the Lasota-Yorke inequalities \eqref{eq:pert LY} with uniform constants.
We also introduce general conditions {\bf (C1)}-{\bf (C4)} to verify that a perturbation is small
in the sense of \eqref{eq:pert small}.  Theorems~\ref{thm:uniform} and \ref{thm:close} show that these
conditions are sufficient to establish the framework of \cite{keller liverani}.  Once this is accomplished, we only need to check that these conditions are satisfied
for each class of perturbations described above.


\subsection{A class of maps with uniform properties}
\label{class of maps}

We fix the phase space $M = \cup_i I_i \times [ - \frac{\pi}{2}, \frac{\pi}{2} ]$
of a billiard map
associated with a periodic Lorentz gas as in Section~\ref{abstract}.

We define the set $\Si_0 = \{ \vf = \pm \frac{\pi}{2} \}$ and for a fixed $k_0 \in \mathbb{N}$,
we define for $k \geq k_0$, the homogeneity strips,
\beq\label{homogeneity}
\Ho_k = \{ (r,\vf) : \pi/2 - k^{-2} < \vf < \pi/2 - (k+1)^2 \}.
\eeq
The strips $\Ho_{-k}$ are defined similarly near $\vf = -\pi/2$.  We also define
$\Ho_0 = \{ (r, \vf) : -\pi/2 + k_0^{-2} < \vf < \pi/2 - k_0^{-2} \}$.
The set $\Si_{0,H} = \Si_0 \cup (\cup_{|k| \ge k_0} \partial \Ho_{\pm k} )$ is therefore fixed
and will give rise to the singularity sets for the maps that we define below,
i.e. for any map $T$ that we consider, we define
$\Si_{\pm n}^T = \cup_{i = 0}^n T^{\mp i} \Si_{0,H}$ to be the singularity sets for
$T^{\pm n}$, $n \ge 0$.

Suppose there exists a class of invertible maps $\mathcal{F}$ such that
for each $T \in \mathcal{F}$, $T : M \setminus \Si_1^T \to M \setminus \Si_{-1}^T$
is a $C^2$ diffeomorphism on each connected component of $M \setminus \Si_1^T$.
We assume that elements of $\mathcal{F}$ enjoy the following uniform properties.

\bigskip
\noindent
{\bf(H1)}
{\em Hyperbolicity and singularities.}  There exist continuous families
of stable and unstable cones $C^s(x)$ and $C^u(x)$, defined on all of $M$, which are strictly invariant for the
class $\F$, i.e.,  $DT(x) C^u(x) \subset C^u(Tx)$ and $DT^{-1}(x) C^s(x) \subset C^s(T^{-1}x)$
for all $T \in \F$ wherever $DT$ and $DT^{-1}$ are defined.

\noindent
\smallskip
We require that the cones $C^s(x)$ and $C^u(x)$ are uniformly transverse on $M$
and that $\Si_{-n}^T$ is uniformly transverse to $C^s(x)$ for each $n \in \N$ and
all $T \in \F$.   We assume in addition that
$C^s(x)$ is uniformly transverse to
the horizontal and vertical directions on all of $M$.\footnote{This is not a restrictive
assumption for perturbations of the Lorentz
gas since the standard cones $\hat C^s$ and $\hat C^u$ for the billiard map satisfy this
property (see for example \cite[Section 4.5]{chernov book});  the common cones
$C^s(x)$ and $C^u(x)$ shared by all maps in the class $\F$ must therefore lie
inside $\hat C^s(x)$ and $\hat C^u(x)$ and therefore satisfy this property.
In any case, a weaker formulation of this assumption  is necessary:  we use in the compactness
argument that the lengths of stable curves in the homogeneity strips $\Ho_k, k \ge k_0$, are
proportional to the width of the strips.  This is only true if stable curves are transverse
to the horizontal direction in such strips.}

\noindent
\smallskip
Moreover,  there exist constants $C_e>0$ and $\Lambda >1$ such that for all
$T \in \F$,
\begin{equation}
\label{eq:uniform hyp}
\| DT^n(x) v \| \ge C_e^{-1} \Lambda^n \| v\|, \forall v \in C^u(x), \; \; \;
\mbox{and} \; \; \;
\| DT^{-n}(x) v \| \ge C_e^{-1} \Lambda^n \| v\|, \forall v \in C^s(x),
\end{equation}
for all $n \ge 0$, where $\| \cdot \|$ is the Euclidean norm on the tangent space $\mathcal{T}_xM$.

\smallskip
\noindent
For any stable curve  $W \in \widehat \W^s$ (see {\bf (H2)} below), the set $W \cap T\Si_0$
(not counting homogeneity strips) is finite or countable
and has at most $K$ accumulation points on $W$, where $K\geq 0$ is a constant
uniform for $T \in \F$.
Let $x_{\infty}$ be one of them
and let $\{x_n\}$ denote the monotonic sequence of points $W \cap T\Si_0$ converging to
$x_{\infty}$.  We denote the part of $W$ between $x_n$ and $x_{n+1}$ by $W_n$.
We assume there exists $C_a >0$ such that
the expansion factor on $W_n$ satisfies
\begin{equation}
\label{eq:expansion}
C_a n [\cos  \vf(T^{-1}x)]^{-1} \| v \| \leq \|DT^{-1}(x) v\| \leq C_a^{-1} n [\cos \vf(T^{-1}x)]^{-1} \| v \| ,\,\,\,\,\,\,\forall x\in W_n, \forall v\in C^s(x),
\end{equation}
where $\vf(y)$ denotes the angle at the point $y = (r, \vf) \in M$.
Let exp$_x$ denote the exponential map from $\mathcal{T}_xM$ to $M$.
We require the following bound on the second derivative,
\begin{equation}
\label{eq:2 deriv}
C_a n^2 [\cos  \vf(T^{-1}x)]^{-3} \leq \|D^2T^{-1}(x) v\| \leq C_a^{-1} n^2 [\cos \vf(T^{-1}x)]^{-3},\,\,\,\,\,\,\forall x\in W_n,
\end{equation}
for all $v \in \mathcal{T}_xM$ such that $T^{-1}(\mbox{exp}_x(v))$ and $T^{-1}x$ lie in the
same homogeneity strip.

\smallskip
\noindent
We assume there exist constants $c_s, \upsilon_0 >0$ such that if
$x \in W_n$ and $T^{-1}x \in \Ho_k$, then
\begin{equation}
\label{eq:3 deriv}  k \ge c_s n^{\upsilon_0} .\end{equation}

\smallskip
\noindent
If $K=0$ (i.e. the billiard has finite horizon) then the indexing scheme above based on
$n$ is finite and \eqref{eq:expansion}, \eqref{eq:2 deriv} and \eqref{eq:3 deriv} hold with $n=1$.

If $W \cap T\Si_0$ has no accumulating points, we assume (\ref{eq:expansion}), \eqref{eq:2 deriv} hold with $n=1$.

\bigskip
\noindent
{\bf(H2)}
{\em Families of stable and unstable curves.}   We call $W$ a {\em stable curve}
for a map $T \in \F$ if the tangent line to $W$, $\mathcal{T}_xW$ lies in $C^s(x)$
for all $x \in W$.  We call $W$ {\em homogeneous} if $W$ is contained in one homogeneity
strip $\Ho_k$.   Unstable curves are defined similarly.

\smallskip
\noindent
We assume that there exists a family of smooth stable curves, $\widehat\W^s$,
such that each $W \in \widehat\W^s$ is a $\C^2$ stable curve with curvature bounded above
by a uniform constant $B >0$.  The family $\widehat\W^s$ is required to be invariant under
$\F$ in the following sense:  For any $W \in \widehat\W^s$ and $T \in \F$,
the connected components of $T^{-1}W$ are again elements of $\widehat \W^s$.

\smallskip
\noindent
A family of unstable curves $\widehat\W^u$ is defined analogously, with obvious modifications:
For example, we require the connected components of $TW$ to be elements of
$\widehat\W^u$ for all $W \in \widehat\W^u$ and $T \in \F$.

\bigskip
\noindent
{\bf(H3)}
{\em One-step expansion.}
We formulate the one-step expansion in terms of an adapted norm $\| \cdot \|_*$,
uniformly equivalent to $\| \cdot \|$, in
which the constant $C_e$ in \eqref{eq:uniform hyp} can be taken to be $1$, i.e. we have expansion
and contraction in one step in the adapted norm. We assume such a norm exists for
maps in the class $\F$.

\smallskip
\noindent
Let $W \in \widehat W^s$.
For any $T \in \F$, we partition the connected components of $T^{-1}W$ into
maximal pieces $V_i = V_i(T)$ such that each $V_i$ is a homogeneous stable curve  in some
$\Ho_k$, $k\geq k_0$, or $\Ho_0$.
Let $|J_{V_i}T|_*$
denote the minimum contraction on $V_i$ under $T$ in the metric
induced by the adapted norm $\| \cdot \|_*$.
We assume that for some choice of $k_0$,
\begin{equation}
\label{eq:step1}
\limsup_{\delta \to 0} \sup_{T \in \F} \sup_{|W|<\delta} \sum_i |J_{V_i}T|_* < 1,
\end{equation}
where $|W|$ denotes the arclength of $W$.

\smallskip
\noindent
In addition, we require that the above sum converges even when the expansion on each piece
is weakened slightly in the following sense:  There exists $\varsigma_0 < 1$ such that for
all $\varsigma > \varsigma_0$, there exists $C_\varsigma = C_\varsigma (\varsigma, \delta)$
such that for all $T \in \F$ and any $W \in \widehat\W^s$ with $|W| < \delta$,
\begin{equation}
\label{eq:weakened step1}
\sum_i |J_{V_i}T|^\varsigma_{\C^0(V_i)} < C_\varsigma ,
\end{equation}
where $J_{V_i}T$ denotes the stable Jacobian of $T$ along the curve $V_i$ with respect
to arc length.
We formulate \eqref{eq:weakened step1} in terms of
the usual Euclidean norm since we do not need $C_\varsigma <1$, i.e. we only need the
above sum to be finite in some uniform sense.

\bigskip
\noindent
{\bf(H4)}
{\em Bounded distortion.}
There exists a constant $C_d>0$ with the following properties.
Let $W' \in \widehat\W^s$ and for any $T \in \F$, $n \in \N$, let $x, y \in W$ for some
connected component $W \subset T^{-n}W'$ such that $T^iW$ is a
homogeneous stable curve for each $0 \le i \le n$. Then,
\begin{equation}
\label{eq:distortion stable}
\left| \frac{J_\mu T^n(x)}{J_\mu T^n(y)} -1 \right|  \; \leq \;  C_d d_W(x,y)^{1/3}   \; \;
\mbox{and} \; \; \left| \frac{J_WT^n(x)}{J_WT^n(y)} -1 \right| \; \leq \; C_d d_W(x,y)^{1/3},
\end{equation}
where $J_\mu T^n$ is the Jacobian of $T^n$ with respect to the smooth measure
$d\mu = \cos \vf dr d\vf$.

\smallskip
\noindent
We assume the analogous bound along unstable leaves:
If $W \in \widehat\W^u$ is an unstable curve such that $T^iW$ is a homogeneous unstable curve
for $0\le i \le n$, then for any $x, y \in W$,
\begin{equation}
\label{eq:D u dist}
\left| \frac{J_\mu T^n(x)}{J_\mu T^n(y)} -1 \right|  \; \leq \;  C_d d(T^nx,T^ny)^{1/3}     .
\end{equation}

\bigskip
\noindent
{\bf(H5)}
{\em Control of Jacobian.}
Let $\beta, q < 1$ be from the definition of the norms in Section~\ref{norms} and let
$\theta_*<1$ be from \eqref{eq:one step contract}.
Assume there exists a constant
$\eta < \min \{ \Lambda^\beta, \Lambda^q, \theta_*^{\alpha-1} \}$
such that for any $T \in \F$,
\[
(J_\mu T(x))^{-1} \le \eta, \; \; \;
\mbox{wherever $J_\mu T(x)$ is defined.}
\]


\subsection{Transfer operator}

\label{transfer}

Recall the family of stable curves $\widehat \W^s$ defined by {\bf (H2)}.  We define
a subset $\W^s \subset \widehat \W^s$ as follows.  By {\bf (H3)} we may choose
$\delta_0 > 0$ for which there exists $\theta_*<1$ such that
\begin{equation}
\label{eq:one step contract}
\sup_{T \in \F} \sup_{|W| \le \delta_0} \sum_i |J_{V_i}T|_* \le \theta_* .
\end{equation}
We shrink $\delta_0$ further if necessary so that the graph transform argument in
Lemma~\ref{lem:angles}(a) holds.
The set $\W^s$ comprises all those stable curves  $W \in \widehat\W^s$ such that
$|W| \le \delta_0$.

For any $T \in \F$,
we define scales of spaces
using the set of stable curves $\W^s$ on which the
{\em transfer operator} $\Lp_T$ associated with $T$ will act.
Define $T^{-n}\W^s$
to be the set of homogeneous stable curves $W$ such that $T^n$ is smooth on $W$ and
$T^iW \in \W^s$ for $0 \leq i \le n$.   It follows from {\bf (H2)}  that
$T^{-n}\W^s \subset \W^s$.
We denote (normalized) Lebesgue measure on $M$ by $m$.

For $W \in T^{-n}\W^s$, a complex-valued test function $\psi: M \to \mathbb{C}$, and $0<p\le 1$ define $H^p_W(\psi)$ to be
the H\"older constant of $\psi$ on $W$ with exponent $p$ measured in the
Euclidean metric.
Define $H^p_n(\psi) = \sup_{W \in T^{-n}\W^s} H^p_W(\psi)$
and let $\tilde{\C}^p(T^{-n}\W^s) = \{ \psi : M \to \mathbb{C} \mid H^p_n(\psi) < \infty \}$,
denote the set of complex-valued functions which are H\"older continuous on elements of
$T^{-n}\W^s$.
The set $\tilde{\C}^p(T^{-n}\W^s)$ equipped with the norm
$| \psi |_{\C^p(T^{-n}\W^s)} = |\psi|_\infty + H^p_n(\psi)$ is a Banach space.
Similarly, we define $\tilde \C^p(\widehat \W^u)$, the set of functions which are H\"older continuous
with exponent $p$ on unstable curves $\widehat \W^u$.

It follows from \eqref{eq:C1 C0} that if $\psi \in \tilde{\C^p}(T^{-(n-1)}\W^s)$, then
$\psi \circ T \in \tilde{\C}^p(T^{-n}\W^s)$.  Thus
if $h\in(\tilde \C^p(T^{-n}\W^s))'$, is an element of the dual of $\tilde \C^p(T^{-n}\W^s)$,
then
$\Lp_T :(\tilde \C^p(T^{-n}\W^s))'\to (\tilde \C^p(T^{-(n-1)}\W^s))'$ acts on $h$ by
\[
\Lp_T h(\psi) = h(\psi \circ T) \quad \forall \psi \in \tilde \C^p(T^{-(n-1)}\W^s).
\]
Recall that $d\mu = c \cos \vf dr d\vf$ denotes the smooth invariant measure for the unperturbed
Lorentz gas.
If $h \in L^1(M,\mu)$, then $h$ is canonically identified with a signed measure
absolutely continuous with respect to $\mu$, which we shall also call $h$, i.e.,
$
h(\psi) = \int_M \psi h \, d\mu.
$
With the above
identification, we write $L^1(M,\mu) \subset (\tilde \C^p(T^{-n}\W^s))'$ for each $n \in \N$.
Then restricted to $L^1(M,\mu)$, $\Lp_T$ acts according to the familiar
expression
\[
\Lp_T^n h = h \circ T^{-n} \; (J_\mu T^n(T^{-n}))^{-1} \; \; \;
\mbox{for any $n \geq 0$ and $h \in L^1(M,\mu)$.}
\]

\begin{remark}
In \cite{demers zhang}, we used Lebesgue measure as a reference measure to
show that the functional analytic framework developed there did not need to assume
the existence of a smooth invariant measure.  Now that $\mu$ has been established
in our function space $\B$, however, we find it more convenient to use it as a starting
point in our study of the classes of perturbations considered here.
It also simplifies our norms and estimates slightly
since for example, it eliminates the need for the $\cos W$ weight in our test functions
that was used in \cite{demers zhang}.
We do not assume that $\mu$ is an invariant measure for $T \in \F$; indeed, the SRB
measures for such $T$ are in general singular with respect to Lebesgue measure.
\end{remark}


\subsection{Definition of the Norms}
\label{norms}

The norms are defined via integration on the set of stable curves
$\W^s$.  Before defining the norms, we define
the notion of a distance $d_{\W^s}(\cdot, \cdot)$ between such curves as well as
a distance $d_q(\cdot, \cdot)$ defined among functions supported on these curves.

Due to the transversality condition on the stable cones
$C^s(x)$ given by {\bf (H1)}, each stable curve $W$ can be viewed
as the graph of a function $\vf_W(r)$ of the arc length parameter $r$.
For each $W \in \W^s$,
let $I_W$ denote the interval on which
$\vf_W$ is defined and set $G_W(r) = (r, \vf_W(r))$ to be its graph so that
$W = \{ G_W(r) : r \in I_W \}$.
We let $m_W$ denote the unnormalized arclength measure on $W$.

Let $W_1, W_2 \in \W^s$ and identify them with the graphs $G_{W_i}$ of their
functions $\vf_{W_i}$, $i = 1,2$.  Suppose $W_1, W_2$ lie in the same component of $M$
and let $I_{W_i}$ be the $r$-interval on which each curve is defined.
Denote by $\ell(I_{W_1} \triangle I_{W_2})$ the length of the symmetric difference
between $I_{W_1}$ and $I_{W_2}$.
Let
$\Ho_{k_i}$ be the homogeneity strip containing $W_i$.
We define the distance between $W_1$ and $W_2$ to be,
\[
d_{\W^s} (W_1,W_2) = \eta(k_1, k_2) +
\ell( I_{W_1} \triangle I_{W_2}) + |\vf_{W_1} -\vf_{W_2}|_{\C^1(I_{W_1} \cap I_{W_2})}
\]
where $\eta(k_1,k_2) = 0$ if $k_1=k_2$ and $\eta(k_1,k_2) = \infty$ otherwise,
i.e., we only compare curves which lie in the same homogeneity strip.

For $0 \leq p \leq 1$, denote by
$\tilde{\C}^p(W)$ the set of continuous complex-valued functions on
$W$ with H\"{o}lder exponent $p$, measured in the Euclidean
metric, which we denote by $d_W(\cdot, \cdot)$.
We then denote by $\C^p(W)$ the closure of $\C^\infty(W)$
in the $\tilde{\C}^p$-norm\footnote{While $\C^p(W)$ may not contain
all of $\tilde{\C}^p(W)$, it does contain $\C^{p'}\!(W)$ for all $p'>p$.}:
$| \psi |_{\C^p(W)} = |\psi|_{\C^0(W)} + H^p_W(\psi)$, where
$H^p_W(\psi)$ is the H\"older constant of $\psi$ along $W$.
Notice that with this definition,
$|\psi_1 \psi_2 |_{\C^p(W)} \le |\psi_1|_{\C^p(W)} |\psi_2|_{\C^p(W)}$.
We define $\tilde{\C}^p(M)$ and $\C^p(M)$ similarly.

Given two functions
$\psi_i\in\C^q(W_i,\mathbb{C})$, $q >0$, we define the distance between
$\psi_1$, $\psi_2$ as
\[
d_q(\po,\pt) =|\po\circ G_{W_1}-\pt\circ G_{W_2}|_{\C^q(I_{W_1} \cap I_{W_2})}.
\]
We will define the required Banach spaces by closing $\C^1(M)$ with respect to
the following set of norms.
For $s,p \geq 0$, define the following norms for test functions,
\[
|\psi|_{W,s,p}:=|W|^s \cdot|\psi|_{\C^p(W)} .
\]

Now fix $0 < p \le \frac 13 $.
Given a function $h \in \C^1(M)$, define the \emph{weak norm}
of $h$ by
\begin{equation}
\label{eq:weak}
|h|_w:=\sup_{W\in\W^s}\sup_{\substack{\psi \in\C^p(W)\\
|\psi|_{W,0,p} \leq 1}}\int_W h \psi \; dm_W .
\end{equation}
Choose\footnote{The restrictions on the constants are placed according to the
dynamical properties of $T$.  For example, $p \le 1/3$ due to the distortion bounds
in {\bf (H4)}, while $\alpha < 1-\varsigma_0$ so that Lemma~\ref{lem:growth}(d) can be
applied with
$\varsigma = 1 - \alpha > \varsigma_0$.}
$\alpha$, $\beta$, $q >0$ such that $\alpha < 1-\varsigma_0$, $q < p$ and
$\beta \leq \min \{ \alpha, p-q \}$.
We define the \emph{strong stable norm} of $h$ as
\begin{equation}
\label{eq:s-stable}
\|h\|_s:=\sup_{W\in\W^s}\sup_{\substack{\psi\in\C^q(W)\\
|\psi|_{W,\alpha,q}\leq 1}}\int_W h \psi \; dm_W
\end{equation}
and the \emph{strong unstable norm} as
\begin{equation}
\label{eq:s-unstable}
\|h\|_u:=\sup_{\varepsilon \leq \varepsilon_0} \; \sup_{\substack{W_1,
W_2 \in \W^s \\
d_{\W^s} (W_1,W_2)\leq \varepsilon}}\;
\sup_{\substack{\psi_i \in \C^p(W_i) \\ |\psi_i|_{W_i,0,p}\leq 1\\ d_q(\psi_1,\psi_2)
\leq \ve}} \;
\frac{1}{\varepsilon^\beta} \left| \int_{W_1} h
\psi_1 \; dm_W - \int_{W_2} h \psi_2 \; dm_W \right|
\end{equation}
where $\ve_0 > 0$ is chosen less than $\delta_0$, the maximum length of $W \in \W^s$ which
is determined by \eqref{eq:one step contract}.
We then define the \emph{strong norm} of $h$ by
\[
\|h\|_\B = \|h\|_s + b \|h\|_u
\]
where $b$ is a small constant chosen in Section~\ref{uniform}.

We define $\B$ to be the completion of $\C^1(M)$ in the strong norm\footnote{As a measure,
$h \in \C^1(M)$ is identified with $hd\mu$ according to our earlier convention.
As a consequence, Lebesgue measure $dm = (\cos \vf)^{-1} d\mu$ is not automatically
included in $\B$ since $(\cos \vf)^{-1} \notin \C^1(M)$.  We will prove in
Lemma~\ref{lem:lebesgue} that in fact, $m \in \B$ (and $\B_w$).}
and $\B_w$ to be the completion of $\C^1(M)$ in the weak norm.



\subsection{Distance in $\F$}
\label{distance}

We define a distance in $\mathcal{F}$ as follows.  Let $\ve_0$ be from \eqref{eq:s-unstable}.
For $T_1, T_2 \in \F$ and $\ve \le \ve_0$, let
$N_\ve(\Si^i_{-1})$ denote the $\ve$-neighborhood in $M$ of the singularity set $\Si^i_{-1}$ of
$T_i^{-1}$, $i = 1,2$.   We say $d_{\F} (T_1, T_2) \le \ve$
if
the maps are close away from their singularity sets in the following sense:
For $x \notin N_\ve(\Si^1_{-1} \cup \Si^2_{-1})$,

\medskip
\noindent
\parbox{.07 \textwidth}{\bf(C1)}
\parbox[t]{.91 \textwidth}{
$ \displaystyle
d(T_1^{-1}(x) , T_2^{-1}(x))  \le \ve$;
}

\medskip
\noindent
\parbox{.07 \textwidth}{\bf(C2)}
\parbox[t]{.91 \textwidth}{
$ \displaystyle
\left| \frac{J_\mu T_i(x)}{ J_\mu T_j(x)}  - 1 \right| \le \ve$, $i,j = 1,2$;
}

\medskip
\noindent
\parbox{.07 \textwidth}{\bf(C3)}
\parbox[t]{.91 \textwidth}{
$ \displaystyle
\left| \frac{J_WT_i(x)}{ J_WT_j(x)}  - 1 \right| \le \ve$,
for any $W \in \W^s$, $i,j = 1,2$, and $x \in W$;
}

\medskip
\noindent
\parbox{.07 \textwidth}{\bf(C4)}
\parbox[t]{.91 \textwidth}{
$ \displaystyle
\| DT_1^{-1}(x) v - DT_2^{-1}(x) v \|  \le \sqrt{\ve}$, for any unit vector $v \in \mathcal{T}_xW$,
$W \in \W^s$.
}

\subsection{Preliminary estimates}
\label{preliminary}

Before proving the Lasota-Yorke inequalities, we show how {\bf (H1)}-{\bf (H5)} imply
several other uniform properties for our class of maps $\F$.  In particular, we will be
interested in iterating the one-step expansion relations given by {\bf (H3)}.
We recall the estimates we need from \cite[Section 3.2]{demers zhang}.

Let $T \in \F$ and $W \in \W^s$.  Let $V_i$ denote the maximal connected components
of $T^{-1}W$ after cutting due to singularities and the boundaries of the homogeneity
strips.  To ensure that each component of $T^{-1}W$ is in $\W^s$, we subdivide any of the
long pieces $V_i$ whose length is $>\delta_0$, where $\delta_0$ is chosen
in \eqref{eq:one step contract}.        This process is then iterated
so that
given $W \in \W^s$,      we construct the components of $T^{-n}W$, which  we call
the $n^{\mbox{\scriptsize th}}$ generation $\G_n(W)$, inductively as follows.
Let $\G_0(W) = \{ W \}$ and suppose we have
defined $\G_{n-1}(W) \subset \W^s$.       First, for any $W' \in \G_{n-1}(W)$,
we partition $T^{-1}W'$ into at most countably many pieces $W'_i$ so that $T$ is
smooth on each $W'_i$ and
each $W'_i$ is a homogeneous stable curve.
If any $W'_i$ have length greater than $\delta_0$, we subdivide those pieces into pieces
of length between $\delta_0/2$ and $\delta_0$.
We define $\G_n(W)$ to be the collection of all pieces $W^n_i \subset T^{-n}W$ obtained
in this way.  Note that each $W^n_i$ is in $\W^s$ by {\bf (H2)}.

At each iterate of $T^{-1}$, typical curves in $\G_n(W)$ grow in size, but there exist a portion of curves which are trapped in tiny homogeneity strips and in the infinite horizon case, stay too close to the infinite
horizon points.  In Lemma~\ref{lem:growth}, we make precise the sense in which the proportion
of curves that never grow to a fixed
length decays exponentially fast.

For $W \in \W^s$, $n \geq 0$, and $0 \le k \le n$, let $\G_k(W) = \{ W^k_i \}$ denote
the $k^{\mbox{\scriptsize th}}$ generation pieces in $T^{-k}W$.  Let
$B_k(W) = \{ i : |W^k_i|< \delta_0/3 \}$  and $L_k(W) = \{ i : |W^k_i| \ge \delta_0/3 \}$
denote the index of the short and long elements of $\G_k(W)$, respectively.
We consider
$\{\G_k \}_{k=0}^n$ as a tree with $W$ as its root and $\G_k$ as the
$k^{\mbox{\scriptsize th}}$ level.

At level $n$, we group the pieces as follows.  Let $W^n_{i_0} \in \G_n(W)$ and
let $W^k_j \in L_k(W)$ denote the most recent long ``ancestor" of $W^n_{i_0}$, i.e.\
$k = \max \{ 0 \leq \ell \le n : T^{n-\ell}(W^n_{i_0}) \subset W^\ell_j \; \mbox{and} \;
j \in L_\ell \}$.  If no such ancestor exists, set $k=0$ and $W^k_j = W$.  Note that
if $W^n_{i_0}$ is long, then $W^k_j = W^n_{i_0}$.  Let
\[
\I_n(W^k_j) = \{ i : W^k_j \in L_k(W) \; \mbox{is the most recent long ancestor of} \; W^n_i \in \G_n(W) \}.
\]
The set $\I_n(W)$ represents those curves $W^n_i$ that belong to short pieces in $\G_k(W)$
at each time step $1 \leq k \le n$, i.e.\ such $W^n_i$ are never part of a piece
that has grown to length $\geq \delta_0/3$.

We collect the results of  \cite[Section~3.2]{demers zhang} in the following lemma.

\begin{lemma}
\label{lem:growth}  (\cite{demers zhang})
Let $W \in \W^s$, $T \in \F$ and for $n \geq 0$, let $\I_n(W)$ and $\G_n(W)$
be defined as above.
There exist constants $C_1, C_2, C_3 >0$, independent of $W$ and $T$, such that for
any $n\geq 0$,
\begin{itemize}
  \item[(a)] $\ds
\sum_{i \in \I_n(W)} |J_{W^n_i}T^n|_{\C^0(W^n_i)} \leq C_1 \theta_*^n $;
 \item[(b)] $\ds
\sum_{W^n_i \in \G_n(W)} |J_{W^n_i}T^n|_{\C^0(W^n_i)} \le C_2     $;
 \item[(c)]  for any $0 \leq \varsigma \leq 1$,
$\ds
\sum_{W^n_i \in \G_n(W)} \frac{|W^n_i|^\varsigma}{|W|^\varsigma} \;  |J_{W^n_i}T^n|_{\C^0(W^n_i)} \le C_2^{1-\varsigma} $;
 \item[(d)] for $\varsigma > \varsigma_0$,
  $\ds
\sum_{W^n_i \in \G_n(W)} |J_{W^n_i}T^n|_{\C^0(W^n_i)}^\varsigma \le C_3^n$,
where $C_3$ depends on $\varsigma$.
\end{itemize}
\end{lemma}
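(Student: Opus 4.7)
\textbf{Proof proposal for Lemma~\ref{lem:growth}.}
My plan is to derive all four statements from repeated application of the one-step expansion bounds in \textbf{(H3)} combined with the bounded distortion bound \textbf{(H4)} and a systematic tracking of the tree $\{\G_k(W)\}_{k=0}^n$ by ``most recent long ancestor.'' Throughout, I will first work in the adapted norm $\|\cdot\|_*$ (in which \eqref{eq:one step contract} gives a genuine contraction factor $\theta_* < 1$), and then transfer back to the Euclidean Jacobians $|J_{W^n_i}T^n|_{\C^0}$ at the cost of a universal constant using the uniform equivalence of the two norms and \textbf{(H4)}.

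For part (a), I would argue by induction on $n$. The base case $n=0$ is trivial. At the inductive step, any $W^n_i$ with $i\in\I_n(W)$ lies in a piece $W^{n-1}_j$ with $|W^{n-1}_j|<\delta_0/3<\delta_0$ and $j\in\I_{n-1}(W)$; grouping the $W^n_i$ by their parents and invoking \eqref{eq:one step contract} on each parent contributes an extra factor $\theta_*$. Iterating $n$ times yields the bound $C_1\theta_*^n$, where $C_1$ absorbs the change from $\|\cdot\|_*$ to the Euclidean Jacobian.

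For parts (b) and (c), I would decompose
\[
\G_n(W) = \bigsqcup_{k=0}^n \bigsqcup_{W^k_j\in L_k(W)\cup\{W\}} \I_n(W^k_j),
\]
where the outer union over $k=0$ with the single piece $W$ covers the trapped descendants, and for $k\ge 1$ we sum over genuine long ancestors. By (a) applied to the subtree rooted at $W^k_j$,
\[
\sum_{i\in\I_n(W^k_j)} |J_{W^n_i}T^{n-k}|_{\C^0} \;\le\; C_1\theta_*^{n-k}.
\]
Composing Jacobians gives $|J_{W^n_i}T^n| = |J_{W^n_i}T^{n-k}|\cdot |J_{W^k_j}T^k|$ (on the appropriate piece), and then summing over long $W^k_j$ uses the geometric series in $\theta_*^{n-k}$ together with the fact that long pieces have length bounded between $\delta_0/3$ and $\delta_0$, so that at each level $k$ the total Jacobian mass of pieces in $L_k$ is controlled by $|W|^{-1}\sum|T^k(W^k_j)| \le 1$ up to subdivision multiplicities. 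This gives (b). For (c), observe that the trivial identity $\sum_{W^n_i\in\G_n(W)} |W^n_i| = |T^n(\G_n(W))| \le |W|$ bounds the $\varsigma=1$ case, and log-convexity of $x\mapsto x^\varsigma$ in $[0,1]$ (applied entry-wise after normalizing by $|W|$) interpolates between this and (b) to yield the exponent $1-\varsigma$ on $C_2$.

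The main obstacle is part (d), since we no longer have contraction and must instead leverage the weakened one-step estimate \eqref{eq:weakened step1}. I would again proceed by induction on $n$, and at each step apply \eqref{eq:weakened step1} with the fixed $\varsigma > \varsigma_0$ to each piece of $\G_k(W)$. The delicate point is the subdivision step in the construction of $\G_{k+1}$: a child piece of length in $(\delta_0/2,\delta_0]$ is cut into at most three subpieces, each a restriction of the same branch of $T^{-1}$, so by \textbf{(H4)} their Jacobians are comparable to the original within a distortion constant, and hence $\sum |J|^\varsigma$ on the subdivided family is bounded by a fixed multiple of $|J|^\varsigma$ on the parent. Thus one full inductive step produces at most a universal multiplicative constant $C_3$, depending on $\varsigma$, $\delta_0$, $C_\varsigma$, $C_d$, yielding $\sum_{\G_n(W)} |J_{W^n_i}T^n|^\varsigma \le C_3^n$ after conversion back from $\|\cdot\|_*$ to the Euclidean Jacobian.
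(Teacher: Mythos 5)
Your approach is essentially the same as that of the cited \cite[Lemmas 3.1--3.4]{demers zhang} (and the version worked out explicitly in the paper's Lemma~\ref{lem:random growth} for random compositions): (a) by induction grouping pieces by short parents and iterating \eqref{eq:one step contract}, (b) by decomposing $\G_n(W)$ according to most recent long ancestor and summing the geometric series, (c) by interpolating between (b) and a $\varsigma=1$ bound, and (d) by induction from \eqref{eq:weakened step1}.

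However, two of your intermediate claims are wrong as stated, even though the conclusions they are meant to support are correct. In (c) you assert the identity $\sum_{W^n_i\in\G_n(W)} |W^n_i| = |T^n(\G_n(W))| \le |W|$. This is false: the left-hand side is the total length of $T^{-n}W$ after cutting, which is typically much \emph{larger} than $|W|$ because $T^{-1}$ expands stable curves. The true statement is $\sum_i |T^n W^n_i| \le |W|$, and the $\varsigma=1$ bound you actually need is $\sum_i \tfrac{|W^n_i|}{|W|} |J_{W^n_i}T^n|_{\C^0} \le 1 + C_d$, which follows from bounded distortion {\bf (H4)} in the form $|W^n_i|\,|J_{W^n_i}T^n|_{\C^0} \le (1+C_d)|T^n W^n_i|$. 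The H\"older/Jensen interpolation with (b) then works as you intend. Second, in (d) you claim a long piece is ``cut into at most three subpieces.'' That is not how the subdivision is defined: only pieces of length exceeding $\delta_0$ are subdivided, into $\sim 2|V|/\delta_0$ pieces of length in $[\delta_0/2,\delta_0]$, and $|V|$ has no bound better than the diameter of $M$. The number of subpieces is therefore uniformly bounded only because $M$ is compact, and this is precisely where the factor $\delta_0^{-1}$ in the paper's constant $C_3 = \delta_0^{-1}C_\varsigma(1+C_d)^{2\varsigma}$ comes from; your accounting should reflect this rather than claim a constant of $3$.
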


\begin{proof}
The proofs of these items are combinatorial and require no more specific information
about the maps than the uniform properties given by {\bf (H2)}, {\bf (H3)} and {\bf (H4)}.

\smallskip
\noindent
(a) This is Lemma 3.1 of \cite{demers zhang}.  The constant $C_1$
depends only on the constant relating the Euclidean norm $\| \cdot \|$ to the
adapted norm $\| \cdot \|_*$.  As such, $C_1$ is independent of $T \in \F$,
$W \in \W^s$ and $n \in \mathbb{N}$.

\smallskip
\noindent
(b) This statement is \cite[Lemma 3.2]{demers zhang}.  The constant $C_2=C_2(\delta_0,\theta_*,C_1, C_d)$.

\smallskip
\noindent
(c) This is \cite[Lemma 3.3]{demers zhang}.  It follows from (b) by an application of
Jensen's inequality.

\smallskip
\noindent
(d) This follows from \eqref{eq:weakened step1} and is proved in \cite[Lemma 3.4]{demers zhang}.
The constant $C_3 = \delta_0^{-1} C_\varsigma (1+C_d)^{2\varsigma}$ is uniform
for $T \in \F$, but depends on $\varsigma$.
\end{proof}

Next we prove a distortion bound for the stable Jacobian of $T$ along
different stable curves in the following context.
Let $W^1, W^2 \in \W^s$ and suppose there exist $U^k \subset T^{-n}W^k$, $k=1,2$,
such that for $0 \le i \le n$,
\begin{enumerate}
  \item[(i)]  $T^iU^k \in \W^s$ and the curves $T^iU^1$ and $T^iU^2$ lie in the same homogeneity
strip;
  \item[(ii)]  $U^1$ and $U^2$ can be put into a 1-1 correspondence
by a smooth foliation $\{ \gamma_x \}_{x \in U^1}$ of curves
$\gamma_x \in \widehat\W^u$ such that
$\{ T^n\gamma_x \} \subset \widehat\W^u$ creates a 1-1 correspondence between
$T^nU^1$ and $T^nU^2$;
  \item[(iii)]  $|T^i\gamma_x| \le 2 \max \{ |T^iU^1|, |T^iU^2| \}$, for all $x \in U^1$.
\end{enumerate}
Let $J_{U^k}T^n$ denote the stable Jacobian of $T^n$ along the
curve $U^k$ with respect to arclength.

\begin{lemma}
\label{lem:angles}
In the setting above, for $x \in U^1$, define $x^* \in \gamma_x \cap U^2$.  There
exists $C_0 > 0$, independent of $T \in \F$, $W \in \W^s$ and $n \ge 0$ such that
\begin{enumerate}
  \item[(a)]  $d_{\W^s}(U^1, U^2) \le C_0 \Lambda^{-n} d_{\W^s}(W^1, W^2)$;
  \item[(b)] $\ds
\left| \frac{J_{U^1}T^n(x)}{J_{U^2}T^n(x^*)} -1 \right| \; \leq \; C_0[d(T^nx,T^n x^*)^{1/3}
+\theta(T^nx, T^n x^*)] $,
\end{enumerate}
where $\theta(T^nx, T^n x^*) $ is the angle formed by the tangent lines of
$T^nU^1$ and $T^nU_2$ at $T^nx$ and $T^n x^*$, respectively.
\end{lemma}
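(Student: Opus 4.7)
The plan for part (a) is to exploit hyperbolicity {\bf (H1)} to contract the connecting foliation $\{\gamma_x\}$ under pullback. Since $\gamma_x \in \widehat{\W}^u$ and $T^n$ expands unstable vectors by at least $C_e^{-1}\Lambda^n$, one has $|\gamma_x| \le C_e \Lambda^{-n}|T^n\gamma_x|$. Hypothesis (iii) together with $T^nU^k \subset W^k$ allows $|T^n\gamma_x|$ to be estimated by the transverse distance between corresponding points on $W^1$ and $W^2$; by uniform transversality of $C^s$ and $C^u$ this is comparable to $d_{\W^s}(W^1,W^2)$. Hence $\sup_x|\gamma_x| \le C\Lambda^{-n} d_{\W^s}(W^1,W^2)$, which directly controls both the $C^0$-distance between the graphs $\varphi_{U^1}$, $\varphi_{U^2}$ and the symmetric difference $\ell(I_{U^1}\triangle I_{U^2})$ of their $r$-projections (the common homogeneity strip condition being built into the hypotheses of the lemma).

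For the $C^1$ (slope) component of $d_{\W^s}(U^1,U^2)$, I would invoke the standard graph-transform argument: strict invariance of the stable cones under $DT^{-1}$ implies that two slopes sharing the same base $r$-value contract toward each other at rate $\Lambda^{-1}$ per iterate, while the variation of $\varphi'_{W^k}$ along the piece of $W^k$ producing $U^k$ contributes an error that is $O(\Lambda^{-n}d_{\W^s}(W^1,W^2))$, thanks to the curvature bound $B$ from {\bf (H2)} and the second-derivative estimate \eqref{eq:2 deriv}. Iterating and summing the geometric error yields $|\varphi'_{U^1} - \varphi'_{U^2}|_{C^0} \le C\Lambda^{-n} d_{\W^s}(W^1,W^2)$, completing (a). The choice of $\delta_0$ making the graph transform contracting is precisely the one invoked already in Section~\ref{transfer}.

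For part (b), the plan is the classical telescoping identity
\[
\log\frac{J_{U^1}T^n(x)}{J_{U^2}T^n(x^*)} \;=\; \sum_{j=0}^{n-1}\Bigl[\log J_{T^jU^1}T(T^jx) - \log J_{T^jU^2}T(T^jx^*)\Bigr].
\]
Each summand is bounded by combining the single-curve distortion estimate \eqref{eq:distortion stable} (comparing $T^{j+1}x$ to the foot of the unstable segment through $T^{j+1}x^*$ on $T^{j+1}U^1$, say) with a cross-curve comparison that picks up an additional angle term, giving an upper bound of the form $C\bigl[d(T^{j+1}x, T^{j+1}x^*)^{1/3} + \theta(T^{j+1}x, T^{j+1}x^*)\bigr]$. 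Because $\gamma_x$ is an unstable curve, $d(T^jx, T^jx^*) \le C\Lambda^{-(n-j)} d(T^nx, T^nx^*)$, and an analogous bound for $\theta(T^jx, T^jx^*)$ follows by applying part (a) to the intermediate stable curves $T^jU^1$, $T^jU^2$. Summing the resulting geometric series (the exponent $1/3$ is preserved because $\sum_j \Lambda^{-(n-j)/3} < \infty$) and exponentiating yields (b).

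The principal obstacle I anticipate is making the graph-transform $C^1$ contraction in part (a) fully uniform across $\F$, and in particular controlling the drift term coming from the variation of $DT^{-1}$ along $W^k$ using only the hyperbolicity and second-derivative bounds in {\bf (H1)} and {\bf (H2)}; once this is in hand, the cross-curve distortion calculation in (b) follows by a standard summation over the telescoping sum.
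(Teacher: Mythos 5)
Your outline matches the paper's strategy at the structural level: part (a) via a graph-transform argument in charts along the orbit, and part (b) via a telescoping sum over iterates with a per-step bound of the form $C[d^{1/3}+\theta]$ decaying geometrically. However, the proposal defers exactly the step that carries the weight of the proof, and you flag it yourself as ``the principal obstacle'' without resolving it.

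The gap is the per-iterate estimate, where $D^2 T^{-1}$ blows up near $\Si_0$ and the homogeneity-strip boundaries. Strict cone invariance alone does not give a uniform $\Lambda^{-1}$ contraction on slopes; the paper obtains the slope contraction by a precise bookkeeping in {\bf (H1)}: with $x_{i-1}\in\Ho_k$ and $x_i$ on a piece of index $n$, the Lipschitz constants of the nonlinear parts $\alpha_i,\beta_i$ in the chart are of order $n^2k^6 \cdot (nk^5)^{-1} = nk$ (second-derivative bound \eqref{eq:2 deriv} times chart size, which hypothesis (iii) lets one take comparable to $|T^iU^1|$), while the linear expansion $|A_i|$ is of order $nk^2$ by \eqref{eq:expansion}; the ratio in \eqref{eq:lip} is then $\sim 1/k$, which is $<\Lambda^{-1}$ for $k$ large, and \eqref{eq:3 deriv} forces $k$ large when $n$ is. That balance is what makes the contraction uniform across $\F$; shrinking $\delta_0$ handles only the complementary compact region. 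Similarly in (b), you invoke \eqref{eq:distortion stable}, but that is a single-curve distortion bound and cannot produce either the angle term $\theta$ or the exponent $1/3$ in the cross-curve comparison. The paper instead writes the per-step difference as $\|DT^{-1}(x_i)\|\,\|\vu_1-\vu_2\| + \|D^2T^{-1}(z_i)\|\,d(x_i,x_i^*)$, divided by the minimal first-derivative expansion, and the crucial inequality
\[
\frac{\|D^2T^{-1}(z_i)\|\,d(x_i,x_i^*)}{\|DT^{-1}(x_i)\vu_1\|}
\;\le\; \frac{(C_a n^2 k^6)\,(2C_a/(nk^5))}{C_a^{-1}nk^2}
\;\le\; \frac{2C_a^3}{k}
\;\le\; 2C_a^3\, d(x_{i-1},x_{i-1}^*)^{1/3}
\]
is exactly what converts the $1/k$ gain into the $d^{1/3}$ form (using $|T^iU^j|\lesssim 1/(nk^5)$, hence $d(x_{i-1},x_{i-1}^*)\lesssim k^{-3}$). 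Without supplying this computation, both (a) and (b) remain unproved; everything else in your plan (the unstable-curve contraction $d(T^jx,T^jx^*)\lesssim\Lambda^{-(n-j)}d(T^nx,T^nx^*)$, the geometric summation, applying (a) to control the intermediate angles) is correct but subsidiary.
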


\begin{proof}
(a)  This is essentially a graph transform argument adapted for this class of maps satisfying
{\bf (H1)}.  What we need to show here is that we do not need to cut curves lying in homogeneity
strips any further in order to get the required contraction and control on distortion.

First notice that due to the uniform expansion of $\gamma_x$ under $T^n$ given by
\eqref{eq:uniform hyp} of {\bf (H1)}, we have $|\gamma_x| \le C_e C_t \Lambda^{-n} d_{\W^s}(W^1, W^2)$, where
$C_t$ is a constant depending only on the minimum angle between $C^u(x)$ and
$C^s(x)$ and between $C^u(x)$ and the horizontal direction.
Again by the transversality of
$\gamma_x$ with $U^1$ and $U^2$, the $r$-intervals on which the functions
$\vf_{U^1}$, $\vf_{U^2}$ describing the curves $U^1$, $U^2$
are defined can differ by no more than
$C_e C_t^2 \Lambda^{-n} d_{\W^s}(W^1, W^2)$.  Letting $I$ denote the intersection
of intervals
on which both functions are defined and recalling the definition of $d_{\W^s}(\cdot, \cdot)$ from
Section~\ref{norms}, it remains to estimate
 $|\vf_{U^1} - \vf_{U^2}|_{\C^1(I)}$.

By the same observation as above, we have
$|\vf_{U^1} - \vf_{U^2}|_{\C^0(I)} \le C_t^2 C_e \Lambda^{-n} d_{W^s}(W^1, W^2)$.
In order to show that the slopes of these curves also contract exponentially, we
make the usual graph transform argument using charts in the adapted norm $\| \cdot \|_*$
from {\bf (H3)}.

Fix $x \in U^1$ and define charts along the
orbit of $x$ so that $x_i := T^ix$, $0 \le i \le n$, corresponds
to the origin in each chart with the stable direction at $x_i$ given by the horizontal axis and the unstable direction by the vertical axis in the charts.  Let $\vartheta < 1$ denote the maximum
absolute value of slopes of stable curves in the chart.  Due to property (iii) before the
statement of the lemma, we may choose the size of the charts to have stable and unstable
diameters $\le C |T^iU1|$ for each $i$, for some uniform constant $C$.  The dynamics
induced by $T^{-1}$ on these charts is defined by
\[
\tilde T^{-1}_{x_i} = \chi^{-1}_{x_{i-1}} \circ T^{-1}  \circ \chi_{x_i}
\]
where $\chi_{x_i}$ are smooth maps with $|\chi_{x_i}|_{\C^2}, |\chi^{-1}_{x_i}|_{\C^2}
\le C$ for some uniform constant $C$.

Note that $D\tilde T_{x_i}^{-1}$ and
$D^2\tilde T_{x_i}^{-1}$ satisfy {\bf (H1)} with possibly larger $C_a$ and $C_e=1$.
In the chart coordinates, since $\tilde T^{-1}_{x_i}(0) = 0$, we have
\[
\tilde T^{-1}_{x_i} (s, t) = (A_i s + \alpha_i(s,t), B_i t + \beta_i(s,t))
\]
where $A_i$ is the expansion at $x_i$ in the stable direction and
$B_i$ is the contraction at $x_i$ in the unstable direction given
by $DT^{-1}_{x_i}(0)$.  The nonlinear functions $\alpha_i, \beta_i$ satisfy
$\alpha_i(0,0)=\beta_i(0,0) =0$ and their Lipschitz constants are bounded by the
maximum of
\begin{equation}
\label{eq:alpha lip}
\| D\tilde T_{x_i}^{-1}(u) - D\tilde T_{x_i}^{-1}(v) \|
\le \| D^2 \tilde T_{x_i}(z) \| \| u- v \|
\end{equation}
where $u, v, z$ range over the chart at $x_i$.

We fix $i$ and let $\vf_1$, $\vf_2$ denote two Lipschitz functions whose graphs
lie in the stable cone of the chart at $x_i$ and satisfy $\vf_j(0)=0$, $j=1,2$.  Define
$L(\vf_1, \vf_2) = \sup_{s \neq 0} \frac{| \vf_1(s) - \vf_2(s) |}{|s|}$.
Let
$\vf_1' = \tilde T_*^{-1} \vf_1$ and $\vf_2'=\tilde T_*^{-1} \vf_2$ denote the graphs of the
images of these two curves in the chart at $x_{i-1}$.
We wish to estimate
$L(\vf_1', \vf_2')$.  For $s$ on the horizontal axis in the chart at $x_i$, we write,
\[
\begin{split}
|\vf_1'(A_i s + & \alpha_i(s, \vf_1(s))) -  \vf_2'(A_is + \alpha_i(s, \vf_1(s))) |
 \le |\vf_1'(A_i s + \alpha_i(s, \vf_1(s))) - \vf_2'(A_is + \alpha_i(s, \vf_2(s))) | \\
& \qquad + |\vf_2'(A_i s + \alpha_i(s, \vf_2(s))) - \vf_2'(A_is + \alpha_i(s, \vf_1(s))) | \\
& \le |B_i| | \vf_1(s) - \vf_2(s) | + | \beta_i(s, \vf_1(s)) - \beta_i(s, \vf_2(s)) |
+ \vartheta | \alpha_i(s, \vf_1(s)) - \alpha_i(s, \vf_2(s)) |   \\
& \le (|B_i| + \mbox{Lip}(\beta_i) + \vartheta \mbox{Lip}(\alpha_i)) | \vf_1(s) - \vf_2(s) |
\end{split}
\]
On the other hand, by \eqref{eq:expansion},
\[
|A_i s +  \alpha_i(s, \vf_1(s))| \ge (|A_i|  - \mbox{Lip}(\alpha_i)(1+\vartheta))|s| .
\]
Putting these together, we see that,
\begin{equation}
\label{eq:lip}
L(\vf_1', \vf_2') \le \sup_{s \neq 0} \frac{(|B_i| + \mbox{Lip}(\beta_i) + \vartheta \mbox{Lip}(\alpha_i)) | \vf_1(s) - \vf_2(s) |}{(|A_i|  - \mbox{Lip}(\alpha_i)(1+\vartheta))|s|} \le
\frac{|B_i| + \mbox{Lip}(\beta_i) + \vartheta \mbox{Lip}(\alpha_i)}{|A_i|  - \mbox{Lip}(\alpha_i)(1+\vartheta)}  L(\vf_1, \vf_2) .
\end{equation}

Suppose that $x_{i-1}$
lies in the homogeneity strip $\Ho_k$ and $x_i$ lies on a curve with index $n$
according to the index given by {\bf (H1)}.  Then by \eqref{eq:alpha lip} and
\eqref{eq:expansion} and \eqref{eq:2 deriv} of {\bf (H1)},  the Lipshitz constants
of  $\alpha_i$ and $\beta_i$ are bounded by
$C_a^{-1} n^2 k^6 (C_a^{-1} n^{-1} k^{-5}) = C_a^{-2} nk$ since the size of the chart
is taken to be on the order of the length of the curve $T^iU^1$ by property (iii) of the
matching.  Thus,
\[
L(\vf_1', \vf_2') \le \frac{\Lambda^{-1} + C_a^{-2} nk (1+ \vartheta)}{C_a nk^2 - C_a^{-2} nk (1+\vartheta)} L(\vf_1, \vf_2) \le \frac{4C_a^{-3}}{k} L(\vf_1, \vf_2),
\]
for large $k$, which can be made smaller than $\Lambda^{-1}$.  Note that since $k \ge c_s n^{\upsilon_0}$ by {\bf (H1)}, this bound is also small for large $n$.  Thus we may choose
$N_0, K_0 >0$ such that the contraction is less than $\Lambda^{-1}$ on all curves
with index $n \ge N_0$ or landing in homogeneity strip $\Ho_k$, $k \ge K_0$.
On the remainder of
$M$, the first and second derivatives of $T^{-1}$ are uniformly bounded by constants
depending on $N_0$ and $K_0$.  For curves in this part of $M$, we choose $\delta_0$,
the maximum length of stable curves in $\W^s$, sufficiently small that the distortion
given by \eqref{eq:alpha lip} is less than
$\frac 12 ( \Lambda^{-1/2} - \Lambda^{-1})$.
Then by \eqref{eq:lip}, since $\vartheta <1$,
the contraction on these pieces is less than $\Lambda^{-1}$ as well.

If $\vf_1$ and $\vf_2$ do not pass through the origin, the exponential contraction in
the $C^0$ norm coupled with the above argument yields the required contraction.

\noindent
(b) It is equivalent to estimate the ratio
$\log \frac{J_{T^nU_1}T^{-n}(T^nx)}{J_{T^n_U2}T^{-n}(T^nx^*)}$.  We write
\begin{equation}
\label{eq:jac split}
\log \frac{J_{T^nU_1}T^{-n}(T^nx)}{J_{T^n_U2}T^{-n}(T^nx^*)}
\le \sum_{i=1}^n \frac{1}{A_i}  | J_{T^iU_1}T^{-1} (T^ix) - J_{T^iU_2}T^{-1}(T^ix^*)|
\end{equation}
where $A_i = \min \{ J_{T^iU_1}T^{-1}(T^ix), J_{T^iU_2}T^{-1}(T^ix^*) \}$.

We estimate the differences one term at a time and assume without loss of generality that
the minimum for $A_i$ is attained at $T^ix$.  Set $x_i = T^ix$, $x_i^* = T^ix^*$.
Let $\vu_1(x_i)$ denote the unit tangent  vector to $T^iU^1$ at $x_i$ and
notice that $J_{T^iU_1}T^{-1}(x_i) = \| DT^{-1}(x_i) \vu_1 \|$.  Define
$\vu_2(x_i^*)$ similarly.  Then
\[
\begin{split}
| \, \| DT^{-1}(x_i) \vu_1 \| - \| DT^{-1}(x_i^*) \vu_2 \| \, |
& \le | \,  \| DT^{-1}(x_i) \vu_1 \| - \| DT^{-1}(x_i) \vu_2 \| \, |  \\
& \qquad + | \, \| DT^{-1}(x_i) \vu_2 \|
- \| DT^{-1}(x_i^*) \vu_2 \|   \, | \\
& \le \| DT^{-1}(x_i) \| \, \| \vu_1 - \vu_2 \| + \| D^2T^{-1}(z_i) \| d(x_i, x_i^*) ,
\end{split}
\]
where $z_i$ is some point on $T^i\gamma_x$.

Suppose $T^{-1}x_i$ lies in the homogeneity strip $\Ho_k$ and $x_i$ lies on some
curve $W_n$ according to the index given in {\bf (H1)}.
Then $\| DT^{-1}(x_i) \| / \| DT^{-1}(x_i) \vu \| \le C$ where $C$ is some uniform constant
for all unit vectors $\vu \in C^s(x_i)$.
Also by {\bf (H1)}, we have $|T^iU^j| \le C_a/nk^5$, $j=1,2$, so
that by property (iii) before the statement of the lemma, $d(x_i, x_i^*) \le 2C_a/(nk^5)$.  Thus
\[
\frac{\| D^2T^{-1}(z_i) \| d(x_i, x_i^*)}{\| DT^{-1}(x_i) \vu_1 \|}
\le \frac{(C_a n^2 k^6) (2C_a/(nk^5))}{C_a^{-1} nk^2} \le \frac{2C_a^3}{k} \le 2C_a^3 d(x_{i-1}, x_{i-1}^*)^{1/3} .
\]
Using these estimates in \eqref{eq:jac split}, we have
\[
\log \frac{J_{T^nU_1}T^{-n}(T^nx)}{J_{T^n_U2}T^{-n}(T^nx^*)}
\le C \sum_{i=1}^n \| \vu_1(x_i) - \vu_2 (x_i^*) \| + d(x_{i-1}, x_{i-1}^*)^{1/3}.
\]
Now $\| \vu_1(x_i) - \vu_2(x_i^*) \| \le \theta(x_i, x_i^*) \le C_0 \Lambda^{i-n} \theta(T^nx, T^nx^*)$
by part (a) of the lemma together with the fact that curves in $\W^s$ have $\C^2$ norm
uniformly bounded above.
Finally, by {\bf (H1)}, $d(x_{i-1}, x_{i-1}^*) \le C_e \Lambda^{i-n-1} d(T^nx, T^nx^*)$,
which completes the proof of the lemma.
\end{proof}


\subsection{Properties of the Banach spaces}
\label{recall property}

We first prove that the weak and strong norms dominate distributional norms on $M$
in the following sense.

\begin{lemma}
\label{lem:distr}
There exists $C >0$ such that for any $h \in \B_w$, $T \in \F$, $n \ge 0$  and
$\psi \in \C^p(T^{-n}\W^s)$,
  \[
  |h(\psi)| \le C |h|_w (|\psi|_\infty + H^p_n(\psi)) .
  \]
\end{lemma}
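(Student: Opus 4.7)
The plan is to reduce, by density of $\C^1(M)$ in $\B_w$, to the case $h \in \C^1(M)$, for which $h(\psi) = \int_M h \psi \, d\mu$ is an honest integral; the estimate then extends to all of $\B_w$ by continuity, since once the bound is established the pairing is a continuous linear functional on $\C^p(T^{-n}\W^s)$.

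For $h \in \C^1(M)$, I would disintegrate $\mu$ along a measurable foliation of $M$ by stable curves, apply the weak norm bound leaf-by-leaf, and integrate transversely. The key technical point is to choose the foliation so that every leaf already lies in $T^{-n}\W^s$: otherwise cutting a generic leaf by the $n$-step singularity set $\Si_n^H$ produces a number of pieces that grows with $n$, contaminating the constant. The natural choice is the foliation by local stable manifolds of $T$, truncated to length at most $\delta_0$, since such a leaf $W$ satisfies $T^i W \in \W^s$ for every $i \ge 0$ and hence $W \in \bigcap_{n \ge 0} T^{-n} \W^s$. Under (H1), (H2), and (H4), the Pesin--Sinai theory for hyperbolic billiards supplies such a foliation $\mu$-a.e., with a disintegration
\[
d\mu = \rho_\alpha \, dm_{W_\alpha} \, d\nu(\alpha),
\]
in which $|\rho_\alpha|_{\C^p(W_\alpha)}$ is bounded by a constant depending only on (H1)--(H5) and $\nu$ has finite total mass, uniformly over $T \in \F$.

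With the disintegration in place, the weak norm on each leaf gives
\[
\left| \int_{W_\alpha} h \psi \rho_\alpha \, dm_{W_\alpha} \right|
\le |h|_w \, |\psi \rho_\alpha|_{\C^p(W_\alpha)}
\le C' |h|_w \bigl( |\psi|_\infty + H^p_n(\psi) \bigr),
\]
using $W_\alpha \in T^{-n}\W^s$ so that $H^p_{W_\alpha}(\psi) \le H^p_n(\psi)$, together with the product rule for H\"older norms and the uniform bound on $|\rho_\alpha|_{\C^p(W_\alpha)}$. Integrating in $\nu$ and using $\nu(\text{base}) < \infty$ yields the required estimate with a constant $C$ independent of $n$, $T$, $h$, and $\psi$.

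The main obstacle is formalizing the uniformity of the Pesin--Sinai disintegration across the class $\F$: the $\C^p$ bound on $\rho_\alpha$ and the total mass of $\nu$ must depend only on the constants appearing in (H1)--(H5), not on the particular map $T$. This is where the uniformity of the hypotheses is essential, and relies on the uniform absolute continuity of the stable holonomy (which in turn uses the distortion bounds (H4)) together with the one-step expansion (H3) to rule out a.e.\ vanishing of local stable manifolds. A less conceptual but more self-contained alternative is to use a fixed smooth geometric foliation by stable curves in $\W^s$, cut by $\Si_n^H$, and invoke the Growth Lemma (Lemma~\ref{lem:growth}) to absorb the resulting complexity---but this complicates the estimate and is unnecessary once Pesin--Sinai is available uniformly.
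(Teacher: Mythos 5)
Your approach is genuinely different from the paper's: you propose to use the invariant family of local stable manifolds of $T$ as the disintegrating foliation (so that every leaf lies in $T^{-n}\W^s$ automatically), apply the weak norm directly on each leaf, and integrate transversally. The paper instead uses a fixed smooth geometric foliation $\{W_\xi\}$ by stable curves (independent of $T$), writes $h(\psi) = \int_M \Lp^n h\cdot(\psi\circ T^{-n})\,d\mu$, and then changes variables to pull back to the pieces $W^n_{\xi,i}\subset T^{-n}W_\xi$ where $\psi$ actually is H\"older; the resulting sums are controlled by Lemma~\ref{lem:growth} together with the identity $\int_M (J_\mu T^k)^{-1}\,d\mu = 1$.

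There is a genuine gap in your argument. The claimed disintegration --- conditional densities $\rho_\alpha$ uniformly bounded in $\C^p(W_\alpha)$ \emph{and} transverse measure $\nu$ of finite total mass --- cannot hold simultaneously for the stable foliation of a dispersing billiard. The lengths of local stable manifolds range continuously down to zero, and $\mu(\{x : |W^s(x)| < \ell\})$ is of order $\ell$ (the set of points with short stable manifolds fills an $\ell$-neighborhood of the one-dimensional singularity set). If you normalize $\rho_\alpha$ to be bounded, the transverse mass carried by leaves with $|W_\alpha|\in[\ell,2\ell]$ is of order $1$, and summing over dyadic scales gives $\nu$ infinite total mass; if you normalize $\nu$ to be finite, then $\rho_\alpha$ is of order $1/|W_\alpha|$ on short leaves and hence unbounded. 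For the smooth foliation the paper uses (curves of length comparable to $\delta_0$, or to the width of a homogeneity strip) both bounds do hold --- but that foliation does not lie in $T^{-n}\W^s$, which is exactly why the change of variables and the Growth Lemma are then needed; the dismissed ``less conceptual'' alternative is not an optional complication, it is how one pays for the non-invariance of $\mu$ under $T$.

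A related warning sign: your argument never touches {\bf (H5)}, while the paper's proof uses it essentially in \eqref{eq:long short}, where the $\eta^n$ contribution from $(J_\mu T^n)^{-1}$ must be beaten by the $\theta_*^{n-k}$ contraction from Lemma~\ref{lem:growth}(a); this is precisely where $\eta\theta_* < 1$ (a consequence of {\bf (H5)}) enters. Since Lemma~\ref{lem:distr} is the device that upgrades the spectral radius bound from $\eta$ (possibly $>1$) to $1$, a proof that makes no reference to {\bf (H5)} should already arouse suspicion.
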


This is the analogue of Lemma~3.9 of \cite{demers zhang}, but it does not follow
from the argument given there since
condition {\bf (H5)} and the weakened Lasota-Yorke inequalities
in Theorem~\ref{thm:uniform} suggest
that the spectral radius of $\Lp_T$ can be as much as $\eta >1$.  It is a consequence
of Lemma~\ref{lem:distr} that the spectral radius is in fact 1
(see Section~\ref{uniform}, proof of Theorem~\ref{thm:uniform}).

\begin{proof}[Proof of Lemma~\ref{lem:distr}]
On each $M_\ell = \Gamma_\ell \times [-\pi/2, \pi/2]$, we partition the set $\Ho_0$
into finitely many boxes $B_j$ whose boundary curves are elements of $\W^s$
and $\W^u$ as well as the horizontal lines $\pm \pi/2 \mp 1/k_0^2$.
We construct the boxes so that each $B_j$ has diameter $\le \delta_0$ and
is foliated by a smooth family of stable curves
$\{W_\xi\}_{\xi \in E_j} \subset \W^s$, each of whose elements completely crosses $B_j$ in the
approximate stable direction.

We decompose the smooth measure $d\mu = \cos \vf dm$ on $B_j$ into
$d\mu = \hat \mu(d\xi) d\mu_\xi$,
where $\mu_\xi$ is the conditional measure of $\mu$ on $W_\xi$ and
$\hat \mu$ is the transverse measure on $E_j$.  We normalize
the measures so that $\mu_\xi(W_\xi) = \int_{W_\xi} \cos \vf \, dm_{W_\xi}$.	
Since the foliation is smooth,
$d\mu_\xi = \rho_\xi \cos \vf dm_{W_\xi}$ where $|\rho_\xi|_{\C^1(W_\xi)} \le C$ for
some constant $C$ independent of $\xi$.	 Note that
$\hat \mu(E_j) \le C \delta_0$
due to the transversality of curves in $\W^s$ and $\W^u$.
Next we choose on each homogeneity strip
$\Ho_t$, $t \ge k_0$, a smooth
foliation $\{W_\xi \}_{\xi \in E_t} \subset \W^s$ whose elements all have endpoints lying
in the two boundary curves of $\Ho_t$.  We again decompose $\mu$ on $\Ho_t$
into $d\mu = \hat \mu(d\xi) d\mu_\xi$, $\xi \in E_t$, and $d\mu_\xi = \rho_\xi \cos \vf dm_{W_\xi}$ is
normalized as above.  By construction, $\hat \mu(E_t) = \mathcal{O}(1)$.
Given $h \in \C^1(M)$, $\psi \in \C^p(T^{-n}\W^s)$, since $T^{-n}M = M$ {\em (mod 0)}, we have
$h(\psi) = \int_M h \psi \, d\mu = \int _M \Lp^n h \, \psi \circ T^{-n} \, d\mu$.  We split
$M = \cup_\ell M_\ell$ and integrate one $\ell$ at a time.
\[
\begin{split}
& \int_{M_\ell} \Lp^n h \, \psi \circ T^{-n} \, d\mu  = \sum_j \int_{B_j} \Lp^n h \, \psi \circ T^{-n} \, d\mu
+ \sum_{|t|\geq k_0} \int_{\Ho_t} \Lp^n h \, \psi \circ T^{-n} \, d\mu \\
& = \sum_j \int_{E_j} \int_{W_\xi} \Lp^nh \, \psi\circ T^{-n} \, \rho_\xi \, d\mu_W d\hat{\mu}(\xi)
+ \sum_{|t| \geq k_0} \int_{E_t} \int_{W_\xi} \Lp^nh \, \psi\circ T^{-n} \, \rho_\xi \, d\mu_W d\hat \mu(\xi)
\end{split}
\]
We change variables and estimate the integrals on one $W_\xi$ at a time.
Letting $W^n_{\xi,i}$ denote the components of $\G_n(W_\xi)$
defined in Section~\ref{preliminary} and recalling that $J_{W^n_{\xi,i}}T^n$ denotes the stable
Jacobian of $T^n$ along the curve $W^n_{\xi,i}$, we write,
\[
\begin{split}
\int_{W_\xi} & \Lp^n h \, \psi \circ T^{-n} \, \rho_\xi \, d\mu_W = \sum_i \int_{W^n_{\xi,i}}
h \psi (J_\mu T^n)^{-1} J_{W^n_{\xi,i}}T^n \rho_\xi \circ T^n \cos \vf \circ T^n \, dm_W \\
& \leq \sum_i |h|_w  |\psi|_{\C^p(W^n_{\xi,i})}
|(J_\mu T^n)^{-1} J_{W^n_{\xi,i}}T^n|_{\C^p(W^n_{\xi,i})}
| \rho_\xi \circ T^n|_{\C^p(W^n_{\xi,i})}  | \cos \vf \circ T^n|_{\C^p(W^n_{\xi,i})} .
\end{split}
\]
By \eqref{eq:C1 C0}, we have $| \rho_\xi \circ T^n|_{\C^p(W^n_{\xi, i})}
\le C |\rho_\xi|_{\C^p(W_\xi)} \le C$ for some uniform constant $C$.
The disortion bounds given by {\bf (H4)}, equation \eqref{eq:distortion stable}, imply that
\begin{equation}
\label{eq:holder c0}
| (J_\mu T^n)^{-1}J_{W^n_{\xi,i}}T^n|_{\C^p(W^n_{\xi,i})} \leq (1+2C_d) | (J_\mu T^n)^{-1}
J_{W^n_{\xi, i}}T^n|_{\C^0(W^n_{\xi, i})} .
\end{equation}

For $W \in \W^s$, let $\cos W_\xi$ denote the average value of $\cos \vf$ on $W$.  Note that
there exists $C_c >0$, depending only on $k_0$ and the uniform transversality of
$C^s(x)$ with the horizontal direction, such that
$C_c^{-1} \cos W \le \cos \vf(x) \le C_c \cos W$ for all $x \in W$.

Thus
$| \cos \vf \circ T^n|_\infty \le C_c \cos W_\xi$.  Then since
$|\cos \vf \circ T^n(x) - \cos \vf \circ T^n(y)| \le d_W(T^n x, T^n y)$ for $x,y \in W^n_{\xi,i}$,
we have $H^p_{W^n_{\xi,i}} (\cos \vf \circ T^n) \le C_e |W_\xi|^{1-p}$, where we have used
{\bf (H1)}.  If $W_\xi \subset \Ho_t$, then $\cos W_\xi \ge c t^{-2}$ while
$|W_\xi| \le C' t^{-3}$ for uniform constants $c, C' >0$, depending on the minimum angle
between $C^s(x)$ and the horizontal.  Thus since $p \le 1/3$, we have
$|\cos \vf \circ T^n |_{\C^p(W^n_{\xi,i})} \le C \cos W_\xi$ for some uniform constant $C$.

Gathering these estimates together, we have
\begin{equation}
\label{eq:single}
\int_{W_\xi}  \Lp^n h \, \psi \circ T^{-n} \, \rho_\xi \, d\mu_W
\le C |h|_w (|\psi|_\infty + H^p_n(\psi)) \cos (W_{\xi}) \sum_i
|(J_\mu T^n)^{-1} J_{W^n_{\xi,i}}T^n|_{\C^0(W^n_{\xi,i})},
\end{equation}
where $C$ is uniform in $T$ and $n$.
We group the pieces $W^n_{\xi,i} \in \G_n(W_\xi)$ according to most recent long ancestor
$W^k_{\xi, j} \in \G_k(W_\xi)$
as described in Section~\ref{preliminary}.  Then splitting up the Jacobians according to times
$k$ and $n-k$ and using {\bf (H5)}, we have
\begin{equation}
\label{eq:long short}
\begin{split}
\sum_i   & |(J_\mu T^n)^{-1} J_{W^n_{\xi,i}}T^n|_{\C^0(W^n_{\xi,i})}
 \le \sum_{i \in \I_n(W)}  \eta^n |J_{W^n_{\xi,i}}T^n|_{\C^0(W^n_{\xi,i})} \\
& \; \;  \; \; \; +  \sum_{k=1}^n \sum_{j \in L_k(W_\xi)} |(J_\mu T^k)^{-1}J_{W^k_{\xi,j}}T^k|_{\C^0(W^k_{\xi,j})}  \left( \sum_{i \in \I_n(W^k_j)} \eta^{n-k} |J_{W^n_{\xi,i}}T^{n-k}|_{\C^0(W^n_{\xi,i})}
 \right) \\
& \le C_1 (\eta \theta_*)^n
+ \sum_{k=1}^n \sum_{j \in L_k(W_\xi)} |(J_\mu T^k)^{-1}J_{W^k_{\xi,j}}T^k|_{\C^0(W^k_{\xi,j})}
 C_1 (\eta \theta_*)^{n-k}
\end{split}
\end{equation}
where we have used Lemma~\ref{lem:growth}(a) on each of the terms involving
$\I_n(W^k_{\xi,j})$ from time $k$ to time $n$.

For each $k$,  since $|W^k_{\xi,j}| \ge \delta_0/3$, we have by bounded distortion {\bf (H4)},
\[
\begin{split}
\sum_{j \in L_k(W_\xi)} |(J_\mu T^k)^{-1}J_{W^k_{\xi,j}}T^k|_{\C^0(W^k_{\xi,j})}
& \le (1+C_d)^2 3 \delta_0^{-1} \sum_{j \in L_k(W_\xi)} \int_{W^k_{\xi,j}} (J_\mu T^k)^{-1}
J_{W^k_{\xi,j}}T^k \, dm_W \\
& \le C \delta_0^{-1} \int_{W_\xi} (J_\mu T^k)^{-1} \, dm_W .
\end{split}
\]
Putting this estimate together with \eqref{eq:single} and \eqref{eq:long short}
and bringing $\cos W_\xi$ into the integral,
\[
\int_{W_\xi}  \Lp^n h \, \psi \circ T^{-n} \, \rho_\xi \, d\mu_W
\leq C |h|_w (|\psi|_\infty + H^p_n(\psi))
\Big(\cos W_\xi + \sum_{k=1}^n (\eta \theta_*)^{n-k} \int_{W_\xi} (J_\mu T^k)^{-1} \, d\mu_W  \Big)
\]
for some uniform constant $C$.
Thus
\[
\begin{split}
\Big| \int_{M_\ell} \Lp^n h \, & \psi \circ T^{-n} \, dm \Big|
\le C |h|_w (|\psi|_\infty + H^p_n(\psi))
\Big( \sum_j \int_{E_j} \cos W_\xi \, \hat \mu(d\xi)	+ \sum_{|t| \geq k_0} \int_{E_t}
\cos W_\xi \, \hat \mu(d \xi) \\
& \qquad \qquad +  \sum_j \sum_{k=1}^n (\eta \theta_*)^{n-k} \int_{B_j} (J_\mu T^k)^{-1} \, d\mu + \sum_{|t| \geq k_0} \sum_{k=1}^n (\eta \theta_*)^{n-k} \int_{\Ho_t} (J_\mu T^k)^{-1} \, d\mu \\
& \! \! \! \! \! \le C |h|_w ( |\psi|_\infty + H^p_n(\psi)) \Big( \sum_j \hat \mu(E_j)
+ \sum_{|t| \geq k_0} t^{-2} \hat \mu(E_t) + \sum_{k=1}^n (\eta \theta_*)^{n-k}
\int_{M_\ell} (J_\mu T^k)^{-1} d\mu  \Big)
\end{split}
\]
where in the last line we have used the fact that
$\cos W \leq Ct^{-2}$ for $W \subset \Ho_t$.	The first two sums are finite since there are only
finitely many $E_j$ and $\hat \mu(E_t)$ is of order $1$ for each $t$.
Since there are only finitely many $M_\ell$, the first two sums remain finite when we sum over
$\ell$.  For the third sum, we sum over $\ell$ and use the fact that $\int_M (J_\mu T^k)^{-1} \, d\mu = 1$
for each $k \ge 1$.  Thus the contribution from the third sum is uniformly bounded in
$n$ using the fact that $\eta \theta_* <1$ by {\bf (H5)}.
\end{proof}

Several other properties of the spaces $\B$ and $\B_w$ proved in
\cite{demers zhang} do not need to be reproved since their proofs remain essentially
unchanged.  They are as follows.
\begin{itemize}
  \item[(i)] (\cite[Lemma 3.7]{demers zhang})
  $\B$ contains piecewise H\"older continuous functions $h$ with exponent greater than $2\beta$ provided
the discontinuities of $h$ are uniformly transverse to the stable cones $C^s(x)$.  
  \item[(ii)] (\cite[Lemma 2.1]{demers zhang})
$\Lp$ is well-defined as a continuous linear operator on both $\B$ and $\B_w$.
Moreover, there is a sequence of embeddings
$\C^\gamma(M) \hookrightarrow \B \hookrightarrow \B_w \hookrightarrow (\C^p(M))'$,
for all $\gamma > 2\beta$.
  \item[(iii)] (\cite[Lemma 3.10]{demers zhang})
  The unit ball of $(\B, \| \cdot \|_\B)$ is compactly embedded in $(\B_w, | \cdot |_w)$.
\end{itemize}
Lemma~\ref{lem:distr} and items (i) and (ii)
characterize the spaces $\B$ and $\B_w$ as spaces of distributions
containing all H\"older continuous and certain classes of piecewise H\"older continuous
functions.  The last item is necessary in order to deduce the quasi-compactness of
$\Lp_T$ from the Lasota-Yorke inequalities given by Theorem~\ref{thm:uniform}.

There remains one final fact to establish.  As mentioned earlier, since we
identify $h \in \C^1(M)$ with the measure $h\mu$ as an element of $\B$, {\em a priori}
Lebesgue measure may not be in $\B$.  The following Lemma shows that
Lebesgue measure is in fact in $\B$ and therefore so is $h dm$ for any $h \in \C^1(M)$.

\begin{lemma}
\label{lem:lebesgue}
The function $(\cos \vf)^{-1}$ is in  $\B$.  Therefore, Lebesgue measure
$m = (\cos \vf)^{-1} \,\mu$ is also in $\B$ and so is $h m$ for any $h \in \C^1(M)$.  Indeed,
any piecewise H\"older continuous function as in item (i) above times Lebesgue belongs to $\B$.
\end{lemma}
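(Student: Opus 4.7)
The plan is to approximate $(\cos\vf)^{-1}$ in $\B$ by a sequence of $\C^1$ functions and show that this sequence is Cauchy in the strong norm. For each $N\ge k_0$, I would define $h_N\in\C^1(M)$ by setting $h_N(r,\vf)=(\cos\vf)^{-1}$ on the region $\{|\vf|\le \pi/2 - N^{-2}\} = \Ho_0\cup\bigcup_{k_0\le|k|<N}\Ho_k$, and smoothly interpolating to a bounded function on each $\Ho_{\pm k}$ with $|k|\ge N$, so that $|h_N|_\infty\le CN^2$.

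The key geometric observation driving every estimate is that by {\bf (H2)} every $W\in\W^s$ is homogeneous and hence contained in a single $\Ho_k$, while the uniform transversality of $C^s(x)$ to the horizontal direction from {\bf (H1)} forces $|W|\le Ck^{-3}$ whenever $W\subset\Ho_k$; simultaneously $(\cos\vf)^{-1}\le Ck^2$ on $\Ho_k$. For $M>N$ and any $W\subset\Ho_k$ with $|k|\ge N$, this gives the weak bound $\int_W|h_M-h_N||\psi|\,dm_W \le Ck^2\cdot k^{-3}|\psi|_\infty \le Ck^{-1}$, so $|h_M-h_N|_w\le C/N$. For the strong stable norm, inserting the allowed $|\psi|_\infty\le|W|^{-\alpha}$ produces a bound of order $k^{3\alpha-1}$, tending to $0$ in $k$ provided $\alpha<1/3$ (consistent with the constraints of Section~\ref{norms}).

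The main work is the strong unstable norm. Pairs $W_1,W_2$ with $d_{\W^s}(W_1,W_2)\le\ve$ must lie in the same $\Ho_k$. When $\ve\ge k^{-3}$, the trivial bound $\bigl|\int_{W_1}-\int_{W_2}\bigr| \le 2Ck^{-1}\le C\ve^{1/3}$ yields a contribution of order $\ve^{1/3-\beta}$ after dividing by $\ve^\beta$. When $\ve<k^{-3}$, I expand the difference of integrals via the graph representation $W_i=\{(r,\vf_{W_i}(r))\}$ and split into two pieces: a $\psi$-variation piece controlled by $d_q(\psi_1,\psi_2)\le\ve$, $|\vf_{W_1}-\vf_{W_2}|_{\C^1}\le\ve$, and $|h_M-h_N|_\infty\le Ck^2$; and an $h$-variation piece using that the $\vf$-derivative of $(\cos\vf)^{-1}$ is at most $Ck^4$ on $\Ho_k$, multiplied by $|\vf_{W_1}-\vf_{W_2}|_\infty\le\ve$ and integrated over length $\le Ck^{-3}$, producing at worst $Ck\,\ve$. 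Under $\ve\le k^{-3}$ both pieces are $O(\ve^{1-\beta}k^{-\kappa})$ with positive $\kappa$ for $\beta$ sufficiently small, so the unstable norm contribution from each $\Ho_k$, $|k|\ge N$, tends to zero uniformly as $N\to\infty$.

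Having shown $\{h_N\}$ is Cauchy in $\B$, call the limit $h$. To identify $h$ with $(\cos\vf)^{-1}$ I would use dominated convergence: $(\cos\vf)^{-1}d\mu = c^{-1}dm$ is a finite measure, so $h(\psi)=\lim_N\int_M h_N\,\psi\,d\mu = c^{-1}\int_M\psi\,dm$ for every $\psi\in\C^\gamma(M)$ with $\gamma>2\beta$, and these test functions separate points in $\B_w$ by Lemma~\ref{lem:distr}. Thus $m\in\B$ and in particular $hm\in\B$ for $h\in\C^1(M)$. The extension to piecewise H\"older $h$ with discontinuities transverse to $C^s(x)$ as in item (i) follows by applying the same Cauchy argument to $h\cdot h_N$, which lies in $\B$ by Lemma~3.7 of \cite{demers zhang}, with an extra $|h|_\infty$ factor multiplying every estimate. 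The main obstacle will be the regime $\ve<k^{-3}$ of the unstable norm, where the $O(k^4)$ Lipschitz constant of $(\cos\vf)^{-1}$ on deep homogeneity strips must be balanced against the narrow width $k^{-3}$ and the graph-closeness of $W_1,W_2$.
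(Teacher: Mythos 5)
Your approach is essentially the paper's: the key balance is $(\cos\vf)^{-1}\le Ck^2$ against $|W|\le Ck^{-3}$ for $W\subset\Ho_k$ (from the strip width and the uniform transversality of the stable cone with the horizontal), together with approximation by cutting off high-index strips. The paper bounds $\|(\cos\vf)^{-1}|_{\Ho_k}\|_\B\le Ck^{-1/2}$ directly, zeros out the tail, and invokes \cite[Lemma 3.7]{demers zhang} on the finitely many remaining strips; you build an explicit Cauchy sequence $h_N$ by smooth truncation, which is morally the same and sidesteps the jump-discontinuity bookkeeping the paper's zeroing-out introduces. The one genuine omission is in the strong unstable norm for $\ve<k^{-3}$: expanding via the graph representation only covers the common interval $I_{W_1}\cap I_{W_2}$, but $d_{\W^s}$ allows the symmetric difference of the $r$-intervals to have length up to $\ve$, and the contributions of those unmatched boundary segments still need to be estimated. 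This is easily patched: write $|V^i_j|^\beta|V^i_j|^{1-\beta}$ with $|V^i_j|\le C\min\{\ve,k^{-3}\}$, giving a contribution of order $\ve^\beta k^{2}k^{-3(1-\beta)}=\ve^\beta k^{-1+3\beta}$, which after dividing by $\ve^\beta$ vanishes as $k\to\infty$ for $\beta<1/3$, so the same balance rescues it. With that term added, your argument is complete.
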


\begin{proof}
In order to show $(\cos \vf)^{-1} \in \B$, we must show that $(\cos \vf)^{-1}$ can be
approximated by functions $h \in \C^1(M)$ in the $\| \cdot \|_\B$ norm.
Since $\| f \|_\B = \sup_{k} \| f|_{\Ho_k} \|_\B$,
our strategy will be to show that $\| (\cos \vf)^{-1}|_{\Ho_k} \|_\B \le Ck^{-1/2}$ for some uniform
constant $C$.  We can then approximate $(\cos \vf)^{-1}$ by $0$ in homogeneity strips
of sufficiently high index.  More precisely, given $\ve >0$, we choose $K$ such that
$CK^{-1/2} < \ve$.  Then on the remaining strips $k < K$,
$(\cos \vf)^{-1}$ has finite $\C^1$-norm
and satisfies the assumptions of \cite[Lemma 3.7]{demers zhang}.  Thus we may find
$f_\ve \in C^1(M)$ as in the proof of that lemma such that
\[
\| (\cos \vf)^{-1} - f_\ve \|_\B \le \sup_{k \ge K} \| (\cos \vf)^{-1} |_{\Ho_k} \|_\B
+ \sup_{k < K} \| ((\cos \vf)^{-1} - f_\ve)|_{\Ho_k} \|_B < 2 \ve ,
\]
proving that $(\cos \vf)^{-1} \in \B$.

It remains to prove the claim $\| (\cos \vf)^{-1}|_{\Ho_k} \|_\B \le Ck^{-1/2}$.
Choose $W \in \W^s$, $W \subset \Ho_k$, and let $\psi \in \C^q(W)$ with
$|\psi|_{W, \alpha , q} \le 1$.  Then,
\[
\int_W (\cos \vf)^{-1} \, \psi \, dm_W \le |(\cos \vf)^{-1} |_{\C^0(W)} |\psi|_{\C^0(W)} |W|
\le |(\cos \vf)^{-1} |_{\C^0(W)} |W|^{1-\alpha}
\]
since $|\psi|_{\C^p(W)} \le |W|^{-\alpha}$.  On $\Ho_k$, we have $|W| \le ck^{-3}$
and $|\cos\vf |^{-1}  \le Ck^2$ for some uniform constants $c$ and $C$ which depend only on the
minimum angle of $C^s(x)$ with the horizontal.  Then, since $\alpha < 1/6$,
\begin{equation}
\label{eq:k balance}
|(\cos \vf)^{-1} |_{\C^0(W)} |W|^{1-\alpha} \le cC k^2 k^{-3+3 \alpha} \le C'k^{-1/2} .
\end{equation}
Taking the suprema over $W \subset \Ho_k$ and $\psi$ with $|\psi|_{W,\alpha, q} \le 1$, we have
$\| (\cos \vf)^{-1} |_{\Ho_k} \|_s \le C' k^{-1/2}$, completing the estimate on the strong stable norm.

To estimate the strong unstable norm, let $\ve \le \ve_0$ and choose two curves in
$\Ho_k$, $W^1, W^2 \in \W^s$, such that $d_{\W^s}(W^1, W^2) \le \ve$.  For $i=1, 2$,
let $\psi_i \in \C^p(W^i)$ with $|\psi_i|_{\C^p(W^i)} \le 1$ and $d_q(\psi_1, \psi_2) \le \ve$.

Recalling the notation of Section~\ref{norms}, denote $W^i =  \{ G_{W^i}(r) = (r, \vf_{W^i}(r)) : r \in I_{W^i} \}$,
$i = 1, 2$ and note that
by definition of $d_{\W^s}(\cdot, \cdot)$, $W^1$ and $W^2$ can be put into one-to-one
correspondence by a foliation of vertical line segments of length at most $\ve$,
except possibly near their endpoints.  Denote by $U^i$ the single matched connected component
of $W^i$ and by $V^i_j$ the at most 2 unmatched components of $W^i$.
We let $\Theta: U^1 \to U^2$ denote the holonomy map along the vertical
foliation.  We estimate,
\begin{equation}
\label{eq:cos split}
\begin{split}
\int_{W^1} (\cos \vf)^{-1} \psi_1 \, dm_W - \int_{W^2} & (\cos \vf)^{-1} \psi_2 \, dm_W
  = \sum_{i,j} \int_{V^i_j} (\cos \vf)^{-1} \psi_i \, dm_W \\
& \;\; \; +  \int_{U^1} (\cos \vf)^{-1} \psi_1 \, dm_W - \int_{U^2} (\cos \vf)^{-1} \psi_2 \, dm_W.
\end{split}
\end{equation}
We first estimate over the unmatched pieces $V^i_j$.  Note that $|V^i_j| \le C\ve$ where
$C$ depends only on the minimum angle of $C^s(x)$ with the vertical.
Recalling that $| \psi_i |_{\C^0(W^i)} \le 1$ and using
\eqref{eq:k balance} since $\beta \le \alpha$, we estimate
\begin{equation}
\label{eq:cosine unmatched}
\left| \sum_{i,j} \int_{V^i_j} (\cos \vf)^{-1} \psi_i \, dm_W \right|
\le \sum_{i,j} |V^i_j|^\beta |V^i_j|^{1-\beta} |(\cos \vf)^{-1}|_{C^0(V^i_j)} \le C \ve^\beta k^{-1/2} .
\end{equation}

To estimate the difference on the matched pieces $U^i$, we change variables to $U^1$ using
$\Theta$,
\[
\begin{split}
 \int_{U^1} (\cos \vf)^{-1} \psi_1 \, dm_W & - \int_{U^2} (\cos \vf)^{-1} \psi_2 \, dm_W
 = \int_{U^1} (\cos \vf)^{-1} \psi_1 - [(\cos \vf)^{-1}  \psi_2] \circ \Theta \, J\Theta \, dm_W \\
 & \le |U^1| |(\cos \vf)^{-1} \psi_1 - [(\cos \vf)^{-1}  \psi_2] \circ \Theta \, J\Theta|_{\C^0(U^1)} .
 \end{split}
 \]
To estimate the $\C^0$ norm of the test function, we split the difference into 3 terms
and use the fact that $|\psi_i|_{\C^0} \le 1$,
\begin{equation}
\label{eq:cosine test}
\begin{split}
|(\cos \vf)^{-1} \psi_1 & - [(\cos \vf)^{-1}  \psi_2] \circ \Theta \, J\Theta|_{\C^0(U^1)}
\le |(\cos \vf)^{-1} - (\cos \vf)^{-1} \circ \Theta|_{\C^0(U^1)} \\
& + |(\cos \vf)^{-1}|_{\C^0(U^2)} |\psi_1 - \psi_2 \circ \Theta|_{\C^0(U^1)}
+ |(\cos \vf)^{-1}|_{\C^0(U^2)} |1 - J\Theta|_{\C^0(U^1)}.
\end{split}
\end{equation}
For the first term above, note that for $x \in U^1$,
$| \cos \vf(x) - \cos \vf \circ \Theta(x) | \le d(x, \Theta(x)) \le \min \{ \ve , Ck^{-3} \}$ for
some uniform constant $C>0$.  Thus
\[
 |(\cos \vf)^{-1}(x) - (\cos \vf)^{-1} \circ \Theta(x)| \le
 \frac{d(x, \Theta(x))}{\cos \vf(x) \cos \vf \circ \Theta(x)} \le C' \frac{\ve^\beta k^{-3(1-\beta)}}{k^{-4}}
 \le C' \ve^\beta k^{3/2} ,
\]
since $\beta \le 1/6$.  To estimate the second term in \eqref{eq:cosine test}, denote $x \in U^1$
by $x = G_{W^1}(r)$ for some $r \in I_{W^1} \cap I_{W^2} =: I$.  Then
$|\psi_1(x) - \psi_2 \circ \Theta(x)| = |\psi_1 \circ G_{W^1}(r) - \psi_2 \circ G_{W^2(r)}| \le \ve$
by definition of $d_q(\cdot , \cdot)$.  Thus
\[
|(\cos \vf)^{-1}|_{\C^0(U^2)} |\psi_1 - \psi_2 \circ \Theta|_{\C^0(U^1)}
\le Ck^2 \ve .
\]
Finally, we estimate the third term of \eqref{eq:cosine test} by noting that
\[
|1-J\Theta| = \left| 1-  \frac{\sqrt{1 + (\vf'_{W^1})^2}}{\sqrt{1 + (\vf'_{W^2})^2}}  \right|
\le |\vf'_{W^1} - \vf'_{W^2}| \le \ve,
\]
where we have used the fact that the derivative of $\sqrt{1+ t^2}$,  $\frac{t}{\sqrt{1+t^2}}$, is
bounded by 1 for $t \geq 0$.

Putting these 3 estimates together in \eqref{eq:cosine test}, we estimate the norm on the
matched pieces by
\[
\left| \int_{U^1} (\cos \vf)^{-1} \psi_1 \, dm_W  - \int_{U^2} (\cos \vf)^{-1} \psi_2 \, dm_W \right|
\le |U^1| C(\ve^\beta k^{3/2} + \ve k^2 + \ve k^2) \le C' \ve^\beta k^{-1},
\]
using the fact that $|U^1| \le Ck^{-3}$.  This, combined with \eqref{eq:cosine unmatched},
yields the required estimate on the strong unstable norm.
\end{proof}


\section{Proof of Theorem~\ref{thm:uniform}}
\label{uniform}

The proof of Theorem~\ref{thm:uniform} relies on the following proposition.

\begin{proposition}
\label{prop:ly}
There exists $C>0$, depending only on {\bf (H1)}-{\bf(H5)}, such that for any
$T \in \F$, $h \in \B$ and $n \ge 0$,
\begin{eqnarray}
|\Lp_T^n h|_w & \le & C \eta^n |h|_w   \label{eq:weak norm} \\
\| \Lp_T^n h \|_s & \le &C ( \theta_1^{(1-\alpha)n} + \Lambda^{-qn}) \eta^n \|h\|_s + C \eta^n \delta_0^{-\alpha}|h|_w
\label{eq:stable norm} \\
\| \Lp_T^n h \|_u &\le & C\eta^n  \Lambda^{-\beta n} \| h\|_u + C \eta^n C_3^n \|h \|_s
\label{eq:unstable norm}
\end{eqnarray}
\end{proposition}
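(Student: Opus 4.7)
The plan is to base all three estimates on the common change-of-variables identity
\[
\int_W \Lp_T^n h \cdot \psi \, dm_W
\;=\; \sum_{W^n_i \in \G_n(W)} \int_{W^n_i} h \cdot A_i \cdot (\psi \circ T^n) \, dm_{W^n_i},
\qquad A_i := (J_\mu T^n)^{-1} J_{W^n_i} T^n,
\]
where $\G_n(W)$ is the generation of preimage pieces from Section~\ref{preliminary}. The Hölder norm of $A_i$ is controlled by its $\C^0$ norm via \textbf{(H4)}, and $(J_\mu T^n)^{-1}\le \eta^n$ by \textbf{(H5)}, so each summand is dominated by $\eta^n|J_{W^n_i}T^n|_{\C^0}$ times the appropriate norm of $h$ paired with $\psi\circ T^n$. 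For the weak norm \eqref{eq:weak norm} the argument ends here: if $|\psi|_{\C^p(W)}\le 1$ then $|\psi\circ T^n|_{\C^p(W^n_i)}\le 1$ by the contraction of $T^n$ along stable curves, and Lemma~\ref{lem:growth}(b) sums the Jacobian factors to $C_2$, producing $C\eta^n|h|_w$.

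The strong stable norm \eqref{eq:stable norm} is the heart of the argument. For $|\psi|_{\C^q(W)}\le|W|^{-\alpha}$, I write $\psi\circ T^n = \bar c_i + \zeta_i$ on each $W^n_i$, where $\bar c_i$ is the average. The oscillation satisfies $|\zeta_i|_{\C^0(W^n_i)}\le C|W|^{-\alpha}(\Lambda^{-n}|W^n_i|)^q$ because $T^n|_{W^n_i}$ contracts by $\Lambda^{-n}$; summed via Lemma~\ref{lem:growth}(c) with $\varsigma = 1-\alpha$, this contributes the $\Lambda^{-qn}\eta^n\|h\|_s$ term. The constant part $\bar c_i \int_{W^n_i} h A_i \, dm_{W^n_i}$ is split along the long/short decomposition of Section~\ref{preliminary}. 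Indices $i\in I_n(W)$ (pieces that never grew to length $\delta_0/3$) are bounded using $\|h\|_s$ with weight $|W^n_i|^\alpha|J_{W^n_i}T^n|_{\C^0}$ and summed by Lemma~\ref{lem:growth}(a), which after a Jensen step to the exponent $1-\alpha$ yields the $\theta_1^{(1-\alpha)n}\eta^n\|h\|_s$ factor. Indices with a most-recent long ancestor $W^k_j\in L_k(W)$ at some $k<n$ are regrouped; bounded distortion on the long piece $W^k_j$ lets us convert the strong-stable bound to a weak-norm bound at the cost of a factor $\delta_0^{-\alpha}$ (from the ratio $|W^n_i|^\alpha/|W^k_j|^\alpha$), with the outer sum $\sum_k\sum_{j\in L_k}|J_{W^k_j}T^k|_{\C^0}$ controlled uniformly by Lemma~\ref{lem:growth}(b). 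This produces the second term $C\eta^n\delta_0^{-\alpha}|h|_w$.

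For the strong unstable norm \eqref{eq:unstable norm}, take $W^1,W^2\in\W^s$ with $d_{\W^s}(W^1,W^2)\le\varepsilon\le\varepsilon_0$ and admissible test functions $\psi_1,\psi_2$. I match components $U^1_i\subset T^{-n}W^1$ and $U^2_i\subset T^{-n}W^2$ by the unstable foliation, invoking Lemma~\ref{lem:angles}(a) to get $d_{\W^s}(U^1_i,U^2_i)\le C_0\Lambda^{-n}\varepsilon$, and set aside unmatched tails of total length $O(\varepsilon)$. The unmatched parts are estimated by $\|h\|_s$ against a sum of $|J_{U^j_i}T^n|^\varsigma$, which by Lemma~\ref{lem:growth}(d) is bounded by $C_3^n$; this produces the $C\eta^n C_3^n\|h\|_s$ contribution. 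On each matched pair, I add and subtract an intermediate integral in which $\psi_2\circ T^n$ is replaced by the holonomy-transport of $\psi_1\circ T^n$. One endpoint difference is absorbed directly by $\|h\|_u$ with parameter $\varepsilon'=C_0\Lambda^{-n}\varepsilon$ (yielding the $\Lambda^{-\beta n}$ factor after dividing by $\varepsilon^\beta$ in \eqref{eq:s-unstable}), while the intermediate difference of test functions is bounded using Lemma~\ref{lem:angles}(b) for the Jacobian ratio together with the $d_q$-closeness of $\psi_1,\psi_2$, and absorbed into the $\|h\|_s$ term via Lemma~\ref{lem:growth}(b).

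The main obstacle is the strong stable estimate: extracting the exponentially small constant $\theta_1^{(1-\alpha)n}$ simultaneously with converting the long-ancestor contribution to a weak-norm term requires careful coordination of the Jensen-type form of Lemma~\ref{lem:growth}(a,c) with bounded distortion on long pieces, all while tracking the $\eta^n$ factor supplied by \textbf{(H5)}, which no longer gives a contractive bound on Jacobians and forces the weakened Lasota-Yorke inequality stated in the proposition.
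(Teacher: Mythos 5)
Your proposal follows the paper's proof essentially step for step: the change-of-variables identity over $\G_n(W)$, the averaging decomposition $\psi\circ T^n=\bar\psi_i+(\psi\circ T^n-\bar\psi_i)$ with long/short-ancestor bookkeeping for the strong stable norm, and the matched/unmatched decomposition with the holonomy-transported test function for the strong unstable norm, each closed via Lemmas~\ref{lem:growth} and \ref{lem:angles} exactly as in Sections~\ref{weak norm}--\ref{unstable norm}. The only slip is minor: the oscillation sum is controlled by Lemma~\ref{lem:growth}(c) with $\varsigma=\alpha$ (giving the bound $C_2^{1-\alpha}$), not $\varsigma=1-\alpha$, but this does not affect the argument.
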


\begin{proof}[Proof of Theorem~\ref{thm:uniform} given Proposition~\ref{prop:ly}]
Choose $1 > \sigma > \eta \max \{ \theta_1^{1-\alpha}, \Lambda^{-q}, \Lambda^{-\beta} \}$
and choose $N \ge 0$ such that
\[
\begin{split}
\| \Lp_T^N h \|_\B &  = \| \Lp^N_T h \|_s + b \| \Lp_T^N h \|_u
\le \frac{\sigma^N}{2} \| h \|_s + C \delta_0^{-\alpha} \eta^N |h|_w + b \sigma^N \| h \|_u + b C\eta^N C_3^N \|h\|_s \\
& \leq \sigma^N \| h \|_\B + C_{\delta_0} \eta^N |h|_w
\end{split}
\]
providing $b$ is chosen sufficiently small with respect to $N$.  This is the required
inequality \eqref{eq:uniform LY}
for Theorem~\ref{thm:uniform} which implies the essential spectrum
of $\Lp_T$ is less than $\sigma$.  Outside the disk of radius $\sigma$, the spectrum
of $\Lp_T$ has finitely many eigenvalues, each with finite multiplicity.  This follows
using the compactness of the unit ball of $\B$ in $\B_w$ \cite[Lemma 3.10]{demers zhang}.

Despite the fact that $\eta$ may be greater than 1, the spectral radius of
$\Lp_T$ equals 1.  To see this, suppose $z \in \mathbb{C}$, $|z|>1$, satisfies
$\Lp_T h = z h$ for some $h \in \B$, $h \neq 0$.  For $\psi \in \C^p(M)$,
Lemma~\ref{lem:distr} implies that,
\[
|h(\psi)| = |z^{-n} \Lp_T^n h(\psi)| = |z^{-n} h(\psi \circ T^n)| \le |z|^{-n} C |h|_w (|\psi|_\infty + H^p_n(\psi \circ T^n)) \xrightarrow[n \to \infty]{} 0
\]
since $H^p_n(\psi \circ T^n) \le C_e \Lambda^{-pn} |\psi|_{\C^p(M)}$ by \eqref{eq:C1 C0}.
Thus $h = 0$, contradicting the assumption on $z$.

The characterization of the peripheral spectrum follows from
Lemmas 5.1 and 5.2 of \cite{demers zhang}.
\end{proof}

To prove Proposition~\ref{prop:ly}, we fix $T \in \F$ and prove the required Lasota-Yorke inequalities
\eqref{eq:weak norm}-\eqref{eq:unstable norm}.
It is shown in \cite[Section 4]{demers zhang} that $\Lp_T$ is a continuous operator
on both $\B$ and $\B_w$
so that it suffices to prove the inequalities for $h \in \C^1(M)$.  They extend to the
completions by continuity.
Since these estimates are similar to those in \cite{demers zhang}, our purpose
in repeating them is to show how they depend explicitly on the uniform constants
given by {\bf (H1)}-{\bf (H5)} and do not require additional information.


\subsection{Estimating the weak norm}
\label{weak norm}

Let $h \in \C^1(M)$, $W \in \W^s$ and $\psi \in \C^p(W)$ such that
$|\psi|_{W,0,p} \leq 1$.         For $n \geq0$, we write,
\begin{equation}
\label{eq:start}
\int_W\Lp^nh \, \psi \, dm_W
=\sum_{W^n_i \in\G_n(W)}\int_{W^n_i}h\frac{J_{W^n_i}T^n}{J_\mu T^n}\psi \circ T^n dm_W
\end{equation}
where $J_{W^n_i}T^n$ denotes the Jacobian of $T^n$ along $W^n_i$.

Using the definition of the weak norm on each $W^n_i$, we estimate \eqref{eq:start} by
\begin{equation}
\label{eq:weak estimate}
\int_W\Lp^nh \, \psi \, dm_W \; \leq \; \sum_{W^n_i \in\G_n} |h|_w
| (J_\mu T^n)^{-1}J_{W^n_i}T^n|_{\C^p(W^n_i)} |\psi\circ T^n|_{\C^p(W^n_i)} .
\end{equation}
For $x,y \in W^n_i$, we use {\bf (H1)} to estimate,
\begin{equation}
\label{eq:C1 C0}
\frac{|\psi (T^nx) - \psi (T^ny)|}{d_W(T^nx,T^ny)^p}\cdot \frac{d_W(T^nx,T^ny)^p}{d_W(x,y)^p} \leq |\psi|_{\C^p(W)} |J_{W^n_i}T^n|^p_{\C^0(W^n_i)}
    \leq C_e \Lambda^{-pn} |\psi|_{\C^p(W)} ,
\end{equation}
so that $|\psi \circ T^n|_{\C^p(W^n_i)} \leq C_e |\psi|_{\C^p(W)} \le C_e$.
We use this estimate together with {\bf (H5)} and \eqref{eq:holder c0} to bound
\eqref{eq:weak estimate} by
\[
\int_W \Lp^nh \, \psi \, dm_W \leq C_e (1+2C_d) \eta^n  |h|_w \sum_{W^n_i \in \G_n}
|J_{W^n_i}T^n|_{\C^0(W^n_i)} \le C' \eta^n |h|_w,
\]
where $C' = C_e (1+2C_d) C_2$ and we have used Lemma~\ref{lem:growth}(b)
for the last inequality.
Taking the supremum over all $W \in \W^s$ and $\psi \in \C^p(W)$ with
$|\psi|_{W,0,p} \leq 1$
yields \eqref{eq:weak norm} expressed with uniform constants given by {\bf (H1)}-{\bf (H5)}.


\subsection{Estimating the strong stable norm}
\label{stable norm}

Let $W \in \W^s$ and let $W^n_i$ denote the elements of $\G_n(W)$ as defined above.
For $\psi \in \C^q(W)$, $|\psi|_{W,\alpha, q} \le 1$, define
$\bp_i = |W^n_i|^{-1} \int_{W^n_i} \psi \circ T^n \, dm_W$.
Using equation \eqref{eq:start}, we write
\begin{equation}
\label{eq:stable split}
\int_W\Lp^nh\, \psi \, dm_W      =
\sum_{i} \int_{W^n_i}h \, \frac{J_{W^n_i}T^n}{J_\mu T^n} \,(\psi \circ T^n- \bp_i) \, dm_W
 + \bp_i  \int_{W^n_i}h \, \frac{J_{W^n_i}T^n}{J_\mu T^n} \, dm_W .
\end{equation}

To estimate the first term of \eqref{eq:stable split},
we first estimate $|\psi \circ T^n - \bp_i|_{\C^q(W^n_i)}$.
If $H_W^q(\psi)$ denotes the H\"older constant of $\psi$ along $W$, then
equation~\eqref{eq:C1 C0} implies
\begin{equation}
\label{eq:H^q}
\frac{|\psi (T^nx) - \psi (T^ny)|}{d_W(x,y)^q} \leq C_e \Lambda^{-nq}  H_W^q(\psi)
\end{equation}
for any $x,y \in W^n_i$.  Since $\bp_i$ is constant on $W^n_i$, we have
$H^q_{W^n_i}(\psi \circ T^n - \bp_i) \leq C_e \Lambda^{-qn} H^q_W(\psi)$.
To estimate the $\C^0$ norm, note that $\bp_i = \psi \circ T^n(y_i)$ for some
$y_i \in W^n_i$.  Thus for each $x \in W^n_i$,
\[
| \psi \circ T^n(x) - \bp_i|
    = |\psi \circ T^n(x) - \psi \circ T^n(y_i)|
    \leq H^q_{W^n_i}(\psi \circ T^n) |W^n_i|^q \leq C_e H^q_W(\psi) \Lambda^{-nq}  .
\]
This estimate together with \eqref{eq:H^q} and the fact that $|\vf|_{W,\alpha,q} \leq 1$, implies
\begin{equation}
\label{eq:C^q small}
|\psi \circ T^n - \bp_i|_{\C^q(W^n_i)} \leq C_e \Lambda^{-nq} |\psi|_{\C^q(W)}
\leq C_e \Lambda^{-qn} |W|^{-\alpha} .
\end{equation}

We apply \eqref{eq:holder c0}, \eqref{eq:C^q small}
and the definition of the strong stable norm to the first term of \eqref{eq:stable split},
\begin{equation}
\label{eq:first stable}
\begin{split}
\sum_i \int_{W^n_i} h & \frac{J_{W^n_i}T^n}{J_\mu T^n}  \, (\psi \circ T^n - \bp_i) \, dm_W  \leq
(1+2C_d) C_e \sum_i \|h\|_s \frac{|W^n_i|^\alpha}{|W|^\alpha}
\left| \frac{J_{W^n_i}T^n}{J_\mu T^n} \right|_{C^0(W^n_i)} \Lambda^{-qn}    \\
& \leq \; \eta^n (1+2C_d) C_e \Lambda^{-qn} \|h\|_s \sum_i \frac{|W^n_i|^\alpha}{|W|^\alpha}
|J_{W^n_i}T^n|_{\C^0(W^n_i)}
\; \leq \; C_4 \eta^n \Lambda^{-qn} \|h\|_s ,
\end{split}
\end{equation}
where $C_4 = (1+2C_d) C_e C_2^{1-\alpha}$ and
in the second line we have used {\bf (H5)} and
Lemma~\ref{lem:growth}(c) with $\varsigma = \alpha$.

For the second term of \eqref{eq:stable split}, we use the fact that
$|\bp_i| \leq |W|^{-\alpha} $ since $|\psi|_{W, \alpha, q} \le 1$.
Recall the notation introduced before the statement of
Lemma~\ref{lem:growth}.
Grouping the pieces $W^n_i \in \G_n(W)$
according to most recent long ancestors $W^k_j \in L_k(W)$, we have
\[
\begin{split}
\sum_{i}  |W|^{-\alpha} \int_{W^n_i}h \frac{J_{W^n_i}T^n}{J_\mu T^n} \, dm_W
= & \sum_{k=1}^n \sum_{j\in L_k(W)}\sum_{ i\in \I_n(W^k_j)}
     |W|^{-\alpha} \int_{W^n_i}h \frac{J_{W^n_i}T^n}{J_\mu T^n} \, dm_W \\
&  + \sum_{ i\in \I_n(W)}
     |W|^{-\alpha} \int_{W^n_i} h \frac{J_{W^n_i}T^n}{J_\mu T^n} \, dm_W
\end{split}
\]
where we have split up the terms involving $k=0$ and $k \geq 1$.
We estimate the terms with $k \geq 1$
by the weak norm and the terms with $k=0$ by the strong stable norm.
Using again \eqref{eq:holder c0} and {\bf (H5)},
\[
\begin{split}
\sum_{i}  |W|^{-\alpha} \int_{W^n_i}h \frac{J_{W^n_i}T^n}{J_\mu T^n} \, dm_W
& \leq
\eta^n (1+2C_d) \sum_{k=1}^n\sum_{j\in L_k(W)}
\sum_{i\in \I_n(W^k_j)} |W|^{-\alpha} |h|_w | J_{W^n_i}T^n |_{\C^0(W^n_i)}\\
&  +
\eta^n (1+2C_d) \sum_{\ i\in \I_n(W)} \frac{|W^n_i|^\alpha }{|W|^\alpha } \|h\|_s |J_{W^n_i}T^n|_{\C^0(W^n_i)} .
\end{split}
\]

In the first sum above corresponding to $k\geq 1$, we write
\[
|J_{W^n_i}T^n|_{\C^0(W^n_i)} \leq |J_{W^n_i}T^{n-k}|_{\C^0(W^n_i)}
|J_{W^k_j}T^k|_{\C^0(W^k_j)} .
\]
Thus using Lemma~\ref{lem:growth}(a) from time $k$ to time $n$,
\[
\begin{split}
\sum_{k=1}^n \sum_{j \in L_k} \sum_{i \in \I_n(W^k_j)} |W|^{-\alpha} |J_{W^n_i}T^n|_{\C^0(W^n_i)}
& \leq \sum_{k=1}^n \sum_{j \in L_k(W)} |J_{W^k_j}T^k|_{\C^0(W^k_j)} |W|^{-\alpha}
\sum_{i  \in \I_n(W^k_j)} |J_{W^n_i}T^{n-k}|_{\C^0(W^n_i)} \\
& \le 3 \delta_0^{-\alpha}  \sum_{k=1}^n \sum_{j \in L_k(W)} |J_{W^k_j}T^k|_{\C^0(W^k_j)} \,
\frac{|W^k_j|^\alpha}{|W|^\alpha} C_1 \theta_*^{n-k},
\end{split}
\]
since $|W^k_j| \ge \delta_0/3$.
The inner sum is bounded by $C_2^{1-\alpha}$ for each $k$ by Lemma~\ref{lem:growth}(c)
while the outer sum is bounded by $C_1/(1-\theta_*)$ independently of $n$.

Finally, for the sum corresponding to $k=0$, since
\[
|J_{W^n_i}T^n|_{\C^0(W^n_i)} \le (1+C_d)  |T^nW^n_i| |W^n_i|^{-1} \le (1+C_d) |J_{W^n_i}T^n|_{\C^0(W^n_i)},
\]
we use Jensen's inequality and Lemma~\ref{lem:growth}(a) to estimate,
\[
\sum_{i \in \I_n(W)} \frac{|W^n_i|^\alpha}{|W|^\alpha} |J_{W^n_i}T^n|_{\C^0(W^n_i)}
\le (1+C_d)
\left( \sum_{i \in \I_n(W)} \frac{|T^nW^n_i|}{|W^n_i|} \right)^{1-\alpha}
\le (1+C_d)C_1 \theta_*^{n(1-\alpha)}.
\]

Gathering these estimates together, we have
\begin{equation}
\label{eq:second stable}
\sum_{i}  |W|^{-\alpha}\left| \int_{W^n_i}h(J_\mu T^n)^{-1}J_{W^n_i}T^n \, dm_W\right|
\; \leq \; C_5 \eta^n \delta_0^{-\alpha}|h|_w + C_6 \|h\|_s \eta^n \theta_*^{n(1-\alpha)} ,
\end{equation}
where $C_5 = 3(1+2C_d) C_1C_2^{1-\alpha} /(1-\theta_*)$ and
$C_6 = (1+2C_d)^2 C_1$.
Putting together \eqref{eq:first stable} and \eqref{eq:second stable} proves
\eqref{eq:stable norm},
\[
\|\Lp^n h\|_s \leq C'  \eta^n \left( \Lambda^{-q n}+\theta_*^{n(1-\alpha)}\right)\|h\|_s
         + C' \eta^n \delta_0^{-\alpha} |h|_w ,
\]
with $C' = \max \{ C_4, C_5, C_6 \}$, a uniform constant depending only on
{\bf (H1)}-{\bf (H5)}.


\subsection{Estimating the strong unstable norm}
\label{unstable norm}

Fix $\ve \le \ve_0$ and
consider two curves $W^1, W^2 \in\W^s$ with $d_{\W^s}(W^1,W^2) \leq \ve$.
For $n \geq 1$, we describe how to partition $T^{-n}W^\ell$ into
``matched'' pieces $U^\ell_j$ and
``unmatched'' pieces $V^\ell_k$, $\ell=1,2$.
In the what follows, we use $C_t$ to
denote a transversality constant which depends only on the minimum angle
between various transverse directions:  the minimum angle between $C^s(x)$ and
$C^u(x)$, between $S^T_{-n}$ and $C^s(x)$, and
between $C^s(x)$ and the vertical and horizontal directions.

Let $\omega$ be a connected component of $W^1 \setminus \Si_{-n}^T$
such that $T^{-n}\omega \in \G_n(W)$.
To each
point $x \in T^{-n}\omega$, we associate a smooth curve
$\gamma_x \in \widehat\W^u$ of length
at most $C_t C_e \Lambda^{-n}\ve$ such that its image $T^n\gamma_x$,
if not cut by a singularity or the boundary of a homogeneity strip,
will have length $C_t \ve$.
By {\bf (H2)}, $T^i\gamma_x \in \widehat\W^u$ for each $i \ge 0$.

Doing this for each connected component of
$W^1 \setminus \Si_{-n}^T$, we subdivide $W^1 \setminus \Si_{-n}^T$ into
a countable
collection of subintervals of points for which $T^n\gamma_x$ intersects
$W^2 \setminus \Si_{-n}^T$ and subintervals for which this is not the case.
This in turn induces a corresponding partition on
$W^2 \setminus \Si_{-n}^T$.

We denote by $V^\ell_k$ the pieces in $T^{-n}W^\ell$ which are not matched up by this process
and note that
the images $T^nV^\ell_k$ occur either at the endpoints of $W^\ell$ or because the
curve $\gamma_x$ has been cut by a singularity.
In both cases, the length of the
curves $T^nV^\ell_k$ can be at most $C_t \ve$ due to the uniform transversality of
$\Si_{-n}^T$ with $C^s(x)$ and of $C^s(x)$ with $C^u(x)$.

In the remaining
pieces the foliation $\{ T^n\gamma_x \}_{x \in T^{-n}W^1}$ provides a one to one correspondence
between points in $W^1$ and $W^2$.
We partition these pieces in
such a way that the lengths of their images under $T^{-i}$
are less than $\delta_0$ for each $0 \le i \le n$ and
the pieces are pairwise matched by
the foliation $\{\gamma_x\}$. We call these matched pieces $\widetilde U^\ell_j$
and note that $T^i \widetilde U^\ell_j \in \G_{n-i}(W^\ell)$ for each $i = 0, 1, \ldots n$.
For convenience, we further trim the $\widetilde U^\ell_j$ to pieces
$U^\ell_j$ so that $U^1_j$ and $U^2_j$ are both defined on the same arclength
interval $I_j$.  The at most two components of $T^n(\widetilde U^\ell_j \setminus
U^\ell_j)$ have length less than  $C_t \ve$ due to the uniform transversality
of $C^s(x)$ with the vertical direction.  We attach these trimmed pieces to
the adjacent $U^\ell_i$ or $V^\ell_k$ as appropriate so as not to create any
additional components in the partition.

We further relabel any pieces $U^\ell_j$ as $V^\ell_j$
and consider them unmatched if for some $i$, $0 \le i \le n$, $|T^i\gamma_x| > 2 |T^iU^\ell_j|$.
i.e. we only consider pieces matched if at each intermediate step, the distance between
them is at most of the same order as their length.  We do this in order to be able to
apply Lemma~\ref{lem:angles} to the matched pieces.  Notice that since the distance between
the curves at each intermediate step is at most $C_t C_e \ve$ and due to the uniform
contraction of stable curves going forward, we have $|T^nV^\ell_k| \le C_t C_e^2 \ve$ for
all such pieces considered unmatched by this last criterion.

In this way we write $W^\ell = (\cup_j T^nU^\ell_j) \cup (\cup_k T^nV^\ell_k)$.
Note that the images $T^nV^\ell_k$ of the unmatched pieces must have length $\le C_v \ve$
for some uniform constant $C_v$
while the images of the matched pieces
$U^\ell_j$
may be long or short.

Recalling the notation of Section~\ref{norms}, we have arranged a pairing
of the pieces $U^\ell_j$ with the following property:
\begin{equation}
\label{eq:match}
\begin{split}
\mbox{If } \; &
U^1_j = G_{U^1_j}(I_j) = \{ (r, \vf_{U^1_j}(r)) : r \in I_j \}, \\
\mbox{then } \; & U^2_j = G_{U^2_j}(I_j) = \{ (r, \vf_{U^2_j}(r)) : r \in I_j \},
\end{split}
\end{equation}
so that the
point $x = (r, \vf_{U^1_j}(r)) \in U^1_j$ can associated with the point
$\bar x = (r, \vf_{U^2_j}(r)) \in U^2_j$ by the vertical
line $\{(r,s)\}_{s\in[-\pi/2, \pi/2]}$, for each $r \in I_j$.  In addition, the $U^\ell_j$ satisfy
the assumptions of Lemma~\ref{lem:angles}.

Given $\psi_\ell$ on $W^\ell$ with $|\psi_\ell|_{W^\ell,0,p} \leq 1$ and
$d_q(\psi_1, \psi_2) \leq \ve$,
with the above construction we must estimate
\begin{align}
\label{eq:unstable split}
& \left|\int_{W^1} \Lp^nh \, \psi_1 \, dm_W - \int_{W^2} \Lp^nh \, \psi_2 \, dm_W \right|
  \; \leq \; \sum_{\ell,k} \left|\int_{V^\ell_k} h (J_\mu T^n)^{-1}J_{V^\ell_k}T^n \psi_\ell \circ T^n \, dm_W \right|\nonumber\\
  & + \sum_j \left| \int_{U^1_j} h (J_\mu T^n)^{-1}J_{U^1_j}T^n \psi_1\circ T^n \, dm_W
    - \int_{U^2_j} h (J_\mu T^n)^{-1}J_{U^2_j}T^n \psi_2\circ T^n \, dm_W \right|
\end{align}
We do the estimate over the unmatched pieces $V^\ell_k$ first
using the strong stable norm.  Note that
by \eqref{eq:C1 C0}, $|\psi_\ell \circ T^n|_{\C^q(V^\ell_k)}
\leq C_e |\psi_\ell|_{\C^p(W^\ell)} \leq C_e $.
We estimate as
in Section~\ref{stable norm}, using the fact that $|T^nV^\ell_k| \le C_v \ve$, as noted above,
\begin{equation}
\label{eq:first unstable}
\begin{split}
 \sum_{\ell,k} \Big| \int_{V^\ell_k}h (J_\mu T^n)^{-1}J_{V^\ell_k}T^n\psi_\ell\circ T^n \, & dm_W \Big|
\leq
C_e \sum_{\ell,k} \|h\|_s |V^\ell_k|^\alpha |(J_\mu T^n)^{-1}J_{V^\ell_k}T^n|_{\C^q(V^\ell,k)}  \\
& \leq C_e (1+2C_d) \eta^n
\| h \|_s \sum_{\ell,k} |V^\ell_k|^\alpha |J_{V^\ell_k}T^n|_{\C^0(V^\ell_k)} \\
& \leq C' \ve^{\alpha} \eta^n \|h\|_s \sum_{\ell,k} |J_{V^\ell_k}T^n|_{\C^0(V^\ell_k)}^{1-\alpha}
\le 2 C' \ve^\alpha \eta^n \|h\|_s C_3^n ,
\end{split}
\end{equation}
with $C' = C_e (1+2C_d)^2 C_v^\alpha$,
where we have applied Lemma~\ref{lem:growth}(d)
with $\varsigma = 1-\alpha > \varsigma_0$
since there are at most two $V^\ell_k$ corresponding to
each element $W^{\ell, n}_i \in \G_n(W^\ell)$ as defined in Section~\ref{preliminary} and
$|J_{V^\ell_k}T^n|_{\C^0(V^\ell_k)} \leq |J_{W^{\ell,n}_i}T^n|_{\C^0(W^{\ell,n}_i)}$
whenever $V^\ell_k \subseteq W^{\ell,n}_i$.

Next, we must estimate
\[
\sum_j\left|\int_{U^1_j}h(J_\mu T^n)^{-1}J_{U^1_j}T^n \, \psi_1\circ T^n \, dm_W -
\int_{U^2_j}h (J_\mu T^n)^{-1}J_{U^2_j}T^n \, \psi_2\circ T^n \, dm_W \right|  .
\]
 We fix $j$ and estimate the difference.
Define
\[
\phi_j = ((J_\mu T^n)^{-1}J_{U^1_j}T^n \, \psi_1 \circ T^n) \circ G_{U^1_j} \circ G_{U^2_j}^{-1} .
\]
The function $\phi_j$ is well-defined on $U^2_j$ and we can write,
\begin{equation}
\label{eq:stepone}
\begin{split}
&\left|\int_{U^1_j}h(J_\mu T^{-1}J_{U^1_j}T^n \, \psi_1\circ T^n-
\int_{U^2_j}h (J_\mu T^n)^{-1}J_{U^2_j}T^n \, \psi_2\circ T^n\right|\\
&\leq \left|\int_{U^1_j}h (J_\mu T^n)^{-1}J_{U^1_j}T^n \, \psi_1\circ T^n-
\int_{U^2_j}h \,\phi_j \right|
+\left|\int_{U^2_j}h (\phi_j - (J_\mu T^n)^{-1}J_{U^2_j}T^n \, \psi_2\circ
T^n)\right| .
\end{split}
\end{equation}

We estimate the first term on the right hand side of~\eqref{eq:stepone} using the
strong unstable norm.
Using {\bf (H5)},  \eqref{eq:holder c0} and \eqref{eq:C1 C0},
\begin{equation}
\label{eq:c1-unst 1}
| (J_\mu T^n)^{-1}J_{U^1_j}T^n\cdot \psi_1 \circ
T^n|_{\C^p(U^1_j)}
\le C_e (1+2C_d) \eta^n |J_{U^1_j}T^n|_{\C^0(U^1_j)}.
\end{equation}
Notice that
\begin{equation}
\label{eq:graph bound}
|G_{U^1_j} \circ G^{-1}_{U^2_j}|_{\C^1(U^2_j)}
\le \sup_{r \in U^2_j}
\frac{\sqrt{1 + (d\vf_{U^1_j}/dr)^2}}{\sqrt{1 + (d\vf_{U^2_j}/dr)^2}} \le
\sqrt{1 + \Gamma^2} =: C_g,
\end{equation}
where $\Gamma$ is the maximum slope of curves in $\W^s$
given by {\bf (H2)}.  Using this, we estimate as in \eqref{eq:c1-unst 1},
\[
|\phi_j|_{\C^p(U^2_j)}
 \le C_g C_e (1+2C_d) \eta^n |J_{U^1_j}T^n|_{\C^0(U^1_j)}.
\]
By the definition of $\phi_j$ and $d_q(\cdot, \cdot)$,
\[
d_q((J_\mu T^n)^{-1}J_{U^1_j}T^n\psi_1\circ T^n, \phi_j)
= \left| \left[  (J_\mu T^n)^{-1}J_{U^1_j}T^n\psi_1\circ T^n \right] \circ G_{U^1_j}
  - \phi_j \circ G_{U^2_j} \right|_{\C^q(I_j)} \; = \; 0 .
\]

By Lemma~\ref{lem:angles}(a), we have
$d_{\W^s}(U^1_j,U^2_j)\leq C_0\Lambda^{-n} \ve =: \ve_1$.
In view of \eqref{eq:c1-unst 1} and following, we renormalize the test functions by
$R_j = C_7 \eta^n |J_{U^1_j}T^n|_{\C^0(U^1_j)}$
where $C_7 = C_g C_e (1+ 2C_d)$.
Then we apply the definition of the strong unstable norm with
$\ve_1$ in place of $\ve$.      Thus,
\begin{equation}
\label{eq:second unstable}
\sum_j \left|\int_{U^1_j}h  J_\mu T^n)^{-1} J_{U^1_j}T^n \, \psi_1\circ T^n -
\int_{U^2_j} h  \, \phi_j \; \right|
\leq C_7 C_0^\beta \ve^\beta \Lambda^{-\beta n} \eta^n \|h\|_u \sum_j |J_{U^1_j}T^n|_{\C^0(U^1_j)}
\end{equation}
where the sum is $\le C_2$ by Lemma~\ref{lem:growth}(b) since there is at most
one matched piece $U^1_j$ corresponding to each element
$W^{1,n}_i \in \G_n(W^1)$ and $|J_{U^1_j}T^n|_{\C^0(U^1_j)} \le |J_{W^{1,n}_i}T^n|_{\C^0(W^{1,n}_i)}$
whenever $U^1_j \subseteq W^{1,n}_i$.

It remains to estimate the second term in \eqref{eq:stepone} using the
strong stable norm.
\begin{equation}
\label{eq:unstable strong}
 \left|\int_{U^2_j}h(\phi_j - (J_\mu T^n)^{-1}J_{U^2_j}T^n \psi_2 \circ T^n) \right|
\leq \;  \|h\|_s |U^2_j|^\alpha
       \left|\phi_j - (J_\mu T^n)^{-1} J_{U^2_j}T^n \psi_2 \circ T^n\right|_{\C^q(U^2_j)} .
\end{equation}
In order to estimate the $\C^q$-norm of the function in \eqref{eq:unstable strong}, we split
it up into two differences.      Since $|G_{U^\ell_j}|_{\C^1} \le C_g$ and
$|G_{U^\ell_j}^{-1}|_{\C^1} \le 1$, $\ell = 1,2$,
we write
\begin{equation}
\label{eq:diff}
\begin{split}
& | \phi_j - ((J_\mu T^n)^{-1}J_{U^2_j}T^n)\cdot \psi_2 \circ T^n|_{\C^q(U^2_j)} \\
\leq& \; \left |\left[ ((J_\mu T^n)^{-1}J_{U^1_j}T^n)\cdot\psi_1 \circ
T^n\right]\circ G_{U^1_j} - \left[((J_\mu T^n)^{-1}J_{U^2_j}T^n)\cdot\psi_2
\circ T^n\right] \circ G_{U^2_j}\right|_{\C^q(I_j)}\\
\leq& \; \left | ((J_\mu T^n)^{-1}J_{U^1_j}T^n)\circ G_{U^1_j} \left[ \psi_1
\circ T^n\circ G_{U^1_j} -\psi_2 \circ T^n\circ
G_{U^2_j}\right]\right|_{\C^q(I_j)}\\
&+ \left|\left[((J_\mu T^n)^{-1}J_{U^1_j}T^n) \circ
G_{U^1_j}-((J_\mu T^n)^{-1}J_{U^2_j}T^n) \circ G_{U^2_j}\right]\psi_2\circ
T^n\circ G_{U^2_j}\right|_{\C^q(I_j)}\\
\leq& \; C_g (1+2C_d) | (J_\mu T^n)^{-1}J_{U^1_j}T^n|_{\C^0(U^1_j)} \left|\psi_1 \circ T^n\circ
G_{U^1_j} -\psi_2 \circ T^n\circ G_{U^2_j}\right|_{\C^q(I_j)}\\
&+ C_g C_e \left|((J_\mu T^n)^{-1}J_{U^1_j}T^n) \circ
G_{U^1_j}-((J_\mu T^n)^{-1}J_{U^2_j}T^n) \circ
G_{U^2_j}\right|_{\C^q(I_j)}
\end{split}
\end{equation}
To bound the two differences above, we need the following lemma.

\begin{lemma}
\label{lem:test}
There exist constants $C_8, C_9>0$, depending only on {\bf (H1)}-{\bf (H5)}, such that,
\begin{itemize}
  \item[(a)] $\displaystyle
|((J_\mu T^n)^{-1} J_{U^1_j}T^n)\circ G_{U^1_j}
-(J_\mu T^n)^{-1} J_{U^2_j}T^n)\circ G_{U^2_j}|_{\C^q(I_j)}\leq C_8 | (J_\mu T^n)^{-1}
J_{U^2_j}T^n|_{C^0(U^2_j)} \ve^{1/3-q};$
  \item[(b)] $\displaystyle
|\psi_1 \circ T^n \circ G_{U^1_j} - \psi_2 \circ T^n \circ G_{U^2_j} |_{\C^q(I_{r_j})}
\le C_9  \ve^{p-q} .$
\end{itemize}
\end{lemma}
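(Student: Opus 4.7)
The strategy for both parts is the same: establish a $\C^0$-bound on the difference, bound a stronger H\"older norm of each individual term, and then interpolate via the standard inequality $|F|_{\C^q(I)} \le C\,|F|_{\C^0(I)}^{\,1-q/\alpha}|F|_{\C^\alpha(I)}^{\,q/\alpha}$ for $0 \le q \le \alpha$.

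For part (a), take $\alpha = 1/3$. Given $r \in I_j$, write $x = G_{U^1_j}(r)$, $x^* = G_{U^2_j}(r)$ and let $y = \gamma_x \cap U^2_j$ be the unstable-matched partner of $x$. By the matching construction $|T^n \gamma_x| \le C_t \ve$, and the angle $\theta(T^n x, T^n y) \le C\ve$ follows from the $\C^1$-closeness $d_{\W^s}(W^1, W^2) \le \ve$ together with the bounded curvature of curves in $\widehat\W^s$ from {\bf (H2)}. Lemma~\ref{lem:angles}(b) then yields $|J_{U^1_j}T^n(x)/J_{U^2_j}T^n(y) - 1| \le C\ve^{1/3}$, and \eqref{eq:D u dist} applied along $\gamma_x$ yields $|J_\mu T^n(x)/J_\mu T^n(y) - 1| \le C\ve^{1/3}$. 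The residual adjustment from $y$ to $x^*$ along $U^2_j$ (of length $\le C_0\Lambda^{-n}\ve$) is absorbed using \eqref{eq:distortion stable}. Multiplying the two Jacobian ratios gives, for the difference $\Delta$ of the two composed functions,
\[
|\Delta|_{\C^0(I_j)} \le C\ve^{1/3}\,|(J_\mu T^n)^{-1}J_{U^2_j}T^n|_{\C^0}.
\]
The corresponding $\C^{1/3}$-bound on each composed term separately follows from \eqref{eq:distortion stable} combined with the uniform $\C^1$-bound on $G_{U^k_j}$, yielding $|((J_\mu T^n)^{-1}J_{U^k_j}T^n)\circ G_{U^k_j}|_{\C^{1/3}(I_j)} \le C|(J_\mu T^n)^{-1}J_{U^k_j}T^n|_{\C^0}$. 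Interpolation produces the asserted bound with $C_8$.

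For part (b), set $\tilde\psi_k = \psi_k \circ G_{W^k}$ and let $f_k : I_j \to I_{W^k}$ be the $r$-coordinate of $T^n \circ G_{U^k_j}$, so $\psi_k \circ T^n \circ G_{U^k_j} = \tilde\psi_k \circ f_k$. Since tangent vectors to $G_{U^k_j}$ lie in $C^s$ and $T^n U^k_j$ is again a stable curve by {\bf (H2)}, the stable contraction \eqref{eq:uniform hyp} gives $|f_k'|_{\C^0} \le C\Lambda^{-n}$, uniformly bounded. The crucial geometric step is $|f_1 - f_2|_{\C^0(I_j)} \le C\ve$: the vertical segment $\sigma_r$ joining $G_{U^1_j}(r)$ and $G_{U^2_j}(r)$ has length $\le C_0\Lambda^{-n}\ve$ by Lemma~\ref{lem:angles}(a), and differs from the matched unstable curve $\gamma_x$ only by a stable-direction perturbation that contracts under $T^n$; since $|T^n\gamma_x| \le C_t\ve$, the image $T^n\sigma_r$ has length $\le C\ve$. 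Decompose
\[
\tilde\psi_1 \circ f_1 - \tilde\psi_2 \circ f_2 = (\tilde\psi_1 - \tilde\psi_2) \circ f_1 + (\tilde\psi_2 \circ f_1 - \tilde\psi_2 \circ f_2).
\]
The first term has $\C^q$-norm at most $|\tilde\psi_1 - \tilde\psi_2|_{\C^q} (1 + |f_1'|_\infty^q) \le C\ve$ by the hypothesis $d_q(\psi_1, \psi_2) \le \ve$. For the second, $|\tilde\psi_2 \circ f_1 - \tilde\psi_2 \circ f_2|_{\C^0} \le |\tilde\psi_2|_{\C^p} |f_1 - f_2|_{\C^0}^p \le C\ve^p$, while each $|\tilde\psi_2 \circ f_k|_{\C^p(I_j)}$ is uniformly bounded; interpolation gives a $\C^q$-norm bound of $C\ve^{p-q}$. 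Summing, the total $\C^q$-norm is $\le C_9 \ve^{p-q}$.

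The principal obstacle is securing $|f_1 - f_2|_{\C^0} \le C\ve$: the initial separation $|\sigma_r|$ is only of order $\Lambda^{-n}\ve$, yet it must be propagated through $n$ iterates of a possibly strongly unstably-expanding $T^n$ to a bound of order $\ve$ that is uniform in $n$. The matching criterion $|T^i\gamma_x| \le 2|T^iU^\ell_j|$ for all intermediate $i$, which disqualifies precisely those pairs for which the unstable expansion would destroy the estimate, is what secures this uniformity.
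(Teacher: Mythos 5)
Your proof is correct and mirrors the paper's own argument in all essentials: the same decomposition of the Jacobian difference into an unstable-foliation contribution plus a residual along $\widetilde U^2_j$, the same two-term splitting for the test functions via $f_j^\ell = G_{W^\ell}^{-1}\circ T^n \circ G_{U^\ell_j}$, and the same interpolation trick (the paper packages your $\C^q \le \C^0$-versus-$\C^\alpha$ interpolation as its Lemma~\ref{lem:general}). One small misattribution in your closing remark: the uniformity in $n$ of $|f_1-f_2|_{\C^0}\le C\ve$ comes from the \emph{construction} of $\gamma_x$ (the curves are chosen at the start of Section~\ref{unstable norm} so that $|T^n\gamma_x|\le C_t\ve$) rather than from the intermediate-step matching criterion $|T^i\gamma_x|\le 2|T^iU^\ell_j|$, whose role is to make the hypotheses of Lemma~\ref{lem:angles} hold.
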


We postpone the proof of the lemma to Section~\ref{lemma proofs} and
show how this completes the estimate on the strong unstable norm.
Note that using \eqref{eq:comparable J}, we may replace $|J_{U^1_j}T^n|_{\C^0(U^1_j)}$
by $C|J_{U^1_j}T^n|_{\C^0(U^1_j)}$ where it appears in our estimates for some
uniform constant $C$.
Starting from \eqref{eq:unstable strong}, we apply Lemma~\ref{lem:test} to \eqref{eq:diff}
 to obtain,
\begin{equation}
\label{eq:unstable three}
\begin{split}
& \sum_j \Big| \int_{U^2_j} h(\phi_j     - (J_\mu T^n)^{-1} J_{U^2_j}T^n \psi_2 \circ T^n ) \, dm_W \Big| \\
& \le \bar C \|h\|_s \sum_j |U^2_j|^\alpha
|(J_\mu T^n)^{-1} J_{U^2_j}T^n|_{\C^0(U^2_j)} \, \ve^{p -q}
\le \bar C \eta^n \|h\|_s \ve^{p-q} \sum_j  |J_{U^2_j}T^n|_{\C^0(U^2_j)},
\end{split}
\end{equation}
for some uniform constant $\bar C$
where again the sum is finite as in \eqref{eq:second unstable}.
This completes the estimate on the second term in \eqref{eq:stepone}.
Now we use this bound, together with \eqref{eq:first unstable} and
\eqref{eq:second unstable} to estimate \eqref{eq:unstable split}
\[
\begin{split}
 \left|\int_{W^1} \Lp^nh \, \psi_1 \, dm_W - \int_{W^2} \Lp^nh \, \psi_2 \, dm_W \right|
  \; \leq \; CC_3^n \eta^n  \|h\|_s \ve^\alpha + C \|h\|_u \Lambda^{-\beta n} \eta^n \ve^\beta
  + C \eta^n \|h\|_s \ve^{p-q} ,
\end{split}
\]
where again $C$ depends only on {\bf (H1)}-{\bf (H5)} through the estimates above.
Since $p-q \ge \beta$ and $\alpha \ge \beta$, we divide through by $\ve^\beta$ and take
the appropriate suprema to complete the proof of \eqref{eq:unstable norm}.


\subsubsection{Proof of Lemma~\ref{lem:test}}
\label{lemma proofs}

First we prove the following general fact and then use it to prove Lemma~\ref{lem:test}.

\begin{lemma}
\label{lem:general}
Let $(N,d)$ be a metric space and let $0<r<s \le 1$.
Suppose $g_1, g_2 \in \C^s(N, \mathbb{R})$ satisfy $|g_1 - g_2|_{\C^0(N)} \le D_1 \ve^s$ for some
constant $D_1>0$.
Then $|g_1 - g_2|_{\C^r(N)} \le 3 \ve^{s-r} \max \{ D_1, H^s(g_1)+ H^s(g_2) \}$,
where $H^s(\cdot)$
denotes the H\"older constant with exponent $s$ on $N$.
\end{lemma}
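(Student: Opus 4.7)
The plan is a direct interpolation argument between the $\C^0$ and $\C^s$ norms, applied to the difference $f := g_1 - g_2$. The key observation is that the H\"older seminorm $H^r(f)$ can be estimated by splitting the supremum over pairs $(x,y) \in N \times N$ according to whether $d(x,y) \le \ve$ or $d(x,y) > \ve$; on one regime the $\C^s$ regularity of $f$ is favorable, on the other the smallness of $|f|_{\C^0}$ is favorable. Set $M := \max\{D_1,\ H^s(g_1) + H^s(g_2)\}$, and note $H^s(f) \le H^s(g_1) + H^s(g_2) \le M$.

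First I would handle pairs with $d(x,y) \le \ve$. For such pairs,
\[
\frac{|f(x) - f(y)|}{d(x,y)^r} \;=\; \frac{|f(x) - f(y)|}{d(x,y)^s} \cdot d(x,y)^{s-r} \;\le\; H^s(f)\, \ve^{s-r} \;\le\; M\, \ve^{s-r}.
\]
Next, for pairs with $d(x,y) > \ve$, I would use only the $\C^0$ bound on $f$:
\[
\frac{|f(x) - f(y)|}{d(x,y)^r} \;\le\; \frac{2|f|_{\C^0(N)}}{d(x,y)^r} \;\le\; \frac{2 D_1 \ve^s}{\ve^r} \;=\; 2 D_1 \ve^{s-r} \;\le\; 2M\, \ve^{s-r}.
\]
Taking the supremum over all pairs yields $H^r(f) \le 2M\, \ve^{s-r}$.

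Finally, combining with the $\C^0$ estimate (and using $\ve \le 1$, which is implicit since $\ve$ will be taken small in applications; otherwise the bound holds trivially as $\ve^{s-r}$ dominates $\ve^s$ is all that is needed, since $\ve^s \le \ve^{s-r}$ for $\ve \le 1$),
\[
|f|_{\C^r(N)} \;=\; |f|_{\C^0(N)} + H^r(f) \;\le\; D_1 \ve^{s-r} + 2M\, \ve^{s-r} \;\le\; 3M\, \ve^{s-r},
\]
which is the claimed bound. There is no genuine obstacle here; the only point requiring care is the standard convention that the interpolation is stated for $\ve$ in a range where $\ve^s \le \ve^{s-r}$, consistent with the small-$\ve$ regime in which the lemma is applied in the proof of Lemma~\ref{lem:test}.
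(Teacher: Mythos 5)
Your proposal is correct and follows essentially the same interpolation idea as the paper: both arguments bound the $\C^r$ seminorm by combining the $\C^0$ smallness of $g_1-g_2$ (useful when $d(x,y)$ is large) with the $\C^s$ regularity (useful when $d(x,y)$ is small). The only difference is cosmetic: the paper phrases the estimate as a pointwise $\min$ of the two competing bounds and then optimizes the cutoff $d(x,y)$ exactly, arriving at the geometric-mean form $\ve^{s-r}(2D_1)^{1-r/s}(H^s(g_1)+H^s(g_2))^{r/s}$, which is then dominated by $2\ve^{s-r}\max\{D_1, H^s(g_1)+H^s(g_2)\}$; you instead fix the cutoff at $d(x,y)=\ve$, which is simpler and immediately yields $2M\ve^{s-r}$ for the seminorm. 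You also explicitly add the $\C^0$ contribution (using $\ve\le 1$), a step the paper leaves implicit. Both routes give the stated bound with constant $3$.
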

\begin{proof}
Since $| \cdot |_{\C^r(N)} = | \cdot |_{\C^0(N)} + H^r(\cdot)$, we must estimate $H^r(g_1 - g_2)$.
Let $x, y \in N$.  Then on the one hand, since $|g_1 -g_2| \le D_1 \ve^s$, we have
\[
\frac{|(g_1(x) - g_2(x)) - (g_1(y)-g_2(y)|}{d(x,y)^r} \leq 2 D_1 \ve^s d(x,y)^{-r}
\]
On the other hand, using the fact that $g_1, g_2 \in \C^s(N)$, we have
\[
\frac{|(g_1(x) - g_2(x)) - (g_1(y)-g_2(y)|}{d(x,y)^r} \leq (H^s(g_1)+H^s(g_2))  d(x,y)^{s-r} .
\]
These two estimates together imply that the H\"older constant of $g_1 - g_2$ is bounded by
\[
H^r(g_1 - g_2) \le \sup_{x,y \in N} \min \{ 2 D_1 \ve^s d(x,y)^{-r}, (H^s(g_1)+H^s(g_2))  d(x,y)^{s-r} \}.
\]
This expression is maximized when $ 2 D_1 \ve^s d(x,y)^{-r} = (H^s(g_1)+H^s(g_2))  d(x,y)^{s-r}$,
i.e., when $d(x,y) = \ve \left( \frac{2D_1}{H^s(g_1) + H^s(g_2)} \right)^{1/s}$.  Thus the H\"older constant
of $g_1-g_2$ satisfies,
\[
H^r(g_1-g_2) \le \ve^{s-r} (2D_1)^{1-\frac rs}(H^s(g_1) + H^s(g_2))^{\frac rs} .
\]
\end{proof}

\begin{proof}[Proof of Lemma~\ref{lem:test}(a)]
Throughout the proof, for ease of notation
we write $J_\ell^n$ for $(J_\mu T^n)^{-1} J_{U^\ell_j}T^n$.

For any $r \in I_j$, $x=G_{U^1_j}(r)$ and $\bar x=G_{U_j^2}(r)$ lie
on a common vertical segment.
By the construction at the beginning of Section~\ref{unstable norm},
$U^1_j$, $U^2_j$ lie in two homogeneous stable
curves $\widetilde U^1_j$ and
$\widetilde U^2_j$ which are connected by the foliation $\{ \gamma_x \}$.
Thus $x^* := \gamma_x \cap \widetilde U^2_j$ is uniquely defined
for all $x \in U^1_j$.
Then $T^n(x)$ and $T^n(x^*)$ lie on the element $T^n\gamma_x \in
\W^u$ which intersects $W^1$ and $W^2$ and
has length at most $C_t \varepsilon$.
By \eqref{eq:D u dist} and Lemma~\ref{lem:angles}(b),
\[
|J_1^n(x) - J_2^n(x^*)|
\leq C_d C_0  |J_2^n|_{\C^0(U^2_j)} (d_W(T^nx,T^n x^*)^{1/3} + \theta(T^nx, T^n x^*)) ,
\]
where $\theta(T^nx, T^n x^*)$ is the angle between the tangent line to $W^1$ at
$T^nx$ and the tangent line to $W^2$ at $T^n x^*$.
Let $y \in W^2$ be the unique point in $W^2$ which lies on the same
vertical segment as $T^nx$.
Since by assumption $d_{\W^s}(W^1, W^2) \le \ve$, we have
$\theta(T^nx, y) \le \ve$.  Due to the uniform transversality of curves in $\W^u$ and $\W^s$
and the fact that $W^2$ is the graph of a $\C^2$ function with
$\C^2$ norm bounded by $B$ from {\bf (H2)}, we have $\theta(y, T^n x^*) \le BC_t\ve$ and so
$\theta(T^nx, T^n x^*) \le (1+BC_t)\ve$.  Thus
\begin{equation}
\label{eq:inter}
|J_1^n(x) - J_2^n(x^*)| \leq C_d C_0 (C_t+1+BC_t) \ve^{1/3} |J_2^n|_{\C^0(U^2_j)}  .
\end{equation}
Also, by \eqref{eq:distortion stable}, since $x^*$ and $\bar x$ are both on
$\widetilde U^2_j$, we have
$|J_2^n(x^*) - J_2^n(\bar x)| \le C_d^2 |J_2^n|_{\C^0(U^2_j)}d_W(x^*, \bar x)^{1/3}$.
Putting this together with \eqref{eq:inter} and using the fact that $d_W(x^*,\bar x) \le C_t \ve$ by the transversality of $\gamma_x$ with $\W^s$ yields,
\begin{equation}
\label{eq:J C0}
|J_1^n(x) - J_2^n(\bar x)| \leq C' \ve^{1/3} |J_2^n|_{\C^0(U^2_j)},
\end{equation}
where $C' = C_d C_0 (2 C_t + 1 + BC_t)$.

Now using the fact that $|G_{U^\ell_j}|_{\C^1(I_j)} \le C_g$ from \eqref{eq:graph bound},
we apply Lemma~\ref{lem:general} with $D_1 = C_1 |J_2^n|_{\C^0(U^2_j)}$
and $g_i = J_i^n \circ G_{U^i_j}$, $i=1,2$.
By \eqref{eq:J C0}, we have
\begin{equation}
\label{eq:comparable J}
|J_1^n|_{\C^0(U^1_j)} \le (1+C' \ve^{1/3}) |J_2^n|_{\C^0(U^2_j)} ,
\end{equation}
and invoking \eqref{eq:distortion stable}, we complete the proof of (a).
\end{proof}

\begin{proof}[Proof of (b)]
Let $\vf_{W^\ell}$ be the
function whose graph is $W^\ell$,  defined for $r \in I_{W^\ell}$, and set $f^\ell_j:=G_{W^\ell}^{-1}\circ
T^n\circ G_{U^\ell_j}$, $k=1,2$.  Notice that since
$|G_{W^\ell}^{-1}|_{\C^1} \le 1$ and $|G_{U^\ell_j}|_{\C^1} \le C_g$, and due to
the uniform contraction along stable curves, we have
Lip$(f^\ell_j) \le C_f$, where $C_f$ is independent of $W^\ell$, $T$ and $j$.
We may assume that $f^\ell_j(I_j) \subset I_{W^1} \cap I_{W^2}$ since if not, by the
transversality of $C^u(x)$ and $C^s(x)$, we must be in a neighborhood of
one of the endpoints of $W^\ell$ of length at most $C_t \ve$; such short pieces
may be estimated as in \eqref{eq:first unstable} using the strong stable norm.
Thus
\begin{equation}
\begin{split}
\label{eq:test split}
|\psi_1 \circ T^n \circ G_{U^1_j} - \psi_2 \circ T^n \circ G_{U^2_j} |_{\C^q(I_j)}
& \le |\psi_1 \circ G_{W^1} \circ f^1_j - \psi_2 \circ G_{W^2} \circ f^1_j|_{\C^q(I_j)} \\
&+ |\psi_2 \circ G_{W^2} \circ f^1_j - \psi_2 \circ G_{W^2} \circ f^2_j |_{\C^q(I_j)} .
\end{split}
\end{equation}
Using the above observation about $f^1_j$, we estimate the first term of
\eqref{eq:test split} by
\begin{equation}
\label{eq:test 2}
|\psi_1 \circ G_{W^1} \circ f^1_j - \psi_2 \circ G_{W^2} \circ f^1_j |_{\C^q(I_j)}
\leq C_f |\psi_1 \circ G_{W^1} - \psi_2 \circ G_{W^2}|_{\C^q(f^1_j(I_j))} \le C_f \ve ,
\end{equation}
since $d_q(\psi_1, \psi_2) \le \ve$.
To estimate the second term of \eqref{eq:test split}, notice that since
$U^1_j$ and $U^2_j$ are joined by the transverse foliation
$\{ \gamma_x \} \subset \widehat \W^u$ and
using the uniform contraction along stable curves under $T^n$, we have
$|f^1_j - f^2_j|_{\C^0(I_j)} \le \tilde C \ve$ for a constant $\tilde C$ depending
only on the uniform hyperbolicity of {\bf (H1)} and the uniform transversality
conditions in {\bf (H2)}.  Thus for $r \in I_j$,
\begin{equation}
\label{eq:test C0}
|\psi_2 \circ G_{W^2} \circ f^1_j(r) - \psi_2 \circ G_{W^2} \circ f^2_j(r)|
\le C_g |\psi_2|_{\C^p} |f^1_j(r) - f^2_j(r)|^p \leq C_g \tilde C |\psi_2|_{\C^p} \ve^p .
\end{equation}
Now we again apply Lemma~\ref{lem:general} to obtain
\[
|\psi_2 \circ G_{W^2} \circ f^1_j - \psi_2 \circ G_{W^2} \circ f^2_j|_{\C^q(I_j)}
\le C |\psi_2|_{\C^p} \ve^{p-q} ,
\]
for a uniform constant $C$.
This estimate combined with \eqref{eq:test 2} proves part (b)
since $|\psi_2|_{\C^p(W^2)} \le 1$.
\end{proof}


\section{Proof of Theorem~\ref{thm:close}}
\label{close}

Fix $\ve < \ve_0$ and
suppose $T_1, T_2 \in \F$ with $d_\F(T_1, T_2) \le \ve$.
We denote by $\Si^\ell_{-n}$ the singularity sets for $T_\ell$, $\ell=1,2$.
Let $h \in \C^1(M)$, $\| h \|_\B \le 1$, and $W \in \W^s$.  Let $\psi \in \C^p(W)$ with
$|\psi|_{W, 0 , p} \le 1$.  We must estimate
\begin{equation}
\label{eq:L diff}
\begin{split}
\int_W (\Lp_1 h & - \Lp_2 h) \psi \, dm_W = \int_W \Lp_1 h \psi \, dm_W - \int _W \Lp_2 h \psi \, dm_W \\
& = \int_{T_1^{-1}W} h \, \psi \circ T_1 (J_\mu T_1)^{-1} J_{T_1^{-1}W}T \, dm_W
- \int_{T_2^{-1}W} h \, \psi \circ T_2 (J_\mu T_2)^{-1} J_{T_2^{-1}W}T_2 \, dm_W .
\end{split}
\end{equation}

Notice that the estimate required is similar to that done in
Section~\ref{unstable norm},
except that instead of two close stable curves iterated under the same map, we have one
stable curve iterated under two different maps.

We partition $T_1^{-1}W$ and $T_2^{-1}W$ into matched and unmatched pieces
as in the beginning of Section~\ref{unstable norm}.
Let $\G_1^\ell(W)$, $\ell=1,2$, denote the elements of $T_\ell^{-1}W$ as described in
Section~\ref{preliminary}.
Let $\omega \in \G_1^1(W)$.
Due to {\bf (C1)},
to each
point $x \in \omega$, we associate a curve $\gamma_x \in \widehat W^u$ of length
at most $C_t \ve$ which terminates on a piece of
$T_2^{-1}W$ that lies in the same homogeneity
strip, if one exists.
We also require that $\gamma_x$ is not cut by $\Si_{1}^1 \cup \Si_{1}^2$.

We denote by $V^\ell_k$ those components of $T_\ell^{-1}W$ not matched
by this process.  We also include in the set of $V^\ell_k$ all images of connected components
of $W \cap N_{\ve}(\Si^1_{-1} \cup \Si^2_{-1})$ under $T_\ell^{-1}$.
Note that
the $T_\ell V^\ell_k$ occur either at the endpoints of $W$ or near a singularity or the boundary of
$N_\ve(\Si^1_{-1} \cup \Si^2_{-1}$).  In all cases, the length of the
curves $T_\ell V^\ell_k$ can be at most $C_t C_e \ve$ due to the uniform transversality of
$\Si^\ell_{-1}$ with $C^s$ and of $C^s$ with $C^u$.

In the remaining
pieces the foliation $\{ \gamma_x \}$ provides a one-to-one correspondence
between points in $T_1^{-1}W$ and $T_2^{-1}W$.
We further partition these pieces in
such a way that their lengths
are between $\delta_0/2$ and $\delta_0$ and
the pieces are pairwise matched by
the foliation $\{\gamma_x\}$. We call these matched pieces $\widetilde U^\ell_j$.
As in Section~\ref{unstable norm}, we trim the $\widetilde U^\ell_j$ to
pieces $U^\ell_j$ so that $U^1_j$ and $U^2_j$ are defined on the same arclength
interval $I_j$.  The at most two components of $T_\ell(\widetilde U^\ell_j \setminus U^\ell_j)$
have length at most $C_t C_e \Lambda^{-1} \ve$.
We adjoin these trimmed pieces to the adjacent
 $U^\ell_i$ or $V^\ell_k$ as appropriate so as not to create more pieces in the partition
of $T_\ell^{-1} W$.

In this way, we write $T^{-1}_\ell W = (\cup_j U^\ell_j) \cup (\cup_k V^\ell_k)$ and note that
the images $T_\ell V^\ell_k$ have length at most $C_v \ve$ for some uniform constant
$C_v$, $\ell = 1,2$.

Now using \eqref{eq:L diff}, we have
\begin{equation}
\label{eq:L close split}
\begin{split}
\int_W (\Lp_1 h & - \Lp_2 h) \psi \, dm_W
= \sum_{\ell,k} \int_{V^\ell_k} h \, \psi \circ T_\ell \, (J_\mu T_\ell)^{-1} J_{V^\ell_k}T_\ell \, dm_W \\
& \; \; \; \; + \sum_j \int_{U^1_j} h \, \psi \circ T_1 \, (J_\mu T_1)^{-1} J_{U^1_j}T_1 \, dm_W
- \int_{U^2_j} h \, \psi \circ T_2 \, (J_\mu T_2)^{-1} J_{U^2_j}T_2 \, dm_W  .
\end{split}
\end{equation}
We estimate the integral on short pieces $V^\ell_k$ first using the strong stable norm.
By \eqref{eq:C1 C0}, we have  $|\psi \circ T_\ell|_{\C^q(V^\ell_k)}
\leq C_e |\psi|_{\C^p(W)} \leq C_e$.  Following the estimate in \eqref{eq:first unstable}, we have
\begin{equation}
\label{eq:first close}
\begin{split}
\sum_{\ell, k} & \left|\int_{V^\ell_k}h (J_\mu T_\ell)^{-1}J_{V^\ell_k}T_\ell \, \psi \circ T_\ell \, dm\right|
\leq C \ve^{\alpha}  \|h\|_s   \sum_{\ell, k} |J_{V^\ell_k}T_\ell |_{\C^0(V^\ell_k)}^{1-\alpha} .
\end{split}
\end{equation}
The sum is finite by \eqref{eq:weakened step1} of {\bf (H3)} with $\varsigma = 1-\alpha$
since there are at most two $V^\ell_k$ corresponding to
each element $W^{\ell,1}_i \in \G^\ell_1(W)$ as defined in Section~\ref{preliminary} and
$|J_{V^\ell_k}T_\ell |_{\C^0(V^\ell_k)} \leq |J_{W^{\ell, 1}_i}T_\ell |_{\C^0(W^{\ell, 1}_i)}$
whenever $V^\ell_j \subseteq W^{\ell, 1}_i$.
The constant $C$ above depends only on properties {\bf (H1)}-{\bf (H5)},
but for brevity we do not write out the explicit dependence since these estimates
are similar to those done in Section~\ref{unstable norm} and the constants are the same.

Next, we must estimate
\[
\sum_j\left|\int_{U^1_j}h \, (J_\mu T_1)^{-1}J_{U^1_j}T_1 \, \psi \circ T_1 \, dm_W -
\int_{U^2_j}h \, (J_\mu T_2)^{-1}J_{U^2_j}T_2 \, \psi \circ T_2 \, dm_W \right|      .
\]
Using notation analogous to \eqref{eq:match},
we fix $j$ and estimate the difference.
Define
\[
\phi_j = ((J_\mu T_1)^{-1}J_{U^1_j}T_1 \, \psi \circ T_1) \circ G_{U^1_j} \circ G_{U^2_j}^{-1} .
\]
The function $\phi_j$ is well-defined on $U^2_j$ and we can write,
\begin{equation}
\label{eq:L stepone}
\begin{split}
&\left|\int_{U^1_j}h \, (J_\mu T_1)^{-1}J_{U^1_j}T_1 \, \psi \circ T_1 -
\int_{U^2_j}h \, (J_\mu T_2)^{-1}J_{U^2_j}T_2 \, \psi \circ T_2\right|\\
&\leq \left|\int_{U^1_j}h \, (J_\mu T_1)^{-1}J_{U^1_j}T_1 \, \psi \circ T_1 -
\int_{U^2_j}h \,\phi_j \right|
+\left|\int_{U^2_j}h (\phi_j - (J_\mu T_2)^{-1}J_{U^2_j}T_2 \, \psi \circ
T_2)\right| .
\end{split}
\end{equation}

To estimate the two terms above, we need the following adaptation of Lemma~\ref{lem:test}.
\begin{lemma}
\label{lem:close}
There exists $\bar C>0$, independent of $W \in \W^s$ and $T_1, T_2 \in \F$,
such that for each $j$,
\begin{itemize}
  \item[(a)]  $d_{\W^s}(U^1_j,U^2_j)\leq \bar C \ve^{1/2}$ ;
  \item[(b)]  $ \displaystyle
|((J_\mu T_1)^{-1} J_{U^1_j}T_1)\circ G_{U^1_j}
-((J_\mu T_2)^{-1} J_{U^2_j}T_2)\circ G_{U^2_j}|_{\C^q(I_j)}\leq \bar C | (J_\mu T_2)^{-1}
J_{U^2_j}T_2|_{C^0(U^2_j)} \ve^{1/3-q}$ ;
  \item[(c)] $ \displaystyle
|\psi \circ T_1 \circ G_{U^1_j} - \psi \circ T_2 \circ G_{U^2_j} |_{\C^q(I_j)}
\le \bar C  \ve^{p-q} $ .
\end{itemize}
\end{lemma}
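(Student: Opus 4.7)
\noindent\emph{Proof plan.} The strategy mirrors the proof of Lemma~\ref{lem:test}: the ``two close curves, one map'' setup there is replaced here by ``one curve, two close maps'', and the quantitative closeness of $T_1$ and $T_2$ supplied by {\bf (C1)}-{\bf (C4)} takes over the role that Lemma~\ref{lem:angles} played there. Throughout, for $r \in I_j$ I write $x = G_{U^1_j}(r)$, $\bar x = G_{U^2_j}(r)$, and $x^* \in \widetilde U^2_j$ for the endpoint of the matching unstable curve $\gamma_x$; the three distinctive exponents $1/2$, $1/3$, $p$ appearing in (a), (b), (c) will enter respectively through {\bf (C4)}, bounded distortion, and H\"older regularity of $\psi$.

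For (a), {\bf (C1)} and the matching construction immediately give $|\vf_{U^1_j}(r)-\vf_{U^2_j}(r)|\le C_t\ve$, $\ell(I_{U^1_j}\triangle I_{U^2_j})\le C_t\ve$, and equality of the homogeneity indices, so the only nontrivial point is the slope estimate $|\vf_{U^1_j}'(r)-\vf_{U^2_j}'(r)|$. The tangent to $U^\ell_j$ at $G_{U^\ell_j}(r)$ is parallel to $DT_\ell^{-1}(T_\ell G_{U^\ell_j}(r))\vec t_W$, with $\vec t_W$ the tangent to $W$ at the corresponding image point. {\bf (C1)} together with the stable expansion of $T_1^{-1}$ on $W$ first yields $d_W(T_1 x, T_2\bar x)=\mathcal{O}(\ve)$. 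Inserting the hybrid base point $T_1 x$ and hybrid tangent $\vec t_W(T_1 x)$, the difference of the two tangent vectors splits as
\begin{align*}
DT_1^{-1}(T_1 x)\vec t_W(T_1 x) &- DT_2^{-1}(T_2\bar x)\vec t_W(T_2\bar x) \\
&= [DT_1^{-1}(T_1 x) - DT_2^{-1}(T_1 x)]\vec t_W(T_1 x) \\
&\quad + DT_2^{-1}(T_1 x)[\vec t_W(T_1 x) - \vec t_W(T_2\bar x)] \\
&\quad + [DT_2^{-1}(T_1 x) - DT_2^{-1}(T_2\bar x)]\vec t_W(T_2\bar x),
\end{align*}
bounded respectively by $\sqrt\ve$ via {\bf (C4)}, by $\mathcal{O}(\ve)$ via the curvature bound $B$ from {\bf (H2)} (regularity of $\vec t_W$ on $W$), and by $\mathcal{O}(\ve)$ via the uniform bound on $\|D^2 T_2^{-1}\|$ away from the $\ve$-neighbourhood of $\Si_{-1}^2$. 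The dominant $\sqrt\ve$ contribution delivers $|\vf_{U^1_j}'-\vf_{U^2_j}'|_{\C^0(I_j)}\le\bar C\sqrt\ve$, and hence (a).

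For (b), writing $J_j^\ell:=(J_\mu T_\ell)^{-1}J_{U^\ell_j}T_\ell$, I telescope through the auxiliary point $x^*$:
$$
J_j^1(x)-J_j^2(\bar x)=\bigl[J_j^1(x)-(J_\mu T_2(x))^{-1}J_{U^1_j}T_2(x)\bigr]+\bigl[(J_\mu T_2(x))^{-1}J_{U^1_j}T_2(x)-J_j^2(x^*)\bigr]+\bigl[J_j^2(x^*)-J_j^2(\bar x)\bigr].
$$
The first bracket is $\mathcal{O}(\ve)J_j^1(x)$ by {\bf (C2)} and {\bf (C3)} on the fixed curve $U^1_j\in\W^s$; the third is $\mathcal{O}(\ve^{1/3})|J_j^2|_{\C^0(\widetilde U^2_j)}$ by the stable distortion \eqref{eq:distortion stable} on $\widetilde U^2_j$, since $d_W(x^*,\bar x)\le C_t\ve$; and the middle bracket, which involves only the single map $T_2$ at the two points $x,x^*$ joined by the unstable curve $\gamma_x$ along the homogeneous stable curves $U^1_j$ and $\widetilde U^2_j$, is $\mathcal{O}(\ve^{1/3})|J_j^2|_{\C^0(\widetilde U^2_j)}$ by the $n=1$ version of Lemma~\ref{lem:angles}(b); the angle contribution $\theta(T_2 x,T_2 x^*)$ is controlled by $\mathcal{O}(\sqrt\ve)$ using the tangent comparison of part (a), which is then absorbed into $\ve^{1/3}$. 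Lemma~\ref{lem:general} with $s=1/3$, $r=q$, using \eqref{eq:distortion stable} and \eqref{eq:graph bound} to control the uniform $\C^{1/3}$ norms of $J_j^\ell\circ G_{U^\ell_j}$, then yields the advertised $\ve^{1/3-q}$ bound in $\C^q(I_j)$.

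For (c), set $f_j^\ell:=G_W^{-1}\circ T_\ell\circ G_{U^\ell_j}:I_j\to I_W$. By {\bf (C1)} and the uniform transversality of $C^s$ with the vertical direction from {\bf (H1)}, $|f_j^1-f_j^2|_{\C^0(I_j)}\le C\ve$, while each $f_j^\ell$ is uniformly Lipschitz by \eqref{eq:graph bound} and the uniform stable contraction of $T_\ell$ from {\bf (H1)}. Hence
$$
|\psi\circ T_1\circ G_{U^1_j}-\psi\circ T_2\circ G_{U^2_j}|_{\C^0(I_j)}=|\psi\circ G_W\circ f_j^1-\psi\circ G_W\circ f_j^2|_{\C^0(I_j)}\le|\psi|_{\C^p(W)}|f_j^1-f_j^2|_{\C^0}^p\le C\ve^p,
$$
and, since both summands have uniformly bounded $\C^p$ norm by \eqref{eq:C1 C0} and $|\psi|_{\C^p(W)}\le 1$, a final application of Lemma~\ref{lem:general} converts this into the $\C^q$ bound $\bar C\ve^{p-q}$. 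The step I expect to be the principal obstacle is the slope comparison in (a): {\bf (C4)} supplies only a $\sqrt\ve$ bound rather than $\ve$, and propagating this through the three-term decomposition without picking up further loss is precisely what produces the exponent $\beta/2$ in the conclusion of Theorem~\ref{thm:close}.
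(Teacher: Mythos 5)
Your proofs of parts (b) and (c) follow essentially the same route as the paper (telescoping through the matched point $x^*$, using {\bf (C2)}/{\bf (C3)}, stable distortion, Lemma~\ref{lem:angles}(b), and Lemma~\ref{lem:general}), and part (c) in particular is a correct variant of the argument; those are fine modulo the usual bookkeeping.

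Part (a) has a genuine gap, and it sits exactly where you flagged trouble. Your three-term decomposition compares $DT_1^{-1}(T_1 x)\vec t_W(T_1 x)$ with $DT_2^{-1}(T_2\bar x)\vec t_W(T_2\bar x)$ and asserts the third term $[DT_2^{-1}(T_1 x) - DT_2^{-1}(T_2\bar x)]\vec t_W(T_2\bar x)$ is $\mathcal O(\ve)$ ``via the uniform bound on $\|D^2T_2^{-1}\|$ away from the $\ve$-neighbourhood of $\Si_{-1}^2$.'' That bound does not exist. By \eqref{eq:2 deriv}, $\|D^2T_2^{-1}\| \sim k^6$ when $T_2^{-1}$ lands in $\Ho_k$, and since the intersection of $W$ with $T_2(\Ho_k)$ has length of order $k^{-5}$, staying $\ve$ away from $\Si_{-1}^2$ only forces $k \lesssim \ve^{-1/5}$. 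Hence $\|D^2T_2^{-1}\| \lesssim \ve^{-6/5}$, so the norm of your third term is $\lesssim \ve^{-6/5}\cdot\ve = \ve^{-1/5}$, which diverges; even converting to an angle by dividing by $\|DT_2^{-1}(\cdot)\vec t_W\| \gtrsim k^2$ only recovers $k^4\ve \lesssim \ve^{1/5}$, far weaker than the required $\sqrt\ve$. (Your claimed $\mathcal O(\ve)$ on the second term has the same implicit, false, uniform bound on $\|DT_2^{-1}\|$, but there the angle interpretation saves it.)

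The paper sidesteps this entirely by introducing $x_\ve := T_2^{-1}(T_1 x)$, so that $T_2 x_\ve = T_1 x =: u$. Then the two derivatives are compared at the \emph{same} base point: $DT_1^{-1}(u)\vec v_W(u)$ is tangent to $U^1_j$ at $x$ and $DT_2^{-1}(u)\vec v_W(u)$ is tangent to $\widetilde U^2_j$ at $x_\ve$, and {\bf (C4)} applies directly to give $\theta(x, x_\ve) \le C_e\sqrt\ve$. The remaining angle $\theta(x_\ve, \bar x)$ is between tangent lines of the single $\C^2$ stable curve $\widetilde U^2_j$ at two points a distance $\lesssim \ve$ apart, and is bounded by the uniform curvature constant $B$ of {\bf (H2)} times $\ve$ --- no second derivative of the map appears at all. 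In your language, the second and third terms of your split must be re-grouped into this single intrinsic comparison along $T_2^{-1}W$; split apart and estimated through $DT_2^{-1}$ and $D^2T_2^{-1}$ separately, the argument fails. A related consequence: your remark at the end of part (b), that the angle $\theta(T_2 x, T_2 x^*)$ is ``controlled by $\mathcal O(\sqrt\ve)$ using the tangent comparison of part (a),'' should instead invoke the $\C^2$ bound on the curve $W$ (as in the proof of Lemma~\ref{lem:test}(a)), which gives the sharper $\mathcal O(\ve)$.
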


We estimate the first term in equation~\eqref{eq:L stepone} using the strong
unstable norm.
The estimates \eqref{eq:holder c0} and
\eqref{eq:C1 C0} and property {\bf (H5)} imply that
\begin{equation}
\label{eq:L c1-unst 1}
| (J_\mu T_1)^{-1}J_{U^1_j}T_1\cdot \psi \circ T_1|_{U^1_j,0,p}
\le \eta C_e  |J_{U^1_j}T_1|_{\C^0(U^1_j)}.
\end{equation}
Similarly, since by \eqref{eq:graph bound},
$|G_{U^1_j} \circ G_{U^2_j}^{-1}|_{\C^1} \le C_g$,
we have
$ |\phi_j|_{U^2_j, 0,p}
\le C_g \eta C_e |J_{U^1_j}T_1|_{\C^0(U^1_j)}$.
By the definition of $\phi_j$ and $d_q(\cdot, \cdot)$,
\[
d_q((J_\mu T_1)^{-1}J_{U^1_j}T_1\psi \circ T_1, \phi_j)
= \left| \left[  (J_\mu T_1)^{-1}J_{U^1_j}T_1\psi \circ T_1 \right] \circ G_{U^1_j}
  - \phi_j \circ G_{U^2_j} \right|_{\C^q(I_j)} \; = \; 0 .
\]
In view of \eqref{eq:L c1-unst 1} and following, we renormalize the test functions by
$R_j = \eta C_g C_e |J_{U^1_j}T_1|_{\C^0(U^1_j)}$.
Then we apply the definition of the strong unstable norm using Lemma~\ref{lem:close}(a)
to obtain,
\begin{equation}
\label{eq:second close}
\sum_j \left|\int_{U^1_j}h  \, (J_\mu T_1)^{-1} J_{U^1_j}T_1 \, \psi_1\circ T_1 -
\int_{U^2_j} h  \, \phi_j \; \right|
\leq C \ve^{\beta/2} \|h\|_u \sum_j |J_{U^1_j}T_1|_{\C^0(U^1_j)},
\end{equation}
where the sum is $\le C_2$ by Lemma~\ref{lem:growth}(b) since there is at most
one matched piece $U^1_j$ corresponding to each curve
$W^{1}_i \in \G_1^1(W)$.

We estimate the second term in \eqref{eq:L stepone} using the
strong stable norm.
\begin{equation}
\label{eq:L unstable strong}
\left|\int_{U^2_j}h(\phi_j - (J_\mu T_2)^{-1}J_{U^2_j}T_2 \psi \circ T) \right|
\leq \; C \|h\|_s |U^2_j|^\alpha
       \left|\phi_j - (J_\mu T_2)^{-1} J_{U^2_j}T_2 \psi \circ T_2\right|_{\C^q(U^2_j)} .
\end{equation}
In order to estimate the $\C^q$-norm of the function in \eqref{eq:L unstable strong},
we split it up into two differences.
Following \eqref{eq:diff} line by line, we obtain
\begin{equation}
\label{eq:close diff}
\begin{split}
 | \phi_j & - (J_\mu T_2)^{-1}J_{U^2_j}T_2\cdot \psi)\circ T_2|_{\C^q(U^2_j)} \\
& \leq \; C |\, (J_\mu T_1)^{-1}J_{U^1_j}T_1|_{\C^0(U^1_j)} \left|\psi \circ T_1\circ
G_{U^1_j} -\psi \circ T_2\circ G_{U^2_j}\right|_{\C^q(I_j)}\\
&\qquad + C \left|((J_\mu T_1)^{-1}J_{U^1_j}T_1) \circ
G_{U^1_j}-((J_\mu T_2)^{-1}J_{U^2_j}T_2) \circ
G_{U^2_j}\right|_{\C^q(I_j)}
\end{split}
\end{equation}
Notice that $|J_{U^1_j}T_1|_{\C^0(U^1_j)}
\le C |J_{U^2_j}T_2|_{\C^0(U^2_j)}$ by \eqref{eq:J est}.
Then using Lemma~\ref{lem:close}(b) and (c) together with
\eqref{eq:close diff} yields by \eqref{eq:L unstable strong}
\[
\sum_j \Big| \int_{U^2_j} h(\phi_j - (J_\mu T_2)^{-1} J_{U^2_j}T_2 \psi \circ T_2 ) \, dm_W \Big|
\le C \|h\|_s \ve^{p-q} \sum_j |J_{U^2_j}T_2|_{\C^0(U^2_j)} ,
\]
where again the sum is finite by Lemma~\ref{lem:growth}(b).
This completes the estimate on the second term in \eqref{eq:L stepone}.
Now we use this bound, together with \eqref{eq:first close} and
\eqref{eq:second close} to estimate \eqref{eq:L close split}
\begin{equation}
\label{eq:final close}
 \left|\int_{W} \Lp_1 h \, \psi \, dm_W - \int_{W} \Lp_2 h \, \psi \, dm_W \right|
  \; \leq \; C \|h\|_s \ve^\alpha + C \|h\|_u \ve^{\beta/2}
  + C \|h\|_s \ve^{p-q} .
\end{equation}
Since $p-q \ge \beta$ and $\alpha \ge \beta$, the theorem is proved.


\subsection{Proof of Lemma~\ref{lem:close}}
\label{close lemma proofs}

\begin{proof}[Proof of (a)]
Note that by construction $U^1_j$ and $U^2_j$ lie in the same homogeneity strip.  Also, they
are both defined on the same interval $I_j$ so the length of the symmetric difference
of their $r$-intervals is 0.  Recalling the definition of $d_{\W^s}(U^1_j, U^2_j)$, we see
that it remains only to estimate
$|\vf_{U^1_j} - \vf_{U^2_j}|_{\C^1(I_j)}$ for their defining functions $\vf_{U^k_j}$.

For $x = (r, \vf_{U^1_j}(r))$, define  $\bar x = (r, \vf_{ U^2_j}(r))$
and $x_\ve = T_2^{-1} \circ T_1(x)$.
We may assume that $x_\ve$ lies on $\widetilde U^2_j$ since otherwise, we would be
$\ve$-close to one of the $V^2_k$ and such short curves can be estimated as in
\eqref{eq:first close} using the strong stable norm.
Since $x$ and $x_\ve$ are images
of the same point $u \in W$ under $T_1^{-1}$ and $T_2^{-1}$ respectively, it follows
from {\bf (C1)} that $x$ and $x_\ve$ are at most $\ve$ apart.	Then since all
vectors in the stable cone have slope bounded away from $\pm \infty$, it follows that
$x$ and $\bar x$ are at most $C\ve$ apart (and so by the triangle inequality,
also $\bar x$ and $x_\ve$ are at most $C\ve$ apart).

This proves that $|\vf_{U^1_j} - \vf_{U^2_j}|_{\C^0(I_j)} \le C \ve$.
It remains to estimate $|\vf'_{U^1_j} - \vf'_{U^2_j}|$, where $\vf_{U^\ell_j}'$ denotes the derivative
of $\vf_{U^\ell_j}$ with respect to $r$.

Let $\vec{v}_W(u)$ be the unit tangent vector to $W$ at $u := T_1(x) = T_2(x)$, as before.
The tangent vector
to $U^\ell_j$ is given by $DT_\ell^{-1}(u)\vec{v}_W(u)$, $\ell =1,2$.  By {\bf (C4)},
\begin{equation}
\label{eq:close slope}
\| DT_1^{-1}(u) \vec{v}_W(u) - DT_2^{-1}(u) \vec{v}_W(u) \| \le \ve^{1/2} .
\end{equation}
Then since $\| DT_\ell^{-1}(u) \vec{v}_W(u) \| \ge C_e^{-1}$ by {\bf (H1)}, we have
$\theta(x, x_\ve) \le C_e \ve^{1/2}$, where $\theta(x, x_\ve)$ is the angle
between the tangent vectors to $U^1_j$ and $U^2_j$ at $x$ and $x_\ve$ respectively.

For $y \in U^\ell_j$, let $\phi(y)$ denote the
angle that $G_{U^\ell_j}$ makes with the positive $r$-axis at $y$.
Then
\[
|\vf'_{U^1_j}(x) - \vf'_{U^2_j}(\bar x) | = |\tan \phi(x) - \tan \phi(\bar x)|
\leq \big[ \sup_{z \in U^\ell_j} \sec^2 \phi(z) \big] |\phi(x) - \phi(\bar x)|
=  \big[ \sup_{z \in U^\ell_j} \sec^2 \phi(z) \big] \theta(x, \bar x) .
\]
Since the slopes of curves in $C^s(x)$ are uniformly bounded away from $\pm \infty$,
we have $\sec^2 \phi(z)$ uniformly bounded above for any $z \in U^k_j$.
The proof of the lemma is completed by writing
$\theta(x, \bar x) \le \theta(x, x_\ve) + \theta(x_\ve, \bar x)$.  The first term
is $\le C \ve^{1/2}$ using \eqref{eq:close slope} and the second term is $\le C \ve$ since
$x_\ve$ and $\bar x$ both lie on $\widetilde U^2_j$ and stable curves have a uniform
$\C^2$ bound by {\bf (H2)}.
 \end{proof}

\begin{proof}[Proof of (b)]
We prove that the closeness condition {\bf (C3)} implies the existence
of a constant $C>0$, independent of $W \in \W^s$ and $T_1, T_2 \in \F$, such that
\begin{equation}
\label{eq:J close}
|J_{U^1_j}T_1 \circ G_{U^1_j} - J_{U^2_j}T_2 \circ G_{U^2_j}|_{\C^q(I_j)} \le
C |J_{U^2_j}T_2|_{\C^0(U^2_j)} \ve^{1/3-q}.
\end{equation}
The analogous statement concerning $(J_\mu T_k)^{-1}$ follows from
condition {\bf (C2)}.  Then since
\[
|f_1 g_1 - f_2 g_2|_{\C^q} \le |f_1|_{\C^q} |g_1 - g_2|_{\C^q} + |g_2|_{\C^q}
|f_1 - f_2|_{\C^q},
\]
for any $\C^q$ functions $f_1, g_1, f_2, g_2$, part (b) of
the lemma follows from these two estimates
using the fact that $| \cdot |_{\C^q} \le (1+C_d) | \cdot |_{\C^0}$ by bounded distortion
for the functions we are estimating.  We proceed to prove \eqref{eq:J close}.

For any $r \in I_j$, we write
\begin{equation}
\label{eq:split J}
\begin{split}
|J_{U^1_j}T_1 \circ G_{U^1_j}(r) - J_{U^2_j}T_2 \circ G_{U^2_j}(r)|
\; \; \le \; \; & |J_{U^1_j}T_1 \circ G_{U^1_j}(r) - J_{U^1_j}T_2 \circ G_{U^1_j}(r)| \\
& + |J_{U^1_j}T_2 \circ G_{U^1_j}(r) - J_{U^2_j}T_2 \circ G_{U^2_j}(r)|
\end{split}
\end{equation}
The first term above is $\le |J_{U^1_j}T_2|_{\C^0(I_j)} \ve$ by {\bf (C3)}.

Recall that $U^1_j, U^2_j$ lie inside the longer curves
$\widetilde U^1_j, \widetilde U^2_j$ which are matched by the foliation
$\{ \gamma_x \}_{x \in \widetilde U^1_j} \subset \widehat \W^u$.
Thus $|J_{U^1_j}T_2|_{\C^0(I_j)} \le C |J_{U^2_j}T_2|_{\C^0(I_j)}$ by
the same argument used to prove \eqref{eq:comparable J}, completing the estimate on the
first term of \eqref{eq:split J}.

The second term of \eqref{eq:split J} is $\le C' \ve^{1/3} |J_{U^2_j}T_2|_{\C^0(I_j)}$
using \eqref{eq:J C0} since it involves the Jacobian of a single map in $\F$ evaluated
on two stable curves that are matched by a foliation of unstable curves.  Thus
\begin{equation}
\label{eq:J est}
|J_{U^1_j}T_1 \circ G_{U^1_j}(r) - J_{U^2_j}T_2 \circ G_{U^2_j} (r)| \leq C \ve^{1/3}
|J_{U^2_j}T_2|_{\C^0(U^2_j)}  .
\end{equation}
This implies in particular that $|J_{U^1_j}T_1|_{\C^0(U^1_j)} \le C
|J_{U^2_j}T_2|_{\C^0(U^2_j)}$.  Now we use \eqref{eq:distortion stable}
and the fact that
 $|G_{U^\ell_j}|_{\C^1(I_j)} \le C_g$ to apply Lemma~\ref{lem:general} and complete
 the proof of \eqref{eq:J close}.
\end{proof}

\begin{proof}[Proof of (c)]
Let $x = (r, \vf_{U^1_j}(r))$ and as above, define $\bar x = (r, \vf_{U^2_j}(r))$ and
$x_\ve = T_2^{-1} \circ T_1(x)$.
Since $\bar x$ and $x_\ve$ are at
most $C \ve$ apart and lie on $\widetilde U^2_j$, we have
$d_W(T_2 \bar x, T_2 x_\ve) \le C \ve$
by the uniform contraction given by
{\bf (H1)}.  Thus,
\begin{equation}
\label{eq:psi close}
|\psi \circ T_1 \circ G_{U^1_j}(r) - \psi \circ T_2 \circ G_{U^2_j}(r)|
\leq |\psi|_{\C^p(W)} d_W(T_1x,T_2 \bar x)^p .
\end{equation}
Since $d_W(T_1 x, T_2 x_\ve)= 0$, we may use the triangle inequality to
conclude that the difference above is bounded by $C |\psi|_{\C^p(W)} \ve^p$.

Again applying Lemma~\ref{lem:general} with $|\psi|_{\C^p(W)} \le 1$
completes the proof of part (c).
\end{proof}


\section{Proofs of Applications: Movements and Deformations of Scatterers and Random Perturbations}
\label{perts}

In this section we prove Theorems~\ref{thm:F1}, \ref{thm:deform} and \ref{thm:random}
and leave Theorems~\ref{thm:C1} and \ref{thm:C2} regarding external forces and kicks
to Section~\ref{kick} since they require more background material.

\subsection{Proof of Theorem~\ref{thm:F1}}

We fix constants $\tau_*, \K_* >0$ and $E_* < \infty$ and denote $\F_1(\tau_*, \K_*, E_*)$
as simply $\F_1$ for brevity.  Note that every $T \in \F_1$ is a billiard map corresponding
to a standard Lorentz gas with convex scatterers so that we may recall known facts
about such maps to establish {\bf (H1)}-{\bf (H5)} with constants depending only on the
three quantities $\tau_*$, $\K_*$ and $E_*$.

\smallskip
\noindent
{\em (H1)}.  For $x \in M$, define
\[
\begin{split}
C^s(x) & = \{ (dr, d\vf) \in \mathcal{T}_xM : - \K_*^{-1} - \tau_*^{-1} \le d\vf/dr \le - \K_* \} \\
\mbox{and} \; \;
C^u(x) & = \{ (dr, d\vf) \in \mathcal{T}_xM : \K_* \le d\vf/dr \le \K_*^{-1} + \tau_*^{-1} \} .
\end{split}
\]
Then for any $T \in \F_1$, $DT_xC^u(x) \subset C^u(Tx)$ and
$DT^{-1}_xC^s(u) \subset C^s(T^{-1}x)$ whenever $DT_x$ and $DT^{-1}_x$ are
defined.  Moreover, \eqref{eq:uniform hyp} is satisfied with
$\Lambda = 1 + 2\K_* \tau_*$ and
\[
C_e = \frac{2\tau_* \K_*}{\Lambda}\frac{\sqrt{1 + \K_*^2}}{\sqrt{1 + (\K_*^{-1}+\tau_*^{-1})^2}},
\]
(see \cite[Section 4.4]{chernov book}).  Notice that $C^s$ and $C^u$ are uniformly
transverse to each other and to the vertical and horizontal directions in $M$ as required.

The bounds on the first and second derivatives of $T$ required by
\eqref{eq:expansion} and \eqref{eq:2 deriv} are standard for such maps
(\cite[Section 4.4]{chernov book}).  Here, the index $n$ corresponds to the free flight
time $\tau(T^{-1}x)$.  For finite horizon, this has a uniform upper bound, while for infinite
horizon, the relation between $k$ and $n$ is satisfied with $\upsilon_0 = 1/4$
(\cite[Section 5.10]{chernov book}).

\smallskip
\noindent
{\em (H2)}.  We say a $\C^2$ curve $W$ in $M$ is stable if its
tangent vectors $\mathcal{T}_xW$ lie in $C^s(x)$ as defined above for each $x \in W$.
We call a stable
curve homogeneous if it is contained in a single homogeneity strip $\Ho_k$.
Since each stable curve $W$ has slope bounded away from infinity, we may identify
$W$ with the graph of a function of $r$, which we denote by $\vf_W(r)$.

By \cite[Proposition 4.29]{chernov book}, we may choose $B$ depending only
on $\tau_*$, $\K_*$ and $E_*$ such that if $\frac{d^2\vf_W}{dr^2} \le B$, then
each smooth component $W'$ of $T^{-1}W$ satisfies
$\frac{d^2\vf_{W'}}{dr^2} \le B$.

We define $\widehat \W^s$ to be the set of all stable homogeneous curves $W$
such that $\frac{d^2\vf_W}{dr^2} \le B$.  The invariance of the family $\C^s(x)$ as
well as the choice of $B$ guarantee that $\widehat \W^s$ is invariant as required.
The set of unstable curves $\widehat \W^u$ is defined similarly.

\smallskip
\noindent
{\em (H3)}.  Following \cite[Section 5.10]{chernov book}, we define the adapted norm
in the tangent space at $x \in M$ by
\[
\| v \|_* = \frac{\K(x) + |\mathcal{V}|}{\sqrt{1 + \mathcal{V}^2}} \| v \|, \; \; \;
\forall v \in C^s(x) \cup C^u(x)
\]
where, $v = (dr, d\vf)$ is a tangent vector, $\mathcal{V} = d\vf/dr$ and $\K(x)$
is the curvature of the scatterer at $x$.
Since the slopes of vectors in $C^s(x)$ and $C^u(x)$ are bounded away from $\pm \infty$,
we may extend $\| \cdot \|_*$ to all of $\mathbb{R}^2$ in such a way
that $\| \cdot \|_*$ is uniformly equivalent
to $\| \cdot \|$.   It is straightforward to check that for $v \in C^u(x)$,
\[
\frac{\| DT(x) v \|_*}{\| v \|_*} \ge 1 + \K_* \tau_* = \Lambda .
\]
Uniform expansion in $C^s(x)$ under $DT^{-1}(x)$ follows similarly.
Now \eqref{eq:step1} follows from \cite[Lemma 5.56]{chernov book} and
\eqref{eq:weakened step1} follows from \cite[Sublemma 3.5]{demers zhang}
with $\varsigma_0 = 1/6$.
From this point forward, we consider $k_0$ to be fixed.

\smallskip
\noindent
{\em (H4)}.  The bounded distortion constant $C_d$ in \eqref{eq:distortion stable}
and \eqref{eq:D u dist} depends only on the choice of $k_0$ from {\bf (H3)} and
the uniform hyperbolicity constants  $C_e$ and $\Lambda$
(\cite[Lemma 5.27]{chernov book}).

\smallskip
\noindent
{\em (H5)}.  For maps in $\F_1$, $DT(x) \equiv 1$ so we may take $\eta=1$.


\subsection{Proof of Theorems~\ref{thm:deform}}

Fix constants $\tau_*, \K_* >0$ and $E_* < \infty$ and consider
a configuration $Q_0 \in \Q_1(\tau_*, \K_*, E_*)$ with scatterers
$\Gamma_1, \ldots, \Gamma_d$.  Choose
$\gamma \le \frac 12 \min \{ \tau_*, \K_* \}$ and let
$\tQ \in \F_B(Q_0, E_*; \gamma)$ with scatterers $\tGamma_1, \ldots, \tGamma_d$.
Since $\ell(I_i) = |\partial \Gamma_i| = |\partial \tGamma_i|$ we may take
the corresponding functions $u_i, \tu_i$ to be arclength parametrizations
of $\partial \Gamma_i$ and $\partial \tGamma_i$ respectively.
We denote by $u'_i$ and $u''_i$ the first and second derivatives of
$u_i$ with respect to the arclength parameter $r$.
Then the curvature of $\partial \Gamma_i$ is simply given by
$\K(r) = \| u''_i(r) \|$ at each point $u_i(r) \in \partial \Gamma_i$,
and similarly for $\partial \tGamma_i$.

Thus on $\partial \tGamma_i$, we have by assumption on $\tQ$ and $\gamma$,
\[
\tilde{\K}(r) = \| \tu''_i \| = \| u''_i + \tu''_i - u''_i \| \ge \K(r) - \gamma \ge \K_*/2.
\]
Also, $\tau_{\min}(\tQ) \ge \tau_{\min}(Q_0) - \gamma \ge \tau_*/2$
since $\| u_i - \tu_i \| \le \gamma$.  Thus
$\F_A(Q_0, E_*; \gamma) \subset \F_1(\tau_*/2, \K_*/2, E_*)$.

Next we must show that $\tQ \in \F_A(Q_0, E_*; \gamma)$ represents a small
perturbation in the distance $d_\F(\cdot, \cdot)$.  We do this by first fixing
$\Gamma_2, \ldots, \Gamma_d$ and considering a deformation of
$\Gamma_1$ into $\tGamma_1$ such that $| u_1 - \tu_1 |_{\C^2} \le \gamma$.

Let $T_0$ be the map corresponding to $Q_0$ and let $T_1$ be the map corresponding
to $\tQ$.  We fix $x = (r, \vf) \in I_1 \times [-\pi/2, \pi/2]$ and compare $T_0^{-1}x$
with $T_1^{-1}x$.  To do this, we let $\Phi^0_t$ and $\Phi^1_t$ denote the flow on the tables
$Q_0$ and $\tQ$ respectively.  We denote by $\pi_0(x)$ the projection of $x$ onto the
flow space $\mathbb{T}^2 \times S^1$ corresponding to $Q_0$ and
by $\pi_0^q$ and $\pi_0^\theta$ the projections onto the position and angular coordinates
respectively.
Let $\tau_0(x)$ denote the
free flight time of $x$ under $\Phi^0_t$ and let $\K_0(\cdot)$ denote the curvature
of the scatterers in $Q_0$.  The analogous objects, $\pi_1, \pi_1^q, \pi_1^\theta,
\tau_1(\cdot)$ and $\K_1(\cdot)$ are defined for the table $\tQ$.

First suppose that $T_0^{-1}x$ and $T_1^{-1}x$ lie on the same scatterer $\Gamma_j$.
Notice that the trajectories $\Phi^0_{-t}(\pi_0x)$ and $\Phi^1_{-t}(\pi_1x)$ begin
from two points in $\mathbb{T}^2$ at most $\gamma$ apart and make an angle
of at most $\gamma$ with one another.  We decompose this motion into the sum of
(I) two parallel trajectories starting a distance $\gamma$ apart and (II) two
trajectories starting at the same point and making an angle $\gamma$.

\smallskip
\noindent
{\em I.  Parallel trajectories.} It is an elementary estimate that two parallel lines
a distance $\gamma$ apart will intersect a convex scatterer at a distance at most
\begin{equation}
\label{eq:parallel}
d_{\mathbb{T}^2}(\pi_0^q(T^{-1}_0x), \pi_1^q(T_1^{-1}x)) \le \sqrt{3 \gamma/\K_{\min}(\Gamma_j)}
\le \sqrt{3 \gamma/ \K_*},
\end{equation}
where $d_{\mathbb{T}^2}$ denotes distance on $\mathbb{T}^2$.

\smallskip
\noindent
{\em II. Nonparallel trajectories making an angle $\gamma \neq 0$.}  After time $t$ under the flow, the two trajectories will be
at most $t\gamma$ apart in $\mathbb{T}^2$.  Let $\tau(x_{-1}) = \max\{ \tau_0(T_0^{-1}x), \tau_1(T_1^{-1}x) \}$.
Then in the case of a finite horizon
Lorentz gas, by the same estimate as in \eqref{eq:parallel},
\begin{equation}
\label{eq:finite horizon}
d_{\mathbb{T}^2}(\pi_0^q(T^{-1}_0x), \pi_1^q(T_1^{-1}x)) \le \sqrt{3 \gamma \tau(x_{-1})/\K_{\min}(\Gamma_j)}
\le \sqrt{3 \gamma \tau_{\max} / \K_*}.
\end{equation}
In the infinite horizon case, define $\hat \tau = \gamma^{-1/3}$.  If
$\tau(x_{-1}) \le \hat \tau$, then 
\eqref{eq:finite horizon} implies
$d_{\mathbb{T}^2}(\pi_0^q(T^{-1}_0x), \pi_1^q(T_1^{-1}x)) \le \sqrt{3 / \K_*} \gamma^{1/3}$.
On the other hand, suppose $\tau_0(T_0^{-1}x) > \hat \tau$. Then $x$ lies in a cell $D_n$
such that $c^{-1} n \le \tau_0(T_0^{-1}y) \le c n$ for some $c >0$ and all $y \in D_n$, and
the width of $D_n$ in the stable direction is at most $C'/n$ (see \cite[Section 4.10]{chernov book}).  Thus
\beq
\label{eq:tau}
d_M(x, \Si_{-1}^{T_0}) \le C' n^{-1} \le C' c \tau_0^{-1}(T_0^{-1}x) \le C' c \hat \tau^{-1} \le C' c \gamma^{1/3} .
\eeq
An identical estimate holds if $\tau_1(T_1^{-1}x) > \hat \tau$.
Thus either $x \in N_{C\gamma^{1/3}}(\Si_{-1}^{T_0} \cup \Si_{-1}^{T_1})$ or
\begin{equation}
\label{eq:infinite horizon}
d_{\mathbb{T}^2}(\pi_0^q(T^{-1}_0x, \pi_1^q(T_1^{-1}x)) \le \sqrt{3  / \K_*} \gamma^{1/3}.
\end{equation}

\smallskip
\noindent
Concatenating these two estimates (I) and (II), we see that in terms of position coordinates,
$T^{-1}_0x$ and $T^{-1}_1x$ in $I_j$
are of order $\gamma^{1/2}$ in the finite horizon case and of order $\gamma^{1/3}$ in the
infinite horizon case.  Since the normal direction of $\Gamma_j$
varies smoothly with the position, we have
$d_M(T_0^{-1}x, T_1^{-1}x)$
of the same order.  Similar estimates hold when starting from $x \in \Gamma_j$ and
comparing images in $\Gamma_1$ and $\tGamma_1$.

In the case when $T_0^{-1}x$ and $T_1^{-1}x$ do not lie on the same scatterer $\Gamma_j$,
we must have $x \in N_{C\gamma^{1/3}}(\Si_{-1}^{T_0} \cup \Si_{-1}^{T_1})$
by the preceding arguments where $C = 4 \K_*^{-3/2}$ is sufficient.  We have
thus shown {\bf (C1)} holds with $\ve = C \gamma^{1/3}$.  Indeed, {\bf (C1)} holds with
$\ve = C \gamma^b$ for any $0 < b \le 1/3$ by the same argument.

We can consider the deformation of $d$ scatterers as the concatenation of errors induced
by deforming one scatterer at a time.  The preceding analysis holds with $C$ increased
by a factor of $d$.

Condition {\bf (C2)} is trivial to check since $J_\mu T_i \equiv 1$ for $i = 0,1$.

Next we prove {\bf (C4)}.  By \cite[eq. (2.26)]{chernov book},
$DT^{-1}_0(x) = \frac{-1}{\cos \vf(T_0^{-1}x)} A_0(x)$, where
\[
\scriptsize
A_0(x) =
\left[
\begin{array}{cc}
  \tau_0(T_0^{-1}x) \K_0(x) + \cos \vf(x) & - \tau_0(T_0^{-1}x) \\
  - \K_0(T_0^{-1}x) ( \tau_0(T_0^{-1}x) \K_0(x) + \cos \vf(x) ) - \K_0(x) \cos \vf(T_0^{-1}x)
  & \tau_0(T_0^{-1}x) \K_0(T_0^{-1}x) + \cos \vf(T_0^{-1}x) 
\end{array}
\right],
\]
and $DT_1^{-1}x = \frac{-1}{\cos \vf(T_1^{-1}x)} A_1(x)$, with a similar definition for $A_1(x)$.
Thus
\begin{equation}
\label{eq:DT diff}
\| DT_0^{-1}(x) - DT_1^{-1}x \| \le \Big| \frac{1}{\cos \vf(T_0^{-1}x)} - \frac{1}{\cos \vf(T_1^{-1}x)} \Big|
  \| A_0(x) \| + \frac{1}{\cos \vf(T_1^{-1}x)} \| A_0(x) - A_1(x) \|
\end{equation}
Note that $\| A_i(x) \|$ is bounded by a uniform constant times $\tau_i(T_i^{-1}x)$ and
\beq
\label{eq:A bound}
\| A_0(x) - A_1(x) \| \le K \tau(x_{-1})(d_M(T_0^{-1}x, T_1^{-1}x) + \gamma)
\eeq
where $K$ depends on $\tau_*$, $E_*$ and $\K_*$
and the $\gamma$ term is
due to possible differences in the curvatures $\K_0$ and $\K_1$ at the same point.
Notice that if $W \in \W^s$, then $|T_i^{-1}W| \le C |W|^{1/3}$ in the infinite horizon case
and $|T_i^{-1}W| \le C |W|^{1/2}$ in the finite horizon case.  Thus for $\delta < 1/k_0$, if
$T^{-1}_ix \in N_\delta(\Si_0)$, then $d_M(x, \Si_{-1}^{T_i}) \le C_t\delta^2$ where
$C_t$ is a uniform constant depending on the transversality of $C^s(x)$ with the
horizontal direction and of $\Si_{-1}^{T_i}$ with $C^s(x)$.

Now choose $\ve = \gamma^{a}$, where $a \le 1/3$ will be determined shortly.  Suppose
$x \notin N_\ve(\Si_{-1}^{T_0} \cup \Si_{-1}^{T_1})$.  Then by the above observation,
$\cos \vf(T_i^{-1}x) \ge C \ve^{1/2}$, $i =0,1$, and also by \eqref{eq:tau},
$\tau(x_{-1}) \le C \ve^{-1}$.  Thus recalling that
$d_M(T_0^{-1}x, T_1^{-1}x) \le C \gamma^{1/3}$, we estimate the
first term of \eqref{eq:DT diff},
\beq
\label{eq:cos diff}
\begin{split}
\| A_0(x) \| \Big| \frac{1}{\cos \vf(T_0^{-1}x)} - \frac{1}{\cos \vf(T_1^{-1}x)} \Big|
& \le \frac{K \tau(T_0^{-1}x) }{\cos \vf(T_0^{-1}x) \cos \vf(T_1^{-1}x)} | \cos \vf(T_1^{-1}x) - \cos \vf(T_0^{-1}x)| \\
& \le C\ve^{-2}  d_M(T_0^{-1}x, T_1^{-1}x) \le C' \gamma^{1/3-2a}  .
\end{split}
\eeq
To estimate the second term of \eqref{eq:DT diff}, we use \eqref{eq:A bound} to estimate,
\[
 \frac{1}{\cos \vf(T_1^{-1}x)} \| A_0(x) - A_1(x) \|
  \le C \ve^{-3/2} \gamma^{1/3} = C \gamma^{1/3 - 3a/2} .
\]

Putting these estimates together, we have
\[
\| DT_0^{-1}(x) - DT_1^{-1}(x) \| \le C''  \gamma^{1/3 - 2a}.
\]
Choosing $a = 2/15$ establishes {\bf (C4)}.

Condition {\bf (C3)} follows similarly
using the fact that
the stable Jacobian along $W \in \W^s$ is simply the norm of the tangent vector to $W$ times
$DT_i(x)$, $i = 0,1$.  The improved estimate in {\bf (C3)} comes from the fact that
instead of estimating \eqref{eq:cos diff} as above, we
must estimate instead
\[
\tau(x_{-1}) \left| \frac{\cos \vf(T_0^{-1}x)}{\cos \vf(T_1^{-1}x)} - 1 \right| \le C \ve^{-3/2}
d_M(T_0^{-1}x, T_1^{-1}x) \le C' \gamma^{1/3 -3a/2} = C' \gamma^{2/15} = C' \ve
\]
with our choice of $a = 2/15$.

If we restrict perturbations to the finite horizon case with horizons uniformly bounded by some
$\tau_{\max} < \infty$, then our estimates above improve by omitting a factor of $\ve^{-1}$
and $d(T_0^{-1}x , T_1^{-1}x) \le C \gamma^{1/2}$ by \eqref{eq:finite horizon}.  In
this case, the optimal choice of $b = 1/3$.


\subsection{Proof of Theorem~\ref{thm:random}}
\label{random proof}

We fix a class of maps $\F$ for which {\bf (H1)}-{\bf (H5)} hold with uniform constants
and choose $T_0 \in \F$.  Define
$X_\ve(T_0) = \{ T \in \F : d_{\F}(T,T_0) \le \ve \}$.
Recall the transfer operator $\Lp_{(\nu,g)}$ associated with the random process drawn
from $X_\ve(T_0)$ as defined in Section~\ref{random}.
Our first lemma is a generalization of Theorem~\ref{thm:close} which shows that the transfer
operator $\Lp_{(\nu,g)}$ is close to $\Lp_{T_0}$ in the norms we have defined.

\begin{lemma}
\label{lem:random close}
There exists $C>0$ such that if $\ve \le \ve_0$, then
$||| \Lp_{(\nu,g)} - \Lp_{T_0} ||| \leq C A \ve^\beta$.
\end{lemma}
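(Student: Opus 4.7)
The plan is to reduce the bound to a $\nu$-average of pointwise closeness estimates furnished by Theorem~\ref{thm:close}, together with a multiplier estimate that absorbs the weight $g$. First, I would observe that for any deterministic $T$ and $\omega \in \Omega$ the change-of-variables formula yields
\[
\Lp_T h(x)\, g(\omega, T^{-1}x) \;=\; \Lp_T\bigl(g(\omega, \cdot)\, h\bigr)(x).
\]
Substituting this into the definition of $\Lp_{(\nu,g)}$ and using the normalization $\int_\Omega g(\omega,x)\,d\nu(\omega) = 1$ together with the linearity of $\Lp_{T_0}$ produces the clean identity
\[
\Lp_{(\nu,g)} h - \Lp_{T_0} h \;=\; \int_\Omega \bigl(\Lp_{T_\omega} - \Lp_{T_0}\bigr)\bigl(g(\omega, \cdot)\, h\bigr)\, d\nu(\omega).
\]

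Next, for any $W \in \W^s$ and any test function $\psi$ admissible in the weak norm, Fubini reduces $\bigl|\int_W (\Lp_{(\nu,g)}h - \Lp_{T_0}h)\,\psi\,dm_W\bigr|$ to a $\nu$-integral of the corresponding deterministic weak-norm quantities for the pair $(T_\omega, T_0)$. Since $d_\F(T_\omega, T_0) \le \ve$ for $\nu$-a.e.\ $\omega$, the closeness estimate of Theorem~\ref{thm:close} applied to each such pair controls the integrand by a constant multiple of $\ve^\beta\,\|g(\omega,\cdot)\,h\|_\B$.

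It then remains to establish a uniform multiplier estimate of the form $\|g(\omega,\cdot)\,h\|_\B \le C_0 A\,\|h\|_\B$. For the weak and strong stable norms this is immediate: $\psi$ is replaced by $g(\omega,\cdot)\psi$, whose $\C^p(W)$ norm is bounded by $CA|\psi|_{\C^p(W)}$ since $g(\omega,\cdot)\in\C^1(M)$. For the strong unstable norm one must verify that if $(\psi_1,\psi_2)$ is an admissible pair on matched curves $W_1,W_2$ with $d_{\W^s}(W_1,W_2)\le\ve$ and $d_q(\psi_1,\psi_2)\le\ve$, then the renormalized pair $\bigl(g|_{W_1}\psi_1,\,g|_{W_2}\psi_2\bigr)$ still satisfies $d_q \le C A\ve$; this follows from the Lipschitz control on $g(\omega,\cdot)$ together with the fact that $W_1,W_2$ lie in a common homogeneity strip over a common $r$-interval (up to an $\ve$-symmetric difference, estimated separately via $\|\cdot\|_s$).

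The principal technical obstacle will be this multiplier estimate in the strong unstable norm: one has to control simultaneously how the product $g(\omega,\cdot)\psi$ deforms the $d_q$-distance between the test functions and how it interacts with the matching foliation defining $d_{\W^s}$, while propagating the constant $A$ cleanly without losing any factor of $\ve$. Once this is settled, substituting into the Fubini bound and integrating against $\nu$ yields the claim $|||\Lp_{(\nu,g)} - \Lp_{T_0}||| \le C A\,\ve^{\beta}$.
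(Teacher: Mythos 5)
Your approach is structurally the same as the paper's: push the weight into the transfer operator via the identity $\Lp_T h(x)\,g(\omega,T^{-1}x)=\Lp_T(g(\omega,\cdot)h)(x)$, use $\int_\Omega g(\omega,\cdot)\,d\nu=1$ to write $\Lp_{(\nu,g)}h-\Lp_{T_0}h=\int_\Omega(\Lp_{T_\omega}-\Lp_{T_0})(g(\omega,\cdot)h)\,d\nu(\omega)$, Fubini, apply the estimate \eqref{eq:final close} from the proof of Theorem~\ref{thm:close} to each $\omega$, and finish with a multiplier bound. Your explicit derivation of this identity is cleaner than the paper's (terse) display.

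However, the specific multiplier estimate you propose for the strong unstable norm does not hold. You claim that the renormalized pair $(g\psi_1,g\psi_2)$ still satisfies $d_q\le CA\ve$. Write $a_i=g\circ G_{W_i}$ and $b_i=\psi_i\circ G_{W_i}$; then $d_q(g\psi_1,g\psi_2)\le |a_1|_{\C^q(I)}\,d_q(\psi_1,\psi_2)+|a_1-a_2|_{\C^q(I)}\,|b_2|_{\C^q(I)}$. The first term is $\le CA\ve$, but since $g$ is only $\C^1$, the difference $a_1-a_2$ is controlled in $\C^0$ by $A\ve$ with no useful bound on its Lipschitz seminorm beyond $O(A)$; Lemma~\ref{lem:general} then gives only $|a_1-a_2|_{\C^q(I)}\le CA\ve^{1-q}$, which is strictly worse than $\ve$. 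Thus $d_q(g\psi_1,g\psi_2)\le CA\ve^{1-q}$, and renormalizing to $\hat\ve=\ve^{1-q}$ in the definition of $\|\cdot\|_u$ loses a factor $\ve^{-q\beta}$ — the bound $\|g h\|_u\le CA\|h\|_u$ fails.

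The fix is the same device used in Section~\ref{unstable norm}: define $\phi_2:=(g\psi_1)\circ G_{W_1}\circ G_{W_2}^{-1}$ on $W_2$, so that $d_q(g\psi_1,\phi_2)=0$. The pair $(g\psi_1,\phi_2)$, normalized by $CA$, is then admissible with $\hat\ve=\ve$ and contributes $CA\ve^\beta\|h\|_u$. The remainder $\int_{U_2}h(\phi_2-g\psi_2)$ is estimated by the strong stable norm, using $|\phi_2-g\psi_2|_{\C^q(U_2)}\le d_q(g\psi_1,g\psi_2)\le CA\ve^{1-q}$; since $q<p\le 1/3$ one has $1-q>\beta$, so this term is $\le CA\ve^\beta\|h\|_s$ and no factor of $\ve$ is lost. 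The resulting multiplier bound is therefore $\|gh\|_u\le CA(\|h\|_s+\|h\|_u)$, not $\|gh\|_u\le CA\|h\|_u$, which is still exactly what the final estimate requires. Finally, note that since the argument invokes Theorem~\ref{thm:close} (which yields $\ve^{\beta/2}$), the exponent obtained is $\ve^{\beta/2}$, consistent with Theorem~\ref{thm:random}; the $\ve^\beta$ in the lemma statement appears to be a typo.
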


\begin{proof}
Let $h \in \C^1(M)$, $W \in \W^s$ and $\psi \in \C^p(W)$ with $|\psi|_{W,0,p} \le 1$.
Then using \eqref{eq:final close},
\[
\begin{split}
\left| \int_W \Lp_{(\nu,g)}h \, \psi \, dm_W -  \int_W \Lp_{T_0}h \, \psi \, dm_W \right| &
= \left| \int_{\Omega} \int_W ( \Lp_{T_\omega} h(x) - \Lp_{T_0}h(x)) \psi(x) g(\omega, T_\omega^{-1}x) \, dm_W d\nu \right| \\
& \le \int_\Omega Cb^{-1} \ve^{\beta/2} \| h \| |g(\omega, \cdot)|_{\C^1(M)} d\nu(\omega)
\le Cb^{-1} A \ve^{\beta/2} \| h\|,
\end{split}
\]
where we have interchanged order of integration since $\int_W \Lp_{T_w}( h) \, \psi \, g(\omega, \cdot) \, dm_W$ is uniformly and absolutely integrable for each $\omega \in \Omega$
by Theorem~\ref{thm:uniform}.
\end{proof}

It remains to prove the uniform Lasota-Yorke inequalities for $\Lp_{\nu,g}$.
Let $\ob_n = (\omega_1, \ldots, \omega_n) \in \Omega^n$ and define
$T_{\ob_n} = T_{\omega_n} \circ \cdots \circ T_{\omega_1}$.
We first prove that the random compositions $T_{\ob_n}$ have the same properties
{\bf (H1)}-{\bf (H5)} as the maps $T_\omega \in \F$, with possibly modified
constants.

The singularity sets for $T_{\ob_n}$ are
$\Si_n^{T_{\ob_n}} = \cup_{k=1}^n T_{\omega_1}^{-1} \circ \cdots \circ T_{\omega_k}^{-1} \Si_0$,
for $n\ge 0$, and similarly for $\Si_{-n}^{T_{\ob_n}}$.  Thus the transversality properties
{\bf (H1)}
of $\Si_{-n}^{T_{\ob_n}}$ with respect to $C^s$ and $C^u$ hold due to the
uniformity of this transversality for all maps in $\F$.  The family $\W^s$ is
preserved under $T_{\ob_n}^{-1}$ since it is preserved by each map in the composition.

The uniform
expansion given by \eqref{eq:uniform hyp} of {\bf (H1)} also holds since
$DT_{\ob_n} = \prod_{k=1}^n DT_{\omega_k} \circ T_{\ob_{k-1}}$ and in the adapted
metric $\| \cdot \|_*$ given by {\bf (H3)}, the expansion holds with $C_e=1$
for each map in the composition.  Translating to the Euclidean norm at the last step,
we get {\bf (H1)} with $C_e$ depending only on the uniform constant relating the
adapted and Euclidean metrics.  Equations \eqref{eq:expansion} and \eqref{eq:2 deriv}
also hold trivially since they concern only one iterate of a map drawn from $\F$.
{\bf (H5)} follows for the same reason.

Due to the uniform expansion along stable and unstable leaves,
\eqref{eq:distortion stable} and \eqref{eq:D u dist} of {\bf (H4)} hold with a possibly
larger distortion constant $C_d^*$, again using the bounded distortion of each
map in the composition $T_{\ob_n}$.

Finally, we establish that the iteration of the one-step expansion
given in {\bf (H3)} holds for random sequences of maps in the class $\F$.
As in Section~\ref{preliminary}, for $W \in \W^s$ we define the $n$th generation
$\G_n^{\ob_n}(W) \subset \W^s$ of smooth curves in $T_{\ob_n}^{-1}W$.
The elements of $\G_n^{\ob_n}(W)$ are denoted by $W^n_i$ as before
and long and short pieces are defined similarly.
Analogously,
$\I^{\ob_n}_n(W^k_j)$ denotes the set of indices $i$ in generation $n$ such that
$W^k_j$ is the most recent long ancestor of $W^n_i$ under $T_{\ob_n}$.
Thus $\I_n^{\ob_n}(W)$ denotes
the set of curves that are never part of a curve that has grown to length $\delta_0/3$
at each time step $1 \le k \le n$.

\begin{lemma}
\label{lem:random growth}
Let $W \in \W^s$ and for $n \geq 0$, let $\I_n^{\ob_n}(W)$ and $\G_n^{\ob_n}(W)$
be defined as above.
There exist constants $C_1, C_2, C_3 >0$, independent of $W \in \W^s$ and
$\ob_n \in \Omega^n$, such that for
any $n\geq 0$,
\begin{itemize}
  \item[(a)] $\ds
\sum_{i \in \I_n^{\ob_n}(W)} |J_{W^n_i}T_{\ob_n}|_{\C^0(W^n_i)} \leq C_1 \theta_*^n $;
 \item[(b)] $\ds
\sum_{W^n_i \in \G_n^{\ob_n}(W)}  |J_{W^n_i}T_{\ob_n}|_{\C^0(W^n_i)} \le C_2     $;
 \item[(c)]  for any $0 \leq \varsigma \leq 1$,
$\ds
\sum_{W^n_i \in \G_n^{\ob_n}(W)} \frac{|W^n_i|^\varsigma}{|W|^\varsigma}
|J_{W^n_i}T_{\ob_n}|_{\C^0(W^n_i)} \le C_2^{1-\varsigma} $;
 \item[(d)] for $\varsigma > \varsigma_0$,
  $\ds
\sum_{W^n_i \in \G_n^{\ob_n}(W)} |J_{W^n_i}T_{\ob_n}|_{\C^0(W^n_i)}^\varsigma \le C_3^n$.
\end{itemize}
\end{lemma}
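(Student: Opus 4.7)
The plan is to reproduce the arguments of Lemma~\ref{lem:growth} line by line, but replacing the fixed map $T$ by the random composition $T_{\ob_n}$. All four estimates in Lemma~\ref{lem:growth} were combinatorial consequences of {\bf (H1)}--{\bf (H4)}: the uniform contraction given by the one-step expansion \eqref{eq:one step contract}, the bounded distortion \eqref{eq:distortion stable}, and the invariance of $\W^s$ under $\F$. Since by hypothesis each $T_{\omega_k} \in \F$ satisfies these with the same constants $\theta_*$, $\delta_0$, $C_d$, $\Lambda$, $C_e$ and $C_\varsigma$, the same combinatorics go through without change; what needs to be verified is only that the bookkeeping is applied to each edge of the generation tree using whichever map governs that edge.

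For part (a), I would argue by induction on $n$. Let $Z_n = \sum_{i \in \I_n^{\ob_n}(W)} |J_{W^n_i}T_{\ob_n}|_{*,\C^0(W^n_i)}$, where the Jacobian is measured in the adapted norm of {\bf (H3)}. Every $W^{n-1}_i \in \G_{n-1}^{\ob_{n-1}}(W)$ indexed by $\I_{n-1}^{\ob_{n-1}}(W)$ is short, i.e.\ $|W^{n-1}_i|<\delta_0/3<\delta_0$, so applying \eqref{eq:one step contract} to $T_{\omega_n}$ on that piece gives that the sum of $|J_{V_j}T_{\omega_n}|_*$ over its preimage components is at most $\theta_*$. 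Only the short children survive into $\I_n^{\ob_n}(W)$ (the long ones, even if artificially cut to have length $\le \delta_0$, are by definition of length $\ge \delta_0/2$ and exit the index set), so the chain rule $J_{W^n_i}T_{\ob_n} = J_{V_j}T_{\omega_n} \cdot J_{W^{n-1}_i}T_{\ob_{n-1}}|_{T_{\omega_n}V_j}$ together with bounded distortion gives $Z_n \le \theta_* Z_{n-1}$. Iterating and converting from $\| \cdot \|_*$ to the Euclidean Jacobian produces $Z_n \le C_1 \theta_*^n$ with $C_1$ depending only on the equivalence constant between the two norms. For part (b), I would decompose $\G_n^{\ob_n}(W)$ according to the most recent long ancestor, exactly as in Section~\ref{preliminary}:
\[
\sum_{W^n_i \in \G_n^{\ob_n}} |J_{W^n_i}T_{\ob_n}|_{\C^0}
\le \sum_{i \in \I^{\ob_n}_n(W)} |J_{W^n_i}T_{\ob_n}|_{\C^0}
 + \sum_{k=1}^n \sum_{j \in L^{\ob_k}_k(W)} |J_{W^k_j}T_{\ob_k}|_{\C^0}
 \sum_{i \in \I^{\ob_n}_n(W^k_j)} |J_{W^n_i}(T_{\omega_n}\!\circ\!\cdots\!\circ\!T_{\omega_{k+1}})|_{\C^0} .
\]
The inner sum is bounded by $C_1\theta_*^{n-k}$ by part (a) applied to the shifted sequence $(\omega_{k+1},\ldots,\omega_n)$ starting from the long curve $W^k_j$; the outer sum is bounded by $C\delta_0^{-1}$ using $|W^k_j|\ge \delta_0/3$ and bounded distortion as in \cite[Lemma 3.2]{demers zhang}. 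Summing the geometric series in $k$ gives (b) with $C_2$ uniform in $\ob_n$. Part (c) is then H\"older's inequality applied to (b), following \cite[Lemma 3.3]{demers zhang}. Part (d) requires only \eqref{eq:weakened step1}, which holds uniformly over $\F$ with the same $C_\varsigma$ for every $T_{\omega_k}$; iterating this one-step sum $n$ times yields $\le C_3^n$ with $C_3 = \delta_0^{-1} C_\varsigma(1+C_d)^{2\varsigma}$, independent of the sequence $\ob_n$.

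The main (and essentially only) subtlety is verifying that at each inductive step the relevant one-step bound \eqref{eq:one step contract} is legitimately applied to each individual piece $W^{n-1}_i$ under the specific map $T_{\omega_n}$, not to an aggregate; but since \eqref{eq:one step contract} holds pointwise in $W \in \W^s$ for every $T \in \F$ with a common constant $\theta_*$, this is automatic. All constants $C_1,C_2,C_3$ depend only on $\delta_0$, $\theta_*$, $C_d$, $C_\varsigma$, and the norm-equivalence constant, all of which are uniform over $\F$ by {\bf (H1)}--{\bf (H5)}; none of them depends on $\ob_n$ or on $n$, which is precisely what the lemma asserts.
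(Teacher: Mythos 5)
Your proposal is correct and follows essentially the same strategy as the paper: for (a), an induction on $n$ using the adapted-norm contraction $Z_n$, the one-step expansion \eqref{eq:one step contract} applied to a single map in $\F$ at each step, and the chain rule; for (b), the decomposition by most recent long ancestor together with (a) and bounded distortion; for (c), Jensen; for (d), iteration of the weakened one-step bound \eqref{eq:weakened step1}.

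The one place your argument diverges from the paper is the orientation of the induction in (a): you peel off the \emph{last} symbol, pulling back the short pieces of $\G_{n-1}^{\ob_{n-1}}(W)$ once more by $T_{\omega_n}^{-1}$ and using the inductive bound on $Z_{n-1}$ rooted at $W$, whereas the paper peels off the \emph{first} symbol, passing to the first-generation pieces $W^1_j \in \G_1^{\omega_1}(W)$ and applying the inductive bound (quantified over all $W \in \W^s$) to each $W^1_j$ with the shifted subsequence. Both inductions are valid given the uniformity of {\bf (H3)} over $\F$; yours is a slightly more direct step $n-1 \to n$ from the fixed root $W$, while the paper's re-rooting at the first generation is what lets it split $|J_{W^{n+1}_i}T_{\ob_{n+1}}|_*$ cleanly by the chain rule at $W^1_j$. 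Two small remarks: bounded distortion is not actually needed in part (a) — the submultiplicativity of the $\C^0$ norm of the Jacobian is enough — and you should note the base case $Z_1(W) \le \theta_*$ and (as the paper does explicitly) that pieces in $\I_n$ are never artificially subdivided, so \eqref{eq:one step contract} applies at every level of the tree.
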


\begin{proof}
(a)   Fix $W\in \W^s$ and for $\ob_n \in \Omega^n$, define
$ \Z_n(W)
=\sum_{i\in \mathcal I_n^{\ob_n}(W)} |J_{W^n_i}T_{\ob_n}|_*$,
where $|J_{W^n_i}T_{\ob_n}|_*$ denotes the least contraction on $W^n_i$ under
$T_{\ob_n}$ measured in the metric induced by the adapted norm.
We will prove by induction on $n\in \mathbb{N}$ that  $\Z_{n}(W)\leq \theta_*^{n}$.
Then, since $\| \cdot \|_*$ is equivalent to $\| \cdot \|$, statement (a) follows.

Note that at each iterate between $1$ and $n$, every piece $W^n_i$, $i \in \I_n^{\ob_n}(W)$, is created by genuine cuts due to singularities and
homogeneity strips and not by any artificial subdivisions, since those are only made when a piece
has grown to length greater than $\delta_0$.
Thus we may apply
the one-step expansion \eqref{eq:step1} to conclude,
\begin{equation}
\label{thetaW}
 \Z_1(W)\leq \theta_*, \; \; \; \forall \; W \in \W^s.
\end{equation}

 Assume that $\Z_{n}(W)\leq \theta_*^{n}$ is proved for some $n\geq 1$ and all $W \in \W^s$.
We apply it to each component $W^1_i \in \G_1^{\omega_1}(W)$ such that $i \in \I_1^{\omega_1}(W)$.
Then
$ \Z_n(W^1_i) \leq \theta_*^{n}$ since $W^1_i \in \W^s$.

Given $\ob_n \in \Omega^n$, we use the notation
$\ob_{n-k}' = (\omega_n, \ldots, \omega_{n-k+1})$ so that we may split up
compositions $\ob_n = (\ob_{n-k}', \ob_k)$ into two pieces.
Given a sequence $\ob_{n+1}$,
we group the components of $W_i^{n+1}\in \G_{n+1}^{\ob_{n+1}}(W)$ with
 $i\in \I_{n+1}^{\ob_{n+1}}(W)$
 according to elements with index in $\I_1^{\omega_1}(W)$. More precisely,
 for $j \in \I_1^{\omega_1}(W)$, let
 $A_j = \{ i : W^{n+1}_i \in \G^{\ob_{n+1}}_{n+1}(W), T_{\ob_n'}W^{n+1}_i \subset W^1_j \}$.
 Note that $|J_{W^{n+1}_i}T_{\ob_{n+1}}|_* \le |J_{W^{n+1}_i}T_{\ob_n'}|_*
 |J_{W^1_j}T_{\omega_1}|_*$ whenever $T_{\ob_n'}W^{n+1}_i \subseteq W^1_j$.
Combining this and \eqref{thetaW} with the inductive hypothesis, we get
 \[
 \begin{split}
 \Z_{n+1}(W) &
=  \sum_{j \in \I_1^{\omega_1}(W)}\sum_{i \in A_j}  |J_{W^{n+1}_i}T_{\ob_{n+1}}|_*
\; \leq \; \sum_{j \in \I_1^{\omega_1}(W)}
\left( \sum_{i \in A_j} |J_{W^{n+1}_i}T_{\ob_n'}|_* \right)
|J_{W^1_j}T_{\omega_1}|_*   \\
&=\sum_{j \in \I_1^{\omega_1}(W)} \Z_n(W^1_j)\,\cdot
|J_{W^1_j}(T_{\omega_1})|_*
\; \leq \; \theta_*^{n+1}  .
\end{split}
\]

\smallskip
\noindent
(b)   Fix $W \in \W^s$ and $\ob_n \in \Omega^n$.
For any $0 \le k \le n$ and
$W^n_i \in \G_n^{\ob_n}(W)$, we have
\begin{equation}
\label{eq:long piece bound}
|J_{W^n_i}T_{\ob_n}|_{\C^0(W^n_i)} \le |J_{W^n_i}T_{\ob_{n-k}'}|_{\C^0(W^n_i)} |J_{W^k_j}T_{\ob_k}|_{\C^0(W^k_j)},
\end{equation}
whenever $T_{\ob_{n-k}'}W^n_i \subseteq W^k_j \in \G_k^{\ob_k}(W)$.

Now grouping $W^n_i \in \G_n^{\ob_n}(W)$ by most recent long ancestor
$W^k_j \in L_k^{\ob_k}(W)$ as described
in Section~\ref{preliminary} and using \eqref{eq:long piece bound},
we have
\[
\begin{split}
\sum_i & |J_{W^n_i}T_{\ob_n}|_{\C^0(W^n_i)}
= \sum_{k =0}^n \sum_{W^k_j \in L_k^{\ob_k}(W)} \sum_{i \in \I_n^{\ob_n}(W^k_j)}
|J_{W^n_i}T_{\ob_n}|_{\C^0(W^n_i)}    \\
& \le \sum_{k=1}^{n} \sum_{W^k_j \in L_k^{\ob_k}(W)} \Big( \sum_{i \in \I_n^{\ob_n}(W^k_j)}
 |J_{W^n_i}T_{\ob_{n-k}'}|_{\C^0(W^n_i)} \Big) |J_{W^k_j}T_{\ob_k}|_{\C^0(W^k_j)}
 \; + \; \sum_{i \in \I_n^{\ob_n}(W)}  |J_{W^n_i}T_{\ob_n}|_{\C^0(W^n_i)} ,
 \end{split}
\]
where we have split off the terms involving $k=0$ that have no long ancestor.  We have
\[
|J_{W^k_j}T_{\ob_k}|_{\C^0(W^k_j)} \le (1+C_d^*)|T_{\ob_k}W^k_j| |W^k_j|^{-1} \le 3 \delta_0^{-1}
(1+C_d^*)|T_{\ob_k}W^k_j|
\]
since $|W^k_j| \ge \delta_0/3$.
Since $\I_n^{\ob_n}(W^k_j)$ and $\I_{n-k}^{\ob_{n-k}'}(W^k_j)$ correspond to the same set of short pieces in the
$(n-k)^{\mbox{\scriptsize th}}$ generation of $W^k_j$, we apply part (a) of this lemma
to each of these sums.  Thus,
\[
\begin{split}
\sum_i & |J_{W^n_i}T_{\ob_n}|_{\C^0(W^n_i)}
\le \sum_{k=0}^{n-1} \sum_{W^k_j \in L_k^{\ob_k}(W)} C_1 \theta_*^{n-k} |J_{W^k_j}T_{\ob_k}|_{\C^0(W^k_j)} \; + \; C_1 \theta_*^n \\
 & \le C \delta_0^{-1} \sum_{k=0}^{n-1} \sum_{W^k_j \in L_k^{\ob_k}(W)} \theta_*^{n-k}|T_{\ob_k}W^k_j| + C \theta_*^n
 \; \le \; C \delta_0^{-1} |W| \sum_{k=0}^{n-1} \theta_*^{n-k} + C \theta_*^n ,
 \end{split}
\]
which is uniformly bounded in $n$.

\smallskip
\noindent
(c) follows from (b) by an application of Jensen's inequality and (d) follows from
{\bf (H3)} using an inductive argument similar to the proof of (a).
\end{proof}

We complete the proof of Theorem~\ref{thm:random} via the following proposition.
The uniform Lasota-Yorke inequalities of Theorem~\ref{thm:uniform} then follow
from the argument given at the beginning of Section~\ref{uniform}.

\begin{proposition}
\label{prop:random ly}
Choose $\ve \le \ve_0$ sufficiently small that $\sigma(1+\ve)<1$ and let $\Delta(\nu,g) \le \ve$.
There exists a constant $C$, depending on $a$, $A$, and {\bf (H1)}-{\bf (H5)} such that
for $h \in \B$ and $n \ge 0$,
\begin{eqnarray}
|\Lp_{(\nu,g)}^n h|_w & \le & C \eta^n |h|_w   \label{eq:random weak norm} \\
\| \Lp_{(\nu,g)}^n h \|_s & \le &C \eta^n ( \theta_*^{(1-\alpha)n} + \Lambda^{-qn}) \|h\|_s + C \delta_0^{-\alpha} \eta^n |h|_w
\label{eq:random stable norm} \\
\| \Lp_{(\nu,g)}^n h \|_u &\le & C \eta^n \Lambda^{-\beta n} \| h\|_u + C \eta^n C_3^n \|h \|_s
\label{eq:random unstable norm}
\end{eqnarray}
\end{proposition}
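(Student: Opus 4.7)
The plan is to follow the proof of Proposition~\ref{prop:ly} essentially line by line, with two modifications: substitute the random growth bounds of Lemma~\ref{lem:random growth} for their deterministic analogues in Lemma~\ref{lem:growth}, and carefully handle the extra weights
$G_{\ob_n}(x) := \prod_{k=1}^n g(\omega_k, T_{\ob_{k-1}}x)$
that arise from unfolding the iterate as
\[
\Lp_{(\nu,g)}^n h = \int_{\Omega^n} \Lp_{T_{\ob_n}}(G_{\ob_n} h) \, d\nu^n(\ob_n),
\]
where $T_{\ob_n} = T_{\omega_n} \circ \cdots \circ T_{\omega_1}$ and $T_{\ob_0}=\mathrm{Id}$. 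The first preliminary step is to record the pointwise identity $\int_{\Omega^n} G_{\ob_n}(x)\, d\nu^n = 1$, valid for every $x\in M$, which follows by induction from hypothesis (ii) on $g$.

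For the weak-norm estimate \eqref{eq:random weak norm}, I would begin from the identity
\[
\int_W \Lp_{(\nu,g)}^n h \, \psi \, dm_W = \int_{\Omega^n} \sum_{W^n_i \in \G_n^{\ob_n}(W)} \int_{W^n_i} h \, G_{\ob_n} \, (\psi \circ T_{\ob_n}) \, \frac{J_{W^n_i}T_{\ob_n}}{J_\mu T_{\ob_n}} \, dm_W \, d\nu^n,
\]
and reproduce the calculation of Section~\ref{weak norm} for each fixed $\ob_n$, bounding $h$ against the composite test function via $|h|_w$ and invoking Lemma~\ref{lem:random growth}(b) to control $\sum_i |J_{W^n_i}T_{\ob_n}|_{\C^0(W^n_i)}\le C_2$ uniformly in $\ob_n$. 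The strong-stable and strong-unstable inequalities \eqref{eq:random stable norm}--\eqref{eq:random unstable norm} follow by repeating Sections~\ref{stable norm} and \ref{unstable norm} verbatim for each fixed $\ob_n$. The matched/unmatched piece construction of Section~\ref{unstable norm} requires only that the stable and unstable cones and the families $\widehat\W^s, \widehat\W^u$ be common to every map in $\F$, which is guaranteed by \textbf{(H1)}--\textbf{(H2)}; Lemma~\ref{lem:random growth}(d) then supplies the bound on the short unmatched pieces exactly as in \eqref{eq:first unstable}. After the per-$\ob_n$ estimates are in place, one integrates over $d\nu^n$.

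The main technical obstacle is controlling the weights $G_{\ob_n}$, whose pointwise $C^p$ norm on a single piece $W^n_i$ is only bounded by $A^n(1+n)$, far too large to give a constant independent of $n$. The resolution is to exploit the pointwise identity $\int G_{\ob_n}\, d\nu^n = 1$ \emph{before} taking suprema: instead of applying the algebra inequality $|fg|_{\C^p}\le|f|_{\C^p}|g|_{\C^p}$ to pull $G_{\ob_n}$ out of the test-function norm, one distributes the $d\nu^n$ integration across the $n$ factors of the expanded composition $\Lp_{T_{\omega_n}}\cdots\Lp_{T_{\omega_1}}$, so that each integration $\int g(\omega_k,\cdot)\, d\nu(\omega_k)$ collapses pointwise to $1$ and only the H\"older constants of the $g(\omega_k,\cdot)$ (which are bounded by $A$ uniformly) contribute to the final constant. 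This rearrangement---which uses the lower bound $g\ge a$ to keep the test functions away from zero when estimating their $C^p$ norms---is the heart of the proof and yields the stated bounds with $C$ depending on $a$, $A$, and \textbf{(H1)}--\textbf{(H5)} but independent of $n$.
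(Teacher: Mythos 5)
Your proposal follows the same overall route as the paper: unfold $\Lp_{(\nu,g)}^n$ into an integral over $\ob_n\in\Omega^n$ of single-path transfer operators, carry out the Lasota--Yorke estimate for each fixed $\ob_n$ using Lemma~\ref{lem:random growth} in place of Lemma~\ref{lem:growth}, and then integrate over $\ob_n$ using $\int_\Omega g(\omega,x)\,d\nu(\omega)=1$. That is exactly what the paper does.

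However, the heart of the proof is handled too loosely. The difficulty is precisely as you say: after bounding the test function on $W^n_i$ in $\C^p$, the factor $|G_{\ob_n}|_{\C^p(W^n_i)}$ appears, and the naive bound $A^n$ is useless, while the identity $\int_{\Omega^n}G_{\ob_n}\,d\nu^n=1$ holds only pointwise and cannot be applied to a $\C^p$ norm. Your resolution --- ``distribute the $d\nu^n$ integration across the $n$ factors'' so that ``each integration collapses pointwise to $1$'' --- is the right intuition but is not yet a proof step: to make it rigorous you must first replace $|G_{\ob_n}|_{\C^p(W^n_i)}$ by a \emph{single pointwise value} $G_{\ob_n}(x_0)$ that the $d\nu^n$ integration can act on. This is what the paper isolates as Sublemma~\ref{lem:G}: using the lower bound $g\ge a$, the Lipschitz bound $|g(\omega,\cdot)|_{\C^1}\le A$, and the exponential contraction of $T_{\ob_{j-1}}$ along the stable curve $W^n_i$, one proves the distortion estimate
\[
|G_{\ob_n}|_{\C^1(W^n_i)}\le C\,G_{\ob_n}(x_0)\quad\text{for any }x_0\in W^n_i,
\]
with $C$ independent of $n$, $\ob_n$, $W^n_i$. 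Once this is in hand, each per-$\ob_n$ strong-stable (or weak, or strong-unstable) estimate produces a factor $G_{\ob_n}(x_0)$ multiplying the bound, the same $x_0\in T_{\ob_n}^{-1}W$ can be chosen across all pieces, and then the integral $\int_{\Omega^n}G_{\ob_n}(x_0)\,d\nu^n(\ob_n)=1$ (peeling off $\omega_n,\omega_{n-1},\dots$ one at a time) kills the weight entirely. You have all the ingredients --- the lower bound $a$, the $\C^1$ bound $A$, and the geometric contraction along stable curves --- but they need to be assembled into an explicit lemma; without that, the passage from the $\C^p$-norm of $G_{\ob_n}$ to its integral over $\Omega^n$ is not justified.
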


\begin{proof}
We record for future use,
\[
\Lp^n_{(\nu,g)}h(x) = \int_{\Omega^n} h \circ T_{\ob_n}^{-1} (J_\mu T_{\ob_n}\circ T_{\ob_n}^{-1})^{-1}
\prod_{j=1}^n g(\omega_j, T_{\omega_j}^{-1} \circ \cdots \circ T_{\omega_n}^{-1}x) d\nu^n(\ob_n)
.
\]
The proofs of the inequalities are the same as in Section~\ref{uniform} except that we have the
additional function $g(\omega,x)$.  We show how to adapt the estimates of Section~\ref{uniform}
to the operator $\Lp_{(\nu,g)}$ in the case of the strong stable norm.  The other estimates
are similar.

\medskip
\noindent
{\bf Estimating the Strong Stable Norm.}
Following Section~\ref{stable norm}, we write,
\begin{equation}
\begin{split}
\label{eq:random split}
\int_W & \Lp^n_{(\nu,g)} h \, \psi dm_W
= \int_{\Omega^n} \sum _i \left\{ \int_{W^n_i} h (\psi \circ T_{\ob_n} - \bp_i)
(J_\mu T_{\ob_n})^{-1} J_{W^n_i}T_{\ob_n} \prod_{j=1}^ng(\omega_j, T_{\ob_{j-1}}x) dm_W \right. \\
& \left. + \bp_i \int_{W^n_i} h (J_\mu T_{\ob_n})^{-1} J_{W^n_i}T_{\ob_n} \prod_{j=1}^n
g(\omega_j, T_{\ob_{j-1}}x) dm_W \right\} d\nu^n(\ob_n),
\end{split}
\end{equation}
where $\bp_i = |W^n_i|^{-1} \int_{W^n_i} \psi \circ T_{\ob_n} \, dm_W$.
Since for each $\ob_n$, $T_{\ob_n}$ satisfies properties {\bf (H1)}-{\bf (H5)}
with uniform constants, we may use the estimates of Section~\ref{uniform}.
Accordingly, $|\psi \circ T_{\ob_n} - \bp_i|_{\C^q(W^n_i)} \le C \Lambda^{-qn} |W|^{-\alpha}$
using \eqref{eq:C1 C0}.  Define $G_{\ob_n}(x) = \prod_{j=1}^n g(\omega_j, T_{\ob_{j-1}}x)$.
We estimate the first term of \eqref{eq:random split} using \eqref{eq:first stable}
\begin{equation}
\begin{split}
\label{eq:random first}
\sum_i & \int_{W^n_i} h (\psi \circ T_{\ob_n} - \bp_i) \, (J_\mu T_{\ob_n})^{-1} J_{W^n_i}T_{\ob_n}
G_{\ob_n} \, dm_W \\
& \le \sum_i C \| h\|_s |W_i|^\alpha |(J_\mu T_{\ob_n})^{-1} J_{W^n_i}T_{\ob_n}|_{\C^q(W^n_i)}
|\psi \circ T_{\ob_n} - \bp_i|_{\C^q(W^n_i)} |G_{\ob_n}|_{\C^q(W^n_i)} \\
& \le C \| h \|_s \Lambda^{-qn} \eta^n \sum_i \frac{|W^n_i|^\alpha}{|W|^\alpha}
|J_{W^n_i}T_{\ob_n}|_{\C^0(W^n_i)}  |G_{\ob_n}|_{\C^q(W^n_i)} .
\end{split}
\end{equation}
The only new term here is $|G_{\ob_n}|_{\C^q(W^n_i)}$ which is addressed by the following
lemma.

\begin{sublem}
\label{lem:G}
There exists $C>0$, independent of $W$ and $\ob_n$, such that if $W^n_i \in \G_n^{\ob_n}(W)$, then
\[
|G_{\ob_n}|_{\C^1(W^n_i)} \le  C G_{\ob_n}(x) \; \; \mbox{for any } x \in W^n_i .
\]
\end{sublem}
\begin{proof}[Proof of Sublemma]
For $x,y \in W^n_i$,
\[
\begin{split}
\log \frac{\prod_{j=1}^n g(\omega_j, T_{\ob_{j-1}} x)}{\prod_{j=1}^n g(\omega_j, T_{\ob_{j-1}}y)}
& \le \sum_{j=1}^n a^{-1} |g(\omega_j, \cdot)|_{\C^1(M)} d(T_{\ob_{j-1}}x, T_{\ob_{j-1}}y) \\
& \le \sum_{j=1}^\infty a^{-1} A C_e \Lambda^{-n}d(x,y) =: c_0 d(x,y) ,
\end{split}
\]
using properties (i) and (iii) of $g$.  The distortion bound yields the lemma with $C = c_0 e^{c_0}$.
\end{proof}
We estimate \eqref{eq:random first} using the sublemma and Lemma~\ref{lem:random growth}(c),
\begin{equation}
\label{eq:random contract}
\sum_i  \int_{W^n_i} h (\psi \circ T_{\ob_n} - \bp_i) \, (J_\mu T_{\ob_n})^{-1} J_{W^n_i}T_{\ob_n}
G_{\ob_n} \, dm_W
\le C \| h \|_s \eta^n \Lambda^{-qn} G_{\ob_n}(x_0),
\end{equation}
where $x_0$ is some point in $T_{\ob_n}^{-1}W$.

Similarly, we estimate the second term in \eqref{eq:random split} using
\eqref{eq:second stable}.   In each term, $G_{\ob_n}$ plays the role of a test function and we
replace the occurrences of $|G_{\ob_n}|_{\C^p(W^n_i)}$ and $|G_{\ob_n}|_{\C^q(W^n_i)}$
as appropriate according to Sublemma~\ref{lem:G}.  Thus following \eqref{eq:second stable},
we write,
\[
\sum_i \bp_i \int_{W^n_i} h (J_\mu T_{\ob_n})^{-1} J_{W^n_i}T_{\ob_n} G_{\ob_n} dm_W
\le C(\delta_0^{-\alpha} \eta^n |h|_w + \theta_*^{(1-\alpha)n} \eta^n \|h\|_s) G_{\ob_n}(x_0) ,
\]
choosing the same $x_0$ as in \eqref{eq:random contract}.
Now Combining this expression with \eqref{eq:random contract} and \eqref{eq:random split},
we obtain
\[
\int_W \Lp_{T_{\ob_n}} h \psi \, dm_W \le C \eta^n
(\|h\|_s(\Lambda^{-qn} + \theta_*^{(1-\alpha)n})
+ \delta_0^{-\alpha} |h|_w) \prod_{j=1}^n g(\omega_j, T_{\ob_{j-1}}x_0) .
\]
We integrate this expression one $\omega_j$ at a time, starting with $\omega_n$.
Notice that $\int_\Omega g(\omega_n, T_{\ob_{n-1}}x_0) d\nu(\omega_n) =1$ by
property (ii) of $g$ since $T_{\ob_{n-1}}$ is independent of $\omega_n$.  Similarly,
each factor in $G_{\ob_n}(x_0)$ integrates to 1 so that
\[
\| \Lp^n_{(\nu,g)} h \|_s \le C \|h\|_s \eta^n (\Lambda^{-qn} + \theta_*^{(1-\alpha)n})
+ C \delta_0^{-\alpha} \eta^n |h|_w
\]
which is the required inequality for the strong stable norm.  The inequalities for
the weak norm and the strong unstable norm follow similarly, always using
Sublemma~\ref{lem:G}.
\end{proof}


\section{Proofs of Applications:  External Forces with Kicks and Slips}
\label{kick}

In this section we prove Theorem \ref{thm:C1} and \ref{thm:C2} for the perturbed dispersing billiards under external forces with kicks and slips.
To simplify the analysis, for any fixed force $\mathbf{F}$, we will consider our  system,
denoted as $T_{\bF,\bG}$, as a perturbation of the map $T_{\bF, \mathbf{0}}$.
We say a constant $C$ is uniform if $C=C(\eps_1, \tau_*, \cK_*, E_*)$, where
$\eps_1, \tau_*, \cK_*$ and $E_*$ are from {\bf (A2)} and {\bf (A3)}.

We begin by reviewing some properties of $T_\bF = T_{\bF, \mathbf{0}}$ proved in
\cite{Ch01} and proving some additional ones that we shall need.

\subsection{Properties of $T_\bF$}
\label{flow review}

We assume the setup described in Section~\ref{concrete}.B, which is the billiard flow
given by \eqref{flowf} and \eqref{reflectiong} with $\bG = \mathbf{0}$.

Let $\bx=(\bq, \theta)\in \cM$ be any phase point with position $\bq$, and $V\in \cT_\bx \cM$ a tangent vector at $\bx$. Pick a small number $\delta_0>0$ and a $C^{3}$ curve $c_s(0)=(\bq_s, \theta_s)\subset \cM$ tangent to the vector $V$, such that $c_0=\bx$ and $\frac{d c_s}{ds}|_{s=0}=V$, and $s\in [0,\delta_0]$. Now we define $c_s(t)=\Phi^t c_s(0)$, for any $t\geq 0$. Since $\tau$ is the free path function, we have  $d\tau=p dt$. In the calculation below, we denote differentiation with respect to $s$ by primes and that with respect to  $\tau$ by dots. In particular, $\dot{c}_s(t)=(\dot \bq, \dot \theta)=( \bv, h)$, where
$\bv = \p/p = (\cos \theta, \sin \theta)$ and
$h=h(\bq,\theta)$ is  the geometric curvature of the billiard trajectory with initial condition $(\bq,\theta)$ on the table.

If we assume $t_s$ to be the time that the trajectory of $c_s(0)$ hits the wall of the billiard table, then $\{c_s(t)\,|\, t\in [0, t_s], s\in [0, \delta_0]\}$ is a $C^{3}$ smooth $2$-d manifold in $\cM$. We introduce two quantities $u=\bq'\cdot \bv$, and $w=\bq'\cdot \bv^{\perp}$, where $\bv^{\perp}=(-\sin\theta, \cos\theta)$. Clearly $\bq'=u \bv+w \bv^{\perp}$. Now let  $\kappa=(\theta'-uh)/w$. We consider two vectors of the surface $U=(\bv,h)$ and $R=(\bv^{\perp}, \kappa)$. Clearly $\dot c_s=U$ and $c_s'=u U+w R$.   Define  $p_U=\text{grad}(p) \cdot U$, $p_R=\text{grad} (p)\cdot R$, and $h_U=\text{grad} (h) \cdot U$, $h_R=\text{grad} (h)\cdot R$, respectively.
Then it is straight forward to check that
\beq
\label{p'h'}
p'=\text{grad}(p)\cdot c'_s = p_U u+p_R w \qquad
h'=h_U u+h_Rw \qquad \text{and } \; \; \theta'=\kappa w+h u .
\eeq
In addition $ \dot p= p_U $ and $\dot h= h_U $.
The derivation of these formulas can be found in  \cite{Ch01}.
The following lemma was proved in \cite[Lemmas 3.1, 3.2]{Ch01}.

\begin{lemma}[\cite{Ch01}]
The evolution of the quantities $\kappa$ and $w$ between collisions is given by the equations
\beq\label{kappadot}
\dot\kappa=-\kappa^2 + a + b\kappa \,\,\,\,\,\,\text{ and }\,\,\,\,\,\,\,\dot w=\kappa w,
\eeq
where $a=a(h)$, $b=b(h)$ are smooth functions whose $\C^0$ norms are bounded by $c_0\ve_1$
for some uniform $c_0>0$.  Furthermore, at the moment of collision,
\beq\label{pm}
u^+=u^-, \; \; w^+=-w^-\,\,\,\,\text{ and }\,\,\,\, \kappa^+=\kappa^-+\frac{2\cK(r)+(h^++h^-)\sin\varphi}{\cos\varphi} .
\eeq
In addition the derivative of $r$ and $\varphi$ satisfies
\beq\label{drdphi}
dr/ds=\mp w^{\pm}/\cos\varphi\,\,\,\,\,\,\,\text{ and }\,\,\,\,\,\,\,d\varphi/dr=\mp\cK(r)+\kappa^{\pm}\cos\varphi\mp h^{\pm}\sin\varphi .
\eeq
\end{lemma}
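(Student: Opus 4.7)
The proof rests on two structural facts: the commutation $[\partial_\tau,\partial_s]=0$ (since $s$ parametrizes initial data and $\tau$ is arc length along each trajectory), and the flow identities $\dot{\bq}=\bv$, $\dot{\bv}=h\bv^{\perp}$, $\dot\bv^{\perp}=-h\bv$, $\dot\theta=h$. My plan is to first derive the between-collision evolution of $(u,w,\kappa)$ by differentiating their definitions along the flow, then compute the collision jumps from the reflection law, and finally read off $dr/ds$ and $d\vf/dr$ from the geometry of impact with $\partial Q_0$.

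For the between-collision part, I would start from $\bq'=u\bv+w\bv^{\perp}$ and differentiate with respect to $\tau$. On the one hand, by commutation, $\dot{\bq'}=(\dot\bq)'=\bv'=\theta'\bv^{\perp}$. On the other hand, expanding $\dot{\,}(u\bv+w\bv^{\perp})$ and using $\dot\bv=h\bv^{\perp}$, $\dot\bv^{\perp}=-h\bv$ gives $(\dot u-wh)\bv+(\dot w+uh)\bv^{\perp}$. Matching coefficients yields $\dot u=wh$ and $\dot w=\theta'-uh=\kappa w$, which is the second formula in \eqref{kappadot}. To obtain $\dot\kappa$, I would differentiate $\kappa w=\theta'-uh$ in $\tau$: using the commutation $\dot{\theta'}=h'=h_Uu+h_Rw$ together with $\dot u=wh$ and $\dot h=h_U$, the $u h_U$ terms cancel and one obtains $\dot\kappa w=-\kappa^2 w+w(h_R-h^2)$, i.e.\ $\dot\kappa=-\kappa^2+(h_R-h^2)$. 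This has the claimed form $-\kappa^2+a+b\kappa$ (with $b=0$ in the leading-order expression and any additional $\kappa$-linear contribution absorbed into $b$), and since $h=(\bF\cdot\bv^{\perp})/p^2$ with $\|\bF\|_{C^1}<\ve_1$ and $p\ge p_{\min}>0$ by {\bf (A1)} and {\bf (A3)}, the $C^0$ bounds $\|a\|_\infty,\|b\|_\infty\le c_0\ve_1$ are automatic.

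At collision, the position is continuous ($\bq^+=\bq^-$) so $\bq'{}^+=\bq'{}^-$ as a vector in $\R^2$; the reflection law reverses the normal component of $\bv$ while preserving the tangent component. Projecting the common $\bq'$ onto the post-collision frame $\{\bv^+,\bv^{+\perp}\}$ immediately gives $u^+=u^-$ and $w^+=-w^-$. For the jump in $\kappa$, I would differentiate the angular reflection law $\theta^+=\pi+\theta^--2\vf$ with respect to $s$, keeping track of how $\vf$ changes as the collision point slides along $\partial Q_0$: writing $\vf$ as the angle between $\bv$ and the inward normal and using the boundary curvature $\cK(r)$ to express $d\vf/ds$, one finds after rearrangement the Sinai-type mirror formula $\kappa^+ = \kappa^-+(2\cK(r)+(h^++h^-)\sin\vf)/\cos\vf$; the $(h^++h^-)\sin\vf$ correction is precisely what accounts for the trajectory already being curved at impact (in the unperturbed case $h^\pm=0$ and one recovers the classical $2\cK/\cos\vf$).

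Finally, formulas \eqref{drdphi} are geometric: since the collision point lies on $\partial Q_0$ parametrized by arc length $r$, $\bq'$ is tangent to $\partial Q_0$, and decomposing it against $\bv^\pm$ and $\bv^{\pm\perp}$ using the incidence angle $\vf$ gives $dr/ds=\mp w^\pm/\cos\vf$ (the $\cos\vf$ factor arising from the projection of the boundary tangent onto $\bv^{\pm\perp}$). The identity $d\vf/dr=\mp\cK(r)+\kappa^\pm\cos\vf\mp h^\pm\sin\vf$ then follows by writing $\vf$ as the angle between $\bv^\pm$ and the boundary normal, differentiating in $r$, and using $d\theta/dr=\theta'/(dr/ds)$ together with $\theta'=\kappa w+hu$ and the definition of $\cK(r)$. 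The main technical obstacle throughout is the careful bookkeeping of signs at the collision and the correct identification of the tangential and normal components across the discontinuity; the underlying calculus is routine once the commutation $[\partial_s,\partial_\tau]=0$ has been exploited.
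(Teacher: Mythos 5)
The paper does not prove this lemma itself; it cites [Ch01, Lemmas 3.1, 3.2]. Your reconstruction follows Chernov's route — commute $\partial_s$ and $\partial_\tau$ on the two-parameter family of trajectories, differentiate $\bq'=u\bv+w\bv^\perp$ and $\kappa w=\theta'-uh$ in $\tau$ to get the flow equations, and read off the collision jumps from the reflection law and the boundary geometry — and your calculations for $\dot u$, $\dot w$, $u^\pm$, $w^\pm$ and for $\dot\kappa=-\kappa^2+(h_R-h^2)$ are correct.

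There is, however, a real gap in passing from $\dot\kappa=-\kappa^2+(h_R-h^2)$ to the stated form $-\kappa^2+a+b\kappa$: you declare $b=0$ ``in the leading-order expression'' with any extra $\kappa$-linear contributions ``absorbed into $b$,'' which is a hand-wave that does not locate the $\kappa$-dependence. By the definitions already set up in the paper, $R=(\bv^\perp,\kappa)$ and $h_R=\text{grad}(h)\cdot R$, so $h_R=\nabla_\bq h\cdot\bv^\perp+(\partial_\theta h)\,\kappa$ is manifestly affine in $\kappa$; the forced split is therefore $a=\nabla_\bq h\cdot\bv^\perp-h^2$ and $b=\partial_\theta h$, with $b$ not small-of-higher-order but simply the $\theta$-derivative of $h$. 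Until you observe this, your ``$a$'' secretly still depends on $\kappa$ and the claimed form of the Riccati equation has not actually been derived. With the correct identification the $\C^0$ bounds $|a|,|b|\le c_0\eps_1$ follow as you say from {\bf (A1)} and {\bf (A3)}, since $h=(\bF\cdot\bv^\perp)/p^2$ and its $(\bq,\theta)$-derivatives are $\mathcal{O}(\eps_1)$ (with $p\ge p_{\min}>0$). You should also reconcile the sign in your reflection law $\theta^+=\pi+\theta^--2\vf$ with the paper's clockwise orientation of the arclength parameter $r$, since that convention is what fixes the $\mp$ signs in \eqref{drdphi}, and your sketch of that step currently takes those signs for granted.
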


We will calculate the differential of the map $T_{\bF}$ (which is not contained in
\cite{Ch01}). It follows from (\ref{kappadot}) that
\beq
\label{ddotw}
\frac{d \dot w}{d\tau}=\frac{d}{d\tau}(\kappa w)=\dot\kappa w+\kappa \dot w=\kappa \dot w-\kappa^2 w+(a + b\kappa) w= aw + b\dot w .
\eeq
   This implies that
 \beq\label{contw1}   \left\{
     \begin{array}{ll}
      \dot w(\tau)=\dot w(0)+\int_{0}^{\tau} aw + b\dot w\, d\gamma \\
     w(\tau)=w(0)+\dot w(0) \tau+\int_0^{\tau} \int_0^{\xi} aw + b\dot w \,d\gamma\,d\xi
     \end{array}
   \right.   .
   \eeq
At the moment of collision, (\ref{pm}) implies that
\beq\label{dotwpm}
 \left\{
     \begin{array}{ll}
      w^+=-w^-\\
      \dot w^+=-\dot w^--\frac{2\cK+(h^++h^-)\sin\varphi}{\cos\varphi}w^-
    \end{array}
   \right.   .
      \eeq

In addition (\ref{drdphi}) implies that
\beq
\label{dphids}
\frac{d\varphi}{ds}=\frac{\cK(r)+h^{\pm}\sin\varphi}{\cos\varphi}w^{\pm} \mp \dot w^{\pm} .
\eeq

\begin{lemma}
\label{wtaubound}
For $x = (r, \vf)$, let $\tau_1(x)$ denote the distance to the next collision under the flow.
There exist constants $\hat C_1, \hat C_2 > 0$ independent of $x$, such that
$|w(\tau)|$ and $|\dot w(\tau)|$ are uniformly bounded from above by
$\hat C_1 |w^+(0)| + \hat C_2|\dot w^+(0)|$ for $\tau \in [0,\tau_1(x)]$.
\end{lemma}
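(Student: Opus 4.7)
The plan is a straightforward Grönwall-type argument based on the integral equations \eqref{contw1}. Between collisions both $w$ and $\dot w$ satisfy a coupled linear system whose coefficients $a,b$ are uniformly small (bounded by $c_0\eps_1$), and the free flight length $\tau$ is uniformly bounded above by $\tau_{\max}\le \tau_*^{-1}$ thanks to assumption (\textbf{A2}). These two facts alone force $(w,\dot w)$ to stay comparable to their initial values $(w^+(0),\dot w^+(0))$.

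First, I would define the nondecreasing quantity
\[
M(\tau)\;:=\;\sup_{s\in[0,\tau]}\bigl(|w(s)|+|\dot w(s)|\bigr),\qquad \tau\in[0,\tau_1(x)].
\]
Applying the triangle inequality directly to \eqref{contw1} and using $|a|,|b|\le c_0\eps_1$ together with $\tau\le \tau_{\max}$ gives
\[
|\dot w(\tau)|\;\le\;|\dot w^+(0)|+c_0\eps_1\int_0^\tau M(s)\,ds,
\]
\[
|w(\tau)|\;\le\;|w^+(0)|+\tau_{\max}|\dot w^+(0)|+c_0\eps_1\,\tau_{\max}\int_0^\tau M(s)\,ds.
\]
Adding the two inequalities and taking the supremum of the left side over $[0,\tau]$ yields the integral inequality
\[
M(\tau)\;\le\;(1+\tau_{\max})\bigl(|w^+(0)|+|\dot w^+(0)|\bigr)\;+\;c_0\eps_1(1+\tau_{\max})\int_0^\tau M(s)\,ds.
\]

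The second step is to invoke the classical (linear) Grönwall lemma, which yields
\[
M(\tau)\;\le\;(1+\tau_{\max})\bigl(|w^+(0)|+|\dot w^+(0)|\bigr)\exp\!\bigl(c_0\eps_1(1+\tau_{\max})\,\tau\bigr).
\]
Since $\tau\le\tau_{\max}\le\tau_*^{-1}$, the exponential factor is bounded by a constant depending only on $\eps_1$ and $\tau_*$. Setting
\[
\hat C_1=\hat C_2=(1+\tau_{\max})\exp\!\bigl(c_0\eps_1(1+\tau_{\max})\tau_{\max}\bigr)
\]
gives the required uniform bound on both $|w(\tau)|$ and $|\dot w(\tau)|$ by $\hat C_1|w^+(0)|+\hat C_2|\dot w^+(0)|$.

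There is no serious obstacle here: because we are working between two consecutive collisions, there are no jump terms from \eqref{dotwpm} to handle, and the initial data at $\tau=0$ are precisely the postcollisional values $w^+(0),\dot w^+(0)$. The only point of care is to make sure the constants $\hat C_1,\hat C_2$ are uniform in $x$, which follows because both $\tau_{\max}$ (from (\textbf{A2})) and the bounds on $a,b$ (from (\textbf{A3}) together with the lemma cited from \cite{Ch01}) are uniform over the class $\cF_B(Q_0,\tau_*,\eps_1)$.
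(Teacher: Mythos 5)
Your proof is correct, and it is genuinely cleaner than the paper's. The paper tracks the Riccati variable $\kappa=\dot w/w$, derives exponential representations for $w$ and $\dot w$ via $\kappa$ (equations \eqref{convw}, \eqref{1stdotwt}, \eqref{2ndwtau}), and then splits into three cases according to whether $\kappa$ stays small, exceeds $1/\tau_{\min}$, or blows up to $\pm\infty$ (a focal point, where $w$ vanishes). Your route bypasses all of this by observing that \eqref{ddotw}, namely $\ddot w = aw + b\dot w$, is a second-order linear ODE in $(w,\dot w)$ with coefficients $a,b$ uniformly bounded by $c_0\eps_1$; hence the integral inequalities from \eqref{contw1} together with the classical Gr\"onwall lemma immediately bound $|w|$ and $|\dot w|$ on the full flight interval, whose length is at most $\tau_{\max}\le\tau_*^{-1}$ by (\textbf{A2}). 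The key advantage is that $(w,\dot w)$ remain continuous through focal points where $\kappa$ blows up, so your argument requires no case distinction and does not need the auxiliary bound \eqref{kappabound}. What the paper's approach buys is explicit formulas for $w(\tau)$ and $\dot w(\tau)$ in terms of exponentials of $\int(a/\kappa+b)$, but those are not required for this lemma and are in any case already subsumed by the integral representations \eqref{contw1} that you start from. One small presentational point: the constant you name $\hat C_1=\hat C_2$ is actually an upper bound for $M(\tau)=|w|+|\dot w|$, which is a fortiori an upper bound for each of $|w|$ and $|\dot w|$ separately, so the conclusion holds exactly as stated.
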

\begin{proof}
We fix $x$ and abbreviate $\tau_1(x)$ as $\tau_1$.
We begin by adapting \cite[Lemma 3.4]{Ch01}, to show that if  for some
 $\tau_0 \in [0,\tau_1)$,
 $\kappa(\tau_0)$ is bounded away from zero, then
 $\kappa$ is bounded away from zero and infinity on $[\tau_0,\tau_1]$.
 More precisely, (\ref{kappadot}) implies that if $\kappa > 0$, then
$$
-(\kappa + \ve_2)^2 \le
\dot \kappa=-\kappa^2+b \kappa+a=-(\kappa-\frac{b}{2})^2+\frac{b^2}{4}+a\leq -(\kappa-c_0\eps_1)^2+ \eps_2^2
$$
 where $\eps_2^2 = 2c_0\eps_1.$

So if we assume that  for some $\tau_0 \in [0, \tau_1)$, $\kappa^+(\tau_0) > c_1$ for a fixed
$c_1 > 5 \sqrt{\ve_0}$, then
we may integrate these inequalities to obtain
$$
\frac{1}{(\kappa^+(\tau_0) + \ve_2)^{-1} + (\tau - \tau_0)} - \ve_2 \le \kappa(\tau)
\le \ve_2 \frac{Ae^{2 \ve_2 (\tau - \tau_0)} + 1}{Ae^{2\ve_2(\tau- \tau_0)} -1} +  c_0\ve_1,
$$
where $A = (\kappa^+(\tau_0) - c_0 \ve_1 + \ve_2)/(\kappa^+(\tau_0) - c_0 \ve_1 - \ve_2)$.
Then since $\ve_0$ is small compared to $\kappa^+(\tau_0)$, this reduces to
\beq
\label{kappabound}
\frac{1}{(\kappa^+(\tau_0))^{-1} + (\tau - \tau_0)} - \ve_3 \le \kappa(\tau) \le
\frac{1}{(\kappa^+(\tau_0))^{-1} + (\tau - \tau_0)} +  \ve_3
\eeq
where $\ve_3 = 2\ve_2 + 2c_0 \ve_1$.

 Now (\ref{kappadot}) implies that for any $0 \le \tau'  < \tau \le \tau_1$,
 \beq
 \label{convw}
 w(\tau)=w(\tau')\exp\left(\int_{\tau'}^{\tau} \kappa d\gamma \right).
 \eeq
Also, (\ref{ddotw}) implies that
 $\frac{\dot w}{w}d\ln \dot w=(a+b\kappa) \,d\tau$ and
since $\dot w=\kappa w$, we integrate this to obtain,
\beq
\label{1stdotwt}
\dot w(\tau)=\dot w(0) \exp\left(\int_{0}^{\tau} (\frac{a}{\kappa}+b)\, d\gamma \right)
\; \; \; \mbox{for any $\tau \in [0,\tau_1]$.}
\eeq
Integrating again, it follows that
\beq
\label{2ndwtau}
w(\tau)=w(0)+\dot w(0)\int_0^{\tau} \exp\left(\int_{0}^{\xi} (\frac{a}{\kappa}+b)\, d \gamma \right)\,d\xi .
\eeq
This implies that both $w(\tau), \dot w(\tau)$  are functions of $(w^+(0), \dot w^+(0))$.

 To show that $|w|$ and $|\dot w|$ are uniformly bounded, we consider three cases.

 \smallskip
 \noindent
 Case I:  $\kappa$ is finite on $[0, \tau_1)$ and $\kappa(\tau) < 1/\tau_{\min}$
 for all $\tau \in [0, \tau_1)$ ($\kappa$ can be positive or negative).
 Then by \eqref{convw}, $|w(\tau)| \le |w(0)| e^{\tau/\tau_{\min}}
 \le |w(0)| e^{\tau_{\max}/\tau_{\min}}$ for all $\tau \in [0, \tau_1]$.

Once we know $|w|$ is bounded on $[0,\tau_1]$, we may use it to bound $|\dot w|$ as follows.
We integrate \eqref{ddotw} using
the integrating factor $\exp(- \int_0^\tau b \, d\gamma)$ to obtain,
\beq
\label{eq:wint}
\dot w(\tau) = \dot w(0) e^{\int_0^\tau b \, d\gamma} + e^{\int_0^\tau b \, d\gamma}
\int_0^\tau aw(\xi)
e^{ - \int_0^\xi b \, d\gamma } \, d\xi .
\eeq
Thus
\beq
\label{eq:dotw}
| \dot w(\tau)| \le |\dot w^+(0)| e^{c_0\ve_1 \tau_{\max}} + |w^+(0)|e^{(2 c_0 \ve_1 + 1/\tau_{\min})\tau_{\max}}  c_0 \ve_1 \tau_{\max} =: C_1 |\dot w^+(0)| + C_2 |w^+(0)| .
\eeq

 \smallskip
 \noindent
 Case II: $\kappa$ is finite on $[0,\tau_1)$, $\kappa(\tau_0) \ge  1/\tau_{\min}$ for some
 $\tau_0 \in [0, \tau_1]$ and
 $\tau_0$ is the least $\tau$ in the interval with this property.  Then
 by \eqref{kappabound}, $\kappa(\tau) \geq (\tau_{\min} + 2\tau_{\max})^{-1}$ for
 all $\tau \in [\tau_0, \tau_1]$.  As a consequence, by \eqref{2ndwtau},
 \beq
 \label{half}
 |w(\tau)| \le |w(\tau_0)| + |\dot w(\tau_0)| \tau_{\max}
 e^{c_0 \ve_1 (\tau_{\min} + 2\tau_{\max} + 1)\tau_{\max}}
 \eeq
 for each $\tau \in [\tau_0, \tau_1]$.  On the other hand, for $\tau \in [0, \tau_0]$, we have
 $\kappa(\tau) \le 1/\tau_{\min}$, so that both $|w(\tau)|$ and $|\dot w(\tau)|$ are uniformly
 bounded on this interval by Case I.  This together with \eqref{half} proves
 Case II for $|w|$.  The estimate for $|\dot w|$ follows again from \eqref{eq:wint} and \eqref{eq:dotw}.

\smallskip
\noindent
Case III: $\kappa(\tau_0) = \pm \infty$ for some $\tau_0 \in (0, \tau_1)$.
According to \eqref{kappadot} and \eqref{kappabound}, the only way this case can occur is if
$\kappa$ reaches $-\infty$ in finite time and changes from $-\infty$ to $\infty$ at $\tau_0$.
\eqref{convw} implies in particular that $w(\tau_0)=0$.

On the interval $[0, \tau_0]$, $\kappa$ clearly satisfies the assumption of Case I so that
both $|w|$ and $|\dot w|$ are uniformly bounded as in the statement of the lemma on this interval.
Indeed, this is true on any interval in which $\kappa$ remains negative.  Thus the only case
left to consider is when $\kappa(\tau) > 0$ for $\tau \in (\tau_0, \tau_1]$.

In this case, \eqref{kappadot} guarantees that $\kappa$ initially decreases and
\eqref{kappabound} guarantees that $\kappa(\tau) \geq \tau_{\min}^{-1}$ on this interval.
Thus by \eqref{2ndwtau}, we estimate as in \eqref{half} to bound
$|w|$ by a linear combination of $|w(\tau_0)|$ and $|\dot w(\tau_0)|$.  But since
these two quantities are in turn bounded by $|w^+(0)|$ and $|\dot w^+(0)|$ by the
previous paragraph, the proof of Case III is complete for $|w|$.  The estimate
on $|\dot w|$ now follows again from \eqref{eq:wint} and \eqref{eq:dotw}.
\end{proof}

Combining the above facts,  we can show the following.

\begin{lemma} If we denote $x_1=(r_1, \varphi_1)=T_{\bF} x$, then there exits $C=C( \cK_*,\tau_*)>0$ such that for any unit vector $(dr/ds, d\varphi/ds)$,
\beq\label{DTeps0}
\left\{
     \begin{array}{ll}
    -\cos\varphi_1\frac{dr_1}{ds}=\left(\cos\varphi+\tau \cK+a_1\right)\frac{dr}{ds}+(\tau+a_2)\frac{d\varphi}{ds}\\
    -\cos\varphi_1\frac{d\varphi_1}{ds}=\left(\tau \cK_1\cK+\cK_1\cos\varphi+\cK\cos\varphi_1 +b_1\right) \frac{dr}{ds}+\left(\cK_1\tau+\cos\varphi_1+b_2\right)\frac{d\varphi}{ds}     \end{array}
   \right.
\eeq
where $a_i\leq C\eps_1$ and $b_i\leq C\eps_1$, for $i=1,2$.
In addition
\beq
\label{detTF}
(1-C\eps_1)\frac{\cos\varphi}{\cos\varphi_1}\leq |\det D_xT_\bF|
\leq (1+C\eps_1)\frac{\cos\varphi}{\cos\varphi_1}
\eeq
\end{lemma}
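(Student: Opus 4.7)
The plan is to compute $D_xT_\bF$ by propagating a tangent vector $V=(dr/ds,d\varphi/ds)\in\cT_x M$ through the perturbed flow using the Jacobi variables $(w,\dot w)$ from Section~\ref{flow review} and tracking corrections of order $\mathcal{O}(\ve_1)$ relative to the classical $\bF=\mathbf{0}$ case. First, evaluating (\ref{drdphi}) at $s=0$ with post-collision (upper) signs expresses the initial Jacobi data in terms of $V$:
\[
w^+(0) = -\cos\varphi\,(dr/ds), \qquad \dot w^+(0)=\kappa^+(0)w^+(0) = -(\cK+h^+\sin\varphi)(dr/ds) - (d\varphi/ds).
\]
In particular $|w^+(0)|,|\dot w^+(0)|$ are bounded uniformly by a constant times $\|V\|$.

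Second, I would propagate $(w,\dot w)$ over the free flight time $\tau$ via (\ref{contw1}), writing
\[
w^-(\tau) = w^+(0) + \dot w^+(0)\tau + \alpha_w, \qquad \dot w^-(\tau) = \dot w^+(0) + \alpha_{\dot w},
\]
where $\alpha_w,\alpha_{\dot w}$ collect the integrals of $aw+b\dot w$. Since Lemma~\ref{wtaubound} gives $|w(\gamma)|+|\dot w(\gamma)| \le \hat C(|w^+(0)|+|\dot w^+(0)|)$ uniformly on $[0,\tau]$, and $|a|,|b|\le c_0\ve_1$ with $\tau\le \tau_{\max}$, each $\alpha_\bullet$ is a linear functional of $V$ with coefficients of size $\mathcal{O}(\ve_1)$. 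Applying (\ref{drdphi}) at $x_1$ with pre-collision (lower) signs then yields $dr_1/ds = w^-(\tau)/\cos\varphi_1$ and $d\varphi_1/ds = (\cK_1+\kappa^-(\tau)\cos\varphi_1+h^-\sin\varphi_1)(dr_1/ds)$. Substituting $\kappa^-(\tau)w^-(\tau)=\dot w^-(\tau)$, multiplying by $-\cos\varphi_1$, and inserting the expressions from Step~1 and Step~2 gives (\ref{DTeps0}); the correction terms $\tau h^+\sin\varphi$, $h^-\sin\varphi_1$, $\alpha_w$ and $\alpha_{\dot w}$ are all linear in $V$ with $\mathcal{O}(\ve_1)$ coefficients and get absorbed into $a_1,a_2,b_1,b_2$.

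For the determinant, I would compute $|\det D_xT_\bF|$ directly from the $2\times 2$ matrix implicit in (\ref{DTeps0}). The unperturbed principal part satisfies the identity
\[
(\cos\varphi+\tau\cK)(\cK_1\tau+\cos\varphi_1) - \tau(\tau\cK\cK_1+\cK_1\cos\varphi+\cK\cos\varphi_1) \;=\; \cos\varphi\cos\varphi_1,
\]
so that after dividing by $\cos^2\varphi_1$ the classical value $\cos\varphi/\cos\varphi_1$ emerges. Expanding with the $\mathcal{O}(\ve_1)$ perturbations of the entries and using $|\cos\varphi+\tau\cK|, |\tau|, |\cK_1\tau + \cos\varphi_1|$ bounded uniformly produces $|\det D_xT_\bF| = (\cos\varphi/\cos\varphi_1)\bigl(1+\mathcal{O}(\ve_1)\bigr)$, which is (\ref{detTF}).

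The main obstacle is the book-keeping in Step~2: every correction must be re-expressed as a linear functional of $(dr/ds,d\varphi/ds)$ with coefficients bounded uniformly by $C\ve_1$, independently of the tangent vector chosen. Lemma~\ref{wtaubound} is essential here, since without an a-priori bound on $(w,\dot w)$ over the entire free flight the perturbative integrals in (\ref{contw1}) could not be controlled linearly by the initial data; with it, each correction is automatically of the form $A_1(dr/ds)+A_2(d\varphi/ds)$ with $|A_i|\le C\ve_1$, and (\ref{DTeps0}) follows by collecting terms. The determinant estimate is then a one-line calculation given the cancellation displayed above.
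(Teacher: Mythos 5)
Your derivation of \eqref{DTeps0} follows essentially the same route as the paper: express the tangent vector in Jacobi coordinates $(w^+(0),\dot w^+(0))$ via \eqref{drdphi}, propagate through the free flight using \eqref{contw1} with Lemma~\ref{wtaubound} controlling the integrals $I,II$ so that the corrections are linear in the data with $\mathcal{O}(\ve_1)$ coefficients, and convert back via the pre-collision form of \eqref{drdphi}. The paper organizes this as the matrix factorization $DT_\bF = N_{x_1}^{-1}L_{x_1}D_x N_x$, whereas you apply \eqref{drdphi} directly at $x_1$ with lower signs and thereby fold $N_{x_1}^{-1}L_{x_1}$ into one step; these are the same computation.

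Your argument for \eqref{detTF}, however, has a genuine gap. Let $M = -\cos\varphi_1\, D_xT_\bF$ be the matrix of \eqref{DTeps0}. From the bounds $|a_i|,|b_i|\le C\ve_1$ together with the algebraic identity you exhibit, one only gets
\[
\det M = \cos\varphi\cos\varphi_1 + E, \qquad |E|\le C'\ve_1,
\]
an \emph{additive} error of size $\mathcal{O}(\ve_1)$ (the cross terms $a_1(\cK_1\tau+\cos\varphi_1)$, $a_2(\tau\cK_1\cK+\cdots)$, $b_2(\cos\varphi+\tau\cK)$, $b_1\tau$, $a_1b_2$, $a_2b_1$ do not vanish as $\cos\varphi$ or $\cos\varphi_1\to 0$, and the coefficients multiplying them are bounded but not small). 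Dividing by $\cos^2\varphi_1$ gives $|\det D_xT_\bF - \cos\varphi/\cos\varphi_1|\le C\ve_1/\cos^2\varphi_1$, whose relative size is $C\ve_1/(\cos\varphi\cos\varphi_1)$, which blows up near grazing collisions. This is strictly weaker than the stated \emph{multiplicative} estimate $(1\pm C\ve_1)\cos\varphi/\cos\varphi_1$, which is precisely what is needed later for the distortion bounds of {\bf (H4)}. The paper obtains the multiplicative form by exploiting the factorization: $\det D_xT_\bF = \det N_{x_1}^{-1}\cdot\det L_{x_1}\cdot\det D_x\cdot\det N_x = \tfrac{\cos\varphi}{\cos\varphi_1}\,\det D_x$ with $\det D_x = (1+c_2)(1+c_3)-c_1(\tau+c_4) = 1+\mathcal{O}(\ve_1)$; the cosine factors live entirely in $\det N_{x_1}^{-1}\det N_x$ and the perturbation lives entirely in $\det D_x$, which is how the multiplicative structure arises. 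Your construction in Step~2 already produces the $c_i$-type corrections composing $\det D_x$; if you keep them explicit rather than collapsing them into generic $a_i,b_i$ and retaining only size bounds, the multiplicative estimate follows. As written, your proof of \eqref{detTF} does not establish it.
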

\begin{proof} Let $x_1=T_{\bF} x$, and $\tau_1(x)$ be the length of the free path of $x$.
By \eqref{1stdotwt} and \eqref{2ndwtau}, there exists a linear transformation $D_x$
such that
\beq
\label{Dx}
D_x(w^+, \dot w^+)^T=(w^-_1,\dot w^-_1)^T
\eeq
where $w^-_1=w^-(\tau_1)$ and $\dot w^-_1=\dot w^-(\tau_1)$.
Indeed, by Lemma~\ref{wtaubound}, there exist smooth
functions $c_i$, $i=1, \ldots 4$ with $|c_i| \le C \ve_1$ for some $C = C(\K_*, \tau_*) >0$ such that
\beq
\label{III}
I:=\int_0^{\tau} aw + b \dot w \, d\gamma = c_1 w^+(0)+c_2\dot w^+(0),\,\,\,\,
II:=\int_0^{\tau}\int_0^{\xi} aw + b \dot w \,d\gamma d\xi=c_3 w^+(0)+c_4\dot w^+(0),
\eeq
so that using \eqref{contw1}, we may write $D_x$ as
\beq
\label{Dxg}
D_{x}=\left(\begin{array}{cc}1+c_3&\tau+c_4\\c_1&1+c_2\\\end{array}\right) .
\eeq

Using (\ref{dotwpm}) and (\ref{dphids}), the differential of $DT_{\bF}$ satisfies
\beq
\label{DTepsmatrix}
DT_{\bF} = N_{x_1}^{-1} L_{x_1} D_x N_{x}
\eeq
where 
$$N_{x}=-\left(\begin{array}{cc}\cos\varphi &0\\\cK+h^+\sin\varphi
    &1\\\end{array}\right)$$ 
    is the coordinate transformation matrix on $\cT_x M$, such that $(w^+(0), \dot w^+(0))^T=N_x (dr/ds, d\varphi/ds)^T$, and
\beq
\label{LU}
L_{x_1}=\left(\begin{array}{cc}-1&0\\-\tfrac{2\cK_1+(h_1^++h_1^-)\sin\varphi_1}{\cos\varphi_1}&-1\\\end{array}\right)\,\,\,\,\,\,\,\text{ and }\,\,\,\,\, N_{x_1}^{-1}=\left(\begin{array}{cc} -\frac{1}{\cos\varphi_1}&0\\ \frac{\cK_1+h_1^+\sin\varphi_1}{\cos\varphi_1}&-1\\
\end{array}\right) .
\eeq

Now combining (\ref{contw1}) with \eqref{III} and  (\ref{DTepsmatrix}), we get
\begin{align*}
    -\cos\varphi_1\frac{dr_1}{ds}&=\left(\cos\varphi+\tau \cK+\tau h^+\sin\varphi\right)\frac{dr}{ds}+\tau\frac{d\varphi}{ds}-II\\
        &=\left(\cos\varphi+\tau \cK+\tau h^+\sin\varphi\right)\frac{dr}{ds}+\tau\frac{d\varphi}{ds}-c_3w^+-c_4\dot w^+\\
            &=\left(\cos\varphi+\tau \cK+a_1\right)\frac{dr}{ds}+(\tau+a_2)\frac{d\varphi}{ds}
    \end{align*}
    where $a_1=c_3\cos\varphi+c_4(\cK+h^+\sin\varphi)+\tau h^+\sin\varphi$ and $a_2=c_4$.
 Similarly we obtain
    \begin{align*}-\cos\varphi_1\frac{d\varphi_1}{ds}&=-(\cK_1+h_1^-\sin\varphi_1)w^-_1-\cos\varphi_1\dot w^-_1\\
    &=-(\cK_1+h_1^-\sin\varphi_1)(w^++\dot w^+\tau+II)-\cos\varphi_1(\dot w^++I)  \\
    &=\left[(\cK_1+h_1^-\sin\varphi_1)\cos\varphi+(\tau(\cK_1+h_1^-\sin\varphi_1)+\cos\varphi_1)(\cK+h^+\sin\varphi)\right] \frac{dr}{ds}\\
    &\,\,\,\,\,\,+\left(\tau\cK_1+\tau h_1^-\sin\varphi_1+\cos\varphi_1\right)\frac{d\varphi}{ds}-II(\cK_1+h_1^-\sin\varphi_1)-I\cos\varphi_1\\
      &=\left(\tau \cK_1\cK+\cK_1\cos\varphi+\cK\cos\varphi_1+b_1 \right) \frac{dr}{ds}+\left(\cK_1\tau+\cos\varphi_1+b_2\right)\frac{d\varphi}{ds}     \end{align*}
where
\begin{align*}
b_1&=(\cos\varphi+\tau\cK)h_1^-\sin\varphi_1+\cos\varphi_1\left(c_1\cos\varphi+c_2\cK+(1+c_2)h^+\sin\varphi\right)\\
&\,\,\,\,\,\,+\left(c_3\cos\varphi+\tau h^+\sin\varphi+c_4(\cK+h^+\sin\varphi)\right)(\cK_1+h_1^-\sin\varphi_1)\end{align*}
and $b_2=  (\tau+c_4) h_1^-\sin\varphi_1
+c_4\cK_1+c_2\cos\varphi_1$.

Now we use the assumption that the quantities $\cK, \tau$ are uniformly bounded from above, and $|h^{\pm}|=\cO(\eps_1)$, to obtained that for any unit vector $(dr/ds, d\varphi/ds)$, the quantities $|a_i|\leq C\eps_1$ and $|b_i| \leq C\eps_1$, $i=1,2$, for some uniform $C>0$.

Finally we use (\ref{DTepsmatrix}) to calculate the determinant of the differential $D_x T_{\bF}$,
\beq
\label{eq:full detTf}
\begin{split}
\det D_x T_{\bF}&=\det N_{x_1}^{-1} \cdot \det L_{x_1} \cdot \det D_x\cdot \det N_x=\frac{\cos\varphi}{\cos\varphi_1} \det D_x\\
&=\frac{\cos\varphi}{\cos\varphi_1}\left((1+c_2)(1+c_3) - c_1(\tau+c_4)\right)
\end{split}
\eeq
which implies the last inequality (\ref{detTF}).
\end{proof}

It follows from the above lemma that  the differential $D_xT_\bF:\cT_x M\to \cT_{x_1}M$  at any  point $ x =(r, \varphi)\in M$ is the $2\times 2$ matrix:
\beq\label{DTf}
DT_{\bF}(x)=-\frac{1}{\cos\varphi_1}\left(
  \begin{array}{cc}
	\tau\K +\cos\varphi+a_1& \tau+a_2 \\
	 \K( r_1)(\tau\K+\cos\varphi)+\K\cos\varphi_1+b_1 & \tau\K( r_1)+\cos\varphi_1+b_2 \\
  \end{array}
\right)
\eeq
where $ x_1=T_\bF(x)=( r_1, \varphi_1)$.

 Furthermore  it was shown in [Ch01] that the  map $T_{\bF}$ has two families of cones
$\bar \cC^u( x )$ (unstable) and $\bar \cC^s( x )$ (stable) in the tangent spaces
${\cT}_{ x } M$, for all ${ x }\in M$. More precisely, the unstable cone $\bar\cC^u(x)$ contains all tangent vectors based at ${ x }$ whose images  generate dispersing wave fronts:
\begin{equation}
\label{eq:unstable cone}
\bar\cC^u(x)=\{(dr, d\varphi)\in \cT_{ x } M:\, B_0^{-1}\leq d\varphi/dr\leq	 B_0\} .
\end{equation}
The unstable cone $\bar\cC^u(x)$ is strictly invariant under $DT_F$.
Similarly the stable cone
$$\bar\cC^s(x)=\{(dr, d\varphi)\in \cT_{ x } M:\, -B_0^{-1}\geq d\varphi/dr\geq	 -B_0\}$$
is strictly invariant under $DT_F^{-1}$.
Here $B_0=B_0(\eps_1, \tau_*,\cK_*)>1$ is a uniform constant.
Indeed, there exists a uniform constant $C>0$ such that we can choose
$B_0 = \cK^{-1}_* + 2\tau_*^{-1} + C\ve_1$ for all $\ve_1$
sufficiently small.

Let $dx = (dr, d\vf) \in \mathcal{T}_xM$.
Following \cite[Section 5.10]{chernov book}, we define the adapted norm
$\| \cdot \|_*$ by
\beq
\label{inducenorm}
\| dx \|_* = \frac{\K(x) + |\mathcal{V}|}{\sqrt{1 + \mathcal{V}^2}} \| dx \|, \; \; \;
\forall dx \in C^s(x) \cup C^u(x) ,
\eeq
where $\| dx \| = \sqrt{dr^2 + d\vf^2}$ is the Euclidean norm.
Since the slopes of vectors in $C^s(x)$ and $C^u(x)$ are bounded away from $\pm \infty$,
we may extend $\| \cdot \|_*$ to all of $\mathbb{R}^2$ in such a way
that $\| \cdot \|_*$ is uniformly equivalent
to $\| \cdot \|$.   It is straightforward to check that for $dx \in C^u(x)$,
\beq
\label{eq:Fexp}
\frac{\| dx_1\|_*}{\| dx \|_*} \ge \hat\Lambda:=1 + \K_{\min} \tau_{\min}/2.
\eeq
Finally, a simple calculation using \eqref{DTf} shows that there exists a constant
$B_1= B_1(\K_*, \tau_{\min}, \tau_{\max}) > 0$ such that
\beq
\label{eq:cos exp}
\frac{B_1^{-1}}{\cos \vf(x_1)} \le \frac{\| dx_1 \|}{\| dx \|}  \le \frac{B_1}{\cos \vf(x_1)},
\; \; \; \mbox{for all } dx \in C^u(x) .
\eeq
Uniform expansion in $C^s(x)$ under $DT^{-1}(x)$ follows similarly.
(See also \cite[Sect. 3]{Ch01}.)


\subsection{Hyperbolicity of the perturbed map $T_{\bF,\bG}$}
\label{hyp tfg}

We are now ready to verify conditions {\bf (H1)}-{\bf (H5)} for the map $T_{\bF, \bG}$.
We do this fixing $\bF$, $\bG$ satisfying assumptions {\bf (A1)}-{\bf (A4)}
with $|\bF|_{\C^1}, |\bG|_{\C^1} \le \ve$ for some $\ve \le \ve_1$.  We then
compare $T = T_{\bF, \bG}$ with the related map $T_{\bF} = T_{\bF, \mathbf{0}}$.

Since  $\mathbf{G}$ preserves tangential collisions,
the discontinuity set of $T$ is the same as that of $T_{\bF}$, which  comprises the
preimage of $\cS_0:=\{\varphi=\pm \pi/2\}$.   Similarly, the singularity sets
of $T^{-1}$ and $T_\bF^{-1}$ are the same due to {\bf (A4)}.
But the singular sets for higher iterates are not the same.
Let $\cS_{\pm n}^{T}= \cup_{i=0}^n T ^{\mp i}\cS_{0,H}$
with $n\in \mathbb{N}$.
Then $T^{\pm n}$ is smooth on $M\setminus \cS_{\pm n}^{T}$.

For any phase point $ x =(r,\varphi)\in M$, let $T x =(\bar r_1,\bar \varphi_1)$
and $T_{\bF}  x =(r_1, \varphi_1)$. According to {\bf (A3)} and {\bf (A4) } and since we
are on a fixed integral surface, we may
express $\bG$ in local coordinates via two smooth functions
$g^1$ and $g^2$ such that $g^i(r,\pm \pi/2)=0$, $i=1,2$, and
\beq
\label{rphi}
\bar r_1= r_1+g^1( r_1, \varphi_1)\,\,\,\,\,\text{ and }\,\,\,\,\,\,\bar \varphi_1=\varphi_1+g^2( r_1, \varphi_1)
\eeq
where $g^i$ is a  $C^2$ function with $C^1$ norm uniformly bounded from above by $c_g\eps$, for some uniform constant $c_g>0$.

According to  (\ref{rphi}), the differential of $T$ satisfies
\beq
\label{drphi}
d\bar r_1=\left(1+g^1_{1}( r_1, \varphi_1)\right) dr_1+g_{2}^1( r_1, \varphi_1) d\varphi_1\,\,\,\,\,\text{ and }\,\,\,\,\,\,d\bar \varphi_1=g^2_1( r_1, \varphi_1) dr_1+\left(1+g_2^2( r_1, \varphi_1)\right)d\varphi_1
\eeq
where $g^i_1(r_1,\varphi_1)=\partial g^i/\partial r_1$ and $g^i_2(r_1,\varphi_1)=\partial g^i/\partial \varphi_1$. This  implies
\begin{align}
\label{DTepsg}
D T(x)&= \left(
  \begin{array}{cc}
  1+g^1_1( r_1, \varphi_1) & g^1_2( r_1, \varphi_1) \\
 g^2_1( r_1, \varphi_1) & 1+g^2_2( r_1, \varphi_1) \\
  \end{array}
  \right)D T_{\bF}(x)
  \end{align}
Note that $T$ is not  a $\C^1$ perturbation of $T_\bF$ around the boundary of $M$.
Furthermore, $T$ no longer preserves $\mu_{\bF}$, the SRB measure for $T_\bF$.
However, it follows from \eqref{detTF} and \eqref{DTepsg} that
\begin{align}
\label{differential}
|\det DT(x)|\leq \frac{\cos\varphi(x)}{\cos\bar \varphi_1(x)}
\frac{\cos \bar \varphi_1(x)}{\cos \varphi_1(x)}(1+ C\eps)
\le  \frac{\cos\varphi(x)}{\cos\bar \varphi_1(x)} (1 + C_1 \ve)
\end{align}
since by \eqref{rphi},
\beq
\label{eq:equiv cos}
\frac{\cos \bar \varphi_1(x)}{\cos \varphi_1(x)} =
\frac{\cos (\varphi_1(x) + g^2(x_1))}{\cos \varphi_1(x)} \le (1 + C' \ve)
\eeq
since $g^2(r, \pm \pi/2) = 0$ and $|\nabla g^2| \le C \ve$.
Clearly this implies condition {\bf (H5)}.

The next proposition shows that although the perturbed maps do not have the same families of stable/unstable manifolds, they do share common families of stable and unstable cones.
\begin{proposition}\label{cones} There exist two families of cones
$C^u(x)$ (unstable) and $C^s(x)$ (stable) in the tangent spaces
${\cT}_{ x} M$ and $\Lambda>1$, such that for all ${ x}\in M$:
\begin{itemize}
\item[(1)] $D T  (C^u(x))\subset C^u(T  x)$ and $D T
	(C^s (x))\supset C^s( T  x)$ whenever $D T  $ exists.
	\item[(2)] These
	families of cones are	continuous on $M$
 and the angle between $C^u(x)$ and $C^s(x)$ is uniformly
 bounded away from
zero.
\item[(3)]	$\|D_{ x} T (v)\|_*\geq \Lambda \|v\|_*, \forall
	v\in C^u(x) \quad\text{and}\quad   \|D_{ x}T ^{-1}(v)\|_*\geq
	\Lambda \|v\|_*, \forall v\in C^s(x)$.
 \end{itemize}
\end{proposition}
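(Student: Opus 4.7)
My plan is to build the common cones for $T = T_{\bF,\bG}$ by \emph{slightly widening} the cones $\bar\cC^u,\bar\cC^s$ of the intermediate map $T_\bF = T_{\bF,\mathbf{0}}$ from \eqref{eq:unstable cone}, and then to exploit the factorization \eqref{DTepsg} in order to transfer strict invariance and hyperbolicity from $DT_\bF$ to $DT$. Concretely, for a small $\eta > 0$ to be chosen below I set
\[
C^u(x) := \{(dr,d\vf)\in\cT_xM : (B_0+\eta)^{-1}\le d\vf/dr \le B_0+\eta\},
\]
and define $C^s(x)$ analogously with slopes in $[-(B_0+\eta),-(B_0+\eta)^{-1}]$. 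The parts of (2) requiring continuity on $M$ and uniform transversality of $C^u$ from $C^s$ and from the coordinate axes are then automatic, since the slope bounds are constant in $x$.

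For the forward invariance in (1) I would first record that for $T_\bF$ the inclusion $DT_\bF \bar\cC^u(x)\subset \bar\cC^u(T_\bF x)$ is in fact \emph{strict}: by the construction in \cite{Ch01} there exists $\delta>0$ such that the image slopes all lie in $[B_0^{-1}+\delta,\,B_0-\delta]$. This strictness survives a small widening of the domain cone, so that $DT_\bF C^u(x)$ has slopes in $[B_0^{-1}+\delta/2,\,B_0-\delta/2]$ once $\eta$ is small enough. By \eqref{DTepsg}, $DT(x)=(I+G(x_1))DT_\bF(x)$ with $G$ having entries $g^i_j(r_1,\vf_1)$ of size $\cO(\eps_1)$; a direct computation shows that multiplying by $I+G$ perturbs the slope of a vector whose slope lies in $[B_0^{-1}+\delta/2,B_0-\delta/2]$ by at most $Cc_g\eps_1$. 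Taking $\eps_1$ so small that $Cc_g\eps_1<\min\{\delta/2,\eta\}$ forces $DT\, C^u(x)\subset C^u(Tx)$. The backward invariance of $C^s$ under $DT^{-1}$ is identical after time reversal.

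For the expansion in (3), I would apply the same factorization in the adapted norm \eqref{inducenorm}. Writing $DTv=(I+G)DT_\bF v$ with $\|G\|=\cO(\eps_1)$ gives $\|DTv\|\ge (1-C\eps_1)\|DT_\bF v\|$ in the Euclidean norm. Separately, the normalizing factor $(\K+|\mathcal{V}|)/\sqrt{1+\mathcal{V}^2}$ is uniformly Lipschitz in $(\K,\mathcal{V})$ on the compact set of bounded slopes, $\K(\cdot)$ is $\C^2$ on $\partial Q_0$, and by \eqref{rphi}--\eqref{drphi} one has $d(Tx,T_\bF x)=\cO(\eps_1)$ together with a corresponding $\cO(\eps_1)$ control on the change of the image slope. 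Combining this with \eqref{eq:Fexp} yields
\[
\|DTv\|_{*,Tx}\;\ge\;(1-C'\eps_1)\hat\Lambda\,\|v\|_{*,x},
\]
so that $\Lambda:=(1-C'\eps_1)\hat\Lambda>1$ for $\eps_1$ small enough; the bound for $DT^{-1}$ on $C^s$ is symmetric.

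The principal technical obstacle I foresee is uniformity of all of these estimates near the tangential collision boundary $\{\vf=\pm\pi/2\}$, where $DT_\bF$ degenerates through the $\cos^{-1}\vf_1$ factor in \eqref{DTf}, the adapted norm itself becomes singular, and a naive Lipschitz bound on the perturbation of slopes by $I+G$ could blow up. Assumption {\bf (A4)} is tailored for precisely this: since $\bG$ preserves tangential collisions we have $g^i(r,\pm\pi/2)=0$, and a mean-value argument yields the refined estimate $g^i(r_1,\vf_1)=\cO(\eps_1\cos\vf_1)$ while $g^i_j=\cO(\eps_1)$ remains uniform up to the boundary. These bounds cancel precisely against the singular $\cos^{-1}\vf_1$ factors inherited from $DT_\bF$, and once the cancellation is made explicit the cone and expansion estimates above close uniformly on $M$.
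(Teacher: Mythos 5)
Your construction of the common cones via a small widening of $\bar\cC^u$, the use of the factorization $DT=(I+G)DT_\bF$ from \eqref{DTepsg} to propagate strict invariance and norm growth, and the adapted-norm chain rule for the expansion in part (3) all track the paper's proof of Proposition~\ref{cones}; the only cosmetic difference is that the paper widens multiplicatively, taking the cone boundaries to be $B_0^{-1}(1-c_1\eps_1)$ and $B_0(1+c_2\eps_1)$, rather than additively.

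The ``principal technical obstacle'' in your last paragraph, however, is not actually present, and the cancellation you propose is not what the proof of this proposition needs. The adapted norm \eqref{inducenorm} is \emph{uniformly equivalent} to the Euclidean norm on the cones: the factor $(\K(x)+|\cV|)/\sqrt{1+\cV^2}$ stays between positive constants because $\K$ is bounded on $\partial Q_0$ and $|\cV|$ is bounded by the cone slopes. There is no singularity in $\|\cdot\|_*$ near $\{\vf=\pm\pi/2\}$; the $\cos^{-1}\vf_1$ blowup lives only in the Euclidean expansion rate \eqref{eq:cos exp}, and passing to $\|\cdot\|_*$ removes exactly that factor and yields the uniform bound \eqref{eq:Fexp}. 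Likewise, the slope transfer \eqref{cVeps} gives $\bar\cV_1=\cV_1+\cO(\eps_1)$ uniformly on $M$, since $\cV_1$ is bounded inside the cone and the entries $g^i_j$ are uniformly $\cO(\eps_1)$; no Lipschitz bound threatens to blow up. The refined estimate $g^i(r_1,\vf_1)=\cO(\eps_1\cos\vf_1)$ that you derive from {\bf (A4)} via the mean value theorem is genuine and is indeed used in the paper, but for a different purpose: it yields \eqref{eq:equiv cos} and hence the control on $\cos\bar\vf_1/\cos\vf_1$ needed to bound $J_\mu T$ for {\bf (H5)} and the distortion estimates of {\bf (H4)}. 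It plays no role in the cone invariance or adapted-norm expansion of Proposition~\ref{cones}, and chasing that cancellation here would be wasted effort.
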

\begin{proof}
For $ x\in M$ and any unit vector $d x\in \cT_{ x}M$, let $d{ x}_1=D_{ x}T_\bF  d{ x}$.
Then by (\ref{drphi})	
the slope $\bar\V_1$ of the vector
$d{ \bar x}_1$ at $\bar x_1:=T { x}=(\bar r_1, \bar \varphi_1)$ satisfies
\begin{align}\label{cVeps}
\bar\cV_1&=\frac{g^2_1+(1+g^2_2)\cV_1}{1+g^1_1+g^1_2 \cV_1}=\cV_1+\cO(\eps)
\end{align}
So the cone $\bar C^u(x)$ from \eqref{eq:unstable cone}
may not be invariant under $DT(x)$. Accordingly, we define a slightly bigger cone,
$$C^u(x)=\{(dr, d\varphi)\in \cT_{x} M:\, B_0^{-1}(1-c_1\eps_1))\leq d\varphi/dr\leq B_0(1+c_2\eps_1)$$
for some constants $c_1, c_2>0$, and  we use assumption (\textbf{A2}) to ensure that $c_i\eps_1<1/2$, $i=1,2$. By (\ref{DTf}), $DT_{\bF}$ maps the first and third quadrants strictly
inside themselves and shrinks any cones larger than the unstable cones.
More precisely, let $V$ be a unit vector on the upper boundary of $C^u(x)$,
with slope $\cV=B_0(1+c_2\eps_1)$.  Then by (\ref{DTf}) the slope of $DT_{\bF} V$ satisfies
$\cV_1=\frac{C+D\cV}{A+B\cV} $,
where we denote $$\left(\begin{array}{cc}A & B \\C & D \end{array}\right)	
=\left(\begin{array}{cc}\tau\K +\cos\varphi+a_1& \tau+a_2 \\
	 \K( r_1)(\tau\K+\cos\varphi)+\K\cos\varphi_1+b_1 & \tau\K( r_1)+\cos\varphi_1+b_2\end{array}\right) . $$	
It follows from the invariance of $\bar\cC^u$ that  $\frac{C+D B_0}{A+B B_0}<B_0$. One can easily check that  	 $$\cV_1=\frac{C+D B_0(1+c_2\eps_1)}{A+B B_0(1+c_2\eps_1)}<\cV=B_0(1+c_2\eps_1)$$ Similarly we can check the lower boundary of the cone is also mapped inside the cone $C^u$.
	  Thus $C^u$ is invariant under $DT$.

Similarly we define the stable cone	 $C^s(x)$ as
$$C^s(x)=\{(dr, d\varphi)\in \cT_{ x} M:\, -B_0^{-1}(1-c_1\eps_1))\geq d\varphi/dr\geq -B_0(1+c_2\eps_1)\}.$$
Then one can check that the stable cone $\cC^{s}$ is strictly invariant under $DT^{-1} $ whenever $DT^{-1} $ exists for any $T\in \cF$.  From the definitions of $C^s(x)$ and $C^u(x)$,
it is clear that the angle between them is bounded away from 0 on $M$.  Thus items
(1) and (2) of the lemma are proved.

To prove (3), note that \eqref{inducenorm} implies,
$$
\frac{\|d \bar x_1\|_*}{\|d x\|_*}=\frac{\|d \bar x_1\|_*}{\|d x_1\|_*}\frac{\|d x_1\|_*}{\|dx\|_*}=\frac{\|d x_1\|_*}{\|dx\|_*} \frac{\cK(\bar r_1)+|d\bar \vf_1|}{\cK(r_1)+ |d\vf_1|}  .
$$
Using \eqref{rphi}, \eqref{drphi}, \eqref{eq:Fexp}
and the fact that $\cK(\cdot)$ is a $\C^1$ function on $M$,
we conclude that for $\epsilon_0=1$ small enough,
\beq
\label{reducenorm}
\frac{\|d \bar x_1\|_*}{\|d x\|_*}\geq \Lambda := 1 + \cK_{\min} \tau_{\min}/3 .
\eeq
Similarly, one can show property (3) for stable cones, which we will not repeat here.	 
\end{proof}

Near grazing collisions, we have also using \eqref{eq:cos exp} and \eqref{eq:equiv cos}
along with  \eqref{rphi} and \eqref{drphi},
\beq
\label{eq:cos tgf}
\frac{B_1^{-1}(1-C\ve_1)}{\cos \bar \vf_1} \le
\frac{\| d \bar x_1 \|}{\| dx \|} = \frac{\|d \bar x_1\|}{\|d x_1\|}\frac{\|d x_1\|}{\|dx\|}
\le \frac{B_1(1+ C \ve_1)}{\cos \bar \vf_1} ,
\eeq
which establishes \eqref{eq:expansion} in {\bf (H1)} since in the finite horizon case, there
are only finitely many singularity curves so we may take $n$ in that formula to be 1.

The last formula (\ref{eq:2 deriv}) in \textbf{(H1)} (again with $n=1$) follows directly
from differentiating
(\ref{DTepsg}) and using (\ref{DTf}) to recover this standard estimate for the unperturbed
billiard (see \cite{katok} or \cite[Sect. 9.9]{Ch01} for the classical result).
This finishes the verification of \textbf{(H1)}.


\subsection{Regularity of stable and unstable curves}
\label{stable curves}

It follows from Proposition \ref{cones} that we may define common families of stable and
unstable cones for all perturbations $T\in \cF_B(Q_0, \tau_*, \ve_1)$.  Recall the homogeneity strips $\bH_k$ defined
in Section~\ref{class of maps} and that a homogeneous curve in $M$ is a curve
that lies in a single homogeneity strip.
In this subsection we will show that	there is a class of $C^2$ smooth unstable homogeneous
curves $\widehat\W^u$ in $M$ which is invariant under any $T\in \cF $.
Furthermore these curves are regular in the sense that they have uniformly
bounded curvature and distortion bounds.  Similarly, there is an invariant
class of homogeneous stable curves, $\widehat \W^s$.

\subsubsection{Curvature bounds}

  The next lemma, proved in $T_{\bF}$ in  \cite{Ch01},
  states that the images of an unstable curve are essentially flattened under the map $T_\bF$.
\begin{lemma}\label{curvbdT0}  Let $W\subset M$ be a $C^2$-smooth unstable curve with equation $\varphi_0=\varphi_0(r_0)$ such that $T_\bF^iW$ is a homogeneous unstable curve
for each $0 \le i \le n$.   Then
$T_\bF^n W$ has equation $\varphi_{n}=\varphi_n(r_n)$ which satisfies:
\beq\label{curvt0}|\frac{d^2\varphi_n}{dr^2_n}|\leq C_1+ \theta^{3n}|\frac{d^2\varphi_0}{d r_0^2}|\leq C_2\eeq
where $C_i=C_i(Q)$, $i=1,2$ is a constant and $\theta\in (0,1)$.
Furthermore, for any regular unstable curve $W$, there exists $n_W\geq 1$, such that for any $n>n_W$, every smooth curve of $T^n W$ has uniformly bounded curvature.
\end{lemma}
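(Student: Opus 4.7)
The plan is to bound $d^2\vf_n/dr_n^2$ by tracking how the wave-front curvature $\kappa$ evolves along $W$ under $T_\bF$. From \eqref{drdphi} we have $d\vf_n/dr_n = -\cK(r_n) + \kappa^+_n\cos\vf_n - h^+_n\sin\vf_n$, and differentiating once more in $r_n$ expresses the second derivative as a linear combination of $\cK'(r_n)$, $d\kappa^+_n/dr_n$, and terms of order $|\kappa_n|$. Since Lemma~\ref{wtaubound} together with the Riccati bound \eqref{kappabound} already controls $|\kappa|$ uniformly along $[0,\tau_1]$, the whole problem reduces to tracking $d\kappa^+/dr$ along $W$ under one iterate of $T_\bF$ and iterating.

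First we would derive a one-step recursion of the form $|d^2\vf_{k+1}/dr_{k+1}^2| \le \theta^3\,|d^2\vf_k/dr_k^2| + C$ for uniform constants $\theta \in (0,1)$ and $C>0$. Three ingredients combine to produce the factor $\theta^3$: during the flight, the Riccati equation \eqref{kappadot} integrates to $\kappa^-_{k+1} = \kappa^+_k/(1+\tau\kappa^+_k) + O(\ve_1)$, so differentiating along $W$ contracts $d\kappa^+_k/ds$ by $(1+\tau\kappa^+_k)^{-2}$; the collision jump \eqref{pm} introduces a factor $1/\cos\vf_{k+1}$ when differentiated along the curve, together with bounded additive contributions from $\cK'(r_{k+1})$ and the forces; and converting back from $d/ds$ to $d/dr_{k+1}$ via $dr_{k+1}/dr_k \sim (1+\tau\kappa^+_k)/\cos\vf_{k+1}$ contributes the remaining power, producing the cube of the adapted-norm expansion rate from Proposition~\ref{cones}. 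Iterating this recursion and summing the resulting geometric series yields \eqref{curvt0}.

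The furthermore statement then follows immediately: once $n > n_W := \lceil \log(|d^2\vf_0/dr_0^2|/C_1)/(3|\log\theta|) \rceil$, the contracting term $\theta^{3n}|d^2\vf_0/dr_0^2|$ falls below $C_1$, leaving $|d^2\vf_n/dr_n^2| \le 2C_1$ uniformly on every smooth component. The main obstacle will be to justify the net $\theta^3$ contraction near grazing collisions, where the factors $(1+\tau\kappa^+_k)^{-2}$ and $1/\cos\vf_{k+1}$ individually blow up or vanish. The homogeneity assumption $T^i_\bF W \subset \bH_{k_i}$ is essential here, forcing $\cos\vf$ on $T^i_\bF W$ to be comparable to $k_i^{-2}$ and balancing these singular factors against each other. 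The small forces $a,b,h^\pm$ of size $\ve_1$ perturb the Riccati equation and the collision jump by $O(\ve_1)$ but do not disrupt the inequalities for $\ve_1$ sufficiently small, so the classical argument of \cite{Ch01} transfers to our setting with only quantitative modifications to the constants.
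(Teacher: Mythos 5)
The paper does not prove this lemma at all: it records it as a result proved in \cite{Ch01} ("The next lemma, proved [for] $T_\bF$ in \cite{Ch01}, \dots") and builds on it in Proposition~\ref{curvbd} by propagating the bound through the $\C^1$-small change of coordinates \eqref{rphi}--\eqref{drphi}.  Your sketch takes the classical curvature-propagation route that \cite{Ch01} itself uses, so at the level of strategy it is in the right spirit.  However, there is a genuine gap at the precise point you flag as ``the main obstacle,'' and the remedy is not the balancing argument you describe but an algebraic cancellation that your sketch misses.

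You write that Lemma~\ref{wtaubound} together with the Riccati bound \eqref{kappabound} ``already controls $|\kappa|$ uniformly along $[0,\tau_1]$,'' and you then differentiate $d\vf_n/dr_n = -\cK(r_n)+\kappa^+_n\cos\vf_n - h^+_n\sin\vf_n$.  But Lemma~\ref{wtaubound} and \eqref{kappabound} control $\kappa^-$ (the free-flight, pre-collision curvature) and $w,\dot w$, not the post-collision $\kappa^+$.  By the jump formula \eqref{pm}, $\kappa^+_n$ blows up like $2\cK(r_n)/\cos\vf_n$ near grazing, so ``terms of order $|\kappa_n|$'' are \emph{not} bounded, and a naive differentiation of your formula for $d\vf_n/dr_n$ produces terms of order $1/\cos\vf_n$ in $d^2\vf_n/dr_n^2$, directly contradicting the uniform constant $C_2$ you are trying to prove.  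The cancellation that rescues the argument is to substitute \eqref{pm} into \eqref{drdphi} before differentiating: one finds $d\vf_n/dr_n = \cK(r_n)+\kappa^-_n\cos\vf_n + h^-_n\sin\vf_n$, i.e.\ the apparent $1/\cos\vf_n$ singularity in $\kappa^+_n\cos\vf_n$ disappears and is replaced by the bounded $\kappa^-_n\cos\vf_n$.  It is this identity, together with $ds/dr_n \sim \cos\vf_n$ from \eqref{drdphi}, that keeps the additive term in your one-step recursion uniformly bounded and delivers the uniform $\theta\in(0,1)$ and $C_1$.  Asserting that ``the homogeneity assumption\dots forces $\cos\vf$ on $T^i_\bF W$ to be comparable to $k_i^{-2}$ and balanc[es] these singular factors'' is not a substitute for this cancellation: without it the factors do not balance, and in any case the homogeneity strips are used for a different purpose here (ensuring the intermediate curves stay in the cone and the distortion is controlled), not to cancel the $1/\cos\vf$ coming from the jump.

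A secondary issue: your derivation of the $\theta^3$ factor conflates the adapted-norm expansion rate with the Euclidean one and does not justify that the additive constant in the recursion is orbit-independent.  These are exactly the places where the explicit matrix decomposition of $DT_\bF$ in \eqref{DTepsmatrix}--\eqref{DTf} (and its Euclidean counterpart) has to be used.  The ``furthermore'' part of the proof is fine once the main bound is established.
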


One can obtain a similar bounded curvature property for the perturbed map $T $.

\begin{proposition}
\label{curvbd}(Curvature bounds)
 Let $W$ be any $C^2$ smooth unstable curve. Then there exists $n_W\geq 1$ and
 $C_b >0$ such that 	every smooth curve $W'\subset T ^n W$ with equation
 $\bar\varphi_n=\bar\varphi_n(\bar r _n)$	satisfies
	 \beq\label{slopebd}|d^2\bar\varphi_n/d\bar r _n^2|\leq C_b, \; \; \;
	 \mbox{for } n > n_W .
	 \eeq
\end{proposition}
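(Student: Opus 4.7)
The plan is to decompose $T = G\circ T_{\bF}$ where $G(r,\varphi) = (r + g^1(r,\varphi),\,\varphi+g^2(r,\varphi))$ is the kick map from \eqref{rphi}, and then iterate the one-step curvature contraction for $T_{\bF}$ (obtained from Lemma~\ref{curvbdT0} with $n=1$) together with a direct calculation of how $G$ distorts curvature. The key observation is that both $T_{\bF}$ and $G$ preserve the common unstable cone $C^u$ of Proposition~\ref{cones}, so all intermediate images along any orbit remain unstable curves with uniformly bounded slope, and the one-step estimates can be chained.

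First I would establish the one-step curvature bound for $G$. Parametrize an unstable curve $W'$ as $\varphi = \varphi(r)$ and set $\tilde g^i(r) := g^i(r,\varphi(r))$ so that $G(W')$ has equation $\bar r = r + \tilde g^1(r)$, $\bar\varphi = \varphi(r) + \tilde g^2(r)$. A direct chain-rule calculation gives
\[
\frac{d^2\bar\varphi}{d\bar r^2} \;=\; \frac{(\varphi''+\tilde g^{2\,\prime\prime})(1+\tilde g^{1\,\prime}) - (\varphi'+\tilde g^{2\,\prime})\,\tilde g^{1\,\prime\prime}}{(1+\tilde g^{1\,\prime})^3}.
\]
Since $|g^i_j|\le c_g\ve$ by \textbf{(A3)} and $|\varphi'|\le B_0$, we have $\tilde g^{i\,\prime} = g^i_1 + g^i_2\varphi' = \mathcal{O}(\ve)$. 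Expanding $\tilde g^{i\,\prime\prime} = g^i_{11} + 2g^i_{12}\varphi' + g^i_{22}(\varphi')^2 + g^i_2\,\varphi''$ and using that the pure second partials of $g^i$ are uniformly bounded (since $g^i\in C^2$), one gets $\tilde g^{i\,\prime\prime} = \mathcal{O}(1) + \mathcal{O}(\ve)\,\varphi''$. Substituting yields
\[
\Big|\frac{d^2\bar\varphi}{d\bar r^2}\Big| \;\le\; (1+C_G\ve)\,|\varphi''| + D_G,
\]
for constants $C_G,D_G$ depending only on the $C^2$ norm of $\bG$ and on $B_0$.

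Next I would iterate. For each smooth component $W' \subset T^n W$, trace its pre-image curve by curve. At each step $k$, denote by $\kappa_k$ the curvature of the $k$th pre-image. By \textbf{(A4)} the discontinuity set of $T^{-1}$ coincides with that of $T_{\bF}^{-1}$, so the partitioning into smooth homogeneous unstable curves along the orbit is the same as in the unperturbed case, and each intermediate piece remains a homogeneous unstable curve by Proposition~\ref{cones}. Applying Lemma~\ref{curvbdT0} with $n=1$ to each $T_{\bF}$ factor gives an intermediate curvature bound $\le C_1 + \theta^3\kappa_k$, and then applying the one-step bound for $G$ yields
\[
\kappa_{k+1} \;\le\; (1+C_G\ve)(C_1+\theta^3\kappa_k) + D_G \;=\; \tilde C + \tilde\theta\,\kappa_k,
\]
where $\tilde\theta := (1+C_G\ve)\,\theta^3$. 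Provided $\ve_1$ in \textbf{(A3)} is chosen small enough that $\tilde\theta<1$, this is a contracting affine recursion, giving $\kappa_n \le \tilde C/(1-\tilde\theta) + 1 =: C_b$ for all $n$ exceeding some $n_W$ that depends only on the initial curvature of $W$.

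The main technical obstacle will be verifying the smallness condition $\tilde\theta<1$ consistently with the other $\ve_1$-constraints already in force (hyperbolicity via \eqref{reducenorm} and cone invariance from Proposition~\ref{cones}); in particular, one must fix $\ve_1$ a posteriori after combining all constants. The curvature computation under $G$, while somewhat lengthy to write out, is otherwise routine once the chain rule is applied explicitly.
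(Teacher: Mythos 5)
Your proposal is correct and follows essentially the same route as the paper: decompose $T = G\circ T_\bF$, combine the one-step curvature contraction of Lemma~\ref{curvbdT0} (with $n=1$) with a chain-rule computation showing $G$ alters curvature by a factor $1+\mathcal{O}(\ve_1)$ plus a bounded additive term, and iterate the resulting affine recursion after shrinking $\ve_1$ so that $(1+\mathcal{O}(\ve_1))\theta^3<1$. The only cosmetic difference is that the paper carries out the chain rule by differentiating the slope identity \eqref{dbvarphidbr1} directly rather than introducing $\tilde g^i(r)$, but the content is identical.
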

 \begin{proof}	
 We fix any phase point $ \bar x_0:=x \in W$, denote $x_n = (r_n, \vf_n) =T_{\bF}^n x$ and
 $\bar x_n = (\bar r_n, \bar \vf_n) =T^n x$.
 According to \eqref{drphi},	the slope of the vector $DT \,d \bar x $ satisfies
\begin{align}
\label{dbvarphidbr1}
\frac{d\bar\varphi_1}{ d\bar r_1}=\frac{g^2_1+(1+g^2_2)\cV_1}{1+g^1_1+g^1_2 \cV_1}=\cV_1+\frac{g^2_1+g^2_2\cV_1-g^1_1\cV_1-g^1_2 \cV_1}{1+g^1_1+g^1_2 \cV_1},
\end{align}
where $\cV_1=d\varphi_1/dr_1$, $\bar \cV_1=d\bar\varphi_1/d\bar r_1$.
 We differentiate the above equality  with respect to $r_1$, using the fact that by
 (\ref{drphi}),
 $\displaystyle \frac{d\bar r_1}{dr_1}=1+g^1_1 +g^1_2\cV_1 $.
Now  use the same notation as in Lemma \ref{curvbdT0} to get for some
$C_0>0$ and $C_3>0$
\beq
\label{evodphi}
|\frac{d^2\bar\varphi_1}{d\bar r_1^2}|
\leq C_0+(1+C_3\eps_1)\theta^{3}|\frac{d^2\bar\varphi_0}{d\bar r ^2_0}|,
\eeq
since $d^2\bar \vf_0/ d\bar r_0^2 = d^2 \vf_0 / dr_0^2$.
By choosing $\eps_1$ small one can make
$(1+\eps_1 C_3)\theta^2<1$. Then we have for any $n\geq 1$,
$$|\frac{d^2\bar\varphi_n}{d\bar r_n^2}|
\leq \frac{C_0}{1-\theta}+\theta^{n}|\frac{d^2\bar\varphi_0}{d\bar r_0^2}|
$$
 Since $W$ is $\C^2$, there exists $C_1=C_1(W)>0$ such that
 $|\frac{d^2\bar\varphi_0}{d\bar r_0^2}|<C_1$.
 We fix a constant $C_b=C_b(Q)>0$ and define
 $$n_W=|\frac{\ln (C_b/C_1)}{\ln \theta}|.$$
 Then for any $n>n_W$, connected components of $T^n W$ have equation
 $\bar\varphi_n=\bar\varphi_n(\bar r_n)$ with second derivative bounded from above by $C_b$.
\end{proof}

We now fix the constant $C_b>0$, then define $\widehat \cW^u$ be the class of
all homogeneous unstable curves $W$ whose  curvature is uniformly bounded by $C_b$.
It follows from Propositions~\ref{cones} and \ref{curvbdT0} that the class
$\widehat\cW^u$ is invariant under any $T \in\cF$.
Any unstable curve $W\in \widehat\cW^u$ is called a regular unstable curve.
Similarly one defines $\widehat\cW^s$. This verifies condition \textbf{(H2)}.


\subsubsection{Distortion bounds}

In this section, we establish the distortion bounds for $T$ required by {\bf (H4)}.
For any stable curve $W \in \widehat \W^s$ and $ x \in W$, denote by
$J_W T_\bF( x )$ (resp. $J_W T ( x )$) the Jacobian of $T_\bF$
(resp. $T$) along $W$ at $ x \in W$.
It was shown in \cite{Ch01}
that there exists $C_1>0$, such that for any regular stable curve $W$ for which
$T_\bF W$ is also a regular stable curve,
\beq
\label{distT0}
|\ln J_W T_\bF( x )-\ln J_W T_\bF( y)|
\leq C_1 d_W( x , y)^{\frac{1}{3}}
\eeq
where $d_W( x , y)$ is the arclength between $ x $ and $ y$ along $W$.
We show that $T$ has the same properties on the set of all regular stable curves $\widehat \W^s$.
\begin{lemma}\label{distorbd}(Distortion bounds)
Let $T \in \F$ and $W \in \widehat \W^s$ be such that $T$ is smooth on $W$ and
$TW \in \widehat \W^s$.  There exists $C_J>0$ independent of $W$ and $F$
such that
$$|\ln J_W T ( x )-\ln J_W T ( y)|\leq  C_J d_W( x , y)^{\frac{1}{3}} .$$
\end{lemma}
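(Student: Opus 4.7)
The plan is to exploit the factorization $DT(x) = A(x_1)\, DT_\bF(x)$ from \eqref{DTepsg}, where $x_1 = T_\bF x$ and $A(x_1) = \Id + O(\ve_1)$ is the near-identity matrix whose entries $1 + g^i_j$ arise from the post-collision kick $\bG$. Applied to the unit tangent $\tau_W(x)$ of $W$, this yields
\[
J_W T(x) \;=\; J_W T_\bF(x)\;\cdot\; \|A(x_1)\, u(x_1)\|,
\]
where $u(x_1)$ denotes the unit tangent to $T_\bF W$ at $x_1$. Taking logarithms splits the required H\"older-$1/3$ bound into two pieces that I treat separately.

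For the first piece, $|\ln J_W T_\bF(x) - \ln J_W T_\bF(y)|$ is controlled by $C_1\, d_W(x,y)^{1/3}$ directly via \eqref{distT0}, the distortion estimate already established for $T_\bF$ in \cite{Ch01}. A minor subtlety is that $W$ is stable for $T$, i.e.\ tangent to the enlarged common cone $C^s$ from Proposition~\ref{cones}, rather than strictly to the $T_\bF$-stable cone $\bar C^s$; since these cones differ only by $O(\ve_1)$, \eqref{distT0} extends to $W$ with at most a constant-factor loss in $C_1$.

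For the second piece, since $\|A - \Id\| = O(\ve_1)$, the quantity $\|Au\|$ is uniformly bounded away from $0$, so $\ln$ is Lipschitz in $\|Au\|$. Expanding,
\[
\bigl|\ln\|A(x_1)u(x_1)\| - \ln\|A(y_1)u(y_1)\|\bigr|
\;\le\; C\,\|A(x_1)-A(y_1)\| + C\,\|u(x_1)-u(y_1)\|.
\]
The first summand is $\le C\ve_1\, d(x_1,y_1)$ using Lipschitz dependence of the $g^i_j$ on $(r_1,\vf_1)$; the second is $\le C_b\, d_{T_\bF W}(x_1,y_1)$ because $T_\bF W$ inherits a uniform curvature bound from $TW\in\widehat\W^s$ via the near-identity $\C^2$ map $T\circ T_\bF^{-1}$. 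Both summands are then bounded by $C\,\|DT_\bF\|\, d_W(x,y) \le C'\, k^2\, d_W(x,y)$ when $W$ lies in the homogeneity strip $\Ho_k$. Since $W \in \widehat\W^s$ is homogeneous with $|W| \le \text{const}\cdot k^{-3}$, the quantity $k^2\, d_W(x,y)$ is itself dominated by a uniform multiple of $d_W(x,y)^{1/3}$, so the second piece also obeys the H\"older-$1/3$ bound.

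Adding the two contributions yields $|\ln J_W T(x) - \ln J_W T(y)| \le C_J\, d_W(x,y)^{1/3}$ with $C_J$ uniform over $\cF_B(Q_0,\tau_*,\ve_1)$. The main obstacle I anticipate is justifying the Lipschitz bound on $A$ and the curvature bound on $T_\bF W$: both implicitly require $\bG \in \C^2$, whereas \textbf{(A3)} only posits the $\C^1$ bound $\|\bG\|_{\C^1}<\ve_1$. This regularity upgrade should either be implicit in the setup or obtainable by a standard smoothing argument preserving the $\ve_1$-smallness of $\bG$.
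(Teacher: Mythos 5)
Your proof is correct and follows essentially the same route as the paper's: both rest on the factorization $DT(x) = A(x_1)\,DT_\bF(x)$ from \eqref{DTepsg} (you express it as $J_W T = J_W T_\bF \cdot \|A\,u\|$; the paper's \eqref{eq:jacs} writes the same correction in terms of slopes $\cV_1, \bar\cV_1$ and the $r$-component), both invoke \eqref{distT0} for the $T_\bF$ piece, and both control the correction by Lipschitz bounds on the $g^i_j$ and on the unit tangent to $T_\bF W$.

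Two points deserve correction. First, the $k^2$-absorption step is an unnecessary detour that reflects an inverted picture of the stable dynamics. Along a stable curve $W$, the forward map $T_\bF$ contracts: $J_W T_\bF \le \Lambda^{-1}<1$ (indeed roughly $\cos\vf(x)$, i.e.\ $\sim k^{-2}$ if $W\subset\Ho_k$). So $d_{T_\bF W}(x_1,y_1)\le \Lambda^{-1}d_W(x,y)$ immediately, and your second piece is in fact Lipschitz in $d_W(x,y)$, which is also what the paper's proof gives. The bound you invoke, $\|DT_\bF\|\le C'k^2$ for $W\subset\Ho_k$, is not justified as stated: the full operator norm $\|DT_\bF(x)\|\sim 1/\cos\vf(x_1)$ is governed by the homogeneity strip of $T_\bF x$, not of $x$, and the relevant restriction to the stable direction is $\le 1$. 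Your absorption via $|W|\lesssim k^{-3}$ happens to salvage the inequality, but only because the bound you are trying to absorb is a gross overestimate. Second, the concern about $\bG\in\C^2$ is moot: the local representation $g^i$ of $\bG$ in \eqref{rphi} is explicitly taken to be $C^2$ in Section~\ref{hyp tfg}, and the paper's own proof of this lemma invokes ``$\bG$ is $C^2$'' in exactly the same place you need it (Lipschitz control of $g^1_1, g^1_2$ and the curvature of $T_\bF W$). There is no regularity gap to fill.
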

\begin{proof}	Fix $T \in \F$ and $W \in \W^s$ for which $TW \in \widehat \W^s$.  This implies in particular that both $T$ and $T_\bF$ are smooth on $W$.
For any $x=(r,\varphi)\in W$, let $x_1:=T_\bF x=(r_1,\varphi_1)$
and $\bar x_1=T x=(\bar r_1,\bar \varphi_1)$.
Similarly, let $dx = (dr, d\vf) \in \mathcal{T}_xW$ be a unit vector and define
$dx_1 = DT_\bF(x) dx = (dr_1, d\vf_1)$ and $d\bar x_1 =  DT(x) dx = (d\bar r_1, d\bar \vf_1)$.
Then
\[
\frac{J_W T ( x )}{J_W T_{\bF}( x )} = \sqrt{\frac{1+\bar \cV^2_{1}}{1+\cV^2_{1}}}
\frac{|d\bar r_1|}{|d r_1|}
\]
where $\cV_{1} = d\vf_1/dr_1$ and
$\bar \cV_{1} = d\bar \vf_1/ d\bar r_1$.
Then
it follows from \eqref{drphi} that
\beq
\label{eq:jacs}
\ln J_W T ( x )=\ln J_W T_\bF( x )+ \frac{1}{2}\ln (1+\bar \cV_{1}^2)-
\frac{1}{2}\ln (1+\cV_{1}^2)+\ln |1+g^1_1 +g^1_2\cV_1| .
\eeq
By the smoothness of $W$ and the curvature bounds,
there exists $C>0$ such that for any $ x , y\in W$,
$$|\ln (1+\cV_{1}^2( x_1 ))-\ln (1+\cV_{1}^2( y_1))|\leq |\cV_{1}^2( x_1 )-\cV_{1}^2( y_1)|\leq C d_{T_\bF W}( x_1 , y_1) \le C' d_W(x,y),$$
where $y_1 = T_\bF y$, and similarly for $\bar \cV_1$.
 Since $\bG$ is $C^2$, the terms involving $g^1_1$ and $g^1_2$
    satisfy a Lipschitz bound as well.
Putting this together with \eqref{distT0} and \eqref{eq:jacs} proves the lemma.
\end{proof}

In general, for $W \in \widehat \W^s$ and $n\in \mathbb{N}$, suppose $T^n$ is smooth
on $W$ and that  $T^k W \in \widehat \W^s$, $0 \le k \le n$.  Define $T^k W = W_k$ and
for $x, y \in W$, let
$x _k=T^k x $ and $ y_k=T^k y$.  Then
\begin{equation}
\begin{split}
\label{eq:dist ext}
|\ln J_W T^n( x )-\ln J_W T^n ( y)| & \leq \sum_{k=0}^{n-1}|\ln J_{W_k}T ( x _k)-\ln J_{W_k}T ( y_k)| \\
& \leq C\sum_{k=0}^{n-1}d_{W_k}( x _k, y_k)^{1/3}
\le C d_W(x,y)^{1/3} \sum_{k=0}^\infty \Lambda^{k/3},
\end{split}
\end{equation}
due to \eqref{reducenorm}.  This completes the required estimate on $J_WT$.

Finally, we prove the required bounded distortion estimate for $J_\mu T$.
By \eqref{eq:full detTf} and \eqref{DTepsg}, we have
\beq
\label{eq:det formula}
\det DT(x)  = \frac{\cos \vf}{\cos \vf_1} \big( (1+c_2)(1+c_3) - c_1(\tau + c_4) \big)
\big( (1 + g^1_1)(1+ g^2_2) - g^1_2 g^2_1 \big) =: \frac{A(x)}{\cos \bar \vf_1},
\eeq
where $c_1, \ldots, c_4$ are defined by \eqref{III} and
we have replaced $\cos \vf_1$ with $\cos \bar \vf_1$ times a smooth function on
$M \setminus \cS_1^T$
due to \eqref{eq:equiv cos}.
Note that $A(x)$ is a smooth function of
its argument wherever $T$ is smooth and has bounded $C^1$ norm on $M \setminus \cS_1^T$.
It follows that
$J_\mu T$ is a smooth function on $M \setminus \cS_1^T$
whose $C^1$-norm is bounded between $1 \pm C \ve_1$ for some uniform constant
$C$ depending on the table (recall that
$d\mu = c \cos \vf \, dm$ is the smooth invariant measure for the unperturbed
billiard $T_{\mathbf{0},\mathbf{0}}$).  The required distortion estimates
\eqref{eq:distortion stable} and \eqref{eq:D u dist} for $J_\mu T$ follow using this smoothness
and the uniform hyperbolicity of $T$ as in \eqref{eq:dist ext}.  Indeed, \eqref{eq:dist ext}
holds with exponent 1 rather than $1/3$ for $J_\mu T$.  This completes
the verification of {\bf (H4)}.

Distortion bounds for $\det DT$ with exponent $1/3$ follow from
the above considerations in addition to recalling that $1/\cos \vf$
is of order $k^2$ in $\bH_k$, while the width of such a strip along a stable or unstable
curve is $k^{-3}$.
Similarly, one may prove absolute continuity of the holonomy map between unstable
leaves as in \cite{Ch01}, but we do not do that here since we do not need this fact.


\subsection{One step expansion}

Since we have established the expansion factors given by
\eqref{reducenorm} and \eqref{eq:cos tgf},
the one-step expansion condition \eqref{eq:step1} follows from an argument similar
to the unperturbed case (see \cite[Lemma 5.56]{chernov book}) and fixes the choice of
$k_0 \in \N$, the minimum index of the homogeneity strips.
We will not reprove that lemma here. Instead, we focus on the second part of
{\bf (H3)}, given by \eqref{eq:weakened step1}.

Fix $\delta_0 > 0$ and $k_0$ satisfying \eqref{eq:one step contract} and define $\W^s$
accordingly.  For $W \in \W^s$, let $V_i$ denote the maximal homogeneous
connected components of $T^{-1}W$.

\begin{lemma}
\label{lem:sigma}
For any $\varsigma>1/2$,  there exists $C=C(\delta_0, \varsigma,\eps_0)>0$ such that for any
$W \in \W^s$, any $T\in \cF_B(Q_0, \tau_*, \ve_1)$,
\beq\label{step2}
 \sum_i \frac{|TV_i|^{\varsigma}}{|V_i|^{\varsigma}} < C	.
\eeq
\end{lemma}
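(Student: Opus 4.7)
The plan is to reduce \eqref{step2} to a convergent geometric series over the homogeneity strips. For each maximal homogeneous component $V_i$ of $T^{-1}W$, let $\bH_{k_i}$ denote the strip containing $V_i$. The strategy is to show (i) $|TV_i|/|V_i| \le C\, k_i^{-2}$ and (ii) each strip $\bH_k$ contains at most a uniformly bounded number of pieces $V_i$; the lemma then follows by combining these with the elementary fact that $\sum_{k \ge k_0} k^{-2\varsigma}$ converges precisely when $\varsigma > 1/2$, together with a trivial bounded contribution from the central strip $\bH_0$ (which contains only finitely many pieces, each with bounded ratio).

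For (i), I would apply the expansion estimate \eqref{eq:expansion} of {\bf (H1)} at $y = Tx \in TV_i$: since {\bf (A2)} gives finite horizon, we may take $n=1$, obtaining
\[
|J_{TV_i}T^{-1}(y)| \,\ge\, C_a [\cos \vf(x)]^{-1}, \qquad \text{hence} \qquad |J_{V_i}T(x)| \,\le\, C_a^{-1} \cos \vf(x) \,\le\, C\, k_i^{-2},
\]
using $\cos \vf(x) \asymp k_i^{-2}$ on $\bH_{k_i}$. The bounded distortion estimate of Lemma~\ref{distorbd}, which we have already established for $T = T_{\bF,\bG}$, together with the mean value theorem then upgrades this pointwise bound on the stable Jacobian to the averaged bound $|TV_i|/|V_i| \le C'\, k_i^{-2}$.

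For (ii), assumption {\bf (A4)} identifies the singularity set of $T^{-1}$ with that of $T_{\bF,\mathbf{0}}^{-1}$, and the finite-horizon assumption {\bf (A2)} implies that it consists of at most $D_0 = D_0(Q_0,\tau_*)$ smooth curves. Thus $T^{-1}W$ has at most $D_0+1$ smooth components $\omega_j$. By Proposition~\ref{cones} each $\omega_j$ is the graph of a strictly monotone function of $r$ (its slope lies in $[-B_0,-B_0^{-1}]$), so $\omega_j \cap \bH_k$ is a single interval whenever nonempty; hence at most $D_0+1$ of the $V_i$'s lie in any given strip, and summing finishes the proof. The main subtlety — and really the only place where the perturbative setting intervenes — is to be certain that the kick/slip factor in \eqref{DTepsg} does not spoil the $\cos \vf \asymp k^{-2}$ asymptotics near the grazing set. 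This is controlled by \eqref{eq:cos tgf} and \eqref{eq:equiv cos}, which show that the perturbation only multiplies the contraction ratios by factors of $1 + \mathcal{O}(\ve_1)$, so the geometric $k^{-2}$ decay persists and the exponent threshold $\varsigma > 1/2$ is the same as in the unperturbed billiard.
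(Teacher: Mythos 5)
Your proof is correct and follows essentially the same route as the paper's: both bound $|TV_i|/|V_i|$ by $\mathcal{O}(k^{-2})$ using the $\cos\vf\asymp k^{-2}$ asymptotics near the grazing set (you via \eqref{eq:expansion} of {\bf (H1)}, the paper via the model-specific \eqref{eq:cos tgf}, which encode the same estimate), bound the number of homogeneous pieces per strip by the complexity of $T^{-1}W$, and sum the convergent series $\sum_k k^{-2\varsigma}$ for $\varsigma>1/2$. The appeal to Lemma~\ref{distorbd} is harmless but unnecessary, since your pointwise bound on $J_{V_i}T$ already holds at every point of $V_i$ and hence integrates directly to the length ratio.
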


\begin{proof}
According to the structure of singular curves,  a stable curve of length $\le \delta_0$ can be cut by at most $N \le \tau_{\max}/\tau_{\min}$
singularity curves in $\Si_{-1}^{T}$ (see \cite[\S 5.10]{chernov book}).  For each $s \in \Si_{-1}^{T}$
intersecting $W$, $W$ is cut further
by images of the boundaries of homogeneity strips $S^H_k$, $k \geq k_0$.	 For one such $s$, we relabel the components $V_i$ of $T^{-1}W$ on which $T$ is smooth by $V_k$, $k$ corresponding to
the homogeneity strip $\bH_k$ containing $V_k$.  By \eqref{eq:cos tgf}, there exists
$c_1=c_1(\ve_1)>0$ such that on $TV_k$, the expansion	under $T^{-1}$ is
$\geq c_1 k^2$.	 So for all $\varsigma>1/2$,
\begin{equation}
\label{eq:one sum}
\sum_{k\geq k_0}\frac{|TV_k|^{\varsigma}}{|V_k|^{\varsigma}}
\leq c_1\sum_{k\geq k_0} \frac{1}{k^{2\varsigma}}\leq \frac{c_1}{k_0^{2\varsigma-1}} .
\end{equation}
An upper bound for
\eqref{step2} in this case is given by $N$ times the bound in \eqref{eq:one sum}.
\end{proof}

This completes the verification of {\bf (H1)}-{\bf (H5)} and completes the proof of
Theorem~\ref{thm:C1}.


\subsection{Smallness of the perturbation}

In this section, we check that conditions {\bf (C1)}-{\bf (C4)} are satisfied for $\ve_1$
sufficiently small.   We will then be able to apply Theorem~\ref{thm:C2} to any map
$T \in \F_B(Q_0, \tau_*, \ve_1)$.

We fix $\eps\in (0,\eps_1)$ and choose any $T:=T_{\bF, \bG}\in \cF_B(Q_0, \tau_*, \ve_1)$,
such that
$|\bF|_{\C^1}, |\bG|_{\C^1} \le \eps$.  By the triangle inequality, it suffices to
estimate $d_\F(T_0, T)$ where $T_0 = T_{\mathbf{0}, \mathbf{0}}$ is the unperturbed billiard
map.

Denote by $\Phi^t$ the flow corresponding to $T$ and by $\Phi_0^t$ the flow corresponding to
$T_0$.
Let $x \in M \setminus (S_{-1}^T\cup S_{-1}^{T_0})$.
By the facts summarized in Section~\ref{flow review}, $\Phi^t(x)$ and $\Phi_0^t(x)$ can
be no further than a uniform constant times $\ve t$ on the billiard table.
Thus since $T$ has finite horizon bounded by $\tau_{\max}$ and the scatterers have uniformly
bounded curvature,
$T(x)$ and $T_{\bF,\mathbf{0}}(x)$ can be no more than a constant times $\sqrt{\ve}$ apart
if they lie on the same scatterer.  By the smallness of $\bG$ and \eqref{rphi}, we have
$d_M(T_{\bF, \mathbf{0}}(x), T_{\bF, \bG}) < C\ve$ and thus by the triangle inequality,
$d_M(T(x), T_0(x)) < C_f \sqrt{\ve}$ for some uniform $C_f >0$ as long as they lie
on the same scatterer.  A similar bound holds for $T^{-1}x$ and $T_0^{-1}x$.

Let $\epsilon = C_f \ve^{1/3}$.
It then follows that for any
$x\notin N_{\epsilon}(S_{-1}^T\cup S_{-1}^{T_0})$, $d(T^{-1}(x), T^{-1}_0(x))< \epsilon$.
This is {\bf (C1)}.

To establish {\bf (C2)}, we use the fact that
$J_\mu T_0 \equiv 1$ while
\[
J_\mu T(x) = \big( (1+c_2)(1+c_3) - c_1(\tau + c_4) \big)
\big( (1 + g^1_1)(1+ g^2_2) - g^1_2 g^2_1 \big)
\]
by \eqref{eq:det formula}.  Since the functions here are all bounded by uniform constants
times $\ve$ and our horizon is bounded by $\tau_{\max}$, {\bf (C2)} is satisfied.

Next, we prove {\bf (C4)}.  Inverting \eqref{DTepsg} and \eqref{DTf} and using
\eqref{eq:det formula}, we have
\[
DT^{-1}(x) = \frac{-1}{A(T^{-1}x) \cos \vf(T^{-1}x)} \left(  \begin{array}{cc}
         B + b_2 & C - a_2 \\
         D - b_1 & E + a_1  \end{array} \right)
         \left(  \begin{array}{cc}
         1 + g^2_2 & - g^1_2 \\
         - g^2_1     & 1 + g^1_1 \end{array} \right) ,
\]
where $A$ is the smooth function from \eqref{eq:det formula} and
$B = \tau(T^{-1}x) K(x) + \cos \vf(x)$, $C = - \tau(T^{-1}x)$,
$$D = -\K(T^{-1}x)(\tau(T^{-1}x) \K(x) + \cos \vf(x)) - \K(x) \cos \vf(T^{-1}x),\,\,\,\,\,
\text{ and }\,\,\,\,\,E = \tau(T^{-1}x) \K(T^{-1}x) + \cos \vf(T^{-1}x)$$
match the corresponding entries of $DT_0^{-1}x$ with $T$ replaced by $T_0$.

We split the matrix product as
\[
\left( \left(  \begin{array}{cc}
         B  & C  \\
         D  & E   \end{array} \right)
         + \left( \begin{array}{cc}
         b_2 & - a_2 \\
         - b_1 & a_1 \end{array} \right) \right)
\left(  I +
         \left(  \begin{array}{cc}
         g^2_2 & - g^1_2 \\
         - g^2_1     & g^1_1 \end{array} \right) \right)
=: F + R ,
\]
where $F = \left(  \begin{array}{cc}
         B  & C  \\
         D  & E   \end{array} \right)$
and $R$ is a matrix whose entries are smooth functions, all bounded by
a uniform constant times $\ve$.  Now defining $F_0$ to be the matrix $F$
with $T_0$ replacing $T$, we write,
\beq
\label{eq:matrix}
\begin{split}
& \| DT^{-1}(x)  - DT_0^{-1}(x) \| = \Big\| \frac{F + R}{A(T^{-1}x) \cos \vf(T^{-1} x)}
- \frac{ 1}{\cos \vf(T_0^{-1}x)} F_0 \Big\|   \\
& \le
\frac{\| F - F_0 \|}{|A(T^{-1}x) \cos \vf(T^{-1}x)|}
+ \| F_0 \| \left| \frac{1}{A(T^{-1}x) \cos \vf(T^{-1}x)} - \frac{1}{\cos \vf(T_0^{-1}x)} \right|
+ \frac{\| R \|}{|A(T^{-1}x) \cos \vf(T^{-1}x)|} .
\end{split}
\eeq
Notice that if $x \notin N_\epsilon(\cS_{-1}^T \cup \cS_{-1}^{T_0})$, then due
to the uniform expansion given by \eqref{eq:cos tgf} and the uniform transversality of
the stable cone with $\cS_0$, we have $d_M(T^{-1}x, \cS_0) \ge C\sqrt{\epsilon}$,
for some uniform constant $C$.    Thus
$\cos \vf(T^{-1}x) \ge C' \sqrt{\epsilon}$ for some uniform constant $C' > 0$.
The same fact is true for $T_0^{-1}x$.

Using this, plus the fact that the entries of $F$ and $F_0$ are smooth functions of their arguments
with uniformly bounded $\C^1$ norms, we estimate the first term of \eqref{eq:matrix} by
\[
\frac{\| F - F_0 \|}{|A(T^{-1}x) \cos \vf(T^{-1}x)|}
\le C \epsilon^{-1/2} d_M(T^{-1}x, T_0^{-1}x) \le C' \epsilon^{-1/2} \ve^{1/2} = C' C_f  \epsilon
\]
since the $C^1$ norm of $A$ is bounded above and below
by $1\pm C\ve$ by \eqref{eq:det formula}.
Similarly, the third term of \eqref{eq:matrix} is bounded by $C \epsilon$.

Since $\| F_0 \|$ is uniformly bounded,
we split the middle term of \eqref{eq:matrix} into the sum of two terms,
\[
\left| \frac{1}{A(T^{-1}x) \cos \vf(T^{-1}x)} - \frac{1}{\cos \vf(T_0^{-1}x)} \right|
\le \frac{1}{\cos \vf(T^{-1}x)} \left| \frac{1}{A(T^{-1}x)} - 1 \right | +
\left| \frac{1}{\cos \vf(T^{-1}x)} - \frac{1}{\cos \vf(T_0^{-1}x)}  \right| .
\]
As noted earlier, the $\C^1$ norm of $A$ is bounded above and below
by $1 \pm C \ve$ so that the first difference above is bounded by
$C \epsilon^{-1/2} \ve \le C C_f \epsilon$.  The second difference is bounded
by $C \epsilon^{-1} d_M(T^{-1}x, T_0^{-1}x) \le C' \epsilon^{-1} \ve^{1/2} = C' C_f \epsilon^{1/2}$,
similar  to the estimate \eqref{eq:cos diff}.

Putting these estimates together in \eqref{eq:matrix}
proves {\bf (C4)} with $\epsilon = C \ve^{1/3}$.
Condition {\bf (C3)} follows similarly using the fact that $J_WT(x) = \| DT(x)v \|$ where
$v \in \mathcal{T}_xW$ is a unit vector.  The exponent of $\epsilon$ in {\bf (C3)} is better than
in {\bf (C4)} by a factor of $\epsilon^{1/2}$
since we must estimate $\left| \frac{\cos \vf(T^{-1}x)}{\cos \vf(T_0^{-1}x)} - 1 \right|$
in place of $\left| \frac{1}{\cos \vf(T^{-1}x)} - \frac{1}{\cos \vf(T_0^{-1}x)}  \right| $.


\end{document}